\tikzset{blackv/.style={circle,fill=black,inner sep=3pt,outer sep=3pt},
         whitev/.style={circle,fill=white,draw=black,inner sep=3pt,outer sep=3pt},
         blabel/.style={circle,draw=black,inner sep=1.5pt,outer sep=0pt},
         redv/.style={circle,fill=red,inner sep=3pt,outer sep=3pt},
         block/.style={draw,rectangle split,rectangle split horizontal,rectangle split parts=#1},
         symbol/.style={
           draw=none,
           every to/.append style={
             edge node={node [sloped, allow upside down, auto=false]{$#1$}}}}
}
\newtheorem{theorem}{Theorem}[section]
\newtheorem{proposition}[theorem]{Proposition}
\newtheorem{defprop}[theorem]{Definition \textbf{\&} Proposition}
\newtheorem{corollary}[theorem]{Corollary}
\newtheorem{lemma}[theorem]{Lemma}
\theoremstyle{definition}
\newtheorem{remark}[theorem]{Remark}
\newtheorem{example}[theorem]{Example}
\newtheorem{definition}[theorem]{Definition}
\def\xx{\mathbf{x}}
\def\TT{\mathbb{T}}
\def\ZZ{\mathbb{Z}}
\def\Ccal{\mathcal{C}}
\def\Dcal{\mathcal{D}}
\def\Fcal{\mathcal{F}}
\def\Xcal{\mathcal{X}}
\def\mod{\opname{mod}\nolimits}
\def\proj{\opname{proj}\nolimits}
\newcommand{\opname}[1]{\operatorname{\mathsf{#1}}}
\newcommand{\Hom}{\opname{Hom}}
\newcommand{\End}{\opname{End}}
\newcommand{\Ext}{\opname{Ext}}
\newcommand{\add}{\opname{add}\nolimits}
\newcommand{\sst}[1]{\substack{#1}}
\DeclareMathOperator{\st}{\mathsf{st}}
\DeclareMathOperator{\cone}{\mathsf{cone}}
\DeclareMathOperator{\lk}{\mathsf{lk}}
\DeclareMathOperator{\join}{\mathsf{join}}
\title[Positive cluster complexes and $\tau$-tilting simplicial complexes]{Positive cluster complexes and $\tau$-tilting simplicial complexes of cluster-tilted algebras of finite type}
\author{Yasuaki Gyoda}
\keywords{cluster algebras, cluster complexes, face vectors, f-vectors, mutations, representation theory of algebras, support tau-tilting modules}
\subjclass[2020]{13F60, 16G20}
\address{Graduate School of Mathematical Sciences, The University of Tokyo, 3-8-1 Komaba Meguro-ku Tokyo 153-8914, Japan}
\email{gyoda-yasuaki@g.ecc.u-tokyo.ac.jp}
\begin{document}
\begin{abstract}
In this study, we consider the positive cluster complex, a full subcomplex of a cluster complex the vertices of which are all non-initial cluster variables.
In particular, we provide a formula for the difference in face vectors of positive cluster complexes caused by a mutation for finite type. 
Moreover, we explicitly describe specific positive cluster complexes of finite type and calculate their face vectors. We also provide a method to compute the face vector of an arbitrary positive cluster complex of finite type using these results.
Furthermore, we apply our results to the $\tau$-tilting theory of cluster-tilted algebras of finite representation type using the correspondence between clusters and support $\tau$-tilting modules.
\end{abstract}

\maketitle
\tableofcontents

\section{Introduction}
\emph{Cluster algebras} are a class of commutative algebras generated by some elements called \emph{cluster variables}, which are given by transformations called \emph{mutations}. The combinatorial structure of cluster variables and mutations is a subject of active research owing to its applications in various mathematical fields.
In this study, we consider cluster complexes. These simplicial complexes are combinatorial structures comprising clusters and mutations introduced by Fomin-Zelevinsky \cite{fzii}. More precisely, a cluster complex $\Delta(\xx,B)$ is a simplicial complex the vertex set of which comprises cluster variables and the simplicial set of which comprises subsets of clusters in a cluster algebra (Definition \ref{def:clustercomplex}). 
When a cluster algebra arises from a triangulation of a marked surface, the cluster complex is now known to coincide with the \emph{arc complex}, the maximal simplicial set of which consists of triangulations on the marked surface (see \cites{fst,ft}). Further, when a cluster algebra arises from a non-valued quiver, its cluster complex coincides with the \emph{support $\tau$-tilting simplicial complex}, the simplicial set of which comprises special modules of the cluster-tilted algebra of the quiver (see \cites{bmrrt,air}). Cluster algebras have been applied to hyperbolic geometry and representation theory of algebras, respectively, through these correspondences.

In this study, we focus on the positive cluster complex and the full subcomplex of the cluster complex, the vertices of which are non-initial cluster variables. This work is motivated by the representation theory of algebras. The positive cluster complex appears naturally as a \emph{$\tau$-tilting simplicial complex}, in which maximal simplices are $\tau$-tilting modules. When $\Lambda$ is hereditary, the $\tau$-tilting $\Lambda$-module coincides with the tilting $\Lambda$-module. The tilting module is a generalization of the progenerator in the Morita theory and an essential element in the tilting theory. A tilting module $T$ gives rise to an equivalence between a certain torsion class (resp. torsion-free class) of $\Lambda$ and a certain torsion-free class (resp. torsion class) of $(\End_{\Lambda}T)^{\mathrm{op}}$ (see \cite{brenbutl}). The tilting simplicial complex was studied by Riedtmann-Schofield and Unger \cites{resc, ung}. In particular, Riedtmann-Schofield introduced the mutation of tilting modules.

The primary purpose of this paper is to investigate the shape of the positive cluster complex of finite type and apply it to the $\tau$-tilting simplicial complex by using an isomorphism between the cluster and support $\tau$-tilting modules given by Adachi-Iyama-Reiten \cite{air}. 

We begin with the theory of cluster complexes, by investigating enumerative properties of face vectors of positive cluster complexes of finite type. The face vector is essential in understanding the structure of a simplicial complex.
Let $(\xx,B)$ be a cluster seed and $\Delta(\xx,B)$ a cluster complex associated with $(\xx,B)$. We denote by $\Delta^+(\xx,B)$ a positive cluster complex with respect to the initial seed $(\xx,B)$ (Definition \ref{def:positiveclustercomplex}). For $v\in\ZZ^n$, we denote by $[v]_k$ the vector the $i$th component of which is $(i-k)$th component of $v$ (and if $i\leq k$, it is $0$).
\begin{theorem}[Theorem \ref{thm:mutation-positivecomplex}]\label{main-1positive}
We provide the formula for the difference of the face vectors of positive cluster complexes $\Delta^+(\xx,B)$ and $\Delta^+(\xx',B')$ when $(\xx',B')$ is obtained from $(\xx,B)$ by a mutation. More precisely, if $x\in\xx$ is exchanged with $x'\in\xx'$ by the mutation, we have 
\begin{align}\label{eq:difference}
   f(\Delta^+(\xx,B))-f(\Delta^+(\xx',B'))
   =\left[f(\Delta^+(\xx'-\{x'\},B_{\backslash\{x'\}}))\right]_1-\left[f(\Delta^+(\xx-\{x\},B_{\backslash\{x\}}))\right]_1,
\end{align}
where $f(\Delta^+(\xx,B))$ is the face vector of $\Delta^+(\xx,B)$.
\end{theorem}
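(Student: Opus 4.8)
The plan is to realize both positive complexes inside a single ambient complex and to compare their faces directly. Since the cluster complex $\Delta$ attached to a cluster algebra depends only on the algebra and not on the chosen initial seed, $\Delta(\xx,B)$ and $\Delta(\xx',B')$ are one and the same simplicial complex $\Delta$; passing to positive complexes only changes which vertices are deleted, namely the set $\xx$ versus the set $\xx'$. Thus $\Delta^+(\xx,B)$ and $\Delta^+(\xx',B')$ are the full subcomplexes of $\Delta$ supported on $V\setminus\xx$ and $V\setminus\xx'$, where $V$ is the set of all cluster variables. Writing $\xx=(\xx\cap\xx')\cup\{x\}$ and $\xx'=(\xx\cap\xx')\cup\{x'\}$, I would first restrict attention to the faces $\sigma$ of $\Delta$ with $\sigma\cap(\xx\cap\xx')=\emptyset$, since any face meeting $\xx\cap\xx'$ belongs to neither positive complex and is irrelevant to the difference.

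The key combinatorial dichotomy is that $x$ and $x'$ form an exchange pair and hence never lie in a common cluster, so no face contains both. I would partition the faces avoiding $\xx\cap\xx'$ into three classes $A$, $B$, $C$ according to whether they contain $x$, contain $x'$, or contain neither. Then $\Delta^+(\xx,B)$ consists of $B\cup C$ and $\Delta^+(\xx',B')$ of $A\cup C$, so on face vectors the common part $C$ cancels and $f(\Delta^+(\xx,B))-f(\Delta^+(\xx',B'))=f(B)-f(A)$, where $f(B)$ and $f(A)$ denote the contributions of the two classes.

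The second ingredient is the identification of the link of an exchanged variable with a smaller cluster complex: $\lk_{\Delta}(x')\cong\Delta(\xx'-\{x'\},B_{\backslash\{x'\}})$, under which the facet $\xx'-\{x'\}=\xx\cap\xx'$ of the link corresponds to the initial cluster of the sub-seed, and likewise $\lk_{\Delta}(x)\cong\Delta(\xx-\{x\},B_{\backslash\{x\}})$ with $\xx-\{x\}=\xx\cap\xx'$ as initial cluster. The faces in class $B$ are exactly $\{x'\}\cup\tau$ with $\tau$ a face of $\lk_{\Delta}(x')$ avoiding $\xx\cap\xx'$; under the isomorphism these $\tau$ are precisely the faces of the positive complex $\Delta^+(\xx'-\{x'\},B_{\backslash\{x'\}})$. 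Adjoining the single vertex $x'$ increases the number of vertices of each face by one, which on face vectors is exactly the shift operator $[\,\cdot\,]_1$; hence $f(B)=\bigl[f(\Delta^+(\xx'-\{x'\},B_{\backslash\{x'\}}))\bigr]_1$, and symmetrically $f(A)=\bigl[f(\Delta^+(\xx-\{x\},B_{\backslash\{x\}}))\bigr]_1$. Substituting into $f(B)-f(A)$ gives \eqref{eq:difference}.

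The step I expect to be the main obstacle is the link identification together with its compatibility with the positive structure. One must verify that deleting the index of $x'$ from the exchange matrix of the seed in which $x'$ is initial yields the exchange matrix whose cluster complex is $\lk_{\Delta}(x')$, and, crucially, that under the resulting isomorphism the initial cluster of the sub-seed is exactly the trace of $\xx\cap\xx'$; this is what makes "avoiding $\xx\cap\xx'$" coincide with "being positive". It is worth noting that the quadratic term in matrix mutation makes $B_{\backslash\{x'\}}$ genuinely different from $B_{\backslash\{x\}}$ in rank at least three, so the two shifted terms do not cancel and the formula is nontrivial. By contrast, the remaining points — that $x$ and $x'$ never co-occur in a cluster and that adjoining $x'$ realizes the shift $[\,\cdot\,]_1$ — are routine once the indexing conventions for $f$ and for $[\,\cdot\,]_1$ are fixed.
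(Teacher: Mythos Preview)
Your proposal is correct and is essentially the paper's own argument: the paper likewise works inside the single ambient complex $\Delta$, splits off the faces meeting $\xx\cap\xx'$ (its $\widehat{\Delta_X}$), identifies the remaining discrepancy as the faces containing $x$ but no other initial variable versus those containing $x'$ but no other initial variable (its $\underline{\Delta_{\{x\}\subset\xx}}$ and $\underline{\Delta_{\{x'\}\subset\xx'}}$, your $A$ and $B$), and then applies the link identification $\st_\Delta(x)\cong\cone\bigl(\Delta(\xx-\{x\},B_{\backslash\{x\}})\bigr)$ (Lemma~\ref{lem:bijection-between-B-and-Bx}) to obtain the shift by one. The only cosmetic difference is that the paper phrases the first step via complements rather than your direct three-way partition, so it does not need to invoke the incompatibility of $x$ and $x'$ explicitly, but the two decompositions are identical.
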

 Equation \eqref{eq:difference} implies that the difference between two face vectors of positive cluster complexes obtained by a mutation equals that of lower ranks. Moreover, by using Theorem \ref{main-1positive}, we provide a sufficient condition for the invariance of face vectors under a mutation.

\begin{corollary}[Corollary \ref{thm:invariant-sink/source}]\label{main-2positive}
If $B'$ is obtained from $B$ by a sink or source mutation, then the face vectors of $\Delta^+(\xx,B)$ and $\Delta^+(\xx',B')$ coincide.
\end{corollary}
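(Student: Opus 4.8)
The plan is to read off the corollary directly from the mutation formula \eqref{eq:difference} of Theorem~\ref{main-1positive}. Write $k$ for the vertex at which the sink or source mutation is performed, so that $x=x_k$ is exchanged for $x'=x_k'$ while every other cluster variable is left unchanged; in particular $\xx-\{x\}=\xx'-\{x'\}$. By \eqref{eq:difference} it then suffices to prove that the two bracketed terms on the right-hand side agree, and for this I will establish the stronger assertion that the two deleted subseeds $(\xx'-\{x'\},B_{\backslash\{x'\}})$ and $(\xx-\{x\},B_{\backslash\{x\}})$ are literally the same seed. Since the cluster-variable parts already coincide, the whole problem reduces to comparing the induced submatrices, i.e. the exchange matrices $B'$ and $B$ with the $k$-th row and column removed.

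The key step is a sign analysis of the matrix-mutation rule. For $i,j\neq k$ one has
\begin{equation*}
b'_{ij}=b_{ij}+\tfrac12\bigl(|b_{ik}|\,b_{kj}+b_{ik}\,|b_{kj}|\bigr),
\end{equation*}
and a direct check shows the correction term is nonzero precisely when $b_{ik}$ and $b_{kj}$ share the same sign. I would then invoke the defining property of a sink (resp. source): all arrows incident to $k$ carry the same orientation, so the entries $\{b_{ik}\}_{i\neq k}$ all have one fixed sign, and by skew-symmetrizability the entries $\{b_{kj}\}_{j\neq k}$ all have the opposite sign. Hence in the product appearing in the correction term the two factors always have opposite signs, the term vanishes identically, and $b'_{ij}=b_{ij}$ for all $i,j\neq k$. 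Equivalently, a sink or source mutation merely reverses the arrows incident to $k$ and leaves the full subquiver on the remaining vertices untouched, so the two relevant submatrices coincide.

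Combining this with $\xx'-\{x'\}=\xx-\{x\}$ shows that the two deleted subseeds are identical, whence their positive cluster complexes, and a fortiori their face vectors, agree:
\begin{equation*}
f(\Delta^+(\xx'-\{x'\},B_{\backslash\{x'\}}))=f(\Delta^+(\xx-\{x\},B_{\backslash\{x\}})).
\end{equation*}
Since the operator $[\,\cdot\,]_1$ sends equal vectors to equal vectors, the right-hand side of \eqref{eq:difference} vanishes, giving $f(\Delta^+(\xx,B))=f(\Delta^+(\xx',B'))$, as claimed.

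The only genuine content sits in the middle paragraph, and there the single nontrivial point is the sign computation for the correction term; this is where I expect the main (though modest) obstacle to lie, since it requires pinning down the precise sign convention under which \emph{sink} and \emph{source} are defined in the paper and verifying that the vanishing holds uniformly in both cases. Everything else is bookkeeping on top of Theorem~\ref{main-1positive}.
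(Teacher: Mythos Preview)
Your proof is correct and is precisely the argument the paper has in mind. The paper states this corollary immediately after Theorem~\ref{thm:mutation-positivecomplex} with no separate proof, simply noting that it follows from the theorem; your observation that a sink or source mutation leaves the submatrix on the remaining vertices unchanged (so that the two deleted subseeds coincide and the right-hand side of \eqref{eq:difference} vanishes) is exactly the intended one-line justification.
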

Moreover, the following corollary follows immediately from Corollary \ref{main-2positive}.

\begin{corollary}[Corollary \ref{cor:independent-orientation}]\label{main-3positive}
Let $B$ and $B'$ be skew-symmetrizable matrices. If corresponding valued quivers $Q_B$ and $Q_{B'}$ have the same Dynkin diagram as the underlying graph, then the face vectors of $\Delta^+ (\xx,B)$ and $\Delta^+ (\xx',B')$ coincide.
\end{corollary}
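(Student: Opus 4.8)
The plan is to deduce the statement from Corollary \ref{main-2positive} by connecting the two orientations through a chain of sink/source mutations. The key observation is that a mutation $\mu_k$ at a vertex $k$ that is a sink or a source of $Q_B$ merely reverses all arrows incident to $k$ and leaves the valuations (equivalently, the absolute values $|b_{ij}|$) unchanged; hence it preserves the underlying valued graph. Thus every sink/source mutation carries $(\xx,B)$ to a seed whose quiver is another orientation of the \emph{same} valued Dynkin diagram, and by Corollary \ref{main-2positive} it preserves the face vector of the positive cluster complex.

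The heart of the argument is therefore the purely combinatorial claim that any two orientations of a fixed valued Dynkin diagram are related by a finite sequence of sink/source mutations. Since the underlying graph of a Dynkin diagram is a tree, I would prove the following slightly more general statement by induction on the number of vertices: \emph{for a tree $T$ with a fixed valuation, any two orientations are connected by flips at sinks and sources.} For the inductive step I would choose a leaf $\ell$ of $T$ with unique neighbour $u$ and set $T'=T-\ell$. Given orientations $O_1,O_2$ of $T$, I restrict them to $T'$ and, by the inductive hypothesis, fix a sequence of sink/source flips of $T'$ carrying $O_1|_{T'}$ to $O_2|_{T'}$. I then lift this sequence to $T$ one flip at a time: a flip at a vertex $w\neq u$ is automatically a sink/source flip of $T$ (its neighbourhood is unchanged), while a flip at $u$ can be realized in $T$ after first reorienting the pendant edge $\{u,\ell\}$ if necessary. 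The crucial point is that the leaf $\ell$ has degree one and is therefore \emph{always} a sink or a source, so the pendant edge can be reversed freely at any moment; this decouples the behaviour at $u$ from the induction. After processing the whole sequence the two orientations agree on $T'$, and a final (possible) flip at $\ell$ makes them agree on $\{u,\ell\}$ as well, so they coincide.

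With the combinatorial claim in hand, I would connect $Q_B$ and $Q_{B'}$ by such a chain of sink/source flips and lift it to a chain of sink/source mutations starting at $(\xx,B)$; repeatedly applying Corollary \ref{main-2positive} shows that the face vector is unchanged along the chain and produces a seed $(\tilde\xx,B')$ with exchange matrix exactly $B'$. Finally, since the positive cluster complex, and hence its face vector, depends only on the exchange matrix of the seed (the cluster pattern and its distinguished initial cluster being determined by $B'$), we obtain $f(\Delta^+(\tilde\xx,B'))=f(\Delta^+(\xx',B'))$, and combining the equalities yields the claim.

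I would expect the main obstacle to be the inductive lifting in the combinatorial claim: one must check that each abstract $T'$-flip can be realized as a genuine sink/source flip of $T$, and the subtlety is entirely concentrated at the neighbour $u$ of the chosen leaf. Handling it hinges on exploiting that a leaf is unconditionally a sink or source, which lets one pre-adjust the pendant edge; once this is noticed the remainder is bookkeeping.
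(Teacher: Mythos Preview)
Your proposal is correct and follows essentially the same route as the paper: reduce to Corollary~\ref{main-2positive} by connecting the two orientations of the Dynkin tree through a chain of sink/source mutations. The only difference is that the paper cites \cite{fzii}*{Proposition 9.2} for the combinatorial fact that any two orientations of a tree are linked by sink/source flips (up to a permutation of indices), whereas you supply a direct leaf-removal induction; your argument is valid and makes the proof self-contained.
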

 An ``initial condition'' is required along with the difference between two face vectors by a mutation to calculate the face vector of a positive cluster complex. Therefore, we need to know the face vectors of specific positive cluster complexes. For the classical finite type, we solve this problem by providing explicit descriptions of positive cluster complexes in a particular case.
\begin{theorem}[Theorems \ref{thm:positive-simplex-A_n}, \ref{thm:positive-simplex-B_n}, \ref{thm:positive-simplex-D_n}] \label{main-4positive}
We give an explicit description of a positive cluster complex $\Delta^+(\xx,B)$ when the corresponding valued quiver $Q_B$ of $B$ is a linearly oriented Dynkin quiver of type $A,B,C,D$. Specifically, if $Q_B$ is a linearly oriented $A_n$ type, $\Delta^+(\xx,B)$ is isomorphic to a cone of the (dual) associahedron with the rank $n-1$.
\end{theorem}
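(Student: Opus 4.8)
The plan is to work inside the classical combinatorial models of the finite type cluster complexes---non-crossing diagonals of a polygon for type $A$, centrally symmetric diagonals of an even polygon for types $B$ and $C$, and tagged arcs of a once-punctured polygon for type $D$---in each of which the linearly oriented initial seed is realized by a ``fan'' triangulation. The structural statements will then follow by isolating the unique non-initial variable that is compatible with all the others and peeling it off as a cone vertex.

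For type $A_n$ I would model $\Delta(\xx,B)$ as the complex whose vertices are the diagonals of a convex $(n+3)$-gon $P=(v_0,v_1,\dots,v_{n+2})$, with faces the sets of pairwise non-crossing diagonals. The first step is to verify that the linear quiver $1\to 2\to\cdots\to n$ corresponds to the fan triangulation $\{v_0v_2,v_0v_3,\dots,v_0v_{n+1}\}$: consecutive diagonals $v_0v_{i+1},v_0v_{i+2}$ share the triangle $v_0v_{i+1}v_{i+2}$ while non-consecutive ones share no triangle, so orienting the dual of the fan consistently returns a linearly oriented $A_n$ quiver. Hence the non-initial cluster variables are exactly the diagonals of $P$ disjoint from $v_0$.

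The crux is to produce the cone vertex. I claim the diagonal $e=v_1v_{n+2}$ joining the two neighbours of $v_0$ is compatible with every non-initial variable: the boundary arc from $v_1$ to $v_{n+2}$ avoiding $v_0$ contains every vertex except $v_0$, so a diagonal can cross $e$ only if one of its endpoints is $v_0$, i.e.\ only if it is initial. Cutting off the ear at $v_0$ leaves the sub-polygon $P'=(v_1,\dots,v_{n+2})$ on $n+2$ vertices, whose non-crossing-diagonal complex is the cluster complex $\Delta_{A_{n-1}}$ of type $A_{n-1}$, i.e.\ the dual associahedron of rank $n-1$, and $e$ is precisely the boundary edge of $P'$ opposite this ear. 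Since the remaining non-initial diagonals are exactly the diagonals of $P'$, and their compatibility in $P$ agrees with compatibility in $P'$, each face of $\Delta^+(\xx,B)$ either lies in $\Delta_{A_{n-1}}$ or is obtained from such a face by adjoining $e$; this is the join $\{e\}*\Delta_{A_{n-1}}$, giving $\Delta^+(\xx,B)\cong\cone(\Delta_{A_{n-1}})$ as claimed. Note that this cone structure is particular to the linear orientation: Corollary \ref{main-3positive} only guarantees that other orientations share the same face vector, not the same complex.

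For types $B$, $C$ and $D$ I would run the same template in the respective models: the linear quiver again corresponds to a symmetric, resp.\ tagged, fan with a distinguished apex, the non-initial variables are the arcs avoiding that apex, and one analyses directly the full subcomplex they span, with removal of the apex lowering the rank by one and producing the explicit description. I expect the entire difficulty to lie here rather than in type $A$. First, one must track the orientations of arrows around the puncture and across centrally symmetric pairs to confirm that the chosen fan really realizes the linear quiver; second, one must check that deleting the apex yields a surface of the expected lower type with no spurious identifications among tagged arcs. Moreover a root count already shows that, unlike type $A$, these complexes need not be cones over the generalized associahedron of the same Cartan type one rank lower (for instance the $B_n$ complex has $n^2$ vertices, whereas $\cone(\Delta_{B_{n-1}})$ would have $(n-1)^2+1$), so the genuine obstacle is to read off the correct and more intricate gluing from the reduced model.
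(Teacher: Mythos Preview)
Your type $A_n$ argument is correct and essentially identical to the paper's: the paper also works in the $(n+3)$-gon with the fan triangulation at a vertex $v$, identifies the diagonal joining the two neighbours of $v$ as the unique non-initial variable compatible with every other non-initial variable (the unique vertex ``non-linked to $I$'' in the paper's language), and concludes that $\Delta^+(\xx,B)$ equals the closed star of that vertex, hence a cone over its link $\Delta(A_{n-1})$.

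For types $B$, $C$, $D$, however, your proposal is only a programme, not a proof. You correctly observe that the answer cannot be a cone over $\Delta(X_{n-1})$, but you do not say what the explicit descriptions actually \emph{are}, and ``removing the apex'' is not the mechanism the paper uses. In types $B_n/C_n$ the paper works in the $(2n+2)$-gon with centrally symmetric diagonals, finds exactly $n$ non-linked vertices $I'=\{x'_1,\dots,x'_n\}$, shows that $I'$ is itself a cluster whose exchange matrix coincides with $B$ up to relabelling, and deduces $\Delta^+(\xx,B)=\st_{\Delta(\xx,B)}(I')\cong\st_{\Delta(\xx,B)}(I)$; the face-vector formula then comes from $\Delta(B_n)=\st(I)\cup\st(I')$ with intersection $\lk(I)\cong\Delta(B_{n-1})$. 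For $D_n$ the paper uses the $2n$-gon with labelled diameters (not the tagged-arc model you suggest, though they are equivalent); here there are only $n-2$ non-linked vertices, $\st(I')$ misses $\Delta^+(\xx,B)$ by a single vertex $x^*$, and the positive complex is obtained by gluing the cone $\{x^*\}*\bigl(\lk(x^*)\cap\st(I')\bigr)\cong\join(\Delta_0,\Delta_1,\Delta(A_{n-3}))$ onto $\st(I')\cong\st(I\setminus\{x_{n-1},x_n\})$ along an explicitly identified subcomplex. None of this structure---the non-linked set forming (almost) a cluster of the same shape, and the resulting self-similarity---appears in your outline, and it is the genuine content of the $B/C/D$ cases.
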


We calculate face vectors of positive cluster complexes of classical type from the explicit descriptions in Theorem \ref{main-4positive}.

\begin{theorem}[Theorem \ref{thm:count1}]\label{thm:intro-count1}
Suppose that the underlying graph of the valued quiver $Q_B$ is $A_n,B_n,C_n\text{ or }D_n$ type. Then, the $k$th entry of $f^+(\Delta(\xx,B))$ is given by the following list:
\begin{itemize}
    \item[($A_n$)] $\dfrac{1}{k+2}\begin{pmatrix}n\\ k+1\end{pmatrix}\begin{pmatrix}n+k+1\\ k+1\end{pmatrix}$,
    \vspace{3mm}
    \item[($B_n$)] $ \begin{pmatrix}n\\ k+1\end{pmatrix}\begin{pmatrix}n+k\\ k+1\end{pmatrix}$,
      \vspace{3mm}
    \item[($C_n$)] same as $B_n$ type,
      \vspace{3mm}
    \item[($D_n$)] $\begin{pmatrix}n\\ k+1\end{pmatrix}\begin{pmatrix}n+k-1\\ k+1\end{pmatrix}+\begin{pmatrix}n-1\\ k\end{pmatrix}\begin{pmatrix}n-k-2\\ k\end{pmatrix}-\dfrac{1}{n-1}\begin{pmatrix}n-1\\ k\end{pmatrix}\begin{pmatrix}n+k-1\\ k+1\end{pmatrix}$.
  \end{itemize}
\end{theorem}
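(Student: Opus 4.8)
The plan is to fix a convenient orientation using Corollary~\ref{main-3positive}, invoke the explicit simplicial models of Theorem~\ref{main-4positive}, and then read the face numbers off classical dissection counts. Throughout, write $f_k$ for the $k$th entry of a face vector, that is, the number of $k$-dimensional faces. By Corollary~\ref{main-3positive} the vector $f^+(\Delta(\xx,B))$ depends only on the underlying Dynkin type, so in each case I take $Q_B$ to be the linearly oriented Dynkin quiver, for which Theorem~\ref{main-4positive} supplies a concrete model. I use repeatedly that coning a complex $C$ with a fresh apex $v$ gives $f_k(\cone C)=f_k(C)+f_{k-1}(C)$, since every $k$-face of $\cone C$ is either a $k$-face of $C$ or $\{v\}\cup\sigma$ with $\sigma$ a $(k-1)$-face of $C$.

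\emph{Type $A_n$.} Here Theorem~\ref{main-4positive} gives $\Delta^+(\xx,B)\cong\cone\Delta_{A_{n-1}}$, the cone over the type $A_{n-1}$ cluster complex, so $f_k(\Delta^+)=f_k(\Delta_{A_{n-1}})+f_{k-1}(\Delta_{A_{n-1}})$. The faces of $\Delta_{A_m}$ are the sets of pairwise non-crossing diagonals of a convex $(m+3)$-gon, so the Kirkman--Cayley count gives $f_j(\Delta_{A_m})=\tfrac{1}{j+2}\binom{m}{j+1}\binom{m+j+3}{j+1}$. Setting $m=n-1$ and summing the two terms, the assertion reduces to
\begin{equation*}
  \frac{1}{k+2}\binom{n-1}{k+1}\binom{n+k+2}{k+1}+\frac{1}{k+1}\binom{n-1}{k}\binom{n+k+1}{k}=\frac{1}{k+2}\binom{n}{k+1}\binom{n+k+1}{k+1}.
\end{equation*}
Applying Pascal's rule to $\binom{n}{k+1}$ on the right and to $\binom{n+k+2}{k+1}$ on the left exhibits both sides as multiples of $\binom{n+k+1}{k}$, and the identity then follows from the elementary ratio $\binom{n-1}{k+1}=\tfrac{n-k-1}{k+1}\binom{n-1}{k}$.

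\emph{Types $B_n$ and $C_n$.} The exchange matrices of types $B_n$ and $C_n$ are mutual transposes, and transposition leaves the exchange graph---hence the cluster complex and its positive subcomplex---unchanged, so $C_n$ reduces to $B_n$. For $B_n$ I feed the model of Theorem~\ref{thm:positive-simplex-B_n} into the type-$B$ dissection numbers, the faces of $\Delta_{B_n}$ being the centrally symmetric dissections of a $(2n+2)$-gon, for which $f_j(\Delta_{B_n})=\binom{n}{j+1}\binom{n+j+1}{j+1}$. Counting the positive faces then amounts to discarding those meeting an initial variable, and the single Pascal step $\binom{n+k+1}{k+1}=\binom{n+k}{k+1}+\binom{n+k}{k}$ shows the result to be $\binom{n}{k+1}\binom{n+k}{k+1}$.

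\emph{Type $D_n$ and the main obstacle.} This is the crux of the theorem. The three-term answer signals that the model of Theorem~\ref{thm:positive-simplex-D_n} is \emph{not} a single cone: the branch node of the $D_n$ diagram forces the positive complex to be assembled from two type-$A$-like pieces overlapping along the fork. My plan is to stratify the faces by their incidence with the two short branches: the generic stratum is counted by a type-$A$ dissection number, giving $\binom{n}{k+1}\binom{n+k-1}{k+1}$; the faces supported entirely on the branches contribute $\binom{n-1}{k}\binom{n-k-2}{k}$, which correctly vanishes in the upper range $2k>n-2$; and the faces counted twice must be subtracted. The delicate point, where I expect most of the work to lie, is to pin the overlap down as precisely $\tfrac{1}{n-1}\binom{n-1}{k}\binom{n+k-1}{k+1}$ rather than as an unwieldy alternating sum, after which the boundary cases $k=0$ and $k=n-1$ furnish the natural sanity checks.
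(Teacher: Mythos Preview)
Your $A_n$ argument is essentially the paper's: cone over $\Delta(A_{n-1})$, Kirkman--Cayley numbers, one binomial identity.

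The gap is in types $B_n$ and $D_n$. For $B_n$, you invoke Theorem~\ref{thm:positive-simplex-B_n} but never use its content; the sentence ``counting the positive faces amounts to discarding those meeting an initial variable, and a single Pascal step shows the result'' is backward reasoning---you are observing that $f_k(\Delta(B_n))-\binom{n}{k+1}\binom{n+k}{k+1}$ happens to equal $\binom{n}{k+1}\binom{n+k}{k}$, not proving that the $k$-faces touching an initial variable are enumerated by the latter. The paper's route supplies the missing structural input: Corollaries~\ref{cor:boundary-Bn} and~\ref{cor:intersetion-B} give $\Delta(B_n)=\st(I)\cup\st(I')$ with $\st(I)\cap\st(I')\cong\Delta(B_{n-1})$ and $\st(I')\cong\Delta^+$, whence inclusion--exclusion yields $f_k^+=\tfrac12\bigl(f_{B_n,k}+f_{B_{n-1},k}\bigr)$, and only then does the binomial simplification enter.

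For $D_n$ you present only a plan, and you acknowledge that the overlap term---precisely the piece that makes the formula three-term---is where ``most of the work'' lies. The stratification you sketch (generic stratum, branch stratum, overlap) does not match the paper's decomposition, and no justification is given for any of the three claimed counts. The paper instead uses the explicit gluing of Theorem~\ref{thm:positive-simplex-D_n} to write $\Delta^+$ as $\st_{\Delta}(I\setminus\{x_{n-1},x_n\})$ glued to $\join(\Delta_0,\Delta_1,\Delta(A_{n-3}))$ along an identified subcomplex; this, together with Lemma~\ref{cor:boundary-closedstar} ($\lk(I')\cong\Delta(D_{n-1})$) and the proof of Theorem~\ref{thm:mutation-positivecomplex}, produces the exact identity
\[
f(\Delta^+)=\tfrac12 f(\Delta(D_n))+\tfrac12 f(\Delta(D_{n-1}))-[f(\Delta(A_{n-2}))]_1-[f(\Delta(A_{n-2}))]_2+\tfrac12[f(\Delta(A_{n-3}))]_1+\tfrac12[f(\Delta(A_{n-3}))]_2,
\]
from which the stated closed form follows by substituting the known face numbers. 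Your proposal does not reach such an identity, so as written the $D_n$ case is unproved.
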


For exceptional finite type, we calculate face vectors directly by using a calculator.

\begin{theorem}[Theorem \ref{thm:count2}]\label{thm:intro-count2}
Suppose that the underlying graph of the valued quiver $Q_B$ is $E_6,E_7,E_8,F_4,\text{ or }G_2$ type. Then, $f^+(\Delta(\xx,B))$ is given by the following list:
  \begin{itemize}
    \item[($E_6$)] $(1,36,300,1035,1720,1368,418)$,
    \item[($E_7$)] $(1,63,777,3927,9933,13299,9009,2431)$,
    \item[($E_8$)] $(1,120,2135,15120,54327,108360,121555,71760,17342)$,
    \item[($F_4$)] $(1, 24, 101, 144, 66)$,
    \item[($G_2$)] $(1, 6, 5)$.
  \end{itemize}
\end{theorem}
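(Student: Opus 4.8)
The plan is to reduce the statement to a finite, purely combinatorial enumeration and then carry it out on a computer, using the results already established to justify the reduction. By Corollary \ref{main-3positive}, the face vector $f^+(\Delta(\xx,B))$ depends only on the underlying Dynkin diagram of $Q_B$, so for each of the types $E_6,E_7,E_8,F_4,G_2$ I am free to fix a single convenient seed $(\xx,B)$ — for instance an acyclic or bipartite orientation — and to compute $f^+$ for that one representative, confident that the answer is independent of the choice.

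For the chosen seed I would generate the entire cluster complex $\Delta(\xx,B)$ by iterated mutation. Starting from the initial seed, I repeatedly apply the $n$ mutations and label each cluster variable canonically by its $\gg$-vector (or its denominator vector), which in finite type ranges over the almost positive roots. This canonical labelling is what guarantees that two seeds sharing a cluster variable are recognized as such, preventing double counting and ensuring that the finite mutation graph — whose number of vertices is the generalized Catalan number, e.g.\ $25080$ for $E_8$ — is explored completely by a breadth-first search with a visited set. The initial cluster variables are exactly those labelled by the negative simple roots, so the non-initial ones are precisely the positive roots, of which there are $36,63,120,24,6$ respectively; this already matches the entries $f_0^+$ in the stated lists.

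With the complete list of clusters in hand, the $k$th entry $f_k^+$ is the number of $(k+1)$-element subsets of the positive roots lying in a common cluster. Equivalently, since the finite-type cluster complex is flag and an induced subcomplex of a flag complex is again flag, $f_k^+$ counts the cliques of size $k+1$ in the compatibility graph induced on the positive roots, so as an alternative I could compute all pairwise compatibility degrees once and then enumerate cliques. Tallying faces by cardinality across all seeds, dividing out the overcounting via the canonical labels, yields the vector $f^+(\Delta(\xx,B))$, which I would then read off as the displayed tuples.

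The main obstacle is computational rather than conceptual: for $E_8$ the complex has on the order of $10^4$ maximal faces on $120$ vertices, so the enumeration must be organized carefully (canonical labelling, a visited set, and clique counting by increasing cardinality) to be simultaneously complete and free of double counting. As sanity checks I would verify that $f_{-1}^+=1$ and that $f_0^+$ equals the number of positive roots, confirm that the top entry equals the number of clusters contained entirely in the positive roots, and cross-check the whole vector against the mutation identity of Theorem \ref{main-1positive} by comparing the outputs obtained from two different orientations of the same Dynkin diagram.
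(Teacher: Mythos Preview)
Your proposal is correct and follows essentially the same approach as the paper: the paper simply states that for the exceptional types the face vectors are computed directly using \texttt{SageMath} and the \texttt{ClusterSeed} package, with no further argument. Your write-up supplies considerably more implementation detail (canonical labelling by $g$-vectors, breadth-first search, sanity checks) than the paper does, but the underlying strategy---reduce to one orientation via Corollary~\ref{main-3positive} and then enumerate by computer---is the same.
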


By Theorems \ref{thm:intro-count1}, \ref{thm:intro-count2} and Theorem \ref{main-1positive}, we can obtain the face vector of any positive cluster complex of finite type by summing the differences by mutations to the face vector of a computable such complex. We explain this point precisely in Section 5 with a concrete example. 

By applying these results to positive cluster complexes of skew-symmetric finite type and identifying positive cluster complexes with $\tau$-tilting simplicial complexes, we provide some results on $\tau$-tilting theory. First, we have the counterpart of Theorem \ref{main-1positive}.

\begin{theorem}[Corollary \ref{cor:mutation-tau-tilting-complex}]\label{main-1tilting}
  Let $\Lambda$ and $\Lambda'$ be cluster-tilted algebras such that $Q_{\Lambda'}$ is obtained from $Q_\Lambda$ by a quiver mutation. Then, we provide a formula for the difference of the numbers of basic $\tau$-rigid modules of $\Lambda$ and $\Lambda'$ with $k$ direct summands.
\end{theorem}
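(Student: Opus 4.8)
The plan is to deduce this corollary directly from Theorem~\ref{main-1positive} by transporting every term of \eqref{eq:difference} across the Adachi--Iyama--Reiten dictionary \cite{air}. First I would fix the cluster-tilted algebra $\Lambda$ with $Q_\Lambda=Q_B$ (here $B$ is skew-symmetric, so $Q_B$ is an ordinary quiver mutation-equivalent to a simply-laced Dynkin quiver) and recall the bijection between clusters of $\Acal(\xx,B)$ and basic support $\tau$-tilting $\Lambda$-modules. In the language of support $\tau$-tilting pairs $(M,P)$, the non-initial cluster variables in a cluster correspond to the indecomposable $\tau$-rigid summands of $M$, while the initial cluster variables correspond to the indecomposable projective (support-reducing) summands of $P$. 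In particular each non-initial cluster variable is the image of a unique indecomposable $\tau$-rigid $\Lambda$-module, and compatibility of non-initial cluster variables translates into $\tau$-rigidity of the corresponding direct sum.

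The key identification is therefore: a $k$-element face of $\Delta^+(\xx,B)$, i.e.\ a set of $k$ pairwise compatible non-initial cluster variables, corresponds exactly to a basic $\tau$-rigid $\Lambda$-module with $k$ indecomposable direct summands. Hence the $k$-th entry of $f(\Delta^+(\xx,B))$ equals the number $d_k(\Lambda)$ of basic $\tau$-rigid $\Lambda$-modules with $k$ summands, and $\Delta^+(\xx,B)$ is literally the $\tau$-tilting simplicial complex of $\Lambda$ announced in the introduction. Next I would match mutations: a quiver mutation $Q_{\Lambda'}=\mu_x(Q_\Lambda)$ is induced by a seed mutation $(\xx,B)\to(\xx',B')$ exchanging $x$ with $x'$, since mutating the cluster-tilting object realizing $\Lambda$ produces the one realizing $\Lambda'$ with Gabriel quiver $\mu_x(Q_\Lambda)$.

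With these dictionaries in place, Theorem~\ref{main-1positive} translates verbatim. The left-hand side of \eqref{eq:difference} becomes the vector of differences $d_k(\Lambda)-d_k(\Lambda')$, and the two shifted terms $[\,\cdot\,]_1$ become the analogous counts for the smaller cluster-tilted algebras obtained by deleting the mutated vertex from the quiver, the shift by one recording that re-inserting the deleted variable raises the number of summands by one. This yields the desired formula for $d_k(\Lambda)-d_k(\Lambda')$ in terms of the $\tau$-rigid counts of the two rank-reduced algebras.

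The hard part will be the rank-reduction step, namely checking that the restricted data $(\xx-\{x\},B_{\backslash\{x\}})$ on the cluster side really corresponds to a cluster-tilted algebra on the module side whose basic $\tau$-rigid modules are enumerated by $f(\Delta^+(\xx-\{x\},B_{\backslash\{x\}}))$. In finite type this is available because any principal submatrix of a finite-type exchange matrix is again of finite type, so the induced subquiver of $Q_\Lambda$ is mutation-equivalent to a Dynkin quiver and defines a finite-type cluster-tilted algebra; the remaining bookkeeping is to confirm that faces of the restricted positive subcomplex match basic $\tau$-rigid modules of this smaller algebra and that the shift $[\,\cdot\,]_1$ correctly increments the summand count. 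Modulo this verification, the corollary is immediate from Theorem~\ref{main-1positive}.
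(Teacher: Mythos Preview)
Your approach matches the paper's: transport Theorem~\ref{main-1positive} through the isomorphism $\Delta(\Lambda)\cong\Delta^+(\xx,B_\Lambda)$ (Theorem~\ref{thm:iso-tilting-cluster}). For the rank-reduction step you flagged as the hard part, the paper settles it by citing \cite{bmr08}*{Theorem~2.13}: the quotient $\Lambda/\Lambda e_X\Lambda$ is itself the cluster-tilted algebra associated with the subquiver $Q_\Lambda\setminus X$, so Theorem~\ref{thm:iso-tilting-cluster} applies again to identify $f(\Delta^+(\xx-\{x\},B_{\backslash\{x\}}))$ with $f(\Delta(\Lambda/\Lambda e_X\Lambda))$, and likewise on the $\Lambda'$ side.
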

As the counterpart of Corollary \ref{main-2positive}, we have the following corollary.
\begin{corollary}[Corollary \ref{cor:invariant-sink/source-tilting}]\label{main-2tilting}
 Let $\Lambda$ and $\Lambda'$ be cluster-tilted algebras such that $Q_\Lambda'$ is obtained from $Q_\Lambda$ by a sink or source mutation. Then, $\Lambda$ and $\Lambda'$ have the same number of basic $\tau$-rigid modules with $k$ direct summands for any $0 \leq k\leq |Q_\Lambda|$.
\end{corollary}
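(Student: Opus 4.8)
The plan is to deduce this statement directly from its cluster-combinatorial counterpart, Corollary~\ref{main-2positive}, by transporting it through the Adachi--Iyama--Reiten correspondence \cite{air}. Since $\Lambda$ is a cluster-tilted algebra of finite representation type, its quiver $Q_\Lambda$ is the valued quiver $Q_B$ of a skew-symmetric matrix $B$ arising from a cluster algebra of skew-symmetric finite type, so I would first fix a seed $(\xx,B)$ with $Q_B=Q_\Lambda$. The initial bookkeeping step is to record that a sink or source mutation of the quiver $Q_\Lambda$ is precisely a sink or source mutation of the matrix $B$; thus the seed $(\xx',B')$ obtained from $(\xx,B)$ by this mutation satisfies $Q_{B'}=Q_{\Lambda'}$, and $\Lambda'$ is the cluster-tilted algebra attached to $(\xx',B')$.

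The heart of the argument is the dictionary between the positive cluster complex and the $\tau$-tilting simplicial complex. Under the AIR correspondence, the non-initial cluster variables---exactly the vertices of $\Delta^+(\xx,B)$---are in bijection with the indecomposable $\tau$-rigid $\Lambda$-modules, whereas the initial cluster variables correspond to the shifted projectives, which are precisely the vertices excluded from the positive complex. Compatibility of a collection of non-initial cluster variables translates into the condition that the associated indecomposable $\tau$-rigid modules assemble into a basic $\tau$-rigid module. Consequently, the simplices of $\Delta^+(\xx,B)$ with exactly $k$ vertices are in bijection with the basic $\tau$-rigid $\Lambda$-modules having $k$ indecomposable direct summands, so that the $k$th entry of $f(\Delta^+(\xx,B))$ counts precisely these modules for each $0\le k\le |Q_\Lambda|$.

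With this identification in hand the conclusion is immediate: by Corollary~\ref{main-2positive}, a sink or source mutation leaves the face vector unchanged, that is $f(\Delta^+(\xx,B))=f(\Delta^+(\xx',B'))$. Reading off the two face vectors entry by entry through the dictionary above yields that $\Lambda$ and $\Lambda'$ have the same number of basic $\tau$-rigid modules with $k$ direct summands for every $0\le k\le|Q_\Lambda|$, which is the assertion.

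I expect the only genuinely delicate point to be the careful matching in the second step: verifying that the vertices of the \emph{positive} (as opposed to the full) cluster complex correspond exactly to the indecomposable $\tau$-rigid modules and not to the shifted projectives, and that the face-vector indexing $0\le k\le|Q_\Lambda|$ agrees with the count of indecomposable direct summands. Once the framework identifying $\Delta^+(\xx,B)$ with the $\tau$-tilting simplicial complex is established, the remainder is a formal transfer of Corollary~\ref{main-2positive}, entirely parallel to the way Corollary~\ref{main-2positive} itself is obtained from the mutation formula of Theorem~\ref{main-1positive}.
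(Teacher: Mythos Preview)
Your proposal is correct and follows essentially the same route as the paper: the paper's proof of Corollary~\ref{cor:invariant-sink/source-tilting} simply invokes Corollary~\ref{thm:invariant-sink/source} together with the identification $\Delta(\Lambda)\cong\Delta^+(\xx,B_\Lambda)$ of Theorem~\ref{thm:iso-tilting-cluster}, which is exactly the dictionary you spell out via the AIR correspondence. Your write-up is a faithful unpacking of that one-line argument; the only cosmetic point is the face-vector indexing (a simplex with $k$ vertices has dimension $k-1$, so it is $f_{k-1}$ that counts basic $\tau$-rigid modules with $k$ summands), but this does not affect the logic.
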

The following corollary is a special case of Corollary \ref{main-2tilting}, which has been given in several works \cites{mrz,eno}.
\begin{corollary}[Corollary \ref{cor:independent-orientation-tilting}]\label{main-3tilting}
Let $Q$ be a simply laced Dynkin quiver. For any $0\leq k\leq |Q|$, the number of basic rigid $KQ$-modules with $k$ direct summands depends only on the underlying graph of $Q$.
\end{corollary}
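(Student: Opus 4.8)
The plan is to deduce this from Corollary \ref{main-2tilting} via two reductions: passing from rigid to $\tau$-rigid modules in the hereditary case, and connecting arbitrary orientations by sink/source reflections. Since $Q$ is a simply laced Dynkin quiver, the path algebra $KQ$ is hereditary of finite representation type, and it is realized as the cluster-tilted algebra attached to the initial cluster-tilting object of the cluster category $\mathcal{C}_Q$, whose quiver is $Q_\Lambda = Q$. Over a hereditary algebra the Auslander--Reiten formula gives a functorial isomorphism $\Ext^1_{KQ}(M,M)\cong D\Hom_{KQ}(M,\tau M)$, so a $KQ$-module $M$ is rigid if and only if it is $\tau$-rigid. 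Hence counting basic rigid $KQ$-modules with $k$ indecomposable summands is the same as counting basic $\tau$-rigid $KQ$-modules with $k$ summands, which is precisely the invariant controlled by Corollary \ref{main-2tilting}.

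Next I would invoke the classical fact that any two orientations of a fixed Dynkin diagram are linked by reflections at sinks and sources. Concretely, for orientations $Q$ and $Q'$ of the same underlying graph there is a chain $Q=Q_0,Q_1,\dots,Q_m=Q'$ with $Q_{l+1}=\mu_{i_l}(Q_l)$, where each $i_l$ is a sink or source of $Q_l$; this is the standard statement underlying the Bernstein--Gelfand--Ponomarev reflection functors on trees. Each intermediate $Q_l$ is again an acyclic Dynkin quiver, so each $KQ_l$ is a hereditary cluster-tilted algebra with quiver $Q_l$, and passing from $KQ_l$ to $KQ_{l+1}$ is exactly the sink/source mutation of cluster-tilted algebras appearing in Corollary \ref{main-2tilting}.

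Finally I would apply Corollary \ref{main-2tilting} along this chain: at each step the number of basic $\tau$-rigid modules with $k$ summands is unchanged, so by transitivity $KQ$ and $KQ'$ carry the same counts for every $k$. Combined with the rigid--$\tau$-rigid reduction, the number of basic rigid $KQ$-modules with $k$ summands agrees for $Q$ and $Q'$, hence depends only on the underlying graph; summing over all $k$ recovers the earlier statements of \cites{mrz,eno}. The point requiring the most care is confirming that a reflection at a sink or source of the quiver is genuinely realized by a sink/source mutation of the corresponding cluster-tilting object, and that every intermediate cluster-tilted algebra $KQ_l$ stays hereditary (so that the chain never leaves the class of path algebras $KQ_l$); granting this --- it is exactly the hereditary, Auslander--Platzeck--Reiten-type instance of mutation --- the argument is a straightforward induction on the length $m$ of the reflection chain.
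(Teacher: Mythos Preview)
Your proposal is correct and follows essentially the same route as the paper. The paper deduces Corollary~\ref{cor:independent-orientation-tilting} from Corollary~\ref{cor:independent-orientation} together with Theorem~\ref{thm:iso-tilting-cluster}; since Corollary~\ref{cor:independent-orientation} is itself proved by the sink/source chain argument (via \cite{fzii}*{Proposition~9.2}) applied to Corollary~\ref{thm:invariant-sink/source}, and Corollary~\ref{main-2tilting} is just Corollary~\ref{thm:invariant-sink/source} transported through Theorem~\ref{thm:iso-tilting-cluster}, your argument merely reorders the same two ingredients---the rigid\,=\,$\tau$-rigid reduction for hereditary algebras and the reflection chain---and your caveat about intermediate $KQ_l$ staying hereditary is harmless since sink/source reflection of an acyclic (tree) quiver is again acyclic.
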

 
Next, by applying Theorem \ref{main-4positive} to $A_n$ or $D_n$ type, we have the following.
\begin{theorem}[Corollaries \ref{cor:explicit-descriptiton-An}, \ref{cor:explicit-descriptiton-Dn}]
  We describe a $\tau$-tilting simplicial complex of $k Q$ when $Q$ is a linearly oriented quiver of Dynkin $A_n$ or $D_n$ type. Specifically, if $Q$ is linearly oriented Dynkin $A_n$ type, then the $\tau$-tilting simplicial complex of $KQ$ is isomorphic to a cone of the support $\tau$-tilting simplicial complex of $KQ'$, where $Q'$ is of the Dynkin $A_{n-1}$ type.
\end{theorem}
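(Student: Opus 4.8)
The plan is to transport the cluster-theoretic description of Theorem~\ref{main-4positive} across the Adachi--Iyama--Reiten bijection, so that the desired statement becomes a formal translation of an already-established fact about positive cluster complexes.

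First I would set up the dictionary. For a Dynkin quiver $Q$, the indecomposable rigid objects of the cluster category $\mathcal{C}_{KQ}$ are the indecomposable $KQ$-modules together with the shifted projectives $P_1[1],\dots,P_n[1]$, and the bijection between clusters and support $\tau$-tilting $KQ$-modules (equivalently, cluster-tilting objects of $\mathcal{C}_{KQ}$) from \cite{air,bmrrt} is simplicial. Under it the non-initial cluster variables correspond to the indecomposable $\tau$-rigid $KQ$-modules and the initial cluster variables $x_i$ correspond to the shifted projectives $P_i[1]$. Hence this bijection restricts to isomorphisms of simplicial complexes
\[
\Delta^+(\xx,B)\;\cong\;(\text{$\tau$-tilting simplicial complex of }KQ),\qquad \Delta(\xx,B)\;\cong\;(\text{support $\tau$-tilting simplicial complex of }KQ),
\]
identifying the positive cluster complex with the $\tau$-tilting complex and the full cluster complex with the support $\tau$-tilting complex. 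This is the same engine underlying Theorem~\ref{main-1tilting} and Corollary~\ref{main-2tilting}, and it reduces the present statement to reading off Theorem~\ref{main-4positive}.

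For $Q$ a linearly oriented $A_n$ quiver, Theorem~\ref{main-4positive} gives $\Delta^+(\xx,B)\cong\cone(\text{dual associahedron of rank }n-1)$. I would then use the classical identification of the rank-$(n-1)$ dual associahedron with the type-$A_{n-1}$ cluster complex; since the finite-type cluster complex depends only on the Cartan--Killing type, this complex is $\Delta(\xx',B')$ for any seed whose quiver $Q'$ is of type $A_{n-1}$, and applying the dictionary of the previous step in reverse identifies it with the support $\tau$-tilting simplicial complex of $KQ'$. Because the cone operation (join with a point) commutes with simplicial isomorphisms, composing these identifications yields
\[
(\text{$\tau$-tilting simplicial complex of }KQ)\;\cong\;\cone\bigl(\text{support $\tau$-tilting simplicial complex of }KQ'\bigr),
\]
which is the asserted $A_n$ statement. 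The $D_n$ case follows the identical route, feeding the explicit description of Theorem~\ref{thm:positive-simplex-D_n} through the same bijection in place of the associahedral one.

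The step requiring the most care is the vertex bookkeeping in the dictionary: one must verify that the bijection sends the \emph{initial} cluster variables \emph{precisely} to the shifted projectives $P_i[1]$, so that the positive truncation on the cluster side corresponds exactly to deleting the shifted-projective vertices on the module side, i.e.\ to passing from the support $\tau$-tilting complex to the $\tau$-tilting complex. Once this labelling is pinned down the isomorphism is automatically simplicial and the cone structure transfers formally; the only remaining point is the purely combinatorial check that the apex vertex produced by Theorem~\ref{main-4positive} is matched by the dictionary to the distinguished indecomposable $\tau$-rigid module serving as the cone point.
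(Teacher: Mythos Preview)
Your proposal is correct and follows essentially the same approach as the paper: the paper's proofs of Corollaries~\ref{cor:explicit-descriptiton-An} and~\ref{cor:explicit-descriptiton-Dn} consist of the single line ``follows from Theorems~\ref{thm:positive-simplex-A_n} (resp.~\ref{thm:positive-simplex-D_n}) and~\ref{thm:iso-tilting-cluster}'', which is exactly your strategy of transporting Theorem~\ref{main-4positive} through the Adachi--Iyama--Reiten dictionary. The vertex bookkeeping you flag as delicate is precisely what Theorem~\ref{thm:iso-tilting-cluster} packages up, so once that theorem is available no further care is needed.
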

Moreover, by using the $h$-vector and the shellability of the $\tau$-tilting complex, we prove a theorem on the Hasse quiver $\mathsf{Hasse}(\tau\textrm{-}\mathrm{tilt}\Lambda)$ of a $\tau$-tilting module along with an associated corollary. For an algebra $\Lambda$, let $T(M)$ be the number of arrows that end at $M\in\mathsf{Hasse}(\tau\textrm{-}\mathrm{tilt}\Lambda)$.
Set 
\begin{align}\label{anotherreph-vector-intro}
    h_i(\Lambda)=\#\{M\in\tau\textrm{-}\mathrm{tilt}\Lambda\mid T(M)=i\}.
\end{align}
\begin{theorem}[Corollary 6.21] 
 Let $\Lambda$ and $\Lambda'$ be cluster-tilted algebras of finite representation type. If $Q_{\Lambda'}$ is obtained from $Q_{\Lambda}$ by a sink or source quiver mutation, then $h_i(\Lambda)=h_i(\Lambda')$ holds for any $i$.
\end{theorem}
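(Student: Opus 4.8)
The plan is to reduce the invariance of the integers $h_i(\Lambda)$ to the invariance of face vectors already recorded in Corollary \ref{main-2positive} (equivalently Corollary \ref{main-2tilting}), by passing through two standard bridges: the shellability of the $\tau$-tilting complex and the invertible linear change of basis relating $f$-vectors and $h$-vectors. First I would recall that, under the Adachi--Iyama--Reiten correspondence, the $\tau$-tilting simplicial complex of a cluster-tilted algebra $\Lambda$ of finite representation type is identified with the positive cluster complex $\Delta^+(\xx,B)$ of the associated seed: its faces are the basic $\tau$-rigid $\Lambda$-modules, its facets are the $\tau$-tilting modules, and it is pure of dimension $|Q_\Lambda|-1$. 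Finiteness of the representation type is exactly what guarantees that this complex is a finite shellable complex and that Corollary \ref{main-2positive} is applicable.

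The key step is to show that the combinatorially defined quantity $h_i(\Lambda)=\#\{M\in\tau\textrm{-}\mathrm{tilt}\Lambda\mid T(M)=i\}$ coincides with the genuine $h$-vector of this simplicial complex. For this I would invoke the shellability of the $\tau$-tilting complex used in Section~6: the partial order on $\tau$-tilting modules given by inclusion of the associated torsion classes orients $\mathsf{Hasse}(\tau\textrm{-}\mathrm{tilt}\Lambda)$, and I would argue that a linear extension of this order furnishes a shelling order of the complex whose facets are the $\tau$-tilting modules. In such a shelling, the restriction face $R(F_M)$ attached to the facet $F_M$ corresponding to $M$ consists of the vertices $M_k$ of $M$ for which the mutated facet $F_{M'}$ (obtained by exchanging the summand $M_k$) appears earlier in the shelling order; these are precisely the neighbours $M'$ with an arrow $M'\to M$, so that $|R(F_M)|=T(M)$. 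By the defining property of shellings, $\#\{M:|R(F_M)|=i\}$ is the $i$-th entry of the $h$-vector, whence $h_i(\Lambda)=h_i(\Delta^+(\xx,B))$.

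With this identification the remainder is formal. The $h$-vector of a $(d-1)$-dimensional simplicial complex is an invertible linear image of its face vector through the identity $\sum_i h_i t^{d-i}=\sum_i f_{i-1}(t-1)^{d-i}$; in particular the $h$-vector is completely determined by the face vector. By Corollary \ref{main-2positive} a sink or source mutation leaves the face vector of the positive cluster complex unchanged, so under the identification above the face vectors of the $\tau$-tilting complexes of $\Lambda$ and $\Lambda'$ agree whenever $Q_{\Lambda'}$ is obtained from $Q_\Lambda$ by a sink or source mutation (this is exactly Corollary \ref{main-2tilting}). Equal face vectors force equal $h$-vectors, and therefore $h_i(\Lambda)=h_i(\Lambda')$ for every $i$.

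The \textbf{main obstacle} I anticipate is the key step: rigorously matching the Hasse-quiver in-degree $T(M)$ with the cardinality of the restriction face of a shelling. One must verify both that the torsion-class order yields an honest shelling of the $\tau$-tilting complex and that the restriction statistic of this particular shelling is the \emph{in}-degree rather than the out-degree or some shifted variant; pinning down the correct orientation convention and checking the local shelling condition --- that each newly added facet meets the union of the previous ones in a pure codimension-one subcomplex whose missing facets are indexed exactly by the arrows ending at $M$ --- is where the genuine content lies. Once this is settled, the $f$-to-$h$ linear algebra together with the already-established face-vector invariance close the argument at once.
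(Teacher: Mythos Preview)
Your proposal is correct and follows essentially the same route as the paper: establish shellability of the $\tau$-tilting complex via a linear extension of the torsion-class order, identify the in-degree $T(M)$ with the restriction-face cardinality so that $(h_i(\Lambda))$ is the genuine $h$-vector of $\Delta(\Lambda)\cong\Delta^+(\xx,B_\Lambda)$, and then conclude from the sink/source invariance of the face vector. The only cosmetic difference is that the paper packages the last step as Corollary~\ref{thm:h-invariant-sink/source} (invariance of $h$-vectors) rather than going through the $f$-to-$h$ change of basis explicitly, and it disposes of your ``main obstacle'' by invoking \cite{dij} for the shelling of $\Delta(s\Lambda)$ and arguing (via Lemma~\ref{tau-tiltlarger}) that the $\tau$-tilting facets can be placed first, which pins down the in-degree interpretation.
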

\begin{corollary}[Corollary 6.22]
  Let $\Lambda$ and $\Lambda'$ be finite dimensional hereditary algebras. If $Q_\Lambda$ and $Q_{\Lambda'}$ have the same underlying graph, then $h_i(\Lambda) = h_i(\Lambda')$ holds for any $i$.
\end{corollary}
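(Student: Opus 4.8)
The plan is to derive Corollary 6.22 from Corollary 6.21 by expressing the coincidence of underlying graphs through a chain of sink/source quiver mutations. First I would reduce to path algebras: a basic finite-dimensional hereditary algebra is $KQ$ for some acyclic quiver $Q$, and since the numbers $h_i$ are read off the finite poset $\tau\textrm{-}\mathrm{tilt}\,\Lambda$, the algebras in question are $\tau$-tilting finite, so $Q$ is a Dynkin quiver and its underlying graph $\Gamma$ is a tree. Writing $\Lambda=KQ$ and $\Lambda'=KQ'$, the hypothesis means $Q$ and $Q'$ are two orientations of the same tree $\Gamma$. Hereditary algebras of Dynkin type are cluster-tilted algebras of finite representation type \cite{bmrrt}, with $Q_\Lambda=Q$ and $Q_{\Lambda'}=Q'$, so Corollary 6.21 is available for them.

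Next I would use the classical fact from the theory of reflection functors that, on a tree, any two orientations are linked by a finite sequence of reflections at sinks and sources: there are vertices $i_1,\dots,i_r$ such that reversing, in turn, all arrows at $i_j$ — each $i_j$ being a sink or source of the current orientation — carries $Q$ to $Q'$, with every intermediate orientation again acyclic. The key point is that reversing the arrows at a sink or source is precisely the quiver mutation $\mu_{i_j}$ there: since no path of length two passes through a sink or source, the mutation creates neither new arrows nor $2$-cycles and simply reverses the incident arrows. Hence each intermediate quiver $Q^{(j)}$ is an acyclic orientation of $\Gamma$, the algebra $KQ^{(j)}$ is again hereditary of finite representation type (so a cluster-tilted algebra of finite type with $Q_{KQ^{(j)}}=Q^{(j)}$), and the passage $Q^{(j)}\rightsquigarrow Q^{(j+1)}=\mu_{i_{j+1}}Q^{(j)}$ is a sink or source quiver mutation.

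Finally I would apply Corollary 6.21 to each consecutive pair $KQ^{(j)},KQ^{(j+1)}$ to get $h_i(KQ^{(j)})=h_i(KQ^{(j+1)})$ for all $i$, and compose these equalities along the chain to obtain $h_i(\Lambda)=h_i(\Lambda')$. I expect the only real work to be in assembling the classical inputs correctly: checking that the sink/source reflections joining two orientations of a tree coincide with sink/source quiver mutations, and that such a mutation introduces no relations so that we stay within the cluster-tilted algebras of finite representation type for which Corollary 6.21 was proved. The tree-connectivity of orientations and the relation-freeness after a sink/source mutation are the two facts that carry the argument; neither is deep, but both must be invoked explicitly.
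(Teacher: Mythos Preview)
Your proposal is correct and takes essentially the same approach as the paper. The paper deduces this corollary from Corollary~\ref{cor:h-independent-orientation} together with the identification $\Delta(\Lambda)\cong\Delta^+(\xx,B_\Lambda)$ of Theorem~\ref{thm:iso-tilting-cluster}; since Corollary~\ref{cor:h-independent-orientation} is itself proved (via Corollary~\ref{cor:independent-orientation}) by connecting two orientations of a Dynkin diagram through a sequence of sink/source mutations, your route through Corollary~6.21 plus sink/source connectivity is the same argument with the connectivity step carried out on the algebra side rather than on the cluster-complex side.
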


In Appendix \ref{anotherproof}, we provide another proof of the $A_n$ case of Theorem \ref{main-4positive} directly, without identifying a positive cluster complex with a $\tau$-tilting simplicial complex. 

\subsection*{Organization}
The remainder of this work is organized as follows. In Section 2, we describe the basics of cluster algebra. In Section 3, we discuss the properties of positive cluster complexes. In Section 4, we provide specific descriptions of some special positive cluster complexes of finite type. Building on these results, we provide a method for computing the face vectors of general finite type positive cluster complexes in Section 5. In Section 6, we apply the results of Sections 3, 4, and 5 to the representation theory of algebras.

\subsection*{Acknowledgments}
The author thanks Haruhisa Enomoto for the discussion on positive complexes. The author also thanks Osamu Iyama, Sota Asai, and Aaron Chan for their helpful comments at the Tokyo-Nagoya Algebra Seminar. Iyama also checked the paper and commented on its structure. The author would also like to thank Tomoki Nakanishi. This work was supported by JSPS KAKENHI Grant number JP20J12675.

\section{Cluster structures and their properties}
\subsection{Cluster patterns and cluster complexes}\label{section:cluster pattern}
We start by recalling the definitions of seed mutation and cluster patterns according to \cite{fziv}.
Let $n\in \ZZ_{\geq0}$ and $\Fcal$ be a rational function field of $n$ indeterminates.
A \emph{labeled seed} is a pair $(\mathbf{x},B)$, where
\begin{itemize}
\item $\mathbf{x}=(x_1, \dots, x_n)$ is an $n$-tuple of elements of $\mathcal F$ forming a free generating set of $\mathcal F$,
\item $B=(b_{ij})$ is a \emph{skew-symmetrizable} $n \times n$ integer matrix which is. Then, there exists a positive integer diagonal matrix $S$ such that $SB$ is skew-symmetric. Also, we call $S$ a \emph{skew-symmetrizer} of $B$.
\end{itemize}

We say that $\xx$ is a \emph{cluster}, and we refer to $x_i$ and $B$ as the \emph{cluster variables} and the \emph{exchange matrix}, respectively.

We adopt the notation $[b]_+=\max(b,0)$ for an integer $b$. 
Let $(\mathbf{x}, B)$ be a labeled seed, and let $k \in\{1,\dots, n\}$. The \emph{seed mutation $\mu_k$ in direction $k$} transforms $(\mathbf{x},B)$ into another labeled seed $\mu_k(\mathbf{x}, B)=(\mathbf{x'}, B')$ defined as follows.
\begin{itemize}
\item The entries of $B'=(b'_{ij})$ are given by
\begin{align} \label{eq:matrix-mutation}
b'_{ij}=\begin{cases}-b_{ij} &\text{if $i=k$ or $j=k$,} \\
b_{ij}+\left[ b_{ik}\right] _{+}b_{kj}+b_{ik}\left[ -b_{kj}\right]_+ &\text{otherwise.}
\end{cases}
\end{align}
\item The cluster variables $\mathbf{x'}=(x'_1, \dots, x'_n)$ are given by
\begin{align}\label{eq:x-mutation}
x'_j=\begin{cases}\dfrac{\mathop{\prod}\limits_{i=1}^{n} x_i^{[b_{ik}]_+}+\mathop{\prod}\limits_{i=1}^{n} x_i^{[-b_{ik}]_+}}{x_k} &\text{if $j=k$,}\\
x_j &\text{otherwise.}
\end{cases}
\end{align}
\end{itemize}
Specially, if $b_{jk}\geq0$ (resp. $b_{jk}\leq0$) for all $j$, then we say that $\mu_k$ is a \emph{sink mutation} (resp. \emph {source mutation}).
Let $\mathbb{T}_n$ be the \emph{$n$-regular tree} in which edges are labeled by the numbers $1, \dots, n$ such that the $n$ edges emanating from each vertex have different labels. We write
\begin{xy}(0,1)*+{t}="A",(10,1)*+{t'}="B",\ar@{-}^k"A";"B" \end{xy}
to indicate that vertices $t,t'\in \mathbb{T}_n$ are joined by an edge labeled by $k$. We fix an arbitrary vertex $t_0\in \TT_n$, which is referred to as the \emph{rooted vertex}.

A \emph{cluster pattern} is an assignment of a labeled seed $\Sigma_t=(\xx_t,B_t)$ to every vertex $t\in \mathbb{T}_n$ such that the labeled seeds $\Sigma_t$ and $\Sigma_{t'}$ assigned to the endpoints of any edge
\begin{xy}(0,1)*+{t}="A",(10,1)*+{t'}="B",\ar@{-}^k"A";"B" \end{xy}
are obtained from each other by the seed mutation in direction $k$. We denote by $P\colon t\mapsto \Sigma_t$ in this assignment. The elements of $\Sigma_t$ are denoted as follows:
\begin{align} \label{den:seed_at_t}
\mathbf{x}_t=(x_{1;t},\dots,x_{n;t}),\ B_t=(b_{ij;t}).
\end{align}
In particular, at $t_0$, we denote
\begin{align} \label{initialseed}
\mathbf{x}=\mathbf{x}_{t_0}=(x_1,\dots,x_n),\ B=B_{t_0}=(b_{ij}).
\end{align}

The degree $n$ of the regular tree $\TT_n$ is called the \emph{rank} of a cluster pattern $P$.
We also denote by $P(\xx,B)$ the cluster pattern with the initial seed $(\xx,B)$.
\begin{example}\label{A2}
An example of mutations in the case of $A_2$ is provided below.
Let $n = 2$. We consider a tree $\TT_2$ with edges labeled as follows:
\begin{align}\label{A2tree}
\begin{xy}
(-10,0)*+{\dots}="a",(0,0)*+{t_0}="A",(10,0)*+{t_1}="B",(20,0)*+{t_2}="C", (30,0)*+{t_3}="D",(40,0)*+{t_4}="E",(50,0)*+{t_5}="F", (60,0)*+{\dots}="f"
\ar@{-}^{1}"a";"A"
\ar@{-}^{2}"A";"B"
\ar@{-}^{1}"B";"C"
\ar@{-}^{2}"C";"D"
\ar@{-}^{1}"D";"E"
\ar@{-}^{2}"E";"F"
\ar@{-}^{1}"F";"f"
\end{xy}.
\end{align}
We set $B=\begin{bmatrix}
 0 & 1 \\
 -1 & 0
\end{bmatrix}
$ as the initial exchange matrix at $t_0$.
Then, coefficients and cluster variables are given as in Table \ref{A2seed} \cite{fziv}*{Example 2.10}.
\begin{table}[ht]
\begin{equation*}
\begin{array}{|c|c|cc|}
\hline
&&&\\[-4mm]
t& B_t && \xx_t \hspace{30mm}\\
\hline
&&&\\[-3mm]
0 &\begin{bmatrix}0&1\\-1&0\end{bmatrix}& x_1& x_2 \\[3mm]
\hline
&&&\\[-3mm]
1& \begin{bmatrix}0&-1\\1&0\end{bmatrix}& x_1& \dfrac{x_1+1}{x_2} \\[3mm]
\hline
&&&\\[-3mm]
2&\begin{bmatrix}0&1\\-1&0\end{bmatrix}&\dfrac{x_1 + x_2 + 1}{x_1x_2} & \dfrac{x_1+1}{x_2} \\[3mm]
\hline
&&&\\[-3mm]
3&\begin{bmatrix}0&-1\\1&0\end{bmatrix}& \dfrac{x_1+x_2+1}{x_1x_2} & \dfrac{x_2+1}{x_1} \\[3mm]
\hline
&&&\\[-2mm]
4&\begin{bmatrix}0&1\\-1&0\end{bmatrix}& x_2 & \dfrac{x_2+1}{x_1} \\[3mm]
\hline
&&&\\[-2mm]
5&\begin{bmatrix}0&-1\\1&0\end{bmatrix}& x_2 & x_1\\[3mm]
\hline
\end{array}
\end{equation*}
\caption{Exchange matrices and cluster variables in type~$A_2$\label{A2seed}}
\end{table}
\end{example}

To define the class of cluster patterns of finite type, we define the non-labeled seeds according to \cite{fziv}. For a cluster pattern $P$, we introduce the following equivalence relations of labeled seeds. We say that \begin{align*}
\Sigma_t=(\xx_t, B_t),\quad \xx_t=(x_{1;t,}\dots,x_{n;t}),\quad B_t=(b_{ij;t})
\end{align*}
and
\begin{align*}
\Sigma_s=(\xx_{s}, B_{s}),\quad \xx_s=(x_{1;s},\dots,x_{n;s}),\quad B_s=(b_{ij;s})
\end{align*}
are equivalent if there exists a
permutation~$\sigma$ of indices~$1, \dots, n$ such that
\begin{align}\label{eq:order-identification}
x_{i;s} = x_{\sigma(i);t}, \quad
b_{ij;s} = b_{\sigma(i), \sigma(j);t}
\end{align}
for all~$i$ and~$j$.
We denote by $[\Sigma]$ the equivalence classes represented by a labeled seed
$\Sigma$ and refer to this as the \emph{non-labeled seed}. We define a \emph{(non-labeled) clusters} $[\xx]$ as the set ignoring the order of a labeled cluster. Abusing notation, we abbreviate $[\xx]$ to $\xx$. We can define the \emph{mutation}, the \emph{sink mutation}, and the \emph{source mutation $\mu_x$ of a non-labeled seed in the direction of a cluster variable $x$} by using a mutation of a labeled seed.
\begin{remark}
By \cite{cl2}*{Proposition 3}, if we assume that the first equation in \eqref{eq:order-identification} holds, then we have the second equation in \eqref{eq:order-identification}. Therefore, non-labeled clusters can be regarded as clusters of non-labeled seeds. Furthermore, ``an exchange matrix $B$ associated with $\xx$'' is well-defined.
\end{remark}

\begin{definition}
The \emph{exchange graph} of a cluster pattern is the connected regular graph the vertices of which are the non-labeled seeds of the cluster pattern and the edges of which connect the non-labeled seeds related by a single mutation.
\end{definition}
Using the exchange graph, we define cluster patterns of finite type.
\begin{definition}
A cluster pattern $P$ is of \emph{finite type} if the exchange graph of $P$ is finite.
\end{definition}

\begin{remark}\label{rem:finite-charactrization}
If $P$ is of finite type, the initial non-labeled seed $[(\xx,B)]$ is  mutation equivalent to $[(\xx',B')]$ such that $A(B')$ is a finite Cartan matrix (\cite{fzii}*{Theorem 1.4}), where $A(B')=(a_{ij})$ is defined as
\begin{align*}
a_{ij}=\begin{cases}
2\quad &\text{if } i=j;\\
-|b'_{ij}| & \text{if } i\neq j.
\end{cases}
\end{align*}
We call $B'$ an exchange matrix \emph{of tree finite type}.
\end{remark}
Let us introduce cluster complexes as defined in \cite{fzii}.
\begin{definition}\label{def:clustercomplex}
Let $P(\xx,B)$ be a cluster pattern. We define a \emph{cluster complex} $\Delta(\xx,B)$ as a simplicial complex the vertex set of which consists of cluster variables and in which simplices are subsets of each cluster.
\end{definition}
If $P(\xx,B)$ is of finite type, then we say that the cluster complex $\Delta(\xx,B)$ is of \emph{finite type}. A cluster complex of finite type has finitely many simplices. We say that $x$ is \emph{compatible} with $x'$ if $\{x,x'\}$ is a simplex of $\Delta(\xx,B)$.
\begin{example}\label{clustercomplexofA2}
We consider the cluster pattern in Example \ref{A2} and provide a cluster complex corresponding to this cluster pattern in Figure \ref{A2complex}.

\begin{figure}[ht]
\caption{Cluster complex of $A_2$ type}\label{A2complex}
\[
\begin{tikzpicture}
 \coordinate (0) at (0,0);
 \coordinate (1) at (30:2);
 \coordinate (2) at (90:2);
 \coordinate (3) at (150:2);
 \coordinate (4) at (270:2);
 \coordinate (5) at (330:2);
 \node (6) at (30:3) {$\dfrac{x_1 + x_2+1}{x_1x_2}$};
 \node (7) at (90:2.7){$\dfrac{x_2+1}{x_1}$};
 \node (8) at (150:2.5){$x_2$};
 \node (9) at (270:2.5){$x_1$};
 \node (10) at (330:3){$\dfrac{x_1+1}{x_2}$};
 \fill(1) circle (0.7mm);
 \fill(2) circle (0.7mm);
 \fill(3) circle (0.7mm);
 \fill(4) circle (0.7mm);
 \fill(5) circle (0.7mm);
 \draw(1) to (2);
 \draw(2) to (3);
 \draw(3) to (4);
 \draw(4) to (5);
 \draw(5) to (1);
\end{tikzpicture}
\]
\end{figure}
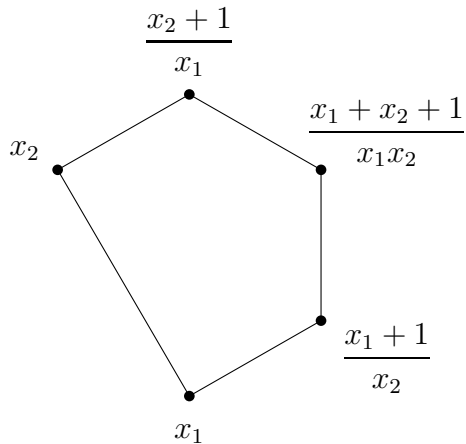
\end{example}
We note that, if $[(\xx,B)]$ is mutation equivalent to $[(\xx',B')]$ as non-labeled seeds, then we have $\Delta(\xx,B)\cong\Delta(\xx',B')$. Hence, we can define the \emph{cluster complex of $X_n$ type} as a cluster complex $\Delta(\xx,B)$, where $B$ is mutation equivalent to an exchange matrix of tree $X_n$ type under an appropriate permutation of indices. We denote by $\Delta(X_n)$ the cluster complex of $X_n$ type.
\subsection{Quiver mutations}
A cluster pattern can be constructed using a quiver $Q$ instead of an exchange matrix $B$; we explain this construction along the lines of \cite{kel10} and \cite{kel12}. First, we consider the case in which $B$ is skew-symmetric.
For an $n\times n$ skew-symmetric integer matrix $B = (b_{ij})$, we define a quiver $Q_B$ as follows.

\begin{itemize}
    \item The cardinality of the set of vertices for $ Q_B $ is $ n $, and these vertices are numbered from $ 1 $ to $ n $.
    \item The number of arrows from $ i $ to $ j $ in $ Q_B $ is $ b_ {ij} $. If negative, we assume that there are $ -b_ {ij} $ arrows from $ j $ to $ i $.
\end{itemize}
Let $Q'=\mu_k(Q)$ be a quiver obtained from $Q$ by the following steps.

\begin{itemize}
  \item[(1)] For any path $i \rightarrow k \rightarrow j$, add an arrow $i \rightarrow j$,
  \item[(2)] reverse all arrows incident to $k$,
  \item[(3)] remove a maximal set of disjoint $2$-cycles.
 \end{itemize}
We refer to this transformation from $Q$ to $Q'$ as the \emph{quiver mutation in direction $k$}.
Then, we have $Q_{\mu_k(B)} = \mu_k(Q_B)$. Therefore, quivers and quiver mutations can be used rather than of skew-symmetric exchange matrices and matrix mutations.
\begin{example}
 A quiver $Q_B$ corresponding to $B=\begin{bmatrix}
 0&-1&-2\\
 1&0&4\\
 2&-4&0
 \end{bmatrix}$ is
 \[
 \begin{xy}(0,0)*+{1}="I",(10,0)*+{2}="J", (20,0)*+{{3}}="K" \ar@{->}"J";"I" \ar@{->}_4"J";"K" \ar@/_4mm/_2"K";"I" \end{xy},
 \]
 where the numbers on the arrows represent the number of arrows; values of 1 arrow are omitted for simplicity. Then, we obtain the following quiver from $Q_B$ by mutating in direction $3$:
 \[
 \begin{xy}(0,0)*+{1}="I",(10,0)*+{2}="J", (20,0)*+{{3}}="K" \ar@{->}^9"J";"I" \ar@{<-}_4"J";"K" \ar@/^4mm/^2"I";"K" \end{xy}.
 \]
\end{example}

Next, we generalize quivers and quiver mutations to a form that can be used even when the corresponding matrix $B$ is skew-symmetrizable. Let $Q_0$ be the set of vertices of $Q$ and $Q_1$ the set of arrows of $Q$. For an $n\times n$ skew-symmetrizable integer matrix $B$, we define a \emph{valued quiver} $Q_B = (Q_B,v,d)$ as follows.
\begin{itemize}
    \item The cardinality of the set of vertices for $ Q_B $ is $ n $, and these vertices are numbered from $ 1 $ to $ n $.
    \item The quiver $Q_B$ has an arrow from $i$ to $j$ if $b_{ij}> 0$.
    \item A function $v\colon (Q_B)_1\to \ZZ^2$ transfers $(\alpha\colon i\to j) \mapsto
    (b_{ij},-b_{ji})$.
    \item A function $d\colon (Q_B)_0\to \ZZ_{> 0}$ satisfies the following condition: for each arrow $\alpha\colon i\to j$, we have
    \[d(i)v(\alpha)_1=v(\alpha)_2d(j),\]
    where $v(\alpha) = (v(\alpha)_1, v(\alpha)_2)$.
\end{itemize}
We define $\mu_k(Q_{B}):=Q_{\mu_k(B)}$ and refer to this construction as the \emph{valued quiver mutation in direction $k$}.
\begin{example}
  A valued quiver $Q_B$ corresponding to $B=\begin{bmatrix}
 0&-2&-4\\
 1&0&4\\
 2&-4&0
 \end{bmatrix}$ is
 \[
 \begin{xy}(0,0)*+{1}="I",(10,0)*+{2}="J", (20,0)*+{{3}}="K" \ar@{->}^{(1,2)}"J";"I" \ar@{->}_{(4,4)}"J";"K" \ar@/_4mm/_{(2,4)}"K";"I" \end{xy},
 \]
 where the pair of integers on the arrow $\alpha$ is $v(\alpha)$, and $d(1)=1,d(2)=2,d(3)=2$. Then, we obtain the following quiver from $Q_B$ by mutating in direction $3$.
 \[
 \begin{xy}(0,0)*+{1}="I",(10,0)*+{2}="J", (20,0)*+{{3}}="K" \ar@{->}^{(9,18)}"J";"I" \ar@{<-}_{(4,4)}"J";"K" \ar@/^4mm/^{(4,2)}"I";"K" \end{xy}.
 \]
\end{example}
By definition, $d(i)$ of $Q_B$ corresponds with the $(i,i)$ entry of the skew-symmetrizer $S$ of $B$. Moreover, by Remark \ref{rem:finite-charactrization}, if a cluster pattern $P(\xx,B)$ is of finite type, then $Q_{B}$ is mutation equivalent to a valued quiver in which the underlying graph is that shown in Figure \ref{fig:valued-quiver} (we omit labels (1,1)).
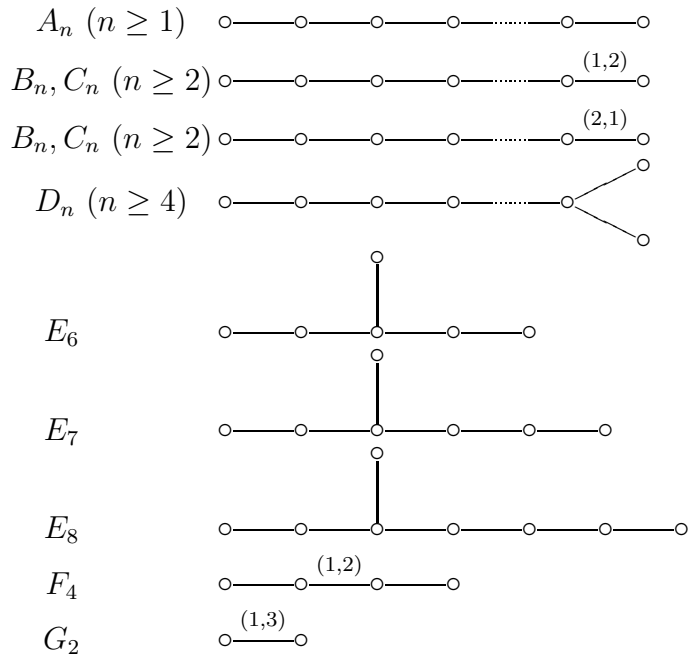
\begin{figure}[ht]
    \caption{Underlying graph of finite type}
    \label{fig:valued-quiver}
\begin{center}
\begin{xy}
    (0,0)*{},(35,-5)*{A_n\ (n \geq 1)},(50,-5)*{\circ}="A",(60,-5)*{\circ}="B",(70,-5)*{\circ}="C",(80,-5)*{\circ}="D",(85,-5)*{}="E",(90,-5)*{}="F",(95,-5)*{\circ}="G",(105,-5)*{\circ}="H"\ar@{-} "A";"B", \ar@{-} "B";"C", \ar@{-} "C";"D", \ar@{-} "D";"E", \ar@{.} "E";"F",\ar@{-} "F";"G",\ar@{-} "G";"H"
\end{xy}
\begin{xy}
    (0,0)*{},(35,-5)*{B_n,C_n\ (n \geq 2)},(50,-5)*{\circ}="A",(60,-5)*{\circ}="B",(70,-5)*{\circ}="C",(80,-5)*{\circ}="D",(85,-5)*{}="E",(90,-5)*{}="F",(95,-5)*{\circ}="G",(105,-5)*{\circ}="H", \ar@{-} "A";"B", \ar@{-} "B";"C", \ar@{-} "C";"D", \ar@{-} "D";"E", \ar@{.} "E";"F",\ar@{-} "F";"G",\ar@{-}^{(1,2)} "G";"H"
\end{xy}
\begin{xy}
    (0,0)*{},(35,-5)*{B_n,C_n\ (n \geq 2)},(50,-5)*{\circ}="A",(60,-5)*{\circ}="B",(70,-5)*{\circ}="C",(80,-5)*{\circ}="D",(85,-5)*{}="E",(90,-5)*{}="F",(95,-5)*{\circ}="G",(105,-5)*{\circ}="H", \ar@{-} "A";"B", \ar@{-} "B";"C", \ar@{-} "C";"D", \ar@{-} "D";"E", \ar@{.} "E";"F",\ar@{-} "F";"G",\ar@{-}^{(2,1)} "G";"H"
\end{xy}
\begin{xy}
    (0,0)*{},(35,-5)*{D_n\ (n \geq 4)},(50,-5)*{\circ}="A",(60,-5)*{\circ}="B",(70,-5)*{\circ}="C",(80,-5)*{\circ}="D",(85,-5)*{}="E",(90,-5)*{}="F",(95,-5)*{\circ}="G",(105,0)*{\circ}="H",(105,-10)*{\circ}="I",\ar@{-} "A";"B", \ar@{-} "B";"C", \ar@{-} "C";"D", \ar@{-} "D";"E", \ar@{.} "E";"F",\ar@{-} "F";"G",\ar@{-} "G";"H",\ar@{-} "G";"I"
\end{xy}
\begin{xy}
    (0,0)*{},(28.5,-5)*{E_6},(50,-5)*{\circ}="A",(60,-5)*{\circ}="B",(70,-5)*{\circ}="C",(80,-5)*{\circ}="D",(90,-5)*{\circ}="E",(70,5)*{\circ}="F"\ar@{-} "A";"B", \ar@{-} "B";"C", \ar@{-} "C";"D", \ar@{-} "D";"E",\ar@{-} "C";"F"
\end{xy}
\begin{xy}
    (0,0)*{},(28.5,-5)*{E_7},(50,-5)*{\circ}="A",(60,-5)*{\circ}="B",(70,-5)*{\circ}="C",(80,-5)*{\circ}="D",(90,-5)*{\circ}="E",(70,5)*{\circ}="F",(100,-5)*{\circ}="G"\ar@{-} "A";"B", \ar@{-} "B";"C", \ar@{-} "C";"D", \ar@{-} "D";"E",\ar@{-} "E";"G",\ar@{-} "C";"F"
\end{xy}
\begin{xy}
    (0,0)*{},(28.5,-5)*{E_8},(50,-5)*{\circ}="A",(60,-5)*{\circ}="B",(70,-5)*{\circ}="C",(80,-5)*{\circ}="D",(90,-5)*{\circ}="E",(70,5)*{\circ}="F",(100,-5)*{\circ}="G",(110,-5)*{\circ}="H"\ar@{-} "A";"B", \ar@{-} "B";"C", \ar@{-} "C";"D", \ar@{-} "D";"E",\ar@{-} "E";"G",\ar@{-} "G";"H",\ar@{-} "C";"F"
\end{xy}
\begin{xy}
    (0,0)*{},(28.5,-5)*{F_4},(50,-5)*{\circ}="A",(60,-5)*{\circ}="B",(70,-5)*{\circ}="C",(80,-5)*{\circ}="D"\ar@{-} "A";"B", \ar@{-}^{(1,2)} "B";"C", \ar@{-} "C";"D"
\end{xy}
\begin{xy}
    (0,0)*{},(28.5,-5)*{G_2},(50,-5)*{\circ}="A",(60,-5)*{\circ}="B",\ar@{-}^{(1,3)} "A";"B"
\end{xy}
\end{center}
\end{figure}
We note that a sink (resp. source) mutation, which is defined in Subsection \ref{section:cluster pattern}, is a mutation in direction $i$, which is a sink (resp. source) vertex in $Q_B$.
\subsection{Basic properties of cluster complexes}
Here, we introduce some basic and useful properties of cluster complexes. 

First, we recall the definition of closed stars and links and cones.
\begin{definition}\label{def:cone-star-link}
  Let $K$ be a simplicial complex and $v$ a vertex of $K$.
  \begin{enumerate}
    \item The \emph{closed star of $v$} is a (not necessarily full) subcomplex $\st_K(v)$ of $K$ the simplices of which are all the simplices $\sigma$ of $K$ such that $\sigma \cup \{v\}$ is a simplex of $K$.
    \item The \emph{link of $v$} is a (not necessarily full) subcomplex $\lk_K(v)$ of $K$ defined by $\lk_K(v) = \{\sigma\in \st_K(v)\mid v\notin \sigma \}$; that is, simplices of $\lk_K(v)$ are all the simplices $\sigma$ of $K$ with $v \not \in \sigma$ such that $\sigma \cup \{v\}$ is a simplex of $K$.
    \item Let $K$ be a simplicial complex and $v$ a vertex of $K$. Let $*$ be a vertex disjoint from $K$. The \emph{cone of $K$} is a simplicial complex $\cone(K)$ in which simplices are $\sigma$ and $\sigma \cup \{ * \}$ for each simplex $\sigma$ of $K$.
  \end{enumerate}
\end{definition}
The following property can be verified straightforwardly from Definition \ref{def:cone-star-link}.
\begin{lemma}\label{lem.st-lk-relation}
  Let $K$ be a simplicial complex and $v$ a vertex of $K$. Then, $\st_K(v)$ is isomorphic to $\cone(\lk_K(v))$ as simplicial complexes.
\end{lemma}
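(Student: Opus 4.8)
The plan is to construct an explicit isomorphism of simplicial complexes in which the cone vertex $*$ plays the role of $v$. Recall that a simplicial isomorphism is determined by a bijection between the two vertex sets under which a subset of vertices is a simplex on one side precisely when its image is a simplex on the other. Accordingly, I would first identify the vertex sets. The vertices of $\st_K(v)$ are $v$ together with every vertex $w\neq v$ for which $\{v,w\}$ is a simplex of $K$; by the definition of the link, these latter vertices are exactly the vertices of $\lk_K(v)$, so the vertices of $\cone(\lk_K(v))$ are these same vertices together with the cone point $*$. This makes the candidate bijection $\psi$ evident: set $\psi(v)=*$ and $\psi(w)=w$ for every other vertex $w$.

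Next I would check that $\psi$ carries simplices to simplices bijectively, splitting into two cases according to whether a simplex contains $v$. For a simplex $\tau$ of $\st_K(v)$ with $v\notin\tau$, the definition of the closed star gives $\tau\cup\{v\}\in K$, hence $\tau\in\lk_K(v)$; since $\psi$ fixes $\tau$ pointwise, $\psi(\tau)=\tau$ is a simplex of $\cone(\lk_K(v))$ not containing $*$, and this correspondence is plainly reversible. For a simplex $\tau$ containing $v$, write $\tau=\sigma\cup\{v\}$ with $\sigma=\tau\setminus\{v\}$; then $\psi(\tau)=\sigma\cup\{*\}$, and by the definition of the cone this lies in $\cone(\lk_K(v))$ exactly when $\sigma\in\lk_K(v)$, i.e.\ when $\sigma\cup\{v\}=\tau\in K$.

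The observation that makes the two cases close is that, for a simplex $\tau$ containing $v$, membership in $\st_K(v)$ is equivalent to $\tau\cup\{v\}=\tau$ being a simplex of $K$, which is the very condition $\sigma\in\lk_K(v)$ appearing on the cone side. Since each direction of each case reduces to the single condition $\tau\in K$, the map $\psi$ is a bijection on simplices and hence a simplicial isomorphism. I do not anticipate any genuine obstacle here; the only point requiring care is to apply the definitions of $\st_K(v)$, $\lk_K(v)$, and $\cone$ consistently, and to keep track of whether $v$ (respectively $*$) belongs to a given simplex.
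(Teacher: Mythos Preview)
Your proposal is correct and is exactly the straightforward verification from Definition~\ref{def:cone-star-link} that the paper alludes to; the paper itself does not spell out a proof, so your explicit bijection $\psi(v)=*$, $\psi(w)=w$ and the two-case check on simplices is precisely what ``verified straightforwardly'' means here.
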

The following property is fundamental:
\begin{lemma}\label{lem:fullsub}
Let $\Delta(\xx,B)$ be a cluster complex and $v$ a vertex of $\Delta(\xx,B)$. Then, $\st_{\Delta(\xx,B)}(v)$ and $\lk_{\Delta(\xx,B)}(v)$ are full subcomplexes of $\Delta(\xx,B)$.
\end{lemma}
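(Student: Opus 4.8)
The plan is to reduce both assertions to the fact that cluster complexes are \emph{flag} (clique) complexes: a finite set $S$ of cluster variables is a simplex of $\Delta(\xx,B)$ if and only if its elements are pairwise compatible. Recall that a subcomplex $L$ of a simplicial complex $K$ is \emph{full} precisely when it coincides with the induced subcomplex on its own vertex set, i.e.\ every simplex of $K$ all of whose vertices lie in $L$ already belongs to $L$. Thus the first step is to record the vertex sets explicitly: by Definition~\ref{def:cone-star-link} the vertices of $\lk_{\Delta(\xx,B)}(v)$ are exactly the cluster variables $w \neq v$ compatible with $v$, while the vertices of $\st_{\Delta(\xx,B)}(v)$ are these together with $v$ itself.

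Next I would prove fullness of the link. Let $\sigma$ be a simplex of $\Delta(\xx,B)$ with all vertices in the vertex set of $\lk_{\Delta(\xx,B)}(v)$; in particular $v \notin \sigma$ and every $w\in\sigma$ is compatible with $v$. I claim $\sigma\cup\{v\}$ is again a simplex: any two vertices of $\sigma$ are compatible (they form a face of $\sigma$, hence a simplex), and each $w\in\sigma$ is compatible with $v$ by hypothesis, so $\sigma\cup\{v\}$ is pairwise compatible and therefore a simplex by the flag property. Hence $\sigma$ lies in $\st_{\Delta(\xx,B)}(v)$ and avoids $v$, so $\sigma\in\lk_{\Delta(\xx,B)}(v)$, proving fullness. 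Fullness of the closed star then follows with essentially no extra work: if $\sigma$ is a simplex whose vertices lie in the vertex set of $\st_{\Delta(\xx,B)}(v)$, then either $v\in\sigma$, in which case $\sigma\cup\{v\}=\sigma$ is a simplex and $\sigma\in\st_{\Delta(\xx,B)}(v)$ trivially, or $v\notin\sigma$, in which case the link argument applies verbatim and places $\sigma$ in $\lk_{\Delta(\xx,B)}(v)\subseteq\st_{\Delta(\xx,B)}(v)$.

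The main obstacle is therefore supplying the flag property, which is the only substantive ingredient. In the finite type setting that governs this paper, this is immediate from the Fomin--Zelevinsky realization of $\Delta(\xx,B)$ as the complex of pairwise-compatible subsets of almost positive roots \cite{fzii}, where ``simplex'' is by construction synonymous with ``pairwise compatible''; I would cite this together with the mutation-equivalence reduction of Remark~\ref{rem:finite-charactrization}. If one wants the statement for an arbitrary $\Delta(\xx,B)$, one invokes instead the general fact that cluster complexes are flag. I expect no difficulty beyond correctly extracting the two vertex sets and carrying out the pairwise-compatibility bookkeeping, since all the genuine content is concentrated in the flag property.
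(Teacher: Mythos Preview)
Your proposal is correct and follows essentially the same route as the paper: both reduce the claim to the flag property of cluster complexes, check pairwise compatibility of $\sigma\cup\{v\}$, and conclude. The paper packages the flag property as Lemma~\ref{lem:existance-cluster} (citing \cite{cl2}*{Corollary 5}, which holds for arbitrary cluster patterns), proves the star case first, and uses the slightly different ``union of two simplices'' formulation of fullness; the only point where you could tighten your write-up is to replace the vague ``general fact that cluster complexes are flag'' with a precise reference such as \cite{cl2} rather than relying on \cite{fzii} plus finite-type reduction.
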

This lemma is proven using the following lemma.
\begin{lemma}[\cite{cl2}*{Corollary 5}]\label{lem:existance-cluster}
Let $X$ be a set of cluster variables satisfying the following condition.
\begin{align}\label{assumption-compatible}
\text{For any pair $x,x'\in X$, there exists a cluster $\xx$ such that $x,x'\in\xx$.}
\end{align}
Then, there exists a cluster such that it contains $X$.
\end{lemma}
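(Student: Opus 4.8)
The content of Lemma \ref{lem:existance-cluster} is precisely the statement that the cluster complex $\Delta(\xx,B)$ is a \emph{flag} (clique) complex: a finite set of cluster variables spans a simplex as soon as it does so pairwise, equivalently every minimal non-face has exactly two elements. The plan is to translate compatibility into a statement about $\gg$-vectors and then to invoke the flagness of the resulting fan. First I would attach to each cluster variable $x$ of $P(\xx,B)$ its $\gg$-vector $\gg(x)\in\ZZ^n$ relative to the initial seed $(\xx,B)$ (\cite{fziv}). Using the sign-coherence of the $\cc$-vectors, the assignment $x\mapsto\gg(x)$ is injective, the $\gg$-vectors of the cluster variables of any single cluster form a $\ZZ$-basis of $\ZZ^n$ (unimodularity), and the cones spanned by these bases, together with their faces, assemble into a simplicial fan $\mathcal{G}(\xx,B)$, the $\gg$-fan.

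Under this dictionary a set $\{x_1,\dots,x_m\}$ lies in a common cluster if and only if $\gg(x_1),\dots,\gg(x_m)$ are rays of a single cone of $\mathcal{G}(\xx,B)$; in particular $x$ and $x'$ are compatible exactly when $\gg(x)$ and $\gg(x')$ span a two-dimensional cone of $\mathcal{G}(\xx,B)$. The lemma therefore reduces to the geometric assertion that the $\gg$-fan is flag: if the rays $\gg(x_1),\dots,\gg(x_m)$ are pairwise compatible, meaning each pair spans a two-face of $\mathcal{G}(\xx,B)$, then the whole collection spans a cone of $\mathcal{G}(\xx,B)$. The cluster variables of $X$ would then all belong to the cluster indexed by any maximal cone containing that cone, which is the cluster sought. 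This reduction is the routine part of the argument.

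The genuine obstacle is the flag property itself, since an arbitrary simplicial fan need not be flag; one must exploit the mutation structure of the $\gg$-fan to exclude a configuration of pairwise-compatible but not jointly compatible variables. In the full generality of Lemma \ref{lem:existance-cluster} this is exactly \cite{cl2}*{Corollary 5}, where it is extracted from the enough-$\gg$-pairs property. For the finite type that actually concerns this paper I would instead argue through the combinatorial models: in types $A$ and $D$ the cluster complex is the arc complex of a (punctured) polygon (\cites{fst,ft}), whose simplices are by definition the sets of pairwise non-crossing arcs, so flagness is tautological, while the remaining types follow from the Fomin--Zelevinsky description of the finite-type cluster complex as the clique complex of the compatibility relation on almost positive roots, where again a subset is a face precisely when it is pairwise compatible (\cite{fzii}). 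In every route the crux, and the step I expect to be hardest, is that compatibility of cluster variables is detected pairwise; this is the structural input one must import from the cluster-specific combinatorics, as it cannot be derived from the bare definitions of Subsection \ref{section:cluster pattern} alone.
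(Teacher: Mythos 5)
Your proposal is sound in substance, but note first that the paper itself offers no proof of Lemma \ref{lem:existance-cluster}: the statement is imported wholesale from \cite{cl2}*{Corollary 5}. For the lemma in its stated generality (arbitrary cluster patterns) your argument ultimately does the same thing---the $\gg$-fan translation is a correct and faithful reformulation (the lemma is precisely the assertion that the cluster complex is a flag complex), but it spends nontrivial inputs (injectivity of $x\mapsto \gg(x)$, unimodularity of the $\gg$-vectors of a cluster, the fan property) only to hand the crux back to the very same citation, so it buys nothing over the paper's bare citation. Where you genuinely diverge, and usefully, is the finite-type argument: invoking the Fomin--Zelevinsky identification of the finite-type cluster complex with the clique complex of the compatibility relation on almost positive roots \cite{fzii}, or the surface models of \cite{fst}, yields flagness from a source independent of \cite{cl2}, and since every downstream use of Lemma \ref{lem:fullsub} in this paper occurs in finite type, that version would suffice for all of the paper's applications. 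Two caveats are in order: first, ``tautological'' overstates the situation---the content is not the definition of a clique complex but the theorem that maximal pairwise-compatible sets of almost positive roots (respectively, of non-crossing arcs) are exactly the clusters, i.e., the identification of the abstract cluster complex with those combinatorial complexes; second, this route proves strictly less than the lemma as stated, which concerns arbitrary cluster patterns, so if Lemmas \ref{lem:existance-cluster} and \ref{lem:fullsub} are to be kept in their current generality, the citation to \cite{cl2} cannot be dispensed with.
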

\begin{proof}[Proof of Lemma \ref{lem:fullsub}]
Here, we prove that $\st_{\Delta(\xx,B)}(v)$ is a full subcomplex of $\Delta(\xx,B)$. It suffices to show that if $X$ and $X'$ are simplices of $\st_{\Delta(\xx,B)}(v)$, and $X\cup X'$ is a simplex of $\Delta(\xx,B)$, then $X\cup X'$ is a simplex of $\st_{\Delta(\xx,B)}(v)$. By definition of the closed star, $X\cup X'\cup \{v\}$ satisfies \eqref{assumption-compatible}. Therefore, by Lemma \ref{lem:existance-cluster}, $X\cup X'\cup \{v\}$ is a simplex of $\Delta(\xx,B)$ and $X\cup X'$ is a simplex of $\st_{\Delta(\xx,B)}(v)$. Similarly, we can prove the fullness of $\lk_{\Delta(\xx,B)}(v)$.
\end{proof}
Now, we generalize the definition of the cone.
\begin{definition}\label{def:set-of-join}
Let $K,L$ be simplicial complexes with a disjoint vertex set. The \emph{join of $K$ and $L$} is a simplicial complex $\join(K,L)$ the vertex set of which his $K\cup L$ and the simplices of which are all disjoint unions of a simplex of $K$ and that of $L$.
\end{definition}
We note that $\join(K,L,M)$ is well-defined by the associativity of joining.
We remark that, if $B=B_1\oplus B_2$ (i.e., $Q_B=Q_{B_1}\cup Q_{B_2}$), then we have
\begin{align}
\Delta(\xx,B)\cong \join (\Delta(\xx_1,B_1),\Delta(\xx_2,B_2)),
\end{align}
where $\xx_1,\xx_2$ are subsets of $\xx$ corresponding to $B_1,B_2$, respectively.
\section{Positive cluster complexes}
\subsection{Positive cluster complexes and basic property}
In this section, we define the special subcomplex of a cluster complex called the \emph{positive cluster complex}.
\begin{definition}\label{def:positiveclustercomplex}
Let $\Delta(\xx,B)$ be a cluster complex. We define a \emph{positive cluster complex} $\Delta^+(\xx,B)$ as a full subcomplex of $\Delta(\xx,B)$ the vertex set of which consists of all non-initial cluster variables. 
\end{definition}
The term ``positive'' comes from the positive roots of a finite root system. If $B$ is a bipartite matrix of tree finite type, then there is a canonical bijection between $\Delta^+(\xx,B)$ and simplices consisting of positive roots of a  \emph{generalized associahedron} \cite{fzii}*{Theorem 1.9}. In fact, \cites{arm,fkt} use the term the ``positive cluster complex'' in this sense. In this paper, we use this term more generally, even if $B$ is not bipartite or not of tree finite type.
By the definition of the positive cluster complex, if $B=B_1\oplus B_2$, then we have
\begin{align}
\Delta^+(\xx,B)\cong \join (\Delta^+(\xx_1,B_1),\Delta^+(\xx_2,B_2)).\label{eq:positive-join}
\end{align}
\begin{example}
In Example \ref{clustercomplexofA2}, the set of positive simplices associated with the initial cluster $\xx$ is given as follows.
\begin{align*}
&\left\{\emptyset,\left\{\dfrac{x_2+1}{x_1}\right\},\left\{\dfrac{x_1+x_2+1}{x_1x_2}\right\}, \left\{\dfrac{x_1+1}{x_2}\right\},\right.\\
&\ \left.\left\{\dfrac{x_2+1}{x_1}, \dfrac{x_1+x_2+1}{x_1x_2}\right\},
\left\{\dfrac{x_1+1}{x_2},\dfrac{x_1+x_2+1}{x_1x_2}\right\}\right\}
\end{align*}
\end{example}
We recall the \emph{face vector}\footnote{In cluster algebra theory, the term ``$f$-vector'' is used to refer to the exponential vector of the principal term of a $F$-polynomial. Therefore, we use this name throughout to avoid confusion.}, $f(\Delta) = (f_{-1},f_0,\dots,f_n)$. This vector is associated with a simplicial complex $\Delta$, and $f_i$ represents the number of $i$-dimensional simplices. We also consider the face vector associated with a set of simplices (necessarily not a simplicial complex) herein. In this case, since the empty set may not be included in a set of simplices, $f_{-1}$ is not always equal to $1$. This vector can be defined when a simplicial complex or a set of simplices is finite.

We remark that if
\begin{align}
\Delta\cong \join (\Delta',\Delta''),
\end{align}
$f(\Delta')=(1,a_0,a_1,\dots,a_{n})$ and $f(\Delta'')=(1,b_0,b_1,\dots,b_{m})$, then we have
\begin{align}
f(\Delta)=\left(1,\sum_{i+j=-1}a_ib_j,\sum_{i+j=0}a_ib_j,\dots,\sum_{i+j=m+n}a_ib_j\right)\label{eq:join-facevector},
\end{align}
where $-1\leq i\leq n$, $-1\leq j\leq m$, and $a_{-1}=b_{-1}=1$.

A proof of a fundamental property of positive cluster complexes is given below.

\begin{proposition}\label{pure}
Positive cluster complexes are pure.
\end{proposition}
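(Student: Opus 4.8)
The plan is to prove the stronger statement that every facet (maximal simplex) of $\Delta^+(\xx,B)$ is in fact a full cluster, hence of dimension $n-1$; purity is then immediate, since all facets share this common dimension. So let $X$ be a maximal simplex of $\Delta^+(\xx,B)$ and suppose for contradiction that $|X|<n$. Since $X$ is a simplex of the ambient cluster complex $\Delta(\xx,B)$, whose maximal faces are precisely the clusters of size $n$, we may extend $X$ to a cluster $\xx'$ and set $B:=\xx'\setminus X\neq\emptyset$. The first easy step is to observe that $B$ cannot contain a non-initial cluster variable: if $v\in B$ were non-initial, then $X\cup\{v\}\subseteq\xx'$ would be a simplex of $\Delta^+(\xx,B)$ strictly containing $X$, contradicting maximality. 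Hence $B\subseteq\xx$ consists entirely of initial cluster variables.

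The key intermediate step is to pin down \emph{all} initial variables compatible with $X$, not just those in $B$. Let $I_X$ be the set of initial cluster variables $x_i$ such that $X\cup\{x_i\}$ is a simplex. For any $x_i,x_j\in I_X$, the set $X\cup\{x_i,x_j\}$ is pairwise compatible (internally on $X$, on each $x_i$ with $X$ by definition of $I_X$, and on $x_i$ with $x_j$ because both lie in the initial cluster $\xx$), so Lemma~\ref{lem:existance-cluster} shows $X\cup I_X$ is itself a simplex. Since $B\subseteq I_X$ and $X\sqcup B=\xx'$ is already a cluster (a maximal simplex of size $n$), the simplex $X\cup I_X\supseteq\xx'$ cannot be strictly larger, forcing $I_X=B$. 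In other words, the only initial variables compatible with $X$ are exactly those in $B$.

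The final step is the escape via mutation. Pick any $b\in B$ and mutate the cluster $\xx'$ in the direction of $b$, obtaining a new cluster $(\xx'\setminus\{b\})\cup\{b^\ast\}$ with $b^\ast\neq b$ and $b^\ast\notin\xx'$; in particular $b^\ast\notin X\cup B$. This new cluster still contains $X$, so $b^\ast$ is compatible with $X$. But $b^\ast\notin B=I_X$, and $I_X$ contains every initial variable compatible with $X$, so $b^\ast$ must be non-initial. Then $X\cup\{b^\ast\}$ is a simplex of $\Delta^+(\xx,B)$ strictly containing $X$, contradicting maximality. Therefore $B=\emptyset$, every facet is a full cluster of dimension $n-1$, and $\Delta^+(\xx,B)$ is pure.

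I expect the main obstacle to be the middle step, the identification $I_X=B$: it is tempting to work only with the chosen completing set $B$, but the argument needs the full compatibility face $I_X$, assembled from pairwise compatibilities via Lemma~\ref{lem:existance-cluster}, in order to guarantee that the mutated variable $b^\ast$ is genuinely non-initial rather than merely a different initial variable. Once $I_X=B$ is in place, the exchange (mutation) property of clusters does the rest, and I note that the argument uses only that clusters are the size-$n$ maximal faces and that mutation replaces a variable by a distinct one, so no finiteness hypothesis is actually required.
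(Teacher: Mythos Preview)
Your proof is correct and follows a genuinely different route from the paper's. The paper first establishes an auxiliary lemma (Lemma~\ref{initial-mutate}) stating that mutating any cluster at an initial variable always produces a non-initial variable; the proof of that lemma invokes the $d$- or $f$-compatibility degree from \cite{cl2} and \cite{fugy}. Purity then follows by iteratively mutating away all initial variables from a cluster containing a given positive simplex. You bypass compatibility degrees entirely: rather than proving that mutation at \emph{any} initial variable yields a non-initial one, you argue only for the specific $b$ in your chosen completion, and you deduce this from the identification $I_X=B$, which in turn rests on Lemma~\ref{lem:existance-cluster}. This is more economical in external input (Lemma~\ref{lem:existance-cluster} is already stated and used elsewhere in the paper) and needs only a single mutation rather than an iteration; conversely, the paper's Lemma~\ref{initial-mutate} is a clean standalone statement of possible independent use. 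One cosmetic point: your use of the symbol $B$ for $\xx'\setminus X$ clashes with the exchange matrix $B$ already in play; renaming it would avoid ambiguity.
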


This property is proven by the following lemma:

\begin{lemma}\label{initial-mutate}
Let $(\xx_t,B_t)$ be a seed in which cluster contains the initial cluster variable $x_i$. By a seed mutation of $(\xx_t,B_t)$ into a direction corresponding to $x_i$, $x_i$ is exchanged with a non-initial cluster variable.
\end{lemma}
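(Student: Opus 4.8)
The plan is to argue by contradiction, combining the fact that every initial cluster variable lies in the single initial cluster $\xx=\xx_{t_0}$ with the fundamental property that the two cluster variables exchanged by a mutation are never compatible.

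First I would make the exchange relation explicit. Writing $x_i=x_{k;t}$ for the position $k$ of $x_i$ in $\xx_t$, the mutation $\mu_k$ replaces $x_i$ by
\[
x'=\frac{\prod_j (x_{j;t})^{[b_{jk;t}]_+}+\prod_j (x_{j;t})^{[-b_{jk;t}]_+}}{x_i}=\frac{M_++M_-}{x_i}.
\]
Since $b_{kk;t}=0$, the monomials $M_+$ and $M_-$ involve only the variables of $\xx_t\setminus\{x_i\}$. As $\xx_t$ is a free generating set of $\Fcal$, its elements are algebraically independent, so $M_\pm$ have $x_i$-degree $0$; if $x'=x_i$ we would get $x_i^2=M_++M_-$, and comparing $x_i$-degrees ($2$ versus $0$) yields a contradiction. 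Hence $x'\neq x_i$.

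Next, suppose toward a contradiction that $x'$ is an initial cluster variable. By the previous step $x'\neq x_i$, so $x'=x_\ell$ for some $\ell\neq i$. Then $x_i$ and $x'=x_\ell$ both lie in the initial cluster $\xx=\{x_1,\dots,x_n\}$, so $\{x_i,x'\}$ is a face of the simplex $\xx$; that is, $x_i$ and $x'$ are compatible. But $x_i$ and $x'$ are precisely the pair exchanged across the edge joining $(\xx_t,B_t)$ to its $k$-th mutation, and such an exchanged pair is never compatible: no cluster contains both members of an exchange relation. This contradiction forces $x'$ to be non-initial.

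The one nonformal ingredient, and hence the main obstacle, is the incompatibility of the exchanged pair $\{x_i,x'\}$. I would obtain it from the standard fact that the two variables of an exchange relation never lie in a common cluster; concretely, if some cluster $\yy$ contained both, then $x_i x'=M_++M_-$ would express the cluster monomial $x_i x'$ as a sum of the two distinct cluster monomials $M_\pm$, whose supports avoid $x_i$, contradicting the linear independence of cluster monomials, which holds in the finite-type setting of this paper. As a fully self-contained alternative I would instead use denominator vectors: using $\dd(x_m)=-e_m$ for initial variables and $\dd(z)\ge 0$ for non-initial $z$, the $d$-vector mutation rule yields $\dd(x')_i=1+\max(\cdots)_i\ge 1>0$, so $\dd(x')$ is not a negative unit vector and $x'$ cannot be initial.
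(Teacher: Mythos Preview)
Your proposal is correct. The overall architecture---assume $x'$ is initial, observe that $x_i$ and $x'$ then sit together in the initial cluster, and derive a contradiction from the incompatibility of an exchanged pair---matches the paper's proof exactly. The difference is in how the key incompatibility is obtained: the paper invokes the $f$- (or $d$-)compatibility degree, citing that $(x_i\parallel x')=0$ when the two variables share a cluster and $(x_i\parallel x')=1$ when they are exchanged by a mutation, which is an immediate contradiction.

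Your alternative (b) via denominator vectors is essentially the $d$-compatibility degree argument unpacked: for an initial variable $x_i$, the degree $(x_i\parallel x')$ \emph{is} $d(x')_i$, so your computation $d(x')_i\ge 1$ is the same inequality the paper gets from ``exchanged implies degree $1$,'' and ``initial implies $d(x')_i\le 0$'' replaces ``compatible implies degree $0$.'' Your alternative (a) via linear independence of cluster monomials is a genuinely different route; it works in finite type but imports a heavier theorem than the paper needs. Your separate verification that $x'\neq x_i$ is a nice explicit step that the paper leaves implicit (it is subsumed in the compatibility-degree framework, since $(x_i\parallel x_i)\neq 1$).
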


\begin{proof}
We use the $d$-compatibility degree or $f$-compatibility degree, which are introduced in \cite{cl2} and \cite{fugy}, respectively. As there is no significant difference in the proofs between the two, we prove the statement using the $f$-compatibility degree. Let $x$ be a cluster variable exchanged with $x_i$ by the mutation in the statement. We assume $x$ is an initial cluster variable. Then, since both $x_i$ and $x$ are contained in the initial cluster, the compatibility degree $(x_i\parallel x)=0$ by \cite{fugy}*{Theorem 4.18}. However, since $x_i$ is exchanged with $x$ by a mutation, we have $(x_i\parallel x)=1$ by \cite{fugy}*{Theorem 4.22}, which is a contradiction.
\end{proof}

\begin{proof}[Proof of Proposition \ref{pure}]
It suffices to show that for any subset $X$ consisting of non-initial cluster variables of a cluster $\xx_t$, there exists a cluster $\xx_s$ such that $X\subset\xx_s$ and $\xx_s$ do not contain initial cluster variables. Lemma \ref{initial-mutate} shows that $\xx_s$ satisfying this condition can be obtained by replacing all the initial cluster variables in $\xx_t$ with mutations.   
\end{proof}

\subsection{Difference of face vectors by mutation}

 We use the following notation to describe the main theorem of this section. Let $(\xx,B)$ be a seed, and $X$ a subset of $\xx$. Then, we denote by $B_{\setminus X}$ the submatrix of $B$ obtained by removing all entries in positions corresponding to $X$ from $B$.
The main theorem of this subsection is the following one.
  \begin{theorem}\label{thm:mutation-positivecomplex}
Let $\Delta (\xx,B)$ be a cluster complex of finite type. We assume that $[(\xx',B')]$ is obtained from $[(\xx,B)]$ by a mutation in direction $x$ (and $x\mapsto x'$). Then, we have
\begin{align}
   f(\Delta^+(\xx,B))-f(\Delta^+(\xx',B'))
   =\left[f(\Delta^+(\xx'-\{x'\},B_{\backslash\{x'\}}))\right]_1-\left[f(\Delta^+(\xx-\{x\},B_{\backslash\{x\}}))\right]_1.
\end{align}
 \end{theorem}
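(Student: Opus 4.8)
The plan is to compute each of the two face vectors by splitting simplices according to whether they contain the single cluster variable that distinguishes the two initial clusters, and then to recognize the two resulting ``link'' pieces as exactly the lower-rank positive complexes on the right-hand side. First I would fix the global picture. Since $(\xx',B')=\mu_x(\xx,B)$, the two seeds lie in the same cluster pattern, so $\Delta(\xx,B)=\Delta(\xx',B')=:\Delta$ is literally one and the same complex, and both $\Delta^+(\xx,B)$ and $\Delta^+(\xx',B')$ are \emph{full} subcomplexes of $\Delta$ (Lemma \ref{lem:fullsub}). Writing $\xx'=(\xx\setminus\{x\})\cup\{x'\}$, the non-initial variables for the two seeds coincide except that $\Delta^+(\xx,B)$ has $x'$ as a vertex (and not $x$) while $\Delta^+(\xx',B')$ has $x$ (and not $x'$). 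Let $W$ be the variables non-initial for both seeds and $C=\Delta|_W$ the common full subcomplex on $W$; because $\Delta$ is the \emph{same} complex on both sides, this $C$ is genuinely identical, not merely isomorphic. The vertex sets are then $W\cup\{x'\}$ and $W\cup\{x\}$ respectively.

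Next I would do the counting by a star/link decomposition. Every simplex of $\Delta^+(\xx,B)$ either avoids $x'$, in which case it is a simplex of $C$, or contains $x'$, in which case deleting $x'$ gives a simplex of $\lk_{\Delta^+(\xx,B)}(x')$; by Lemma \ref{lem.st-lk-relation} (closed star is the cone on the link) and fullness, $\sigma\mapsto\sigma\cup\{x'\}$ is a dimension-raising bijection onto the $x'$-containing family. Hence $f(\Delta^+(\xx,B))=f(C)+[f(\lk_{\Delta^+(\xx,B)}(x'))]_1$, where the shift by $1$ records that adjoining $x'$ raises dimension by one, and the analogous identity holds with $x'$ replaced by $x$ for the primed complex. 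Subtracting, the common term $f(C)$ cancels and leaves
\[
f(\Delta^+(\xx,B))-f(\Delta^+(\xx',B'))=\big[f(\lk_{\Delta^+(\xx,B)}(x'))\big]_1-\big[f(\lk_{\Delta^+(\xx',B')}(x))\big]_1.
\]

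It remains to identify the two links with the stated frozen positive complexes, and this is the step carrying the real content. The key input is the standard ``freezing'' description: $\lk_\Delta(x')$ is isomorphic to the cluster complex of the rank-$(n-1)$ seed obtained by freezing $x'$ in $(\xx',B')$, its vertices being exactly the cluster variables compatible with $x'$, i.e. those of compatibility degree $0$ with $x'$ in the sense of \cite{fugy} and \cite{cl2}. Intersecting with positivity, I would check which initial variables survive: each member of $\xx\setminus\{x\}$ lies in a common cluster with $x'$ (namely $\xx'$) and so is a link vertex, whereas $x$ is the exchange partner of $x'$ (compatibility degree $1$, as in the proof of Lemma \ref{initial-mutate}) and hence absent. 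Thus $\lk_{\Delta^+(\xx,B)}(x')$ is the full subcomplex of $\lk_\Delta(x')$ obtained by deleting precisely the frozen-initial variables $\xx'-\{x'\}$, which is exactly $\Delta^+(\xx'-\{x'\},B_{\setminus\{x'\}})$ (the subscript $\setminus\{x'\}$ read inside the primed seed); symmetrically $\lk_{\Delta^+(\xx',B')}(x)\cong\Delta^+(\xx-\{x\},B_{\setminus\{x\}})$. Substituting these into the displayed formula gives the theorem.

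The main obstacle is exactly this last identification of the link of a cluster variable inside the positive complex with the positive complex of the frozen algebra; the rest is bookkeeping on face vectors. I would flag two subtleties: the two frozen matrices $B_{\setminus\{x'\}}$ and $B_{\setminus\{x\}}$ generally differ, since $\mu_x$ alters the off-direction entries, so the right-hand side need not vanish (it does precisely when these two frozen algebras produce equal face vectors, e.g. under a sink/source mutation, recovering Corollary \ref{thm:invariant-sink/source}); and the cancellation of $f(C)$ depends on $\Delta(\xx,B)$ and $\Delta(\xx',B')$ being the same complex so that the two common parts literally coincide.
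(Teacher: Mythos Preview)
Your argument is correct and is essentially the same as the paper's proof, just organized dually: the paper subtracts the complements $\widehat{\Delta_\xx}=\widehat{\Delta_X}\sqcup\underline{\Delta_{\{x\}\subset\xx}}$ (so that $\widehat{\Delta_X}$ cancels), whereas you decompose $\Delta^+(\xx,B)$ directly into $C$ and the star of $x'$ (so that $C$ cancels). In both cases the surviving pieces are exactly the simplices containing the single distinguishing variable and none of $X$, and both proofs identify these via Lemma~\ref{lem:bijection-between-B-and-Bx} with the frozen positive complex.
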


This theorem indicates an amount of increase of positive faces by a mutation.

The complement of the set of simplices of a positive cluster complex is the set of simplices such that each simplex contains at least one element in the initial cluster. We cite some useful lemmas below.

\begin{lemma}[\cite{fzy}*{Proposition 3.5.3}]\label{lem:bijection-between-B-and-Bx}
For any cluster complex $\Delta(\xx,B)$ of finite type and $x\in \xx$, $\st_{\Delta(\xx,B)}(x)$ is isomorphic to a cone of $\Delta(\xx-\{x\}, B_{\setminus \{x\}})$. In particular, we have a bijection between the set of simplices of $\Delta(\xx,B)$ including $\{x\}$ and that of $\Delta(\xx-\{x\}, B_{\setminus \{x\}})$. Furthermore, if the face vector of the former can be defined and is $f$, then that of the latter is $[f]_{-1}$. \end{lemma}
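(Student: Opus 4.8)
The plan is to reduce the statement to a single simplicial isomorphism of links, and then to exploit the principle that ``freezing'' the variable $x$ turns the ambient cluster pattern into one of rank $n-1$ with exchange matrix $B_{\setminus\{x\}}$. First I would apply Lemma \ref{lem.st-lk-relation}, which gives $\st_{\Delta(\xx,B)}(x)\cong\cone(\lk_{\Delta(\xx,B)}(x))$. Thus it suffices to prove the simplicial isomorphism $\lk_{\Delta(\xx,B)}(x)\cong\Delta(\xx-\{x\},B_{\setminus\{x\}})$; the cone statement, the bijection of $x$-containing simplices, and the face-vector formula will all follow from it.

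The engine of the argument is that matrix mutation commutes with deleting the row and column indexed by $x$, in every direction $k\neq x$. Indeed, for $i,j\neq x$ the entry $b'_{ij}$ in \eqref{eq:matrix-mutation} depends only on $b_{ij},b_{ik},b_{kj}$, whose indices all avoid $x$; hence $(\mu_k B)_{\setminus\{x\}}=\mu_k(B_{\setminus\{x\}})$. Consequently, if one restricts to seeds whose cluster contains $x$ and performs only mutations in directions different from $x$, the parts of these seeds not involving $x$ evolve exactly like the seeds of the cluster pattern $P(\xx-\{x\},B_{\setminus\{x\}})$ of rank $n-1$, with $x$ playing the role of a frozen coefficient that does not affect the combinatorics of the exchange graph. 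I would use this to define a map $\varphi$ from the vertices of $\lk_{\Delta(\xx,B)}(x)$ — that is, the cluster variables $y\neq x$ compatible with $x$ — to the cluster variables of $P(\xx-\{x\},B_{\setminus\{x\}})$.

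Next I would verify that $\varphi$ is a simplicial isomorphism. Surjectivity and injectivity on vertices follow from the fact that, in finite type, the subgraph of the exchange graph spanned by seeds containing $x$ is finite and connected — mutating in direction $x$ is the only move that removes $x$ — so every such seed is reached from the initial one by mutations avoiding $x$, matching bijectively the seeds of the rank-$(n-1)$ pattern. For faces, a set $\{y_1,\dots,y_r\}$ is a simplex of $\lk_{\Delta(\xx,B)}(x)$ iff $\{x,y_1,\dots,y_r\}$ lies in a common cluster, which by Lemma \ref{lem:existance-cluster} is equivalent to pairwise compatibility; under $\varphi$ this translates into $\{\varphi(y_1),\dots,\varphi(y_r)\}$ lying in a common cluster of $P(\xx-\{x\},B_{\setminus\{x\}})$, i.e. being a simplex there. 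Hence $\varphi$ is the required isomorphism, and $\st_{\Delta(\xx,B)}(x)\cong\cone(\Delta(\xx-\{x\},B_{\setminus\{x\}}))$. For the face-vector claim, the simplices of $\Delta(\xx,B)$ containing $\{x\}$ are precisely the sets $\sigma\cup\{x\}$ with $\sigma$ a simplex of $\lk_{\Delta(\xx,B)}(x)$, and this correspondence raises dimension by one; writing $f$ for the face vector of the set of $x$-containing simplices and $g$ for that of $\Delta(\xx-\{x\},B_{\setminus\{x\}})$, this gives $g_{k-1}=f_k$ for all $k$, i.e. $g=[f]_{-1}$ in the notation of the introduction.

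The crux of the proof is the third step, specifically confirming that restricting to $x$-containing seeds and mutating only away from $x$ reproduces the rank-$(n-1)$ pattern on the nose — equivalently, that $\varphi$ is a bijection respecting compatibility in both directions. The two delicate ingredients are the connectivity of the $x$-containing part of the exchange graph (needed for surjectivity) and the fact that the extra coefficient data attached to $x$ after freezing leaves the cluster complex unchanged, so that the frozen pattern's complex is genuinely $\Delta(\xx-\{x\},B_{\setminus\{x\}})$. The finite-type hypothesis is precisely what makes both the requisite finiteness and the existence Lemma \ref{lem:existance-cluster} available.
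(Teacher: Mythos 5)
The paper offers no proof of this lemma at all --- it is imported verbatim from \cite{fzy} --- so the real question is whether your argument stands on its own. Its skeleton is the standard ``freezing'' argument: reduce via Lemma \ref{lem.st-lk-relation} to showing $\lk_{\Delta(\xx,B)}(x)\cong\Delta(\xx-\{x\},B_{\setminus\{x\}})$, observe that mutation in any direction $k\neq x$ commutes with deleting the row and column of $x$, and then match seeds containing $x$ with seeds of the rank-$(n-1)$ pattern. That reduction, the simplex correspondence $\sigma\mapsto\sigma\cup\{x\}$, and the shift $[f]_{-1}$ are all correct. The problem is that the two facts you yourself isolate as the crux are never established, and the one justification you do offer for them is invalid.

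Concretely: (1) your argument for connectivity of the set of $x$-containing seeds --- ``mutating in direction $x$ is the only move that removes $x$'' --- shows only that this set is \emph{closed} under mutations in directions $\neq x$; it says nothing about whether any two such seeds are joined by a path staying inside the set, which is exactly what surjectivity of your map $\varphi$ requires. This connectivity is a genuine theorem (one of the Fomin--Zelevinsky structural conjectures on exchange graphs \cite{fziv}, known to hold in finite type), not an observation. (2) Well-definedness and injectivity of $\varphi$ require that the pattern obtained by freezing $x$ --- a rank-$(n-1)$ pattern carrying a nontrivial coefficient --- has the \emph{same cluster complex} as the coefficient-free pattern $P(\xx-\{x\},B_{\setminus\{x\}})$: the same identifications of cluster variables across different seeds and the same compatibility relation. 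You assert this (``leaves the cluster complex unchanged''), but it is precisely the coefficient-independence of the cluster complex in finite type, which rests on the classification of \cite{fzii} (cluster variables parametrized by almost positive roots, compatibility read off from compatibility degrees, all independent of the coefficient semifield). Both ingredients are at least as deep as the lemma itself, and in the literature they are proved as part of the same package as the lemma; deferring to them without proof or precise citation makes the proposal a reduction rather than a proof. With accurate references for those two facts your outline would close; as written, it has a genuine gap.
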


Furthermore, Lemma \ref{lem:bijection-between-B-and-Bx} can be generalized as follows.

\begin{lemma}[\cite{fzy}*{Proposition 3.6}]\label{cor:generalized-bijection-between-B-and-Bx}
For any cluster complex $\Delta(\xx,B)$ of finite type and $X\subset\xx$ such that $|X|=k$, we have a bijection between the set of simplices of $\Delta(\xx,B)$ including $X$ and that of $\Delta(\xx-X, B_{\setminus X})$. Furthermore, if the face vector of the former can be defined and is $f$, then that of the latter is $[f]_{-k}$.
\end{lemma}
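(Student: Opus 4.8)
The plan is to deduce the statement from its $k=1$ case, Lemma~\ref{lem:bijection-between-B-and-Bx}, by induction on $k=|X|$. The base case is exactly that lemma, so everything reduces to a clean inductive step in which one cluster variable is peeled off at a time.

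For the inductive step I would write $X = X' \sqcup \{x\}$ with $x\in\xx$ and $|X'| = k-1$, and first apply Lemma~\ref{lem:bijection-between-B-and-Bx} to $x$: the map $\sigma \mapsto \sigma\setminus\{x\}$ is a bijection from the simplices of $\Delta(\xx,B)$ containing $\{x\}$ onto the simplices of $\Delta(\xx-\{x\},B_{\setminus\{x\}})$. Restricting to simplices that contain the larger set $X$, its image is precisely the set of simplices of $\Delta(\xx-\{x\},B_{\setminus\{x\}})$ containing $X'$: for $\sigma\supseteq X$ we have $\sigma\setminus\{x\}\supseteq X'$, and conversely any $\tau\supseteq X'$ in the reduced complex has preimage $\tau\cup\{x\}\supseteq X$. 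Since $x\notin X'$ and $X'\subseteq \xx-\{x\}$, the elements of $X'$ are initial cluster variables of the reduced complex $\Delta(\xx-\{x\},B_{\setminus\{x\}})$, which is again of finite type because $B_{\setminus\{x\}}$ is a principal submatrix of $B$. Hence the induction hypothesis applies to this complex and to the $(k-1)$-element set $X'$, producing a bijection onto the simplices of $\Delta((\xx-\{x\})-X',(B_{\setminus\{x\}})_{\setminus X'})$. Because deleting cluster variables one at a time agrees with deleting them all at once, $(\xx-\{x\})-X' = \xx-X$ and $(B_{\setminus\{x\}})_{\setminus X'} = B_{\setminus X}$, so composing the two bijections yields the desired bijection $\sigma\mapsto\sigma\setminus X$.

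For the face-vector claim I would simply track dimensions: under $\sigma\mapsto\sigma\setminus X$ each simplex loses exactly the $k$ vertices of $X$, so a $d$-dimensional simplex containing $X$ maps to a $(d-k)$-dimensional simplex of $\Delta(\xx-X,B_{\setminus X})$. Thus if $f=(f_{-1},f_0,\dots)$ is the face vector of the set of simplices containing $X$ and $g$ that of $\Delta(\xx-X,B_{\setminus X})$, then $g_{d-k}=f_d$ for all $d$, which is the relation recorded as $g=[f]_{-k}$ in the notation fixed in the introduction; equivalently, each peeling step shifts the face vector by $[\,\cdot\,]_{-1}$ and these shifts compose additively to give $[\,\cdot\,]_{-k}$. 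The only genuinely delicate points are the bookkeeping ones, namely confirming that $\Delta(\xx-\{x\},B_{\setminus\{x\}})$ remains of finite type and retains $X'$ among its initial cluster variables, and that the successive row-and-column deletions compose to the single deletion $B_{\setminus X}$. Once these identifications are secured, both the bijection and the face-vector shift follow formally, so I expect no substantive obstacle beyond this careful tracking of initial data and indices.
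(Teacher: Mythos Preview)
The paper does not actually give its own proof of this lemma: it simply cites \cite{fzy}*{Proposition 3.6} and presents the statement as the evident generalization of the $k=1$ case (Lemma~\ref{lem:bijection-between-B-and-Bx}). Your induction on $k$, peeling off one initial cluster variable at a time and composing the resulting bijections, is correct and is exactly the way one makes that generalization precise; there is nothing further to compare.
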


This allows us to prove Theorem \ref{thm:mutation-positivecomplex}.

\begin{proof}[Proof of Theorem \ref{thm:mutation-positivecomplex}]
We may assume that $\xx=X\sqcup{x}, \xx'=X\sqcup{x'}$, and $X=\{x_1,\dots,x_{n-1}\}$. For any set $Y\subset\xx$, let $\widehat{\Delta_Y}$ be the set of simplices of $\Delta(\xx,B)$ such that each simplex contains at least one element in $Y$. Moreover, for any subset $Y'$ of $Y$, let $\underline{\Delta_{Y'\subset Y}}$ be the set of simplices of $\Delta(\xx,B)$ such that each simplex contains all elements in $Y'$ and does not contain any elements in $Y\setminus Y'$. Since $(\xx,B)$ is mutation equivalent to $(\xx',B')$, we have $\Delta(\xx,B)\cong\Delta(\xx',B')$. Since $\Delta^+(\xx,B)=\Delta(\xx,B)-\widehat{\Delta_\xx}$ and $\Delta^+(\xx',B')=\Delta(\xx',B')-\widehat{\Delta_{\xx'}}$ and
\begin{align}
\widehat{\Delta_\xx}&=\widehat{\Delta_X}\sqcup \underline{\Delta_{\{x\}\subset \xx}},\label{eq:Deltax}\\
\widehat{\Delta_{\xx'}}&=\widehat{\Delta_X}\sqcup \underline{\Delta_{\{x'\}\subset \xx'}},\label{eq:Deltax'}
\end{align}
the difference of face vectors of $\Delta^+(\xx,B)$ and $\Delta^+(\xx',B')$ is
\begin{align*}
f(\underline{\Delta_{\{x'\}\subset \xx'}})-f(\underline{\Delta_{\{x\}\subset \xx}}).
\end{align*}
Therefore, by Lemma \ref{lem:bijection-between-B-and-Bx}, we have a bijection between simplices of $\underline{\Delta_{\{x\}\subset \xx}}$ and that of $ \cone(\Delta^+(\xx-\{x\},B_{\{x\}}))$. This suffices to complete the proof.
\end{proof}
By Theorem \ref{thm:mutation-positivecomplex}, we have the following corollary.
 \begin{corollary}\label{thm:invariant-sink/source}
Let $\Delta (\xx,B)$ be a cluster complex of finite type. If $[\Sigma_{t'}]$ is obtained from $[\Sigma_{t}]$ by a sink or source mutation, then for any $-1\leq k\leq n-1$, $\Delta^+ (\xx_t,B_t)$ and $\Delta^+ (\xx_{t'},B_{t'})$ have the same number of $k$-simplices.
 \end{corollary}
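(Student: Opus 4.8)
The plan is to apply Theorem~\ref{thm:mutation-positivecomplex} with $(\xx,B)=(\xx_t,B_t)$ and $(\xx',B')=(\xx_{t'},B_{t'})$, and then to show that under a sink or source mutation the two bracketed terms on the right-hand side of \eqref{eq:difference} coincide, so that their difference is the zero vector. Writing the mutation in direction $x$ as $x\mapsto x'$ and setting $X=\xx_t\setminus\{x\}$, I would first note that $\xx_{t'}\setminus\{x'\}=X$ as well, since by \eqref{eq:x-mutation} a single mutation replaces only the cluster variable $x$ by $x'$ and fixes every other variable. Thus both positive cluster complexes appearing in \eqref{eq:difference} have the same vertex set $X$, and it remains only to compare their exchange matrices.

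The crux is the matrix identity $b'_{ij}=b_{ij}$ for all $i,j\neq k$, where $k$ is the index of $x$. I would deduce this from a sign analysis of the second case of the mutation rule \eqref{eq:matrix-mutation}. For a sink mutation, $b_{jk}\geq 0$ for all $j$ by definition; since $B$ is skew-symmetrizable, $s_jb_{jk}=-s_kb_{kj}$ for the skew-symmetrizer $S=\mathrm{diag}(s_1,\dots,s_n)$, so $b_{kj}\leq 0$ for all $j$. Substituting $[b_{ik}]_+=b_{ik}$ and $[-b_{kj}]_+=-b_{kj}$ into \eqref{eq:matrix-mutation} then gives
\[
b'_{ij}=b_{ij}+b_{ik}b_{kj}+b_{ik}(-b_{kj})=b_{ij}.
\]
For a source mutation, $b_{jk}\leq 0$ for all $j$, whence $b_{ik}\leq 0$ and $b_{kj}\geq 0$, so both $[b_{ik}]_+$ and $[-b_{kj}]_+$ vanish and again $b'_{ij}=b_{ij}$. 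Quiver-theoretically this simply records that mutating at a sink or source only reverses the arrows incident to $k$, creating neither new arrows (there is no path $i\to k\to j$ through a sink or source) nor $2$-cycles, so the full subquiver on the remaining vertices is left unchanged. Hence $(B_{t'})_{\setminus\{x'\}}=(B_t)_{\setminus\{x\}}$.

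Combining the two observations, the seeds $(\xx_{t'}-\{x'\},(B_{t'})_{\setminus\{x'\}})$ and $(\xx_t-\{x\},(B_t)_{\setminus\{x\}})$ are equal, so the two bracketed terms in \eqref{eq:difference} are identical and their difference vanishes. Therefore $f(\Delta^+(\xx_t,B_t))=f(\Delta^+(\xx_{t'},B_{t'}))$, which is exactly the asserted equality of the numbers of $k$-simplices for every $-1\leq k\leq n-1$. I expect no genuine obstacle here: the whole content reduces to the sign bookkeeping above, and the only point requiring care is to invoke skew-symmetrizability to pass from the sign of $b_{jk}$ to that of $b_{kj}$, which is precisely what makes the two cross-terms cancel in the sink case and vanish in the source case.
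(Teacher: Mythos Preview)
Your proof is correct and follows exactly the approach the paper intends: the paper states this corollary immediately after Theorem~\ref{thm:mutation-positivecomplex} with no further argument, and your sign computation showing $(B_{t'})_{\setminus\{x'\}}=(B_t)_{\setminus\{x\}}$ under a sink or source mutation is precisely the detail needed to see that the right-hand side of \eqref{eq:difference} vanishes. The invocation of skew-symmetrizability to pass from the sign of $b_{jk}$ to that of $b_{kj}$ is the only subtle point, and you handle it correctly.
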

We also have the following corollary.
\begin{corollary}\label{cor:independent-orientation}
Let $B$ and $B'$ be skew-symmetrizable matrices. If $Q_B$ and $Q_{B'}$ have the same underlying graph in Figure \ref{fig:valued-quiver}, then, for any $-1\leq k\leq n-1$, $\Delta^+ (\xx_t,B_t)$ and $\Delta^+ (\xx_{t'},B_{t'})$ have the same number of $k$-simplices.
\end{corollary}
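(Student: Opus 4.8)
The plan is to deduce this statement from Corollary~\ref{thm:invariant-sink/source} together with a purely combinatorial fact about orientations of trees. The first observation is that every underlying graph appearing in Figure~\ref{fig:valued-quiver} is a tree. Consequently, if $Q_B$ and $Q_{B'}$ have the same underlying graph $G$ in that figure, then $B$ and $B'$ are simply two orientations of the \emph{same} valued tree $G$ (the valuation data, i.e.\ the skew-symmetrizer together with the unordered pair $(|b_{ij}|,|b_{ji}|)$ on each edge, being part of $G$). Since every orientation of a tree is automatically acyclic, both $B$ and $B'$ are of finite type with the same Cartan companion, so they are mutation equivalent; the goal is to realize $B'$ from $B$ through \emph{sink and source} mutations only, so that Corollary~\ref{thm:invariant-sink/source} applies at each step.

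First I would record the effect of a sink (or source) mutation on a valued quiver. If $k$ is a sink of $Q_B$, then there is no path $i\to k\to j$, so in the quiver-mutation rule step (1) adds no arrows and step (3) removes none; equivalently, expanding \eqref{eq:matrix-mutation} one checks that $b'_{ij}=b_{ij}$ for all $i,j\neq k$, while $b'_{ik}=-b_{ik}$ and $b'_{kj}=-b_{kj}$. Thus a sink mutation merely reverses every arrow incident to $k$ (turning $k$ into a source) and leaves the rest of the quiver, and in particular the whole underlying valued graph $G$, unchanged; the same holds for a source mutation. Hence sink/source mutations act on the set of orientations of the fixed valued tree $G$ exactly as the classical reflections (Bernstein--Gelfand--Ponomarev), and by Corollary~\ref{thm:invariant-sink/source} each such move preserves the face vector of the positive cluster complex taken with respect to the current seed. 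It therefore suffices to prove the combinatorial claim: \emph{any two orientations of a tree are connected by a finite sequence of sink/source reflections.}

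I would prove this claim by induction on the number of vertices $n$, the case $n=1$ being trivial. For $n\geq 2$ choose a leaf $\ell$ of the tree, let $e=\{\ell,v\}$ be its unique incident edge, and let $T'$ be the tree obtained by deleting $\ell$ and $e$. Given orientations $\Omega,\Omega'$ of $T$, the induction hypothesis provides a sequence of sink/source reflections of $T'$ taking $\Omega|_{T'}$ to $\Omega'|_{T'}$. I would lift this sequence to $T$ one reflection at a time: a reflection at a vertex $w\neq v$ does not involve the edge $e$ and lifts verbatim; a reflection at $v$ requires $v$ to be a sink or source of $T$, which constrains only the orientation of $e$, and this can always be arranged beforehand by a single reflection at the leaf $\ell$ (which flips $e$ and touches nothing in $T'$). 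After performing the lifted sequence, the restriction to $T'$ agrees with $\Omega'$, and a final reflection at $\ell$, if needed, corrects the orientation of $e$; this produces $\Omega'$ on all of $T$.

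Granting the claim, I chain the equalities: writing a sink/source path $B=B^{(0)},B^{(1)},\dots,B^{(m)}=B'$ of orientations of $G$ and realizing it as a sequence of sink/source mutations inside a single cluster pattern, Corollary~\ref{thm:invariant-sink/source} gives $f(\Delta^+(\xx^{(i)},B^{(i)}))=f(\Delta^+(\xx^{(i+1)},B^{(i+1)}))$ for every $i$, whence $f(\Delta^+(\xx,B))=f(\Delta^+(\xx',B'))$, which is the assertion. The only genuinely delicate point is the bookkeeping in the inductive lifting step---tracking the orientation of the leaf edge $e$ while reflecting at $v$---since reflecting at a vertex flips \emph{all} its incident edges simultaneously; the freedom to flip $e$ in isolation by reflecting at the leaf $\ell$ is exactly what resolves this, so I expect this to be the main (though routine) obstacle.
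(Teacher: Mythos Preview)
Your proposal is correct and follows the same strategy as the paper: reduce to the fact that any two orientations of a Dynkin tree are connected by a sequence of sink/source mutations, then apply Corollary~\ref{thm:invariant-sink/source} at each step. The only difference is that the paper invokes this combinatorial fact by citing \cite{fzii}*{Proposition 9.2} (together with a permutation of indices), whereas you supply a self-contained inductive proof via leaf deletion; your argument is sound and is essentially the standard proof of that proposition.
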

\begin{proof}
We can obtain $B'$ from $B$ by a sequence of sink or source mutations and a permutation of indices (\cite{fzii}*{Proposition 9.2}). Therefore, we have the statement using Corollary \ref{thm:invariant-sink/source}.
\end{proof}
\begin{example}\label{ex:not-iso-positive-complex}
We set
\[B=\begin{bmatrix}
0&-1&1\\
1&0&0\\
-1&0&0
\end{bmatrix},\quad Q_B= \begin{xy}(0,0)*+{1}="I",(10,0)*+{2}="J", (20,0)*+{{3}}="K" \ar@{->}"I";"J"  \ar@/_4mm/"K";"I" \end{xy},
\]
and consider a cluster pattern with the  initial seed$(\xx=(x_1,x_2,x_3),Q_B)$. This is of $A_3$ type. Figure \ref{hexagoncomplex} provides a cluster complex of this cluster pattern.
\begin{figure}[ht]
\caption{Cluster complex of $A_3$ type\label{hexagoncomplex}}
\vspace{5pt}
\begin{center}
\scalebox{0.8}{
\begin{tikzpicture}
\coordinate (0) at (0,0);
\coordinate (u*) at (90:5.33);
\coordinate (u) at (90:4);
\coordinate (ul) at (150:4);
\coordinate (ur) at (30:4);
\coordinate (uml) at (150:2);
\coordinate (umr) at (30:2);
\coordinate (dmc) at (-90:2);
\coordinate (dl) at (-150:4);
 \coordinate (dl*) at (-150:5.33);
\coordinate (dr) at (-30:4);
\coordinate (dr*) at (-30:5.33);
\coordinate (d) at (-90:4);
\draw (u) to (ul);
\draw (u) to (ur);
\draw (ul) to (uml);
\draw (ur) to (umr);
\draw (umr) to (uml);
\draw (u) to (uml);
\draw (u) to (umr);
\draw (uml) to (dl);
\draw (umr) to (dr);
\draw (ul) to (dl);
\draw (ur) to (dr);
\draw (uml) to (dmc);
\draw (umr) to (dmc);
\draw (dl) to (dmc);
\draw (dr) to (dmc);
\draw (dl) to (d);
\draw (dmc) to (d);
\draw (dr) to (d);
\draw(ul) [out=90,in=180]to (u*);
\draw(ur) [out=90,in=0]to (u*);
\draw(ul) [out=210,in=120]to (dl*);
\draw(d) [out=210,in=300]to (dl*);
\draw(d) [out=330,in=240]to (dr*);
\draw(ur) [out=330,in=60]to (dr*);
\fill [white](u) circle (0.9cm);
\fill[white] (ul) circle (0.9cm);
\fill[white] (ur) circle (0.9cm);
\fill[white] (uml) circle (0.9cm);
\fill[white] (umr) circle (0.9cm);
\fill[white] (dmc) circle (0.9cm);
\fill[white] (dl) circle (0.9cm);
\fill[white] (dr) circle (0.9cm);
\fill[white] (d) circle (0.9cm);
\draw (u)++(210:0.7cm) to ++(90:0.7cm);
\draw (u)++(150:0.7cm) to ++(30:0.7cm);
\draw (u)++(90:0.7cm) to ++(330:0.7cm);
\draw (u)++(30:0.7cm) to ++(270:0.7cm);
\draw (u)++(330:0.7cm) to ++(210:0.7cm);
\draw (u)++(270:0.7cm) to ++(150:0.7cm);
\draw (ur)++(210:0.7cm) to ++(90:0.7cm);
\draw (ur)++(150:0.7cm) to ++(30:0.7cm);
\draw (ur)++(90:0.7cm) to ++(330:0.7cm);
\draw (ur)++(30:0.7cm) to ++(270:0.7cm);
\draw (ur)++(330:0.7cm) to ++(210:0.7cm);
\draw (ur)++(270:0.7cm) to ++(150:0.7cm);
\draw (ul)++(210:0.7cm) to ++(90:0.7cm);
\draw (ul)++(150:0.7cm) to ++(30:0.7cm);
\draw (ul)++(90:0.7cm) to ++(330:0.7cm);
\draw (ul)++(30:0.7cm) to ++(270:0.7cm);
\draw (ul)++(330:0.7cm) to ++(210:0.7cm);
\draw (ul)++(270:0.7cm) to ++(150:0.7cm);
\draw (umr)++(210:0.7cm) to ++(90:0.7cm);
\draw (umr)++(150:0.7cm) to ++(30:0.7cm);
\draw (umr)++(90:0.7cm) to ++(330:0.7cm);
\draw (umr)++(30:0.7cm) to ++(270:0.7cm);
\draw (umr)++(330:0.7cm) to ++(210:0.7cm);
\draw (umr)++(270:0.7cm) to ++(150:0.7cm);
\draw (uml)++(210:0.7cm) to ++(90:0.7cm);
\draw (uml)++(150:0.7cm) to ++(30:0.7cm);
\draw (uml)++(90:0.7cm) to ++(330:0.7cm);
\draw (uml)++(30:0.7cm) to ++(270:0.7cm);
\draw (uml)++(330:0.7cm) to ++(210:0.7cm);
\draw (uml)++(270:0.7cm) to ++(150:0.7cm);
\draw (dl)++(210:0.7cm) to ++(90:0.7cm);
\draw (dl)++(150:0.7cm) to ++(30:0.7cm);
\draw (dl)++(90:0.7cm) to ++(330:0.7cm);
\draw (dl)++(30:0.7cm) to ++(270:0.7cm);
\draw (dl)++(330:0.7cm) to ++(210:0.7cm);
\draw (dl)++(270:0.7cm) to ++(150:0.7cm);
\draw (dr)++(210:0.7cm) to ++(90:0.7cm);
\draw (dr)++(150:0.7cm) to ++(30:0.7cm);
\draw (dr)++(90:0.7cm) to ++(330:0.7cm);
\draw (dr)++(30:0.7cm) to ++(270:0.7cm);
\draw (dr)++(330:0.7cm) to ++(210:0.7cm);
\draw (dr)++(270:0.7cm) to ++(150:0.7cm);
\draw (dmc)++(210:0.7cm) to ++(90:0.7cm);
\draw (dmc)++(150:0.7cm) to ++(30:0.7cm);
\draw (dmc)++(90:0.7cm) to ++(330:0.7cm);
\draw (dmc)++(30:0.7cm) to ++(270:0.7cm);
\draw (dmc)++(330:0.7cm) to ++(210:0.7cm);
\draw (dmc)++(270:0.7cm) to ++(150:0.7cm);
\draw (d)++(210:0.7cm) to ++(90:0.7cm);
\draw (d)++(150:0.7cm) to ++(30:0.7cm);
\draw (d)++(90:0.7cm) to ++(330:0.7cm);
\draw (d)++(30:0.7cm) to ++(270:0.7cm);
\draw (d)++(330:0.7cm) to ++(210:0.7cm);
\draw (d)++(270:0.7cm) to ++(150:0.7cm);
\draw [thick, blue](uml)++(90:0.7cm) to ++(240:1.21cm);
\draw [thick, blue](umr)++(90:0.7cm) to ++(300:1.21cm);
\draw [thick, blue](dmc)++(210:0.7cm) to ++(0:1.21cm);
\draw [thick, blue](dl)++(210:0.7cm) to ++(30:1.39cm);
\draw [thick, blue](dr)++(150:0.7cm) to ++(330:1.39cm);
\draw [thick, blue](u)++(90:0.7cm) to ++(270:1.39cm);
\draw [thick, blue](ul)++(270:0.7cm) to ++(60:1.21cm);
\draw [thick, blue](d)++(150:0.7cm) to ++(0:1.21cm);
\draw [thick, blue](ur)++(150:0.7cm) to ++(300:1.21cm);
\end{tikzpicture}}
\end{center}
\end{figure}
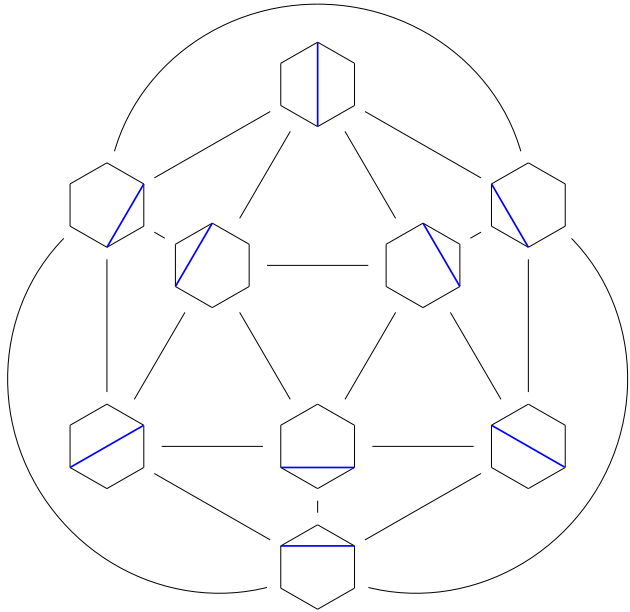
In Figure \ref{hexagoncomplex}, vertices are represented by the diagonals of a hexagon. This is an identification of a cluster complex with the corresponding \emph{arc complex} (see Subsection \ref{subsec:focus-An}).
We consider a mutation of $(\xx = (x_1,x_2,x_3),Q_B)$ in direction $3$. A non-labeled seed obtained by this mutation is given as
\[
\left(\xx'=\left(x_1,x_2,\dfrac{x_1+1}{x_3}\right),Q_B= \begin{xy}(0,0)*+{1}="I",(10,0)*+{2}="J", (20,0)*+{{3}}="K" \ar@{->}"I";"J"  \ar@/^4mm/"I";"K" \end{xy},\right).
\]
 In Figure \ref{hexagoncomplex}, a simplex corresponding to $\xx$ is a triangle that is the right-hand neighbor of the central triangle, and $\xx'$ is the right-hand neighbor of $\xx$. In Figure \ref{fig-positive-simplices}, the left-hand side is a complex $\Delta^+(\xx,B)$ and the right-hand side is $\Delta^+(\xx',B')$. Since mutation from $(\xx,B)$ to $(\xx',B')$ is a source mutation, we have $f(\Delta^+(\xx,B)) = f(\Delta^+(\xx',B'))$ by Corollary \ref{thm:invariant-sink/source}. Indeed, from Figure \ref{fig-positive-simplices}, both $f(\Delta^+(\xx,B))$ and $f(\Delta^+(\xx',B'))$ are (1,6,10,5).
\begin{figure}[ht]
\caption{Complexes $\Delta^+(\xx,B)$ and $\Delta^+(\xx',B')$}\label{fig-positive-simplices}
\[\scalebox{0.8}{
\begin{tikzpicture}
 \coordinate (0) at (0,0);
 \coordinate (1) at (18:3);
 \coordinate (2) at (90:3);
 \coordinate (3) at (162:3);
 \coordinate (4) at (234:3);
 \coordinate (5) at (306:3);
 \draw(0) to (1);
 \draw(0) to (2);
 \draw[ultra thick](0) to (3);
 \draw(0) to (4);
 \draw(0) to (5);
 \draw(1) to (2);
 \draw[ultra thick](2) to (3);
 \draw[ultra thick](3) to (4);
 \draw(4) to (5);
 \draw(5) to (1);
\fill[white](0) circle (0.9cm);
\fill[white] (1) circle (0.9cm);
\fill[white] (2) circle (0.9cm);
\fill[white] (3) circle (0.9cm);
\fill[white] (4) circle (0.9cm);
\fill[white] (5) circle (0.9cm);
\draw (0)++(210:0.7cm) to ++(90:0.7cm);
\draw (0)++(150:0.7cm) to ++(30:0.7cm);
\draw (0)++(90:0.7cm) to ++(330:0.7cm);
\draw (0)++(30:0.7cm) to ++(270:0.7cm);
\draw (0)++(330:0.7cm) to ++(210:0.7cm);
\draw (0)++(270:0.7cm) to ++(150:0.7cm);
\draw (1)++(210:0.7cm) to ++(90:0.7cm);
\draw (1)++(150:0.7cm) to ++(30:0.7cm);
\draw (1)++(90:0.7cm) to ++(330:0.7cm);
\draw (1)++(30:0.7cm) to ++(270:0.7cm);
\draw (1)++(330:0.7cm) to ++(210:0.7cm);
\draw (1)++(270:0.7cm) to ++(150:0.7cm);
\draw (2)++(210:0.7cm) to ++(90:0.7cm);
\draw (2)++(150:0.7cm) to ++(30:0.7cm);
\draw (2)++(90:0.7cm) to ++(330:0.7cm);
\draw (2)++(30:0.7cm) to ++(270:0.7cm);
\draw (2)++(330:0.7cm) to ++(210:0.7cm);
\draw (2)++(270:0.7cm) to ++(150:0.7cm);
\draw (3)++(210:0.7cm) to ++(90:0.7cm);
\draw (3)++(150:0.7cm) to ++(30:0.7cm);
\draw (3)++(90:0.7cm) to ++(330:0.7cm);
\draw (3)++(30:0.7cm) to ++(270:0.7cm);
\draw (3)++(330:0.7cm) to ++(210:0.7cm);
\draw (3)++(270:0.7cm) to ++(150:0.7cm);
\draw (4)++(210:0.7cm) to ++(90:0.7cm);
\draw (4)++(150:0.7cm) to ++(30:0.7cm);
\draw (4)++(90:0.7cm) to ++(330:0.7cm);
\draw (4)++(30:0.7cm) to ++(270:0.7cm);
\draw (4)++(330:0.7cm) to ++(210:0.7cm);
\draw (4)++(270:0.7cm) to ++(150:0.7cm);
\draw (5)++(210:0.7cm) to ++(90:0.7cm);
\draw (5)++(150:0.7cm) to ++(30:0.7cm);
\draw (5)++(90:0.7cm) to ++(330:0.7cm);
\draw (5)++(30:0.7cm) to ++(270:0.7cm);
\draw (5)++(330:0.7cm) to ++(210:0.7cm);
\draw (5)++(270:0.7cm) to ++(150:0.7cm);
\draw [thick, blue](0)++(270:0.7cm) to ++(60:1.21cm);
\draw [thick, blue](1)++(90:0.7cm) to ++(240:1.21cm);
\draw [thick, blue](2)++(90:0.7cm) to ++(270:1.39cm);
\draw [thick, red](3)++(150:0.7cm) to ++(300:1.21cm);
\draw [thick, blue](4)++(150:0.7cm) to ++(0:1.21cm);
\draw [thick, blue](5)++(210:0.7cm) to ++(30:1.39cm);
\end{tikzpicture}
\begin{tikzpicture}[baseline=-33mm]
 \coordinate (0) at (306:3);
 \coordinate (1) at (18:3);
 \coordinate (2) at (90:3);
 \coordinate (3) at (0,0);
 \coordinate (4) at (234:3);
 \coordinate (5) at (306:6);
 \draw(0) to (1);
 \draw(1) to (3);
 \draw(0) to (3);
 \draw(0) to (4);
 \draw[ultra thick](0) to (5);
 \draw(1) to (2);
 \draw(2) to (3);
 \draw(3) to (4);
 \draw[ultra thick](4) to (5);
 \draw[ultra thick](5) to (1);
\fill[white](0) circle (0.9cm);
\fill[white] (1) circle (0.9cm);
\fill[white] (2) circle (0.9cm);
\fill[white] (3) circle (0.9cm);
\fill[white] (4) circle (0.9cm);
\fill[white] (5) circle (0.9cm);
\draw (0)++(210:0.7cm) to ++(90:0.7cm);
\draw (0)++(150:0.7cm) to ++(30:0.7cm);
\draw (0)++(90:0.7cm) to ++(330:0.7cm);
\draw (0)++(30:0.7cm) to ++(270:0.7cm);
\draw (0)++(330:0.7cm) to ++(210:0.7cm);
\draw (0)++(270:0.7cm) to ++(150:0.7cm);
\draw (1)++(210:0.7cm) to ++(90:0.7cm);
\draw (1)++(150:0.7cm) to ++(30:0.7cm);
\draw (1)++(90:0.7cm) to ++(330:0.7cm);
\draw (1)++(30:0.7cm) to ++(270:0.7cm);
\draw (1)++(330:0.7cm) to ++(210:0.7cm);
\draw (1)++(270:0.7cm) to ++(150:0.7cm);
\draw (2)++(210:0.7cm) to ++(90:0.7cm);
\draw (2)++(150:0.7cm) to ++(30:0.7cm);
\draw (2)++(90:0.7cm) to ++(330:0.7cm);
\draw (2)++(30:0.7cm) to ++(270:0.7cm);
\draw (2)++(330:0.7cm) to ++(210:0.7cm);
\draw (2)++(270:0.7cm) to ++(150:0.7cm);
\draw (3)++(210:0.7cm) to ++(90:0.7cm);
\draw (3)++(150:0.7cm) to ++(30:0.7cm);
\draw (3)++(90:0.7cm) to ++(330:0.7cm);
\draw (3)++(30:0.7cm) to ++(270:0.7cm);
\draw (3)++(330:0.7cm) to ++(210:0.7cm);
\draw (3)++(270:0.7cm) to ++(150:0.7cm);
\draw (4)++(210:0.7cm) to ++(90:0.7cm);
\draw (4)++(150:0.7cm) to ++(30:0.7cm);
\draw (4)++(90:0.7cm) to ++(330:0.7cm);
\draw (4)++(30:0.7cm) to ++(270:0.7cm);
\draw (4)++(330:0.7cm) to ++(210:0.7cm);
\draw (4)++(270:0.7cm) to ++(150:0.7cm);
\draw (5)++(210:0.7cm) to ++(90:0.7cm);
\draw (5)++(150:0.7cm) to ++(30:0.7cm);
\draw (5)++(90:0.7cm) to ++(330:0.7cm);
\draw (5)++(30:0.7cm) to ++(270:0.7cm);
\draw (5)++(330:0.7cm) to ++(210:0.7cm);
\draw (5)++(270:0.7cm) to ++(150:0.7cm);
\draw [thick, blue](0)++(210:0.7cm) to ++(30:1.39cm);
\draw [thick, blue](1)++(90:0.7cm) to ++(240:1.21cm);
\draw [thick, blue](2)++(90:0.7cm) to ++(270:1.39cm);
\draw [thick, blue](3)++(270:0.7cm) to ++(60:1.21cm);
\draw [thick, blue](4)++(150:0.7cm) to ++(0:1.21cm);
\draw [thick, red](5)++(210:0.7cm) to ++(0:1.21cm);
\end{tikzpicture}}
\]
\end{figure}
\end{example}
\begin{remark}
As in Example \ref{ex:not-iso-positive-complex}, although $f(\Delta^+(\xx,B))=f(\Delta^+(\xx',B'))$, $\Delta^+(\xx,B)\cong\Delta^+(\xx',B')$ does not necessarily hold.
\end{remark}
We also consider the case in which the face vector of $\Delta^+(\xx,B)$ does not coincide with that of $\Delta^+(\xx',B')$.
\begin{example}\label{ex:not-equal-simplex}
We consider the same situation in Example \ref{ex:not-iso-positive-complex} and a mutation of $(\xx=(x_1,x_2,x_3),B)$ in direction $1$. A non-labeled seed obtained by this mutation is
\[
\left(\xx''=\left(\dfrac{x_2+x_3}{x_1},x_2,x_3\right),Q_B''=\begin{xy}(0,0)*+{1}="I",(10,0)*+{2}="J", (20,0)*+{{3}}="K" \ar@{->}"J";"I" \ar@{->}"K";"J" \ar@/^4mm/"I";"K" \end{xy}\right).
\]
This is neither a sink nor a source mutation. In Figure \ref{hexagoncomplex}, a simplex corresponding to $\xx''$ is a triangle of the center. In Figure \ref{fig:not-equal-simplex}, the left-hand side is a complex $\Delta^+(\xx,B)$, and the right-hand side is $\Delta^+(\xx'',B'')$.
\begin{figure}[ht]
\caption{Complexes $\Delta^+(\xx,B)$ and $\Delta^+(\xx'',B'')$}\label{fig:not-equal-simplex}
\[\scalebox{0.8}{
\begin{tikzpicture}
 \coordinate (0) at (0,0);
 \coordinate (1) at (18:3);
 \coordinate (2) at (90:3);
 \coordinate (3) at (162:3);
 \coordinate (4) at (234:3);
 \coordinate (5) at (306:3);
 \draw[ultra thick](0) to (1);
 \draw(0) to (2);
 \draw(0) to (3);
 \draw(0) to (4);
 \draw(0) to (5);
 \draw[ultra thick](1) to (2);
 \draw(2) to (3);
 \draw(3) to (4);
 \draw(4) to (5);
 \draw[ultra thick](5) to (1);
\fill[white](0) circle (0.9cm);
\fill[white] (1) circle (0.9cm);
\fill[white] (2) circle (0.9cm);
\fill[white] (3) circle (0.9cm);
\fill[white] (4) circle (0.9cm);
\fill[white] (5) circle (0.9cm);
\draw (0)++(210:0.7cm) to ++(90:0.7cm);
\draw (0)++(150:0.7cm) to ++(30:0.7cm);
\draw (0)++(90:0.7cm) to ++(330:0.7cm);
\draw (0)++(30:0.7cm) to ++(270:0.7cm);
\draw (0)++(330:0.7cm) to ++(210:0.7cm);
\draw (0)++(270:0.7cm) to ++(150:0.7cm);
\draw (1)++(210:0.7cm) to ++(90:0.7cm);
\draw (1)++(150:0.7cm) to ++(30:0.7cm);
\draw (1)++(90:0.7cm) to ++(330:0.7cm);
\draw (1)++(30:0.7cm) to ++(270:0.7cm);
\draw (1)++(330:0.7cm) to ++(210:0.7cm);
\draw (1)++(270:0.7cm) to ++(150:0.7cm);
\draw (2)++(210:0.7cm) to ++(90:0.7cm);
\draw (2)++(150:0.7cm) to ++(30:0.7cm);
\draw (2)++(90:0.7cm) to ++(330:0.7cm);
\draw (2)++(30:0.7cm) to ++(270:0.7cm);
\draw (2)++(330:0.7cm) to ++(210:0.7cm);
\draw (2)++(270:0.7cm) to ++(150:0.7cm);
\draw (3)++(210:0.7cm) to ++(90:0.7cm);
\draw (3)++(150:0.7cm) to ++(30:0.7cm);
\draw (3)++(90:0.7cm) to ++(330:0.7cm);
\draw (3)++(30:0.7cm) to ++(270:0.7cm);
\draw (3)++(330:0.7cm) to ++(210:0.7cm);
\draw (3)++(270:0.7cm) to ++(150:0.7cm);
\draw (4)++(210:0.7cm) to ++(90:0.7cm);
\draw (4)++(150:0.7cm) to ++(30:0.7cm);
\draw (4)++(90:0.7cm) to ++(330:0.7cm);
\draw (4)++(30:0.7cm) to ++(270:0.7cm);
\draw (4)++(330:0.7cm) to ++(210:0.7cm);
\draw (4)++(270:0.7cm) to ++(150:0.7cm);
\draw (5)++(210:0.7cm) to ++(90:0.7cm);
\draw (5)++(150:0.7cm) to ++(30:0.7cm);
\draw (5)++(90:0.7cm) to ++(330:0.7cm);
\draw (5)++(30:0.7cm) to ++(270:0.7cm);
\draw (5)++(330:0.7cm) to ++(210:0.7cm);
\draw (5)++(270:0.7cm) to ++(150:0.7cm);
\draw [thick, blue](0)++(270:0.7cm) to ++(60:1.21cm);
\draw [thick, red](1)++(90:0.7cm) to ++(240:1.21cm);
\draw [thick, blue](2)++(90:0.7cm) to ++(270:1.39cm);
\draw [thick, blue](3)++(150:0.7cm) to ++(300:1.21cm);
\draw [thick, blue](4)++(150:0.7cm) to ++(0:1.21cm);
\draw [thick, blue](5)++(210:0.7cm) to ++(30:1.39cm);
\end{tikzpicture}
\hspace{1cm}
\begin{tikzpicture}[baseline=-33mm]
 \coordinate (0) at (306:3);
 \coordinate (1) at (198:5);
 \coordinate (2) at (90:3);
 \coordinate (3) at (0,0);
 \coordinate (4) at (234:3);
 \coordinate (5) at (162:3);
 \draw(0) to (3);
 \draw(0) to (4);
 \draw(2) to (3);
 \draw(3) to (4);
 \draw(3) to (5);
 \draw(2) to (5);
 \draw(4) to (5);
 \draw[ultra thick](1) to (4);
 \draw[ultra thick](1) to (5);
\fill[white](0) circle (0.9cm);
\fill[white] (1) circle (0.9cm);
\fill[white] (2) circle (0.9cm);
\fill[white] (3) circle (0.9cm);
\fill[white] (4) circle (0.9cm);
\fill[white] (5) circle (0.9cm);
\draw (0)++(210:0.7cm) to ++(90:0.7cm);
\draw (0)++(150:0.7cm) to ++(30:0.7cm);
\draw (0)++(90:0.7cm) to ++(330:0.7cm);
\draw (0)++(30:0.7cm) to ++(270:0.7cm);
\draw (0)++(330:0.7cm) to ++(210:0.7cm);
\draw (0)++(270:0.7cm) to ++(150:0.7cm);
\draw (1)++(210:0.7cm) to ++(90:0.7cm);
\draw (1)++(150:0.7cm) to ++(30:0.7cm);
\draw (1)++(90:0.7cm) to ++(330:0.7cm);
\draw (1)++(30:0.7cm) to ++(270:0.7cm);
\draw (1)++(330:0.7cm) to ++(210:0.7cm);
\draw (1)++(270:0.7cm) to ++(150:0.7cm);
\draw (2)++(210:0.7cm) to ++(90:0.7cm);
\draw (2)++(150:0.7cm) to ++(30:0.7cm);
\draw (2)++(90:0.7cm) to ++(330:0.7cm);
\draw (2)++(30:0.7cm) to ++(270:0.7cm);
\draw (2)++(330:0.7cm) to ++(210:0.7cm);
\draw (2)++(270:0.7cm) to ++(150:0.7cm);
\draw (3)++(210:0.7cm) to ++(90:0.7cm);
\draw (3)++(150:0.7cm) to ++(30:0.7cm);
\draw (3)++(90:0.7cm) to ++(330:0.7cm);
\draw (3)++(30:0.7cm) to ++(270:0.7cm);
\draw (3)++(330:0.7cm) to ++(210:0.7cm);
\draw (3)++(270:0.7cm) to ++(150:0.7cm);
\draw (4)++(210:0.7cm) to ++(90:0.7cm);
\draw (4)++(150:0.7cm) to ++(30:0.7cm);
\draw (4)++(90:0.7cm) to ++(330:0.7cm);
\draw (4)++(30:0.7cm) to ++(270:0.7cm);
\draw (4)++(330:0.7cm) to ++(210:0.7cm);
\draw (4)++(270:0.7cm) to ++(150:0.7cm);
\draw (5)++(210:0.7cm) to ++(90:0.7cm);
\draw (5)++(150:0.7cm) to ++(30:0.7cm);
\draw (5)++(90:0.7cm) to ++(330:0.7cm);
\draw (5)++(30:0.7cm) to ++(270:0.7cm);
\draw (5)++(330:0.7cm) to ++(210:0.7cm);
\draw (5)++(270:0.7cm) to ++(150:0.7cm);
\draw [thick, blue](0)++(210:0.7cm) to ++(30:1.39cm);
\draw [thick, red](1)++(150:0.7cm) to ++(330:1.39cm);
\draw [thick, blue](2)++(90:0.7cm) to ++(270:1.39cm);
\draw [thick, blue](3)++(270:0.7cm) to ++(60:1.21cm);
\draw [thick, blue](4)++(150:0.7cm) to ++(0:1.21cm);
\draw [thick, blue](5)++(150:0.7cm) to ++(300:1.21cm);
\end{tikzpicture}}
\]
\end{figure}

Then, we have the face vectors $f({\Delta^+(\xx,B)})$ and $f(\Delta^+(\xx'',B''))$ as follows:
\[
f({\Delta^+(\xx,B)})=(1,6,10,5),\quad f({\Delta^+(\xx'',B'')})=(1,6,9,4).
\]
Thus, the difference between the two is $(0, 0, 1, 1)$. We can also calculate this value using Theorem \ref{thm:mutation-positivecomplex}. However, we need face vectors of positive cluster complexes of subquivers to do so. In Section 5, we provide face vectors of positive cluster complex in special cases and thereby calculate this example.
\end{example}
\begin{remark}
 We consider an example such that $f(\Delta^+ (\xx_t,B_t)) = f(\Delta^+ (\xx_{t'},B_{t'}))$ and $(\xx_{t'},B_{t'})$ are obtained from $ (\xx_t,B_t)$ by a mutation that is neither a sink nor a source mutation.
 We give $Q_{B_t}$ and $Q_{B_{t'}}$ as
 \begin{align*}
     Q_{B_t}=\begin{xy}(0,0)="N",(0,10)*+{1}="I",(20,10)*+{2}="J", (10,0)*+{{3}}="K",(0,-10)*+{4}="L",(20,-10)*+{5}="M" \ar@{<-}"I";"J"  \ar@{<-}"K";"I"\ar@{<-}"J";"K"\ar@{->}"K";"L" \ar@{->}"L";"M"\ar@{->}"M";"K"\end{xy},\quad
     Q_{B_{t'}}=\begin{xy}(0,0)="N",(0,10)*+{1}="I",(20,10)*+{2}="J", (10,0)*+{{3}}="K",(0,-10)*+{4}="L",(20,-10)*+{5}="M" \ar@{->}"I";"L"  \ar@{->}"K";"I"\ar@{->}"J";"K"\ar@{<-}"K";"L" \ar@{<-}"J";"M"\ar@{<-}"M";"K"\end{xy}.
 \end{align*}
A seed $(\xx_{t'},B_{t'})$ is obtained from $(\xx_{t},B_{t})$ by a mutation in direction $3$. We note that $B_{t}$ and $B_{t'}$ are $A_5$ type. Thus, $\Delta^+(\xx_{t},B_{t})$ and $\Delta^+(\xx_{t'},B_{t'})$ are finite type. Since full subquivers of $Q_{B_{t}}$ and $Q_{B_{t'}}$ whose vertices are $\{1,2,4,5\}$ are the same quiver, which consists of two $A_2$ type quivers, $\Delta^+(\xx_{t},B_{t})$ and $\Delta^+(\xx_{t'},B_{t'})$ have the same face vectors. In fact, since $Q_{B_t'}$ is the same shape as $Q_{B_t}$, we have $\Delta^+(\xx_{t},B_{t})\cong\Delta^+(\xx_{t'},B_{t'})$.
 \end{remark}

By using the above results of face vectors, we obtain properties of $h$-vectors. These vectors are defined, for example, in \cite{bruher}*{Section 5.1}.

\begin{definition}\label{hvector}
Let $K$ be a simplicial complex and $f(K)=(f_{-1},f_0,\dots,f_{n-1})$ the face vector of $K$. We define the \emph{$h$-vector} as $h(K)=(h_0,\dots,h_{n})$, where
\begin{align}
    \sum_{i=0}^{n}f_{i-1}(x-1)^{n-i}=\sum_{i=0}^nh_ix^{n-i}.
\end{align}
\end{definition}
By Corollaries \ref{thm:invariant-sink/source} and \ref{cor:independent-orientation}, we have the following corollaries.
 \begin{corollary}\label{thm:h-invariant-sink/source}
Let $\Delta (\xx,B)$ be a cluster complex of finite type. If $(\xx',B')$ is obtained from $(\xx,B)$ by a sink or source mutation, then we have $h(\Delta^+ (\xx,B))=h(\Delta^+(\xx',B'))$.
 \end{corollary}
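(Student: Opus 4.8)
The plan is to observe that the $h$-vector of a simplicial complex is determined entirely by its face vector, so the equality of $h$-vectors is an immediate formal consequence of the equality of face vectors already established in Corollary \ref{thm:invariant-sink/source}.

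First I would return to the defining relation of Definition \ref{hvector},
\begin{align}
    \sum_{i=0}^{n}f_{i-1}(x-1)^{n-i}=\sum_{i=0}^nh_ix^{n-i}.
\end{align}
Expanding each factor $(x-1)^{n-i}$ by the binomial theorem and comparing the coefficient of $x^{n-j}$ on both sides expresses each $h_j$ as a fixed integer linear combination of the entries $f_{-1},f_0,\dots,f_{n-1}$ of the face vector. In other words, the assignment $f\mapsto h$ is a deterministic (indeed invertible) transformation whose only input is the face vector; it carries no further information about the complex beyond the numbers $f_{i-1}$.

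Next I would invoke Corollary \ref{thm:invariant-sink/source}: since $(\xx',B')$ is obtained from $(\xx,B)$ by a sink or source mutation, the complexes $\Delta^+(\xx,B)$ and $\Delta^+(\xx',B')$ have the same number of $k$-simplices for every $-1\leq k\leq n-1$, that is, $f(\Delta^+(\xx,B))=f(\Delta^+(\xx',B'))$. Substituting these identical face vectors into the transformation described above produces identical $h$-vectors, which is exactly the desired identity $h(\Delta^+(\xx,B))=h(\Delta^+(\xx',B'))$.

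There is no genuine obstacle in this argument, as the corollary is purely formal once one recognizes that the $h$-vector depends on nothing but the face vector. The only point deserving a word of care is that both complexes share the same rank $n$, so that the two instances of the defining polynomial identity use the same top exponent; this holds because a mutation preserves the rank of the cluster pattern, and by purity (Proposition \ref{pure}) every maximal positive simplex has dimension $n-1$. This common range is already reflected in the indexing $-1\leq k\leq n-1$ appearing in Corollary \ref{thm:invariant-sink/source}.
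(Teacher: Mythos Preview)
Your proposal is correct and matches the paper's approach: the paper simply notes that the corollary follows from Corollary~\ref{thm:invariant-sink/source} (equality of face vectors under sink/source mutation) together with Definition~\ref{hvector}, which is precisely your observation that the $h$-vector is a function of the face vector alone. Your extra remark about the common rank $n$ is a reasonable clarification but not something the paper spells out.
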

\begin{corollary}\label{cor:h-independent-orientation}
Let $B$ and $B'$ be skew-symmetrizable matrices. If $Q_B$ and $Q_{B'}$ have the same underlying graph in Figure \ref{fig:valued-quiver}, then $h(\Delta^+ (\xx,B))=h(\Delta^+(\xx',B'))$. 
\end{corollary}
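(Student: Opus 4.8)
The plan is to observe that the $h$-vector is a purely deterministic function of the face vector, so that the statement reduces immediately to the invariance of face vectors already established in Corollary \ref{cor:independent-orientation}. Concretely, Definition \ref{hvector} defines the entries $h_0,\dots,h_n$ through the single polynomial identity $\sum_{i=0}^{n} f_{i-1}(x-1)^{n-i}=\sum_{i=0}^{n} h_i x^{n-i}$; comparing coefficients of $x^{n-i}$ on the two sides shows that the tuple $(h_0,\dots,h_n)$ is determined by $(f_{-1},f_0,\dots,f_{n-1})$ alone. Hence any two finite simplicial complexes with equal face vectors automatically have equal $h$-vectors, and the corollary will follow once the two face vectors are matched.

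First I would invoke the hypothesis that $Q_B$ and $Q_{B'}$ share the same underlying graph in Figure \ref{fig:valued-quiver}. By Corollary \ref{cor:independent-orientation}, this guarantees that $\Delta^+(\xx,B)$ and $\Delta^+(\xx',B')$ have the same number of $k$-simplices for every $-1 \leq k \leq n-1$, i.e.\ $f(\Delta^+(\xx,B))=f(\Delta^+(\xx',B'))$. Combining this with the observation of the previous paragraph — that $h$ depends only on $f$ — yields $h(\Delta^+(\xx,B))=h(\Delta^+(\xx',B'))$ at once.

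An equivalent route, exactly paralleling the proof of Corollary \ref{thm:h-invariant-sink/source}, would use \cite{fzii}*{Proposition 9.2} to write $B'$ as the result of applying to $B$ a sequence of sink or source mutations followed by a permutation of indices; one then applies Corollary \ref{thm:h-invariant-sink/source} along the chain of mutations and notes that a permutation of indices leaves the cluster complex, and hence both its face vector and its $h$-vector, unchanged. I expect no genuine obstacle in either route: all of the combinatorial content has already been packaged into Corollary \ref{cor:independent-orientation} (respectively Corollary \ref{thm:h-invariant-sink/source}), and the remaining step is only the definitional fact, recorded in Definition \ref{hvector}, that $h$ is recoverable from $f$.
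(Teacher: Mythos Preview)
Your proposal is correct and matches the paper's own reasoning: the paper simply states that Corollaries \ref{thm:h-invariant-sink/source} and \ref{cor:h-independent-orientation} follow from Corollaries \ref{thm:invariant-sink/source} and \ref{cor:independent-orientation}, which is exactly your argument that the $h$-vector is determined by the face vector via Definition \ref{hvector}. Your alternative route through a chain of sink/source mutations is also fine and equally close to the paper's intent.
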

When a simplicial complex is pure and shellable, the $h$-vector has a special meaning; see Section \ref{applicationtotautilting2}.
\section{Explicit descriptions of positive cluster complexes of the classical finite type}
In this section, we explicitly describe some positive cluster complexes of finite type.
\subsection{Type $A_n$}\label{subsec:focus-An}
In this subsection, we focus on a cluster complex of $A_n$ type, that is, a cluster complex $\Delta(\xx,B)$ such that the initial exchange matrix $B$ is mutation equivalent to an exchange matrix of the tree $A_n$ type. Notably, we consider the case in which $B$ is an exchange matrix of tree finite type of $A_n$ (i.e., $Q_B$ has a diagram of $A_n$ type in Figure \ref{fig:valued-quiver} as its underlying diagram).

First, we explain the realization of cluster structure by marked surfaces, which is given by \cite{fzii}*{Section 12.2} and \cite{fst}.
Let $S$ be a regular $(n+3)$-gon. We say that a diagonal $\ell$ is \emph{compatible} with $\ell'$ if $\ell$ and $\ell'$ do not cross at the interior of $S$, and a set of pairwise compatible diagonals is a \emph{compatible set}. We note that a maximal compatible set of $S$ gives a triangulation of $S$; let $T$ be a maximal compatible set. We label these diagonals by $(\ell_1,\dots,\ell_n)$ and we call the result a \emph{labeled maximal compatible set}. We define a matrix $B_T$ as that obtained from $T$ as follows. For each triangle $\Delta$ in a triangulation of $S$ given by $T$, we define the $n\times n$ integer matrix $B^\Delta=(b^\Delta_{ij})$ by setting
\begin{align*}
b^\Delta_{ij} =\begin{cases}
 1 \quad&\text{if $\Delta$ has diagonals labeled $\ell_{i}$ and $\ell_{j}$, }\\
 &\text{with $\ell_j$ following $\ell_i$ in the clockwise order},\\
 -1 \quad &\text{if the same holds, with the counterclockwise order,}\\
 0 \quad &\text{otherwise}.
\end{cases}
\end{align*}
The matrix $B=B_T=(b_{ij})$ is then defined by
\begin{align*}
B=\sum_\Delta B^\Delta,
\end{align*}
where the summation runs over all triangles $\Delta$ in a triangulation of $S$ given by $T$.
\begin{example}
Let $T=(\ell_1,\ell_2,\ell_3)$ be a labeled maximal compatible set given as in Figure \ref{fig:triangulation-matrix-correspondence}. Then, we have $B_T=\begin{bmatrix}
0&-1&1\\
1&0&-1\\
-1&1&0
\end{bmatrix}$. The corresponding quiver is $\begin{xy}(0,0)*+{1}="I",(10,0)*+{2}="J", (20,0)*+{{3}}="K" \ar@{->}"J";"I" \ar@{->}"K";"J" \ar@/^4mm/"I";"K" \end{xy}$.
\begin{figure}[ht]
\caption{Labeled maximal compatible set of a hexagon}\label{fig:triangulation-matrix-correspondence}
\[
\begin{tikzpicture}[baseline=0cm]
\coordinate (0) at (0,0);
\coordinate (1) at (30:1.5);
\coordinate (2) at (90:1.5);
\coordinate (3) at (150:1.5);
\coordinate (4) at (210:1.5);
\coordinate (5) at (270:1.5);
\coordinate (6) at (330:1.5);
\draw (1) to (2);
\draw (2) to (3);
\draw (3) to (4);
\draw (4) to (5);
\draw (5) to (6);
\draw (6) to (1);
\draw [thick, blue] (1) to (3);
\draw [thick, blue] (3) to (5);
\draw [thick, blue](1) to (5);
\fill [white](90:0.8) circle (3mm);
\fill [white](210:0.8) circle (3mm);
\fill [white](330:0.8) circle (3mm);
\node at (210:0.8) {$\ell_1$};
\node at (90:0.8) {$\ell_2$};
\node at (330:0.8) {$\ell_3$};
\end{tikzpicture}
\]
\end{figure}
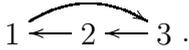
\end{example}
For $i\in \{1,\dots,n\}$ and a maximal compatible set $T$, we define a \emph{flip $\varphi_i(T)$ in direction $i$} as a transformation that obtains the unique maximal compatible set $T'$ from $T$ by removing $\ell_i$ and adding the other diagonal $\ell'_i$ to $T\setminus\{\ell_i\}$.
We explain that flips establish the same structure as seed mutations. We say that a pair $(T,B_T)$ of labeled a maximal compatible set and an exchange matrix $B_T$ given by $T$ as described above is a \emph{labeled triangulation pair}. Abusing notation, we denote $\varphi_i(T,B_T):=(\varphi_i(T),B_{\varphi_i(T)})$ and we refer to this operation as a \emph{flip of a labeled triangulation pair}. We define a \emph{triangulation pattern} as an assignment $\Xi_t=(T_t,B_{T_t})$ to every vertex $t\in \mathbb{T}_n$ such that labeled triangulation pairs $\Xi_t$ and $\Xi_{t'}$ assigned to the endpoints of any edge
\begin{xy}(0,1)*+{t}="A",(10,1)*+{t'}="B",\ar@{-}^k"A";"B" \end{xy}
are obtained from each other by a flip of a labeled triangulation pair in direction $k$. We denote by $P\colon t\mapsto \Xi_t$ this assignment. To distinguish a cluster pattern from a triangulation pattern, we use the notation $P^\Sigma$ for the former and $P^\Xi$ for the latter. We call a triangulation pair associated with the rooted vertex $t_0$ the \emph{initial triangulation pair}. We denote by $P(T,B_T)$ a triangulation pattern with the initial triangulation pair $(T,B_T)$. We use the notation $\ell_{i;t}$ as the $i$th diagonal in $T_t$.
We define a \emph{non-labeled triangulation pair} $[\Xi]$ as the equivalence classes represented by a labeled triangulation pair $\Xi$ in the same way as a non-labeled seed.
We define a \emph{flip} $\varphi_\ell$ of non-labeled triangulation pair, an \emph{arc complex} $\Delta(T,B_T)$ in the same way as a mutation of non-labeled seed and a cluster complex, respectively.
Thus, we have the following basic property.
\begin{theorem}[\cite{fst}*{Proposition 4.8, Theorem 7.11}]\label{thm:arc-variable}
Let $S$ be a regular $(n+3)$-gon and $T$ a labeled maximal compatible set. 
\begin{itemize}
\item[(1)]We have $B_{\varphi_i(T)}=\mu_i(B_T)$, where $\mu_i$ is a mutation in direction $i$ of an exchange matrix.
\item[(2)]We have the canonical bijection between the sets of all diagonals of $S$ and the set of cluster variables in $\{\xx_t\}_{t\in\TT_n}$ by the correspondence $\ell_{i;t}\mapsto x_{i;t}$. Furthermore, this bijection induces an isomorphism between the arc complex $\Delta(T,B_T)$ and the cluster complex $\Delta(\xx,B_T)$.
\end{itemize}
\end{theorem}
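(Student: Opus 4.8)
The plan is to establish the two parts in order, deducing (2) from (1). For part (1), I would exploit the purely local nature of a flip: the diagonal $\ell_i$ is the common edge of exactly two triangles $\Delta_1,\Delta_2$, which together form a quadrilateral $Q$, and $\varphi_i$ replaces $\Delta_1,\Delta_2$ by the two triangles $\Delta_1',\Delta_2'$ cut off by the opposite diagonal $\ell_i'$ of $Q$, leaving every other triangle of $T$ untouched. Since $B_T=\sum_\Delta B^\Delta$, this gives $B_{\varphi_i(T)}-B_T=(B^{\Delta_1'}+B^{\Delta_2'})-(B^{\Delta_1}+B^{\Delta_2})$, so only the $i$th row and column, together with entries $b_{jk}$ for which both $\ell_j$ and $\ell_k$ are sides of $Q$, can change. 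For the $i$th row and column, passing from $\ell_i$ to $\ell_i'$ reverses every clockwise/counterclockwise incidence around $Q$, which yields $b_{ij}'=-b_{ij}$ and matches the first branch of \eqref{eq:matrix-mutation}. For the remaining entries I would run the finite case analysis over how many of the four sides of $Q$ are genuine diagonals of $T$ (rather than boundary edges of $S$), checking in each configuration that the net change equals $[b_{ik}]_+b_{kj}+b_{ik}[-b_{kj}]_+$; here the signed summation $\sum_\Delta B^\Delta$ automatically produces the cancellation of opposing contributions, so no separate $2$-cycle-removal step is needed. This case check is the one genuinely computational step, and the only real care required is in bookkeeping orientations.

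With (1) in hand, the triangulation pattern and the cluster pattern with matching initial data carry identical exchange matrices $B_{T_t}=B_t$ at every $t\in\TT_n$. To define $\Phi\colon \ell_{i;t}\mapsto x_{i;t}$ I must address what I expect to be the main obstacle: a single diagonal of $S$ occurs in many triangulations reached by different paths in $\TT_n$, and one must verify that all of them return the same cluster variable. Rather than a delicate tree-combinatorial argument, I would use the geometric realization and assign to each diagonal its lambda-length (Penner coordinate) after normalizing all boundary lambda-lengths to $1$ and the initial diagonal lambda-lengths to $x_1,\dots,x_n$; this assignment is by construction an intrinsic function of the diagonal. A flip across $Q$ satisfies the Ptolemy relation, whose two right-hand summands are the products of the two pairs of opposite sides of $Q$. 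By part (1) the sides $\ell_j$ that actually contribute are exactly those with $b_{ji}\neq 0$, with multiplicities $[b_{ji}]_+$ and $[-b_{ji}]_+$, so the Ptolemy relation is literally the cluster exchange relation \eqref{eq:x-mutation} in direction $i$. Hence $\ell_{i;t}\mapsto x_{i;t}$ is forced and path-independent, giving a well-defined map $\Phi$. Since $\Phi$ is surjective by construction and the $(n+3)$-gon has exactly $\tfrac{n(n+3)}{2}$ diagonals, which equals the known number of cluster variables in type $A_n$, the map $\Phi$ is a bijection.

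Finally, to promote $\Phi$ to a simplicial isomorphism $\Delta(T,B_T)\cong\Delta(\xx,B_T)$, I would use that two diagonals are compatible (non-crossing) precisely when they belong to a common triangulation, equivalently, via $\Phi$ and part (1), when the corresponding cluster variables belong to a common cluster, which is exactly the compatibility relation defining $\Delta(\xx,B_T)$. Because the simplices of the arc complex are the compatible sets of diagonals and the simplices of the cluster complex are the subsets of clusters, and maximal compatible sets (triangulations) correspond under $\Phi$ to clusters, the map $\Phi$ sends simplices bijectively to simplices. Therefore $\Phi$ is an isomorphism of simplicial complexes, completing both parts.
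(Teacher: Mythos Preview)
The paper does not prove this theorem; it is quoted from \cite{fst} (Proposition 4.8 and Theorem 7.11) and used as a black box. Your outline is essentially the standard argument from that source (and, for the polygon case specifically, already in \cite{fzii}*{Section 12.2}): part (1) by the local quadrilateral analysis, and part (2) by assigning lambda-lengths and matching the Ptolemy relation to the exchange relation \eqref{eq:x-mutation}.

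One point worth flagging: your bijectivity step appeals to ``the known number of cluster variables in type $A_n$.'' This is legitimate provided you source that count from the almost-positive-roots parametrization of \cite{fzii}*{Theorem 1.9}, which is proved independently of the arc model; otherwise the argument would be circular. An alternative that avoids any external count is to observe that distinct diagonals yield distinct denominator vectors (the exponent of $x_j$ in the denominator of $\Phi(\ell)$ is the crossing number of $\ell$ with $\ell_j$), which gives injectivity directly.
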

We note that $B_T$ is a skew-symmetric matrix which is mutation equivalent to an exchange matrix of tree $A_n$ type. Therefore, by using the canonical bijection in Theorem \ref{thm:arc-variable}, we identify diagonals of $(n+3)$-gon with cluster variables of a cluster pattern of $A_n$ type. In particular, $\ell$ is compatible with $\ell'$ if and only if $x_{\ell}$ is compatible with $x_{\ell'}$, where $x_\ell$ and $x_{\ell'}$ are corresponding cluster variables to $\ell$ and $\ell'$.
\begin{example}\label{ex:correspondence-arc-variable-A3}
 Let $B=\begin{bmatrix}
 0&-1&0\\
 1&0&-1\\
 0&1&0
 \end{bmatrix}$ of type $A_3$. The corresponding quiver is $\begin{xy}(0,0)*+{1}="I",(10,0)*+{2}="J", (20,0)*+{{3}}="K" \ar@{->}"J";"I" \ar@{->}"K";"J"  \end{xy}$. Let $T$ be a set $\left\{\ell_1=\begin{tikzpicture}[baseline=-1mm,scale=0.3]
 \coordinate (u) at(90:1); \coordinate (lu) at(150:1); \coordinate (ld) at(-150:1);
 \coordinate (ru) at(30:1); \coordinate (rd) at(-30:1); \coordinate (d) at(-90:1);
 \draw (u)--(lu)--(ld)--(d)--(rd)--(ru)--(u); \node at(0,1.3) {}; \node at(0,-1.3) {}; \draw[blue,thick] (ru)--(d);
\end{tikzpicture}, \ \ell_2=
\begin{tikzpicture}[baseline=-1mm,scale=0.3]
 \coordinate (u) at(90:1); \coordinate (lu) at(150:1); \coordinate (ld) at(-150:1);
 \coordinate (ru) at(30:1); \coordinate (rd) at(-30:1); \coordinate (d) at(-90:1);
 \draw (u)--(lu)--(ld)--(d)--(rd)--(ru)--(u); \node at(0,1.3) {}; \node at(0,-1.3) {}; \draw[blue,thick] (u)--(d);
\end{tikzpicture},\ \ell_3=
\begin{tikzpicture}[baseline=-1mm,scale=0.3]
 \coordinate (u) at(90:1); \coordinate (lu) at(150:1); \coordinate (ld) at(-150:1);
 \coordinate (ru) at(30:1); \coordinate (rd) at(-30:1); \coordinate (d) at(-90:1);
 \draw (u)--(lu)--(ld)--(d)--(rd)--(ru)--(u); \node at(0,1.3) {}; \node at(0,-1.3) {}; \draw[blue,thick] (lu)--(d);
\end{tikzpicture}
\right\}$ of diagonals of a hexagon. Then, we have $B=B_T$. We consider a cluster pattern $P^{\Sigma}(\xx,B)$ and a triangulation pattern $P^\Xi(T,B_T)$. Then, we have $\mu_{x_3}\left[\left(x_1,x_2,x_3\right)\right]=\left[\left(x_1,x_2,\dfrac{x_2+1}{x_3}\right)\right]$ by a mutation in direction $x_3$. On the other hand, we have $\varphi_{\ell_3}\left(
\begin{tikzpicture}[baseline=-1mm,scale=0.3]
 \coordinate (u) at(90:1);
 \coordinate (lu) at(150:1);
 \coordinate (ld) at(-150:1);
 \coordinate (ru) at(30:1);
 \coordinate (rd) at(-30:1);
 \coordinate (d) at(-90:1);
 \draw (u)--(lu)--(ld)--(d)--(rd)--(ru)--(u);
 \node at(0,1.3) {};
 \node at(0,-1.3) {};
 \draw[blue,thick] (lu)--(d);
 \draw[blue,thick] (u)--(d);
 \draw[blue,thick] (ru)--(d);
\end{tikzpicture}\right)=
\begin{tikzpicture}[baseline=-1mm,scale=0.3]
 \coordinate (u) at(90:1);
 \coordinate (lu) at(150:1);
 \coordinate (ld) at(-150:1);
 \coordinate (ru) at(30:1);
 \coordinate (rd) at(-30:1);
 \coordinate (d) at(-90:1);
 \draw (u)--(lu)--(ld)--(d)--(rd)--(ru)--(u);
 \node at(0,1.3) {};
 \node at(0,-1.3) {};
 \draw[blue,thick] (u)--(ld);
 \draw[blue,thick] (u)--(d);
 \draw[blue,thick] (d)--(ru);
\end{tikzpicture}
$ by a flip in direction $\ell_3$. Therefore, $\dfrac{x_2+1}{x_3}$ corresponds to $\begin{tikzpicture}[baseline=-1mm,scale=0.3]
 \coordinate (u) at(90:1);
 \coordinate (lu) at(150:1);
 \coordinate (ld) at(-150:1);
 \coordinate (ru) at(30:1);
 \coordinate (rd) at(-30:1);
 \coordinate (d) at(-90:1);
 \draw (u)--(lu)--(ld)--(d)--(rd)--(ru)--(u);
 \node at(0,1.3) {};
 \node at(0,-1.3) {};
 \draw[blue,thick] (ld)--(u);
\end{tikzpicture}$ by the canonical bijection.
The canonical bijection between the set of cluster variables and the diagonal set is given in Table \ref{examp}.
\begin{table}[ht]
\begin{tabular}{c|c}
 cluster variable $x$ & diagonal corresponding to $x$
\end{tabular}
\vspace{2mm}\\
\begin{minipage}{0.3\hsize}
\begin{center}\begin{tabular}{c|c}
 $x_1$ &
\begin{tikzpicture}[baseline=-1mm,scale=0.5]
 \coordinate (u) at(90:1); \coordinate (lu) at(150:1); \coordinate (ld) at(-150:1);
 \coordinate (ru) at(30:1); \coordinate (rd) at(-30:1); \coordinate (d) at(-90:1);
 \draw (u)--(lu)--(ld)--(d)--(rd)--(ru)--(u); \node at(0,1.3) {}; \node at(0,-1.3) {}; \draw[blue,thick] (d)--(ru);
\end{tikzpicture}
\\\hline
 $x_2$ &
\begin{tikzpicture}[baseline=-1mm,scale=0.5]
 \coordinate (u) at(90:1); \coordinate (lu) at(150:1); \coordinate (ld) at(-150:1);
 \coordinate (ru) at(30:1); \coordinate (rd) at(-30:1); \coordinate (d) at(-90:1);
 \draw (u)--(lu)--(ld)--(d)--(rd)--(ru)--(u); \node at(0,1.3) {}; \node at(0,-1.3) {}; \draw[blue,thick] (u)--(d);
\end{tikzpicture}
\\\hline
 $x_3$ &
\begin{tikzpicture}[baseline=-1mm,scale=0.5]
 \coordinate (u) at(90:1); \coordinate (lu) at(150:1); \coordinate (ld) at(-150:1);
 \coordinate (ru) at(30:1); \coordinate (rd) at(-30:1); \coordinate (d) at(-90:1);
 \draw (u)--(lu)--(ld)--(d)--(rd)--(ru)--(u); \node at(0,1.3) {}; \node at(0,-1.3) {}; \draw[blue,thick] (d)--(lu);
\end{tikzpicture}
\end{tabular}\end{center}
\end{minipage}
\begin{minipage}{0.3\hsize}
\begin{center}\begin{tabular}{c|c}
 $\cfrac{x_2+1}{x_1}$ &
\begin{tikzpicture}[baseline=-1mm,scale=0.5]
 \coordinate (u) at(90:1); \coordinate (lu) at(150:1); \coordinate (ld) at(-150:1);
 \coordinate (ru) at(30:1); \coordinate (rd) at(-30:1); \coordinate (d) at(-90:1);
 \draw (u)--(lu)--(ld)--(d)--(rd)--(ru)--(u); \node at(0,1.3) {}; \node at(0,-1.3) {}; \draw[blue,thick] (u)--(rd);
\end{tikzpicture}
\\\hline
 $\cfrac{x_1+x_3}{x_2}$ &
\begin{tikzpicture}[baseline=-1mm,scale=0.5]
 \coordinate (u) at(90:1); \coordinate (lu) at(150:1); \coordinate (ld) at(-150:1);
 \coordinate (ru) at(30:1); \coordinate (rd) at(-30:1); \coordinate (d) at(-90:1);
 \draw (u)--(lu)--(ld)--(d)--(rd)--(ru)--(u); \node at(0,1.3) {}; \node at(0,-1.3) {}; \draw[blue,thick] (lu)--(ru);
\end{tikzpicture}
\\\hline
 $\cfrac{x_2+1}{x_3}$ &
\begin{tikzpicture}[baseline=-1mm,scale=0.5]
 \coordinate (u) at(90:1); \coordinate (lu) at(150:1); \coordinate (ld) at(-150:1);
 \coordinate (ru) at(30:1); \coordinate (rd) at(-30:1); \coordinate (d) at(-90:1);
 \draw (u)--(lu)--(ld)--(d)--(rd)--(ru)--(u); \node at(0,1.3) {}; \node at(0,-1.3) {}; \draw[blue,thick] (u)--(ld);
\end{tikzpicture}
\end{tabular}\end{center}
\end{minipage}
\begin{tabular}{c|c}
 $\cfrac{x_1+x_3+x_2x_3}{x_1x_2}$ &
\begin{tikzpicture}[baseline=-1mm,scale=0.5]
 \coordinate (u) at(90:1); \coordinate (lu) at(150:1); \coordinate (ld) at(-150:1);
 \coordinate (ru) at(30:1); \coordinate (rd) at(-30:1); \coordinate (d) at(-90:1);
 \draw (u)--(lu)--(ld)--(d)--(rd)--(ru)--(u); \node at(0,1.3) {}; \node at(0,-1.3) {}; \draw[blue,thick] (lu)--(rd);
\end{tikzpicture}
\\\hline
 $\cfrac{x_1+x_3+x_1x_2}{x_2x_3}$ &
\begin{tikzpicture}[baseline=-1mm,scale=0.5]
 \coordinate (u) at(90:1); \coordinate (lu) at(150:1); \coordinate (ld) at(-150:1);
 \coordinate (ru) at(30:1); \coordinate (rd) at(-30:1); \coordinate (d) at(-90:1);
 \draw (u)--(lu)--(ld)--(d)--(rd)--(ru)--(u); \node at(0,1.3) {}; \node at(0,-1.3) {}; \draw[blue,thick] (ru)--(ld);
\end{tikzpicture}
\\\hline
 $\cfrac{x_1+x_3+x_1x_2+x_2x_3}{x_1x_2x_3}$ &
\begin{tikzpicture}[baseline=-1mm,scale=0.5]
 \coordinate (u) at(90:1); \coordinate (lu) at(150:1); \coordinate (ld) at(-150:1);
 \coordinate (ru) at(30:1); \coordinate (rd) at(-30:1); \coordinate (d) at(-90:1);
 \draw (u)--(lu)--(ld)--(d)--(rd)--(ru)--(u); \node at(0,1.3) {}; \node at(0,-1.3) {}; \draw[blue,thick] (ld)--(rd);
\end{tikzpicture}
  \end{tabular}
 \vspace{5mm}
 \caption{Canonical bijection between the set of cluster variables and diagonal set of $A_3$ type}\label{examp}
\end{table}
\end{example}
The main theorem in this subsection is the following, in which we refer to Definition \ref{def:cone-star-link} for the definition of the cone.
\begin{theorem}\label{thm:positive-simplex-A_n}
Let $\Delta(\xx,B)$ be a cluster complex of $A_n$ type. Suppose that quiver $Q_B$ has the following form.
\begin{align}\label{assumption:An}
\begin{xy}
    (50,0)*{\circ}="A",(60,0)*{\circ}="B",(70,0)*{\circ}="C",(80,0)*{\circ}="D",(85,0)*{}="E",(90,0)*{}="F",(95,0)*{\circ}="G",(105,0)*{\circ}="H"\ar@{->} "A";"B", \ar@{->} "B";"C", \ar@{->} "C";"D", \ar@{-} "D";"E", \ar@{.} "E";"F",\ar@{->} "F";"G",\ar@{->} "G";"H"
\end{xy}
\end{align}
Then $\Delta^+(\xx,B)$ is isomorphic to the cone of $\Delta(A_{n-1})$.
\end{theorem}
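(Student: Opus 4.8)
The plan is to carry out the entire argument in the combinatorial model of the regular $(n+3)$-gon furnished by Theorem~\ref{thm:arc-variable}. I would realize $\Delta(\xx,B)$ as the arc complex of a regular $(n+3)$-gon $P$ whose vertices are labelled $0,1,\dots,n+2$ in cyclic order, so that cluster variables become diagonals and compatibility becomes non-crossing. First I would check that the linearly oriented quiver \eqref{assumption:An} is realized by the \emph{fan triangulation} $T=\{(0,2),(0,3),\dots,(0,n+1)\}$ consisting of all diagonals incident to the vertex $0$. Indeed, among the triangles of $T$ the only ones carrying two diagonals are the triangles $(0,k,k+1)$, each of which carries the consecutive pair $(0,k),(0,k+1)$; since these triangles all have the same shape, the matrix $B_T$ computed from the sign rule is tridiagonal with a constant sign pattern off the diagonal, i.e. $Q_{B_T}$ is a linearly oriented path matching \eqref{assumption:An}.

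With this model fixed, the key observation is a clean description of the vertex set of the positive complex. The initial cluster variables are exactly the diagonals of $T$, namely the diagonals incident to $0$; hence by Definition~\ref{def:positiveclustercomplex} the vertices of $\Delta^+(\xx,B)$ are precisely the diagonals of $P$ with both endpoints in $\{1,\dots,n+2\}$. I would then single out the diagonal $e=(1,n+2)$, which is not incident to $0$ and is therefore such a vertex. The diagonal $e$ cuts off the ear triangle $(0,1,n+2)$ and leaves the sub-$(n+2)$-gon $P'$ on the vertices $1,\dots,n+2$; since every non-initial diagonal has both endpoints in $\{1,\dots,n+2\}$, it lies inside $\overline{P'}$ and hence cannot cross $e$. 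Thus $e$ is compatible with every vertex of $\Delta^+(\xx,B)$.

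Compatibility of $e$ with all other vertices makes $e$ a cone point. Using Lemma~\ref{lem:existance-cluster} (any pairwise compatible set of diagonals is a simplex), every simplex $\sigma$ of $\Delta^+(\xx,B)$ satisfies that $\sigma\cup\{e\}$ is again a simplex of $\Delta^+(\xx,B)$, so $\st_{\Delta^+(\xx,B)}(e)=\Delta^+(\xx,B)$. It then remains to identify the link. The vertices of $\lk_{\Delta^+(\xx,B)}(e)$ are the non-initial diagonals other than $e$, and these are exactly the diagonals of the $(n+2)$-gon $P'$, because in $P'$ the segment $e=(1,n+2)$ has become a boundary edge and non-crossing is inherited from $P$. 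Since the arc complex of an $(n+2)$-gon is the cluster complex $\Delta(A_{n-1})$, I would conclude $\lk_{\Delta^+(\xx,B)}(e)\cong\Delta(A_{n-1})$, and then Lemma~\ref{lem.st-lk-relation} gives
\[
\Delta^+(\xx,B)=\st_{\Delta^+(\xx,B)}(e)\cong\cone\bigl(\lk_{\Delta^+(\xx,B)}(e)\bigr)\cong\cone(\Delta(A_{n-1})),
\]
as desired.

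The only genuinely delicate point I expect is the orientation bookkeeping in the first step: verifying that it is the fan (rather than some zigzag) triangulation that produces the linear orientation, and that the sign convention of $B^\Delta$ indeed yields a constant arrow direction along the path. The remaining steps---the description of the non-initial diagonals, the compatibility of $e$ with everything, and the recognition of the link as the $(n+2)$-gon arc complex---are clean planar geometry and should go through without difficulty.
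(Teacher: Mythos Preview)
Your proposal is correct and follows essentially the same route as the paper: both single out the diagonal $e=(1,n+2)$ joining the two neighbours of the fan apex as the cone point of $\Delta^+(\xx,B)$ and recognise its link as the arc complex of the remaining $(n+2)$-gon, i.e.\ $\Delta(A_{n-1})$. The only cosmetic difference is that the paper packages the link identification via Lemma~\ref{lem:bijection-between-B-and-Bx} (after first proving, for any tree $A_n$ orientation, that the non-linked vertex is unique), whereas you read it off directly from the sub-polygon $P'$.
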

\begin{remark}\label{rem:equivalent-condition-An}
A matrix $B$ satisfies the assumption in Theorem \ref{thm:positive-simplex-A_n} if and only if it has the following property: for a triangulation of an $(n+3)$-gon $S$ obtained by a maximal compatible set $T$ which gives $B_T=B$, there exists a vertex $v$ of $S$ such that all diagonals in $T$ share $v$ (see Figure \ref{ex:assumption-of-nakayama}).
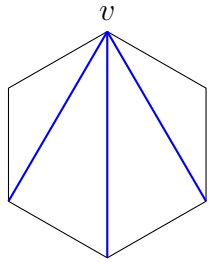
\begin{figure}[ht]
\caption{Example of triangulation corresponding to $B$ satisfying the assumption in Theorem  \ref{thm:positive-simplex-A_n}}\label{ex:assumption-of-nakayama}
\[
\begin{tikzpicture}
 \coordinate (u) at(90:1.5);
 \coordinate (lu) at(150:1.5);
 \coordinate (ld) at(-150:1.5);
 \coordinate (ru) at(30:1.5);
 \coordinate (rd) at(-30:1.5);
 \coordinate (d) at(-90:1.5);
 \draw (u)--(lu)--(ld)--(d)--(rd)--(ru)--(u);
 \draw[blue,thick] (u)--(ld);
 \draw[blue,thick] (u)--(d);
 \draw[blue,thick] (u)--(rd);
 \node at(0,1.75) {$v$};
\end{tikzpicture}
\]
\end{figure}
\end{remark}
\begin{definition}
Let $K$ be a simplicial complex and $V$ a vertex subset of $K$. For a vertex $x$ in $K$, we say that $x$ is \emph{non-linked to $V$} when $\{x,v\}$ is not a simplex of $K$ for all $v\in V$.
\end{definition}

We denote by $I$ the vertex set consisting of all cluster variables in the initial cluster $\xx$. 

The following lemma can be applied to exchange matrices of tree $A_n$ type, which contains the case \eqref{assumption:An}.

\begin{lemma}\label{lem:unique-interior}
Let $B=(b_{ij})$ be an exchange matrix of tree $A_n$ type. For a cluster complex $\Delta(\xx,B)$, there exists a unique vertex $x$ such that $x$ is non-linked to $I$.
\end{lemma}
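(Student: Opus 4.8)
The plan is to work entirely inside the polygon model of the $A_n$ cluster structure supplied by Theorem \ref{thm:arc-variable}. Via the canonical bijection there I identify the vertices of $\Delta(\xx,B)$ with the diagonals of the regular $(n+3)$-gon $S$, the initial cluster $I=\xx$ with a labeled triangulation $T=\{\ell_1,\dots,\ell_n\}$ of $S$, and compatibility with non-crossing of diagonals. Under this dictionary the property ``$x$ is non-linked to $I$'' becomes the purely geometric statement that the diagonal corresponding to $x$ crosses every $\ell_i$ in the interior of $S$. Thus the lemma reduces to: \emph{for a triangulation of tree $A_n$ type there is exactly one diagonal crossing all $n$ diagonals of $T$.}

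Next I would read off the combinatorial shape of $T$ from the hypothesis that $B=B_T$ is of tree $A_n$ type. Since $Q_B$ is then an orientation of the path on $n$ vertices, $T$ contains no triangle all of whose sides are diagonals; equivalently, the dual tree of $T$ is a path $\Delta_0-\Delta_1-\cdots-\Delta_n$ of triangles, with consecutive triangles sharing a diagonal (the linearly oriented case of Remark \ref{rem:equivalent-condition-An} is the special fan within this family). The two endpoint triangles $\Delta_0$ and $\Delta_n$ have degree one in the dual path, so each is an \emph{ear}: it has a single diagonal side and two boundary edges, meeting at a vertex of $S$ that is incident to no diagonal of $T$. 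Write $p$ and $q$ for these two ear tips.

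For existence I would show that the chord $\{p,q\}$ is the desired diagonal. Each diagonal $d$ of $T$ separates $S$ into two subpolygons, one containing the part of the dual path ending at $\Delta_0$ and the other the part ending at $\Delta_n$; consequently $p$ lies strictly in the first arc and $q$ strictly in the second, and neither is an endpoint of $d$ since ear tips carry no diagonal. Hence $\{p,q\}$ crosses every diagonal of $T$. In particular $\{p,q\}$ is a genuine diagonal rather than a boundary edge (a boundary edge crosses nothing), and $p\neq q$ because any $\ell_i$ separates them.

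For uniqueness, suppose a diagonal $\{u,w\}$ crosses every $\ell_i$. Crossing the ear-diagonal bounding $\Delta_0$ forces one endpoint of $\{u,w\}$ to lie in the open arc on the $\Delta_0$-side of that diagonal; but this arc contains the single polygon vertex $p$, so one endpoint must be $p$. The symmetric argument at $\Delta_n$ forces the other endpoint to be $q$. As $p\neq q$ and a chord has only two endpoints, $\{u,w\}=\{p,q\}$, which finishes the proof (and also handles $n=1$, where $\Delta_0$ and $\Delta_n$ are the two ears sharing the unique diagonal). I expect the only real obstacle to be the geometric bookkeeping: correctly translating the tree-$A_n$ condition into ``the dual tree of $T$ is a path of ears,'' and then arguing that crossing an ear-diagonal pins an endpoint at the unique interior vertex of that ear; the remaining separation arguments are routine once this snake structure of $T$ is in hand.
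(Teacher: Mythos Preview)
Your proof is correct and follows essentially the same approach as the paper: both translate the question into the polygon model via Theorem \ref{thm:arc-variable}, observe that the tree $A_n$ hypothesis forces exactly two ear tips in the initial triangulation $T$, and identify the unique non-linked vertex with the diagonal joining these two ear tips. Your version is more explicit than the paper's---you spell out the dual-path structure, the separation argument for existence, and the endpoint-pinning argument for uniqueness---whereas the paper leaves these details largely implicit in a two-sentence proof.
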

\begin{proof}
By the canonical bijection between cluster variables and diagonals of a regular $(n+3)$-gon $S$, it suffices to show that there is a unique diagonal $\ell$ such that for all diagonals $\ell_i$ in the initial maximal compatible set $T$, $\ell_i$ intersects with $\ell$ at the interior of $S$. Since the initial exchange matrix $B$ is of the tree $A_n$ type, two vertices of $S$ are shared by only one triangle in the initial triangulation $T$. A diagonal of $S$ combining these two vertices is the desired diagonal.
\end{proof}
\begin{lemma}\label{lem:description-positivecone}
Let $\Delta(\xx,B)$ be a cluster complex satisfying the assumption in Theorem \ref{thm:positive-simplex-A_n} and $x$ the (unique) non-linked vertex to $I$. Then, $\lk_{\Delta(\xx,B)}(x)$ is isomorphic to $\Delta(A_{n-1})$. Furthermore, $\st_{\Delta(\xx,B)}(x)$ is isomorphic to $\cone(\Delta(A_{n-1}))$.
\end{lemma}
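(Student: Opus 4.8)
The plan is to work entirely inside the arc-complex model of $\Delta(\xx,B)$ provided by Theorem~\ref{thm:arc-variable}, so that cluster variables become diagonals of a regular $(n+3)$-gon $S$ and compatibility becomes non-crossing. First I would fix the combinatorial picture of the hypothesis: by Remark~\ref{rem:equivalent-condition-An}, the assumption of Theorem~\ref{thm:positive-simplex-A_n} means the initial maximal compatible set $T$ (corresponding to $I$) is a \emph{fan}, i.e.\ all $n$ of its diagonals share a common vertex $v$ of $S$. Labelling the vertices of $S$ cyclically as $0,1,\dots,n+2$ with $v=0$, the set $T$ is exactly $\{\,\{0,k\}\mid 2\le k\le n+1\,\}$. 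By Lemma~\ref{lem:unique-interior} there is a unique vertex $x$ of $\Delta(\xx,B)$ non-linked to $I$; I would identify it with the diagonal $\ell=\{1,n+2\}$ that cuts off the ear triangle $\{0,1,n+2\}$ at $v$, observing that $\ell$ separates $0$ from every other vertex and hence crosses each $\{0,k\}$, so that no initial diagonal is compatible with $\ell$.

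Next I would compute the link. A diagonal is compatible with $\ell$, and distinct from $\ell$, precisely when it does not cross $\ell$, i.e.\ when it lies in one of the two regions into which $\ell$ divides $S$. One region is the triangle $\{0,1,n+2\}$, which contains no diagonals; the other is the $(n+2)$-gon $S'$ with vertices $1,2,\dots,n+2$, one of whose sides is $\ell$ itself and the rest of whose sides are genuine edges of $S$. Thus the vertices of $\lk_{\Delta(\xx,B)}(x)$ are exactly the diagonals of $S'$, and two of them are compatible in $\Delta(\xx,B)$ if and only if they are non-crossing in $S'$. Since $\lk_{\Delta(\xx,B)}(x)$ is a full subcomplex by Lemma~\ref{lem:fullsub}, Lemma~\ref{lem:existance-cluster} shows that its simplices are all the pairwise non-crossing sets of diagonals of $S'$; that is, $\lk_{\Delta(\xx,B)}(x)$ is the arc complex of the $(n+2)$-gon $S'$. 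Applying Theorem~\ref{thm:arc-variable} to $S'$ (a $((n-1)+3)$-gon) identifies this arc complex with $\Delta(A_{n-1})$, giving $\lk_{\Delta(\xx,B)}(x)\cong\Delta(A_{n-1})$.

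Finally, the second assertion is immediate: by Lemma~\ref{lem.st-lk-relation} we have $\st_{\Delta(\xx,B)}(x)\cong\cone(\lk_{\Delta(\xx,B)}(x))\cong\cone(\Delta(A_{n-1}))$.

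I expect the one genuine step --- as opposed to bookkeeping --- to be the identification of the link with the arc complex of $S'$: one must check not merely that the vertices of the link are the diagonals of $S'$, but that the simplicial structure matches, i.e.\ that a set of diagonals of $S'$ forms a face of $\lk_{\Delta(\xx,B)}(x)$ exactly when it is pairwise non-crossing in $S'$. The fullness of the link (Lemma~\ref{lem:fullsub}) together with Lemma~\ref{lem:existance-cluster} reduces this to the pairwise statement, and the key geometric observation making the pairwise statement transparent is that two diagonals of $S'$ cross inside $S'$ if and only if they cross inside $S$, so no faces are lost or gained when passing between $S$ and $S'$.
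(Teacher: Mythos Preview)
Your proof is correct and follows essentially the same route as the paper: identify $x$ with the ear-cutting diagonal $\{1,n+2\}$ at the fan apex, observe that the diagonals compatible with it are exactly the diagonals of the remaining $(n+2)$-gon $S'$, and invoke Lemma~\ref{lem.st-lk-relation} for the cone statement. The only cosmetic difference is that the paper packages the link computation by citing Lemma~\ref{lem:bijection-between-B-and-Bx} (applied at a seed containing $x$) and then just checks that $B_{\setminus\{x\}}$ is of $A_{n-1}$ type because $\ell_x$ cuts off a triangle, whereas you spell out the arc-complex identification of $\lk_{\Delta(\xx,B)}(x)$ with the arc complex of $S'$ directly; both arguments rest on the same geometric fact.
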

\begin{proof}
We show the first statement. By Lemma \ref{lem:fullsub} and discussion of the proof of Lemma \ref{lem:bijection-between-B-and-Bx}, it suffices to show that $x$ has the following property.
\begin{itemize}
\item We fix a non-labeled seed $[(\xx_t,B_t)]$ containing $x$. Then, $B_{\setminus\{x\}}$ (this notation is defined in Lemma \ref{lem:bijection-between-B-and-Bx}) is mutation equivalent to a matrix of tree $A_{n-1}$ type.
\end{itemize}
By the canonical bijection between cluster variables and diagonals of a regular $(n+3)$-gon $S$, it suffices to show that a diagonal $\ell_x$, which corresponds to $x$, cut out a triangle from $S$. From the assumption in $B$, a vertex $v$ of $S$ is shared by all the diagonals in the initial maximal compatible set $T$. Therefore, $\ell_x$ is a diagonal that combines both neighbor vertices of $v$ by the proof of Lemma \ref{lem:unique-interior}. Therefore, $\ell_x$ cuts a triangle out of $S$. The second statement follows from the first and Lemma \ref{lem.st-lk-relation}.
\end{proof}

\begin{proof}[Proof of Theorem \ref{thm:positive-simplex-A_n}]
From Lemma \ref{lem:description-positivecone}, it suffices to show $\st_{\Delta(\xx,B)}(x)=\Delta^+(\xx,B)$. First, we prove $\st_{\Delta(\xx,B)}(x)\subset\Delta^+(\xx,B)$. Of a non-linked vertex, all vertices of $\st_{\Delta(\xx,B)}(x)$ are in $\Delta^+(\xx,B)$. Since $\Delta^+(\xx,B)$ is a full subcomplex of $\Delta(\xx,B)$, we have the desired inclusion. Second, let us prove that $\Delta^+(\xx,B)\subset\st_{\Delta(\xx,B)}(x)$. It suffices to show that for any simplex $F$ of $\Delta^+(\xx,B)$, $F\cup\{x\}$ is a simplex of $\Delta(\xx,B)$. We assume that there exists a simplex $F$ of $\Delta^+(\xx,B)$ such that $F\cup\{x\}$ is not a simplex of $\Delta(\xx,B)$. From the canonical bijection between cluster variables and diagonals of a regular $(n+3)$-gon $S$, we can assume that a compatible set $T_F$ corresponding to $F$ contains $\ell'$ intersecting with the corresponding diagonal $\ell_x$ of $x$ at the interior of $S$. However, all diagonals intersecting with $\ell_x$ at the interior of $S$ are in a diagonal set $T$ corresponding to the initial cluster $\xx$. This contradicts the fact that $F$ is a simplex of $\Delta^+(\xx,B)$.
\end{proof}
By using Theorem \ref{thm:positive-simplex-A_n}, we can calculate the number of simplices in $\Delta^+(\xx,B)$ using  $f(\Delta(A_{n-1}))$. We provide a conceptual proof of the following corollary given by Chapoton. 
\begin{corollary}[\cite{cha04}*{(34)}]\label{cor:positive-face-from-all-face-n-1}
Let $B$ be an exchange matrix the quiver $Q_B$ of which has a diagram of $A_n$ type in Figure \ref{fig:valued-quiver} as its underlying graph. Let $f_{n,k}$ be the number of $k$-dimensional simplices of $\Delta(A_{n})$, and $f^+_{n,k}$ the number of $k$-dimensional simplices of $\Delta^+(\xx,B)$. Then, we have
\begin{align}\label{eq:An-positive-facevector}
    f^+_{n,k}=
    f_{n-1,k}+f_{n-1,k-1}=\dfrac{1}{k+2}\begin{pmatrix}n\\ k+1\end{pmatrix}\begin{pmatrix}n+k+1\\ k+1\end{pmatrix}.
\end{align}
\end{corollary}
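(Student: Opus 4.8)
The plan is to combine the cone description from Theorem~\ref{thm:positive-simplex-A_n} with the classical enumeration of faces of the type~$A$ cluster complex, after first reducing to a convenient orientation. Although the statement allows $Q_B$ to carry \emph{any} orientation with underlying graph $A_n$, Corollary~\ref{cor:independent-orientation} guarantees that the face vector $f^+(\Delta(\xx,B))$ depends only on this underlying graph. Hence I may replace $B$ by the linearly oriented matrix of \eqref{assumption:An} without changing any of the numbers $f^+_{n,k}$, and Theorem~\ref{thm:positive-simplex-A_n} then applies to give $\Delta^+(\xx,B)\cong\cone(\Delta(A_{n-1}))$. This reduction is genuinely needed, since the explicit cone description is only available for the linear orientation.

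The first equality in \eqref{eq:An-positive-facevector} now follows from an elementary property of cones. For a finite simplicial complex $K$ with apex $\ast$, each $k$-dimensional simplex of $\cone(K)$ either omits $\ast$, in which case it is a $k$-dimensional simplex of $K$, or contains $\ast$, in which case it is obtained from a $(k-1)$-dimensional simplex of $K$ by adjoining $\ast$; these two possibilities are disjoint and exhaustive, so $f_k(\cone(K))=f_k(K)+f_{k-1}(K)$. Applying this with $K=\Delta(A_{n-1})$, and recalling that $f_{n-1,k}$ is by definition the number of $k$-dimensional simplices of $\Delta(A_{n-1})$, I obtain $f^+_{n,k}=f_{n-1,k}+f_{n-1,k-1}$.

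For the closed-form (right-hand) equality I would invoke the classical realization of $\Delta(A_m)$ as the complex of mutually non-crossing diagonals of a convex $(m+3)$-gon (used already in Subsection~\ref{subsec:focus-An} via Theorem~\ref{thm:arc-variable}), so that $f_{m,k}$ is the Kirkman--Cayley number counting $(k+1)$-element sets of non-crossing diagonals, namely $f_{m,k}=\tfrac{1}{k+2}\binom{m}{k+1}\binom{m+k+3}{k+1}$. Substituting $m=n-1$ into $f_{n-1,k}+f_{n-1,k-1}$ reduces the statement to the purely combinatorial identity
\[
\frac{1}{k+2}\binom{n-1}{k+1}\binom{n+k+2}{k+1}
+\frac{1}{k+1}\binom{n-1}{k}\binom{n+k+1}{k}
=\frac{1}{k+2}\binom{n}{k+1}\binom{n+k+1}{k+1}.
\]
I would verify this by expanding $\binom{n}{k+1}=\binom{n-1}{k+1}+\binom{n-1}{k}$ and $\binom{n+k+2}{k+1}=\binom{n+k+1}{k+1}+\binom{n+k+1}{k}$ by Pascal's rule, then extracting the common factor $\binom{n-1}{k}\binom{n+k+1}{k}$ and applying the absorption identity $\binom{a}{j}=\tfrac{a}{j}\binom{a-1}{j-1}$, after which both sides collapse to the same elementary rational expression in $n$ and $k$. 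The conceptual content is entirely carried by the cone identification and the additive cone formula; the only real obstacle is this final manipulation, which is routine but slightly fiddly and must be pushed through carefully to land exactly on Chapoton's expression.
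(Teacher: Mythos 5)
Your argument is correct and follows essentially the same route as the paper: you invoke Corollary \ref{cor:independent-orientation} to reduce to the linear orientation, apply Theorem \ref{thm:positive-simplex-A_n} to identify $\Delta^+(\xx,B)$ with $\cone(\Delta(A_{n-1}))$, use the additive cone formula to obtain $f^+_{n,k}=f_{n-1,k}+f_{n-1,k-1}$, and substitute Chapoton's count $f_{m,k}=\tfrac{1}{k+2}\binom{m}{k+1}\binom{m+k+3}{k+1}$ to reach the closed form. The only difference is that you spell out the final binomial manipulation, which the paper leaves implicit; your identity checks out (e.g.\ $n=3$, $k=1$ gives $5+5=10$ on both sides).
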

\begin{proof}
By Theorem \ref{thm:positive-simplex-A_n} and Corollary \ref{cor:independent-orientation}, we have
\begin{align}\label{facevectorAn}
f(\Delta^+(\xx,B))=f(\Delta(A_{n-1}))+[f(\Delta(A_{n-1}))]_1.
\end{align}
By \cite{cha04}*{(33)}, we have
\begin{align}\label{eq:formula-faces-A_n}
    f_{n,k}=\dfrac{1}{k+2}\begin{pmatrix}n\\ k+1\end{pmatrix}\begin{pmatrix}n+k+3\\ k+1\end{pmatrix}.
\end{align}
By \eqref{facevectorAn} and \eqref{eq:formula-faces-A_n}, we have the desired equation \eqref{eq:An-positive-facevector}.
\end{proof}
\begin{remark}
In \cite{cha04}*{(34)}, the formula is given more generally. The equation \eqref{eq:An-positive-facevector} corresponds with \cite{cha04}*{(34)} in the case that $\ell=0$.
\end{remark}
\begin{example}
Let $(\xx,B)$ be a seed which is the same as in Example \ref{ex:correspondence-arc-variable-A3}. Then, $\Delta^+(\xx,B)$ is a domain that is shown filled with gray in Figure \ref{hexagonpositivecomplex}.
 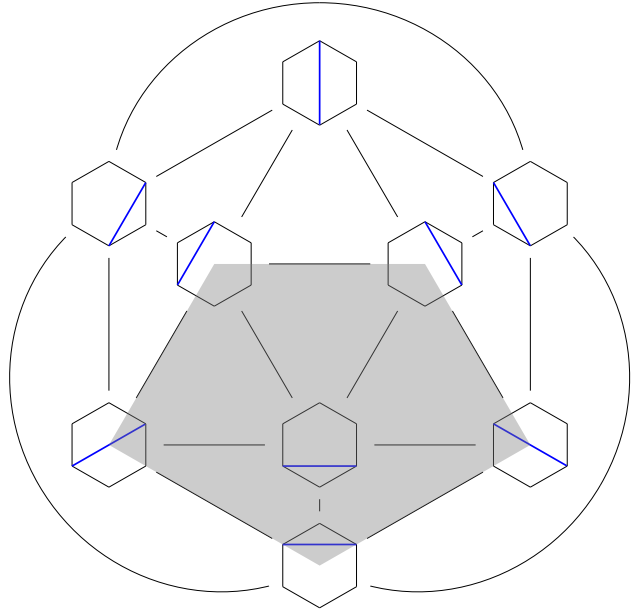
\begin{figure}[ht]
\caption{positive cluster complex of $A_3$ \label{hexagonpositivecomplex}}
\vspace{5pt}
\begin{center}
\scalebox{0.8}{
\begin{tikzpicture}
\coordinate (0) at (0,0);
\coordinate (u*) at (90:5.33);
\coordinate (u) at (90:4);
\coordinate (ul) at (150:4);
\coordinate (ur) at (30:4);
\coordinate (uml) at (150:2);
\coordinate (umr) at (30:2);
\coordinate (dmc) at (-90:2);
\coordinate (dl) at (-150:4);
 \coordinate (dl*) at (-150:5.33);
\coordinate (dr) at (-30:4);
\coordinate (dr*) at (-30:5.33);
\coordinate (d) at (-90:4);
\draw (u) to (ul);
\draw (u) to (ur);
\draw (ul) to (uml);
\draw (ur) to (umr);
\draw (umr) to (uml);
\draw (u) to (uml);
\draw (u) to (umr);
\draw (uml) to (dl);
\draw (umr) to (dr);
\draw (ul) to (dl);
\draw (ur) to (dr);
\draw (uml) to (dmc);
\draw (umr) to (dmc);
\draw (dl) to (dmc);
\draw (dr) to (dmc);
\draw (dl) to (d);
\draw (dmc) to (d);
\draw (dr) to (d);
\draw(ul) [out=90,in=180]to (u*);
\draw(ur) [out=90,in=0]to (u*);
\draw(ul) [out=210,in=120]to (dl*);
\draw(d) [out=210,in=300]to (dl*);
\draw(d) [out=330,in=240]to (dr*);
\draw(ur) [out=330,in=60]to (dr*);
\fill [white](u) circle (0.9cm);
\fill[white] (ul) circle (0.9cm);
\fill[white] (ur) circle (0.9cm);
\fill[white] (uml) circle (0.9cm);
\fill[white] (umr) circle (0.9cm);
\fill[white] (dmc) circle (0.9cm);
\fill[white] (dl) circle (0.9cm);
\fill[white] (dr) circle (0.9cm);
\fill[white] (d) circle (0.9cm);
\draw (u)++(210:0.7cm) to ++(90:0.7cm);
\draw (u)++(150:0.7cm) to ++(30:0.7cm);
\draw (u)++(90:0.7cm) to ++(330:0.7cm);
\draw (u)++(30:0.7cm) to ++(270:0.7cm);
\draw (u)++(330:0.7cm) to ++(210:0.7cm);
\draw (u)++(270:0.7cm) to ++(150:0.7cm);
\draw (ur)++(210:0.7cm) to ++(90:0.7cm);
\draw (ur)++(150:0.7cm) to ++(30:0.7cm);
\draw (ur)++(90:0.7cm) to ++(330:0.7cm);
\draw (ur)++(30:0.7cm) to ++(270:0.7cm);
\draw (ur)++(330:0.7cm) to ++(210:0.7cm);
\draw (ur)++(270:0.7cm) to ++(150:0.7cm);
\draw (ul)++(210:0.7cm) to ++(90:0.7cm);
\draw (ul)++(150:0.7cm) to ++(30:0.7cm);
\draw (ul)++(90:0.7cm) to ++(330:0.7cm);
\draw (ul)++(30:0.7cm) to ++(270:0.7cm);
\draw (ul)++(330:0.7cm) to ++(210:0.7cm);
\draw (ul)++(270:0.7cm) to ++(150:0.7cm);
\draw (umr)++(210:0.7cm) to ++(90:0.7cm);
\draw (umr)++(150:0.7cm) to ++(30:0.7cm);
\draw (umr)++(90:0.7cm) to ++(330:0.7cm);
\draw (umr)++(30:0.7cm) to ++(270:0.7cm);
\draw (umr)++(330:0.7cm) to ++(210:0.7cm);
\draw (umr)++(270:0.7cm) to ++(150:0.7cm);
\draw (uml)++(210:0.7cm) to ++(90:0.7cm);
\draw (uml)++(150:0.7cm) to ++(30:0.7cm);
\draw (uml)++(90:0.7cm) to ++(330:0.7cm);
\draw (uml)++(30:0.7cm) to ++(270:0.7cm);
\draw (uml)++(330:0.7cm) to ++(210:0.7cm);
\draw (uml)++(270:0.7cm) to ++(150:0.7cm);
\draw (dl)++(210:0.7cm) to ++(90:0.7cm);
\draw (dl)++(150:0.7cm) to ++(30:0.7cm);
\draw (dl)++(90:0.7cm) to ++(330:0.7cm);
\draw (dl)++(30:0.7cm) to ++(270:0.7cm);
\draw (dl)++(330:0.7cm) to ++(210:0.7cm);
\draw (dl)++(270:0.7cm) to ++(150:0.7cm);
\draw (dr)++(210:0.7cm) to ++(90:0.7cm);
\draw (dr)++(150:0.7cm) to ++(30:0.7cm);
\draw (dr)++(90:0.7cm) to ++(330:0.7cm);
\draw (dr)++(30:0.7cm) to ++(270:0.7cm);
\draw (dr)++(330:0.7cm) to ++(210:0.7cm);
\draw (dr)++(270:0.7cm) to ++(150:0.7cm);
\draw (dmc)++(210:0.7cm) to ++(90:0.7cm);
\draw (dmc)++(150:0.7cm) to ++(30:0.7cm);
\draw (dmc)++(90:0.7cm) to ++(330:0.7cm);
\draw (dmc)++(30:0.7cm) to ++(270:0.7cm);
\draw (dmc)++(330:0.7cm) to ++(210:0.7cm);
\draw (dmc)++(270:0.7cm) to ++(150:0.7cm);
\draw (d)++(210:0.7cm) to ++(90:0.7cm);
\draw (d)++(150:0.7cm) to ++(30:0.7cm);
\draw (d)++(90:0.7cm) to ++(330:0.7cm);
\draw (d)++(30:0.7cm) to ++(270:0.7cm);
\draw (d)++(330:0.7cm) to ++(210:0.7cm);
\draw (d)++(270:0.7cm) to ++(150:0.7cm);
\draw [thick, blue](uml)++(90:0.7cm) to ++(240:1.21cm);
\draw [thick, blue](umr)++(90:0.7cm) to ++(300:1.21cm);
\draw [thick, blue](dmc)++(210:0.7cm) to ++(0:1.21cm);
\draw [thick, blue](dl)++(210:0.7cm) to ++(30:1.39cm);
\draw [thick, blue](dr)++(150:0.7cm) to ++(330:1.39cm);
\draw [thick, blue](u)++(90:0.7cm) to ++(270:1.39cm);
\draw [thick, blue](ul)++(270:0.7cm) to ++(60:1.21cm);
\draw [thick, blue](d)++(150:0.7cm) to ++(0:1.21cm);
\draw [thick, blue](ur)++(150:0.7cm) to ++(300:1.21cm);
\fill [gray, opacity=.4] (uml)--(dl)--(d)--(dr)--(umr)--(uml);
\end{tikzpicture}}
\end{center}
\end{figure}
\end{example}
\subsection{Type $B_n$ and $C_n$}\label{subsec:focus-Bn-Cn}
We focus on cluster complexes of $B_n$ and $C_n$ type, that is, cluster complexes $\Delta(\xx,B)$ such that $B$ is mutation equivalent to an exchange matrix of tree $B_n$ or $C_n$ type in this subsection. In particular, we consider the case in which $B$ is an exchange matrix of tree $B_n$ or $C_n$ (i.e., $Q_B$ has a diagram of $B_n$ or $C_n$ type as shown in Figure \ref{fig:valued-quiver} as its underlying diagram).

First, we explain the realization of the cluster structure by marked surfaces, which is given by \cite{fzii}*{Subsection 12.3}.
Let $S$ be a regular $(2n+2)$-gon. We consider orbits of diagonals by a $\pi$ rotation $\Theta$. We say that, for any orbits $\bar{\ell}$ and $\bar{\ell}'$, $\bar{\ell}$ is \emph{compatible} with $\bar{\ell}'$ if $\bar{\ell}$ and $\bar{\ell}'$ do not cross at the interior of $S$, and the set of pairwise compatible orbits of diagonals is a \emph{compatible set}. We note that a maximal compatible set $T$ consists of $n-1$ orbits composed of two diagonals and a single diameter. We label these orbits by $(\bar{\ell}_1,\dots,\bar{\ell}_n)$. We call $T$ the \emph{labeled maximal compatible set} as in the case with $A_n$ type. We define a matrix $B_T$ as a matrix obtained from $T$ in the following manner. We assume that $\bar{\ell}_n$ is a diameter in $T$. Let $S_{\ell_n}$ be one of $(n+2)$-gon cut out by a diameter $\ell_n$. For each triangle $\Delta$ in a triangulation of $S_{\ell_n}$ given by $T$, we define the $n\times n$ integer matrix $B^\Delta=(b^\Delta_{ij})$ by setting
\begin{align*}
b^\Delta_{ij} =\begin{cases}
 1 \quad&\text{for any $i\in[1,n],j\in[1,n-1]$, if $\Delta$ has diagonals labeled $\bar{\ell}_{i}$ and  $\bar{\ell}_{j}$,  }\\
 &\text{with $\bar{\ell}_j$ following $\bar{\ell}_i$ in the clockwise order},\\
 -1 \quad &\text{if the same holds as above, with the counterclockwise order,}\\
 2 \quad&\text{for any $i\in[1,n-1]$, if $j=n$ and $\Delta$ has diagonals labeled $\bar{\ell}_{i}$ and  $\bar{\ell}_{n}$,}\\
 &\text{with $\bar{\ell}_n$ following $\bar{\ell}_i$ in the clockwise order},\\
 -2 \quad &\text{if the same holds as the above, with the counterclockwise order}\\
 0 \quad &\text{otherwise},
\end{cases}
\end{align*}
where $[1,n]$ means $\{1,\dots,n\}$.
The matrix $B=B_T=(b_{ij})$ is then defined by
\begin{align*}
B=\sum_\Delta B^\Delta,
\end{align*}
where the summation runs over all triangles $\Delta$ in a triangulation of $S_{\ell_n}$ given by $T$.
\begin{example}
Let $T=(\bar{\ell_1},\bar{\ell_2},\bar{\ell_3})$ be a labeled maximal compatible set given as in Figure \ref{fig:triangulation-matrix-correspondence-Bn}. Then, we have $B_T=\begin{bmatrix}
0&-1&2\\
1&0&-2\\
-1&1&0
\end{bmatrix}$ and the corresponding valued quiver is $\begin{xy}(0,0)*+{1}="I",(10,0)*+{2}="J", (20,0)*+{{3}}="K" \ar@{->}^{(1,1)}"J";"I" \ar@{->}^{(1,2)}"K";"J" \ar@/^4mm/^{(2,1)}"I";"K"\end{xy}$ and $d(1)=d(2)=1,d(3)=2$.
\begin{figure}[ht]
\caption{Labeled maximal compatible set of an octagon}\label{fig:triangulation-matrix-correspondence-Bn}
\[
\begin{tikzpicture}[baseline=0cm]
\coordinate (0) at (0,0);
\coordinate (1) at (22.5:1.5);
\coordinate (2) at (67.5:1.5);
\coordinate (3) at (112.5:1.5);
\coordinate (4) at (157.5:1.5);
\coordinate (5) at (202.5:1.5);
\coordinate (6) at (247.5:1.5);
\coordinate (7) at (292.5:1.5);
\coordinate (8) at (337.5:1.5);
\draw (1) to (2);
\draw (2) to (3);
\draw (3) to (4);
\draw (4) to (5);
\draw (5) to (6);
\draw (6) to (7);
\draw (7) to (8);
\draw (8) to (1);
\draw [thick, blue] (1) to (3);
\draw [thick, blue] (3) to (5);
\draw [thick, blue](1) to (5);
\draw [blue,dashed] (5) to (7);
\draw [blue,dashed] (7) to (1);
\fill [white](67.5:1) circle (3mm);
\fill [white](157.5:1) circle (3mm);
\fill [white](0,0) circle (3mm);
\node at (157.5:1) {$\bar{\ell}_1$};
\node at (67.5:1) {$\bar{\ell}_2$};
\node at (0,0) {$\bar{\ell}_3$};
\end{tikzpicture}
\]
\end{figure}
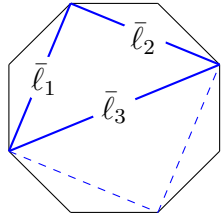
\end{example}
For $i\in \{1,\dots,n\}$ and a maximal compatible set $T$, we define a \emph{flip $\varphi_i(T)$ in direction $i$} as a transformation that obtains the unique maximal compatible set $T'$ from $T$ by removing $\bar{\ell}_i$ and adding the other diagonal $\bar{\ell}'_i$ to $T\setminus\{\bar{\ell}_i\}$.
We explain that flips establish the same structure as seed mutations of $B_n$ type, and define a \emph{labeled triangulation pair} $(T,B_T)$, a \emph{flip of labeled triangulation pair} $\varphi_i(T,B_T):=(\varphi_i(T),B_{\varphi_i(T)})$, a \emph{triangulation pattern} $P\colon t\mapsto \Xi_t$, a \emph{non-labeled triangulation pair} $[\Xi]$, and an \emph{arc complex} $\Delta(T,B_T)$ in the same manner as in Subsection \ref{subsec:focus-An}. In parallel with $A_n$ type, we have the following property by \cite{fzii}*{Subsection 12.3} and checking exchange relations \cite{fzii}*{(12.6),(12.7),(12.8)}.
\begin{theorem}\label{thm:arc-variable-Bn}
Let $S$ be a regular $(2n+2)$-gon and $T$ be a maximal compatible set. Let $P^{\Xi}_{(T,B_T)}$ be a triangulation pattern and we consider a cluster pattern $P^{\Sigma}_{(\xx,B_T)}$ the initial exchange matrix of which is $B_T$.
\begin{itemize}
\item[(1)]We have $B_{\varphi_i(T)}=\mu_i(B_T)$, where $\mu_i$ is a mutation in direction $i$ of an exchange matrix.
\item[(2)]We have the canonical bijection between the set of all orbits of diagonals of $S$ and the set of cluster variables in $\{\xx_t\}_{t\in\TT_n}$ by the correspondence $\bar{\ell}_{i;t}\mapsto x_{i;t}$. Furthermore, this bijection induces an isomorphism between an arc complex $\Delta(T,B_T)$ and a cluster complex $\Delta(\xx,B_T)$.
\end{itemize}
\end{theorem}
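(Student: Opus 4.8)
The plan is to run the proof in exact parallel with the $A_n$ case treated in Theorem \ref{thm:arc-variable}, transplanting every step to the quotient model in which diagonals are replaced by their $\Theta$-orbits on the $(2n+2)$-gon. The conceptual backbone is that this orbit model is the $\ZZ/2$-folding of the $A_{2n-1}$ realization: the $(2n+2)$-gon is precisely the polygon whose diagonals encode the $A_{2n-1}$ cluster structure via Theorem \ref{thm:arc-variable}, and the $\pi$-rotation $\Theta$ generates the order-two symmetry whose quotient produces $B_n$ (or $C_n$). A maximal compatible set of orbits consists of $n-1$ two-diagonal orbits together with one diameter, accounting for the $2n-1$ diagonals of a symmetric $A_{2n-1}$ triangulation, and the half-polygon $S_{\ell_n}$ is the device that records this folded data as the single $n\times n$ matrix $B_T$.

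For part (1), I would establish $B_{\varphi_i(T)}=\mu_i(B_T)$ by a direct local computation on $S_{\ell_n}$. If $\bar{\ell}_i$ is one of the $n-1$ non-diameter orbits, then exactly one of its two $\Theta$-related diagonals lies in $S_{\ell_n}$ (the other lying in the complementary half), and the triangle-contribution picture $B^\Delta$ around it is identical to the $A_n$ situation; hence the mutation rule \eqref{eq:matrix-mutation} reproduces the flipped matrix exactly as in the $A_n$ argument. The genuinely new case is a flip at or across the diameter orbit $\bar{\ell}_n$: here both diagonals of a neighbouring non-diameter orbit abut the diameter inside $S_{\ell_n}$, which is the origin of the $\pm 2$ entries in the definition of $B_T$. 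I would verify by the explicit local configurations that flipping through the diameter transforms these weighted entries precisely according to \eqref{eq:matrix-mutation}; this finite case analysis is the content carried by \cite{fzii}*{(12.6),(12.7),(12.8)}.

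For part (2), with part (1) in hand the correspondence $\bar{\ell}_{i;t}\mapsto x_{i;t}$ is defined inductively along $\TT_n$: starting from the initial identification $\bar{\ell}_i\mapsto x_i$, each flip in direction $i$ is matched to the seed mutation $\mu_i$ by part (1), so the assignment propagates consistently to every vertex of $\TT_n$. To promote this to a bijection between all orbits of diagonals and all cluster variables, I would compare exchange relations, checking that the two orbits exchanged by a flip satisfy the same Ptolemy-type relation that the exchanged cluster variables satisfy under \eqref{eq:x-mutation}, which is exactly \cite{fzii}*{(12.6),(12.7),(12.8)} for the diameter and non-diameter cases. Injectivity then follows because distinct orbits yield distinct Laurent monomials (seen through the folded $A_{2n-1}$ picture), and surjectivity follows from connectedness of $\TT_n$. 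Finally, since compatibility of orbits is non-crossing in $S$ and this matches simultaneous membership in a common maximal compatible set, the bijection carries simplices of $\Delta(T,B_T)$ to simplices of $\Delta(\xx,B_T)$ and back, yielding the claimed isomorphism.

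The main obstacle I anticipate is the diameter orbit. Unlike the $A_n$ case the orbit structure is not uniform, and $B_T$ is computed asymmetrically on the half-polygon $S_{\ell_n}$, so verifying that flips respect mutation right at the diameter—where both the $B_n$ versus $C_n$ distinction and the weight-$2$ entries reside—requires careful bookkeeping of how the two $\Theta$-related diagonals of a flipped non-diameter orbit interact with the diameter, and of how the choice of $S_{\ell_n}$ must be readjusted when the diameter itself is flipped. This is exactly where the reference computations do the real work, and where I would concentrate the case analysis.
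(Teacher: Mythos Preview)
Your proposal is correct and aligned with the paper's approach: the paper does not give a detailed proof at all but simply states that the theorem follows ``by \cite{fzii}*{Subsection 12.3} and checking exchange relations \cite{fzii}*{(12.6),(12.7),(12.8)}'', which is precisely the verification you outline. Your sketch is therefore a faithful (and more detailed) expansion of what the paper leaves implicit; the folding-from-$A_{2n-1}$ framing you add is a helpful conceptual gloss but is not needed for the argument and is not invoked in the paper's one-line justification.
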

We note that $B_T$ is a skew-symmetrizable matrix, which is mutation equivalent to an exchange matrix of tree $B_n$ type.

To obtain the geometric realization of a cluster structure of $C_n$ type, we replace $b_{ij}^\Delta$ in the above discussion with

\begin{align*}
b^\Delta_{ij} =\begin{cases}
 1 \quad&\text{for any $i\in[1,n-1],j\in[1,n]$, if $\Delta$ has diagonals labeled $\bar{\ell}_{i}$ and  $\bar{\ell}_{j}$, }\\
 &\text{with $\bar{\ell}_j$ following $\bar{\ell}_i$ in the clockwise order},\\
 -1 \quad &\text{if the same holds as the above, with the counterclockwise order,}\\
 2 \quad&\text{for any $j\in[1,n-1]$, if $i=n$ and $\Delta$ has diagonals labeled $\bar{\ell}_{i}$ and  $\bar{\ell}_{n}$, }\\
 &\text{with $\bar{\ell}_n$ following $\bar{\ell}_i$ in the clockwise order},\\
 -2 \quad &\text{if the same holds as the above, with the counterclockwise order,}\\
 0 \quad &\text{otherwise}.
\end{cases}
\end{align*}
For a maximal compatible set $T$ of a $(2n+2)$-gon $S$, if $B$ is the corresponding matrix to $T$ of the $B_n$ type, then that of $C_n$ type is given by the transpose of $-B$.

By the construction method of arc complexes $B_n$ and $C_n$ type, we have $\Delta(B_{n})\cong \Delta(C_{n})$.

\begin{example}\label{ex:correspondence-arc-variable-B3}
 Let $B=\begin{bmatrix}
 0&-1&0\\
 1&0&-2\\
 0&1&0
 \end{bmatrix}$ of type $B_3$. The corresponding valued quiver is $\begin{xy}(0,0)*+{1}="I",(10,0)*+{2}="J", (20,0)*+{{3}}="K" \ar@{->}^{(1,1)}"J";"I" \ar@{->}^{(1,2)}"K";"J"\end{xy}$, and $d(1)=d(2)=1,d(3)=2$. 
 
 Let $T$ be a set
 $\left\{\bar{\ell}_1=\begin{tikzpicture}[baseline=-1mm,scale=0.3]
 \coordinate (u1) at(67.5:1); \coordinate (u2) at(112.5:1); \coordinate (lu) at(157.5:1);
 \coordinate (ld) at(-157.5:1); \coordinate (ru) at(22.5:1); \coordinate (rd) at(-22.5:1);  \coordinate (d1) at(-67.5:1); \coordinate (d2) at(-112.5:1);
 \draw (u1)--(u2)--(lu)--(ld)--(d2)--(d1)--(rd)--(ru)--(u1); \node at(0,1.3) {}; \node at(0,-1.3) {}; \draw[blue,thick] (u1)--(lu);
 \draw[blue,thick] (d2)--(rd);
\end{tikzpicture}, \ \bar{\ell}_2=\begin{tikzpicture}[baseline=-1mm,scale=0.3]
 \coordinate (u1) at(67.5:1); \coordinate (u2) at(112.5:1); \coordinate (lu) at(157.5:1);
 \coordinate (ld) at(-157.5:1); \coordinate (ru) at(22.5:1); \coordinate (rd) at(-22.5:1);  \coordinate (d1) at(-67.5:1); \coordinate (d2) at(-112.5:1);
 \draw (u1)--(u2)--(lu)--(ld)--(d2)--(d1)--(rd)--(ru)--(u1); \node at(0,1.3) {}; \node at(0,-1.3) {}; \draw[blue,thick] (u1)--(ld);
 \draw[blue,thick] (d2)--(ru);
\end{tikzpicture},\ \bar{\ell}_3=\begin{tikzpicture}[baseline=-1mm,scale=0.3]
 \coordinate (u1) at(67.5:1); \coordinate (u2) at(112.5:1); \coordinate (lu) at(157.5:1);
 \coordinate (ld) at(-157.5:1); \coordinate (ru) at(22.5:1); \coordinate (rd) at(-22.5:1);  \coordinate (d1) at(-67.5:1); \coordinate (d2) at(-112.5:1);
 \draw (u1)--(u2)--(lu)--(ld)--(d2)--(d1)--(rd)--(ru)--(u1); \node at(0,1.3) {}; \node at(0,-1.3) {}; \draw[blue,thick] (u1)--(d2);
\end{tikzpicture}
\right\}$ of an octagon. Then, we have $B=B_T$. For example, we consider a cluster pattern $P^{\Sigma}(\xx,B)$ and a triangulation pattern $P^\Xi(T,B_T)$. Then, we have $\mu_{x_3}\left[\left(x_1,x_2,x_3\right)\right]=\left[\left(x_1,x_2,\dfrac{x_2^2+1}{x_3}\right)\right]$ by a mutation in direction $x_3$. In contrast, we have $\varphi_{\ell_3}\left(
\begin{tikzpicture}[baseline=-1mm,scale=0.3]
 \coordinate (u1) at(67.5:1); \coordinate (u2) at(112.5:1); \coordinate (lu) at(157.5:1);
 \coordinate (ld) at(-157.5:1); \coordinate (ru) at(22.5:1); \coordinate (rd) at(-22.5:1);  \coordinate (d1) at(-67.5:1); \coordinate (d2) at(-112.5:1);
 \draw (u1)--(u2)--(lu)--(ld)--(d2)--(d1)--(rd)--(ru)--(u1); \node at(0,1.3) {}; \node at(0,-1.3) {}; \draw[blue,thick] (u1)--(d2); \draw[blue,thick] (u1)--(lu);  \draw[blue,thick] (u1)--(ld); \draw[blue,thick] (ru)--(d2); \draw[blue,thick] (rd)--(d2);
\end{tikzpicture}\right)=
\begin{tikzpicture}[baseline=-1mm,scale=0.3]
 \coordinate (u1) at(67.5:1); \coordinate (u2) at(112.5:1); \coordinate (lu) at(157.5:1);
 \coordinate (ld) at(-157.5:1); \coordinate (ru) at(22.5:1); \coordinate (rd) at(-22.5:1);  \coordinate (d1) at(-67.5:1); \coordinate (d2) at(-112.5:1);
 \draw (u1)--(u2)--(lu)--(ld)--(d2)--(d1)--(rd)--(ru)--(u1); \node at(0,1.3) {}; \node at(0,-1.3) {}; \draw[blue,thick] (ru)--(ld); \draw[blue,thick] (u1)--(lu);  \draw[blue,thick] (u1)--(ld); \draw[blue,thick] (ru)--(d2); \draw[blue,thick] (rd)--(d2);
\end{tikzpicture}
$ by a flip in direction $\ell_3$. Therefore, $\dfrac{x_2^2+1}{x_3}$ corresponds to \begin{tikzpicture}[baseline=-1mm,scale=0.3]
 \coordinate (u1) at(67.5:1); \coordinate (u2) at(112.5:1); \coordinate (lu) at(157.5:1);
 \coordinate (ld) at(-157.5:1); \coordinate (ru) at(22.5:1); \coordinate (rd) at(-22.5:1);  \coordinate (d1) at(-67.5:1); \coordinate (d2) at(-112.5:1);
 \draw (u1)--(u2)--(lu)--(ld)--(d2)--(d1)--(rd)--(ru)--(u1); \node at(0,1.3) {}; \node at(0,-1.3) {}; \draw[blue,thick] (ru)--(ld);
\end{tikzpicture} by the canonical bijection.
The canonical bijection between the set of cluster variables and the orbit set is given in Table \ref{exampB3}, and the cluster complex of $B_3$ type is given in Figure \ref{B3complex}.
\begin{table}[ht]
\begin{tabular}{c|c}
 Cluster variable $x$ & diagonal corresponding to $x$
\end{tabular}
\vspace{2mm}\\
\begin{minipage}{0.3\hsize}
\begin{center}\begin{tabular}{c|c}
 {\small $x_1$} &
\begin{tikzpicture}[baseline=-1mm,scale=0.5]
  \coordinate (u1) at(67.5:1); \coordinate (u2) at(112.5:1); \coordinate (lu) at(157.5:1);
 \coordinate (ld) at(-157.5:1); \coordinate (ru) at(22.5:1); \coordinate (rd) at(-22.5:1);  \coordinate (d1) at(-67.5:1); \coordinate (d2) at(-112.5:1);
 \draw (u1)--(u2)--(lu)--(ld)--(d2)--(d1)--(rd)--(ru)--(u1); \node at(0,1.3) {}; \node at(0,-1.3) {};  \draw[blue,thick] (u1)--(lu);  \draw[blue,thick] (rd)--(d2);
\end{tikzpicture}
\\\hline
 {\small$x_2$} &
\begin{tikzpicture}[baseline=-1mm,scale=0.5]
  \coordinate (u1) at(67.5:1); \coordinate (u2) at(112.5:1); \coordinate (lu) at(157.5:1);
 \coordinate (ld) at(-157.5:1); \coordinate (ru) at(22.5:1); \coordinate (rd) at(-22.5:1);  \coordinate (d1) at(-67.5:1); \coordinate (d2) at(-112.5:1);
 \draw (u1)--(u2)--(lu)--(ld)--(d2)--(d1)--(rd)--(ru)--(u1); \node at(0,1.3) {}; \node at(0,-1.3) {}; \draw[blue,thick] (u1)--(ld);  \draw[blue,thick] (d2)--(ru);
\end{tikzpicture}
\\\hline
 {\small$x_3$} &
 \begin{tikzpicture}[baseline=-1mm,scale=0.5]
 \coordinate (u1) at(67.5:1); \coordinate (u2) at(112.5:1); \coordinate (lu) at(157.5:1);
 \coordinate (ld) at(-157.5:1); \coordinate (ru) at(22.5:1); \coordinate (rd) at(-22.5:1);  \coordinate (d1) at(-67.5:1); \coordinate (d2) at(-112.5:1);
 \draw (u1)--(u2)--(lu)--(ld)--(d2)--(d1)--(rd)--(ru)--(u1); \node at(0,1.3) {}; \node at(0,-1.3) {}; \draw[blue,thick] (u1)--(d2);
  \end{tikzpicture}
\end{tabular}\end{center}
\end{minipage}
\begin{minipage}{0.3\hsize}
\begin{center}\begin{tabular}{c|c}
 {\small$\cfrac{x_2+1}{x_1}$} &
\begin{tikzpicture}[baseline=-1mm,scale=0.5]
  \coordinate (u1) at(67.5:1); \coordinate (u2) at(112.5:1); \coordinate (lu) at(157.5:1);
 \coordinate (ld) at(-157.5:1); \coordinate (ru) at(22.5:1); \coordinate (rd) at(-22.5:1);  \coordinate (d1) at(-67.5:1); \coordinate (d2) at(-112.5:1);
 \draw (u1)--(u2)--(lu)--(ld)--(d2)--(d1)--(rd)--(ru)--(u1); \node at(0,1.3) {}; \node at(0,-1.3) {}; \draw[blue,thick] (u2)--(ld);  \draw[blue,thick] (d1)--(ru);
\end{tikzpicture}
\\\hline
 {\small$\cfrac{x_1+x_3}{x_2}$} &
\begin{tikzpicture}[baseline=-1mm,scale=0.5]
  \coordinate (u1) at(67.5:1); \coordinate (u2) at(112.5:1); \coordinate (lu) at(157.5:1);
 \coordinate (ld) at(-157.5:1); \coordinate (ru) at(22.5:1); \coordinate (rd) at(-22.5:1);  \coordinate (d1) at(-67.5:1); \coordinate (d2) at(-112.5:1);
 \draw (u1)--(u2)--(lu)--(ld)--(d2)--(d1)--(rd)--(ru)--(u1); \node at(0,1.3) {}; \node at(0,-1.3) {};
 \draw[blue,thick] (lu)--(d2); \draw[blue,thick] (u1)--(rd);
\end{tikzpicture}
\\\hline
 {\small$\cfrac{x_2^2+1}{x_3}$} &
\begin{tikzpicture}[baseline=-1mm,scale=0.5]
  \coordinate (u1) at(67.5:1); \coordinate (u2) at(112.5:1); \coordinate (lu) at(157.5:1);
 \coordinate (ld) at(-157.5:1); \coordinate (ru) at(22.5:1); \coordinate (rd) at(-22.5:1);  \coordinate (d1) at(-67.5:1); \coordinate (d2) at(-112.5:1);
 \draw (u1)--(u2)--(lu)--(ld)--(d2)--(d1)--(rd)--(ru)--(u1); \node at(0,1.3) {}; \node at(0,-1.3) {};
 \draw[blue,thick] (ld)--(ru);
\end{tikzpicture}
\end{tabular}\end{center}
\end{minipage}
\begin{tabular}{c|c}
 {\small$\cfrac{x_1+x_3+x_2x_3}{x_1x_2}$} &
\begin{tikzpicture}[baseline=-1mm,scale=0.5]
  \coordinate (u1) at(67.5:1); \coordinate (u2) at(112.5:1); \coordinate (lu) at(157.5:1);
 \coordinate (ld) at(-157.5:1); \coordinate (ru) at(22.5:1); \coordinate (rd) at(-22.5:1);  \coordinate (d1) at(-67.5:1); \coordinate (d2) at(-112.5:1);
 \draw (u1)--(u2)--(lu)--(ld)--(d2)--(d1)--(rd)--(ru)--(u1); \node at(0,1.3) {}; \node at(0,-1.3) {}; \draw[blue,thick] (u1)--(d1); \draw[blue,thick] (u2)--(d2);
\end{tikzpicture}
\\\hline
 {\small$\cfrac{x_1+x_3+x_1x_2^2}{x_2x_3}$} &
\begin{tikzpicture}[baseline=-1mm,scale=0.5]
  \coordinate (u1) at(67.5:1); \coordinate (u2) at(112.5:1); \coordinate (lu) at(157.5:1);
 \coordinate (ld) at(-157.5:1); \coordinate (ru) at(22.5:1); \coordinate (rd) at(-22.5:1);  \coordinate (d1) at(-67.5:1); \coordinate (d2) at(-112.5:1);
 \draw (u1)--(u2)--(lu)--(ld)--(d2)--(d1)--(rd)--(ru)--(u1); \node at(0,1.3) {}; \node at(0,-1.3) {}; \draw[blue,thick] (lu)--(ru); \draw[blue,thick] (ld)--(rd);
\end{tikzpicture}
\\\hline
  {\small$\cfrac{x_1^2x_2^2 + x_1^2 + 2x_1x_3 + x_3^2}{x_2^2x_3}$} &
\begin{tikzpicture}[baseline=-1mm,scale=0.5]
  \coordinate (u1) at(67.5:1); \coordinate (u2) at(112.5:1); \coordinate (lu) at(157.5:1);
 \coordinate (ld) at(-157.5:1); \coordinate (ru) at(22.5:1); \coordinate (rd) at(-22.5:1);  \coordinate (d1) at(-67.5:1); \coordinate (d2) at(-112.5:1);
 \draw (u1)--(u2)--(lu)--(ld)--(d2)--(d1)--(rd)--(ru)--(u1); \node at(0,1.3) {}; \node at(0,-1.3) {}; \draw[blue,thick] (rd)--(lu);
\end{tikzpicture}
  \end{tabular}\\
 \vspace{5mm}
 \begin{tabular}{c|c}
 {\small$\cfrac{x_1+x_3+x_1x_2^2+x_2x_3}{x_1x_2x_3}$} &
\begin{tikzpicture}[baseline=-1mm,scale=0.5]
 \coordinate (u1) at(67.5:1); \coordinate (u2) at(112.5:1); \coordinate (lu) at(157.5:1);
 \coordinate (ld) at(-157.5:1); \coordinate (ru) at(22.5:1); \coordinate (rd) at(-22.5:1);  \coordinate (d1) at(-67.5:1); \coordinate (d2) at(-112.5:1);
 \draw (u1)--(u2)--(lu)--(ld)--(d2)--(d1)--(rd)--(ru)--(u1); \node at(0,1.3) {}; \node at(0,-1.3) {}; \draw[blue,thick] (u2)--(ru); \draw[blue,thick] (ld)--(d1);
\end{tikzpicture}
\\\hline
 {\small$\cfrac{x_1^2x_2^2 + x_1x_2x_3 + x_2x_3^2 + x_1^2 + 2x_1x_3 + x_3^2}{x_1x_2^2x_3}$} &
\begin{tikzpicture}[baseline=-1mm,scale=0.5]
  \coordinate (u1) at(67.5:1); \coordinate (u2) at(112.5:1); \coordinate (lu) at(157.5:1);
 \coordinate (ld) at(-157.5:1); \coordinate (ru) at(22.5:1); \coordinate (rd) at(-22.5:1);  \coordinate (d1) at(-67.5:1); \coordinate (d2) at(-112.5:1);
 \draw (u1)--(u2)--(lu)--(ld)--(d2)--(d1)--(rd)--(ru)--(u1); \node at(0,1.3) {}; \node at(0,-1.3) {}; \draw[blue,thick] (lu)--(d1); \draw[blue,thick] (u2)--(rd);
\end{tikzpicture}
\\\hline
 {\small$\cfrac{x_1^2x_2^2 + x_2^2x_3^2 + 2x_1x_2x_3 + 2x_2x_3^2 + x_1^2 + 2x_1x_3 + x_3^2}{x_1^2x_2^2x_3}$} &
\begin{tikzpicture}[baseline=-1mm,scale=0.5]
 \coordinate (u1) at(67.5:1); \coordinate (u2) at(112.5:1); \coordinate (lu) at(157.5:1);
 \coordinate (ld) at(-157.5:1); \coordinate (ru) at(22.5:1); \coordinate (rd) at(-22.5:1);  \coordinate (d1) at(-67.5:1); \coordinate (d2) at(-112.5:1);
 \draw (u1)--(u2)--(lu)--(ld)--(d2)--(d1)--(rd)--(ru)--(u1); \node at(0,1.3) {}; \node at(0,-1.3) {}; \draw[blue,thick] (u2)--(d1);
\end{tikzpicture}
  \end{tabular}
 \vspace{5mm}
 \caption{Canonical bijection between the set of cluster variables and the set of orbits of diagonals of $B_3$ type}\label{exampB3}
\end{table}
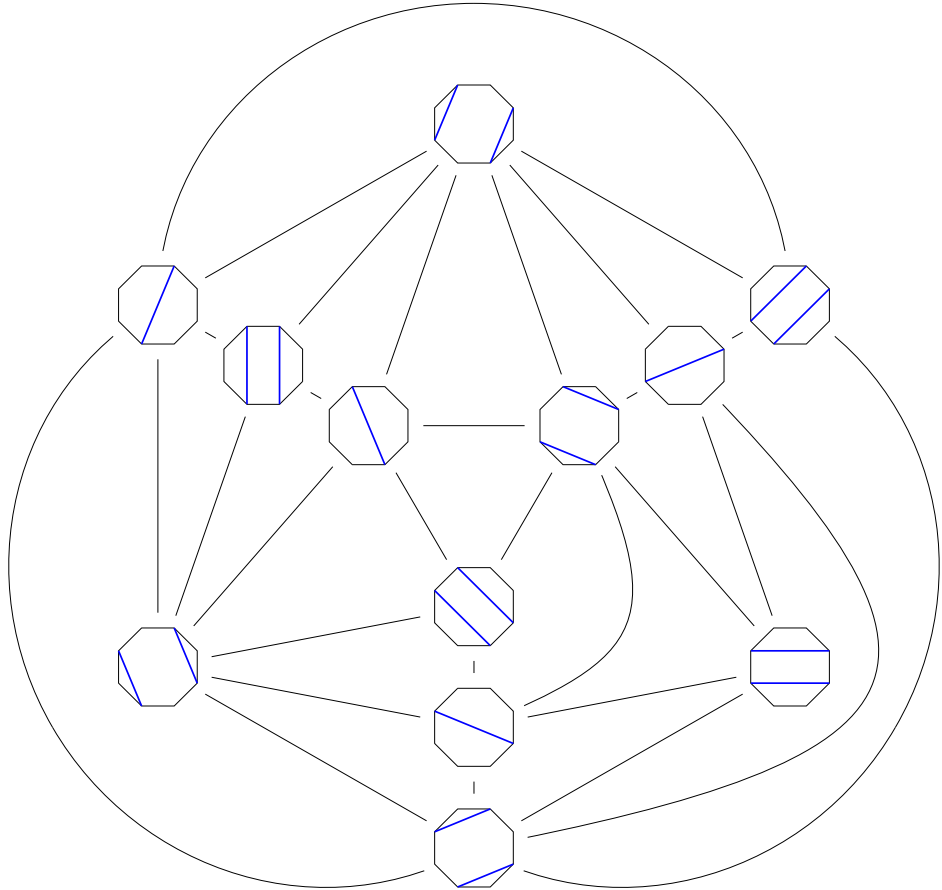
\begin{figure}[ht]
\caption{Cluster complex of $B_3$ type\label{B3complex}}
\hspace{5pt}
\begin{center}
\scalebox{0.8}{
\begin{tikzpicture}
\coordinate (0) at (0,0);
\coordinate (u*) at (90:8);
\coordinate (u) at (90:6);
\coordinate (ul) at (150:4);
\coordinate (ur) at (30:4);
\coordinate (uml) at (150:2);
\coordinate (umr) at (30:2);
\coordinate (dmc) at (-90:2);
\coordinate (dl) at (-150:6);
 \coordinate (dl*) at (-150:8);
\coordinate (dr) at (-30:6);
\coordinate (dr*) at (-30:8);
\coordinate (d) at (-90:4);
\coordinate (ll) at (150:6);
\coordinate (rr) at (30:6);
\coordinate (dd) at (-90:6);
\draw (u) to (ul);
\draw (u) to (ur);
\draw (ul) to (uml);
\draw (ur) to (umr);
\draw (umr) to (uml);
\draw (u) to (uml);
\draw (u) to (umr);
\draw (uml) to (dl);
\draw (umr) to (dr);
\draw (ul) to (dl);
\draw (ur) to (dr);
\draw (uml) to (dmc);
\draw (umr) to (dmc);
\draw (dl) to (dmc);
\draw (umr) [out=290,in=45] .. controls (-35:4) and (-45:4) .. (d);
\draw (ur) [out=290,in=45] .. controls (-20:9) and (-30:9) .. (dd);
\draw (dl) to (d);
\draw (dmc) to (d);
\draw (dr) to (d);
\draw (dd) to (d);
\draw (dd) to (dl);
\draw (dd) to (dr);
\draw (ll) to (ul);
\draw (ll) to (dl);
\draw (ll) to (u);
\draw (rr) to (ur);
\draw (rr) to (u);
\draw(ll) [out=90,in=180]to (u*);
\draw(rr) [out=90,in=0]to (u*);
\draw(ll) [out=210,in=120]to (dl*);
\draw(dd) [out=210,in=300]to (dl*);
\draw(dd) [out=330,in=240]to (dr*);
\draw(rr) [out=330,in=60]to (dr*);
\fill [white](u) circle (0.9cm);
\fill[white] (ul) circle (0.9cm);
\fill[white] (ur) circle (0.9cm);
\fill[white] (uml) circle (0.9cm);
\fill[white] (umr) circle (0.9cm);
\fill[white] (dmc) circle (0.9cm);
\fill[white] (dl) circle (0.9cm);
\fill[white] (dr) circle (0.9cm);
\fill[white] (d) circle (0.9cm);
\fill[white] (ll) circle (0.9cm);
\fill[white] (rr) circle (0.9cm);
\fill[white] (dd) circle (0.9cm);
\draw (u)++(22.5:0.7cm) to ++(135:0.54cm);
\draw (u)++(67.5:0.7cm) to ++(180:0.54cm);
\draw (u)++(112.5:0.7cm) to ++(225:0.54cm);
\draw (u)++(157.5:0.7cm) to ++(270:0.54cm);
\draw (u)++(202.5:0.7cm) to ++(315:0.54cm);
\draw (u)++(247.5:0.7cm) to ++(0:0.54cm);
\draw (u)++(292.5:0.7cm) to ++(45:0.54cm);
\draw (u)++(337.5:0.7cm) to ++(90:0.54cm);
\draw (ur)++(22.5:0.7cm) to ++(135:0.54cm);
\draw (ur)++(67.5:0.7cm) to ++(180:0.54cm);
\draw (ur)++(112.5:0.7cm) to ++(225:0.54cm);
\draw (ur)++(157.5:0.7cm) to ++(270:0.54cm);
\draw (ur)++(202.5:0.7cm) to ++(315:0.54cm);
\draw (ur)++(247.5:0.7cm) to ++(0:0.54cm);
\draw (ur)++(292.5:0.7cm) to ++(45:0.54cm);
\draw (ur)++(337.5:0.7cm) to ++(90:0.54cm);
\draw (ul)++(22.5:0.7cm) to ++(135:0.54cm);
\draw (ul)++(67.5:0.7cm) to ++(180:0.54cm);
\draw (ul)++(112.5:0.7cm) to ++(225:0.54cm);
\draw (ul)++(157.5:0.7cm) to ++(270:0.54cm);
\draw (ul)++(202.5:0.7cm) to ++(315:0.54cm);
\draw (ul)++(247.5:0.7cm) to ++(0:0.54cm);
\draw (ul)++(292.5:0.7cm) to ++(45:0.54cm);
\draw (ul)++(337.5:0.7cm) to ++(90:0.54cm);
\draw (umr)++(22.5:0.7cm) to ++(135:0.54cm);
\draw (umr)++(67.5:0.7cm) to ++(180:0.54cm);
\draw (umr)++(112.5:0.7cm) to ++(225:0.54cm);
\draw (umr)++(157.5:0.7cm) to ++(270:0.54cm);
\draw (umr)++(202.5:0.7cm) to ++(315:0.54cm);
\draw (umr)++(247.5:0.7cm) to ++(0:0.54cm);
\draw (umr)++(292.5:0.7cm) to ++(45:0.54cm);
\draw (umr)++(337.5:0.7cm) to ++(90:0.54cm);
\draw (uml)++(22.5:0.7cm) to ++(135:0.54cm);
\draw (uml)++(67.5:0.7cm) to ++(180:0.54cm);
\draw (uml)++(112.5:0.7cm) to ++(225:0.54cm);
\draw (uml)++(157.5:0.7cm) to ++(270:0.54cm);
\draw (uml)++(202.5:0.7cm) to ++(315:0.54cm);
\draw (uml)++(247.5:0.7cm) to ++(0:0.54cm);
\draw (uml)++(292.5:0.7cm) to ++(45:0.54cm);
\draw (uml)++(337.5:0.7cm) to ++(90:0.54cm);
\draw (dl)++(22.5:0.7cm) to ++(135:0.54cm);
\draw (dl)++(67.5:0.7cm) to ++(180:0.54cm);
\draw (dl)++(112.5:0.7cm) to ++(225:0.54cm);
\draw (dl)++(157.5:0.7cm) to ++(270:0.54cm);
\draw (dl)++(202.5:0.7cm) to ++(315:0.54cm);
\draw (dl)++(247.5:0.7cm) to ++(0:0.54cm);
\draw (dl)++(292.5:0.7cm) to ++(45:0.54cm);
\draw (dl)++(337.5:0.7cm) to ++(90:0.54cm);
\draw (dr)++(22.5:0.7cm) to ++(135:0.54cm);
\draw (dr)++(67.5:0.7cm) to ++(180:0.54cm);
\draw (dr)++(112.5:0.7cm) to ++(225:0.54cm);
\draw (dr)++(157.5:0.7cm) to ++(270:0.54cm);
\draw (dr)++(202.5:0.7cm) to ++(315:0.54cm);
\draw (dr)++(247.5:0.7cm) to ++(0:0.54cm);
\draw (dr)++(292.5:0.7cm) to ++(45:0.54cm);
\draw (dr)++(337.5:0.7cm) to ++(90:0.54cm);
\draw (dmc)++(22.5:0.7cm) to ++(135:0.54cm);
\draw (dmc)++(67.5:0.7cm) to ++(180:0.54cm);
\draw (dmc)++(112.5:0.7cm) to ++(225:0.54cm);
\draw (dmc)++(157.5:0.7cm) to ++(270:0.54cm);
\draw (dmc)++(202.5:0.7cm) to ++(315:0.54cm);
\draw (dmc)++(247.5:0.7cm) to ++(0:0.54cm);
\draw (dmc)++(292.5:0.7cm) to ++(45:0.54cm);
\draw (dmc)++(337.5:0.7cm) to ++(90:0.54cm);
\draw (d)++(22.5:0.7cm) to ++(135:0.54cm);
\draw (d)++(67.5:0.7cm) to ++(180:0.54cm);
\draw (d)++(112.5:0.7cm) to ++(225:0.54cm);
\draw (d)++(157.5:0.7cm) to ++(270:0.54cm);
\draw (d)++(202.5:0.7cm) to ++(315:0.54cm);
\draw (d)++(247.5:0.7cm) to ++(0:0.54cm);
\draw (d)++(292.5:0.7cm) to ++(45:0.54cm);
\draw (d)++(337.5:0.7cm) to ++(90:0.54cm);
\draw (ll)++(22.5:0.7cm) to ++(135:0.54cm);
\draw (ll)++(67.5:0.7cm) to ++(180:0.54cm);
\draw (ll)++(112.5:0.7cm) to ++(225:0.54cm);
\draw (ll)++(157.5:0.7cm) to ++(270:0.54cm);
\draw (ll)++(202.5:0.7cm) to ++(315:0.54cm);
\draw (ll)++(247.5:0.7cm) to ++(0:0.54cm);
\draw (ll)++(292.5:0.7cm) to ++(45:0.54cm);
\draw (ll)++(337.5:0.7cm) to ++(90:0.54cm);
\draw (rr)++(22.5:0.7cm) to ++(135:0.54cm);
\draw (rr)++(67.5:0.7cm) to ++(180:0.54cm);
\draw (rr)++(112.5:0.7cm) to ++(225:0.54cm);
\draw (rr)++(157.5:0.7cm) to ++(270:0.54cm);
\draw (rr)++(202.5:0.7cm) to ++(315:0.54cm);
\draw (rr)++(247.5:0.7cm) to ++(0:0.54cm);
\draw (rr)++(292.5:0.7cm) to ++(45:0.54cm);
\draw (rr)++(337.5:0.7cm) to ++(90:0.54cm);
\draw (dd)++(22.5:0.7cm) to ++(135:0.54cm);
\draw (dd)++(67.5:0.7cm) to ++(180:0.54cm);
\draw (dd)++(112.5:0.7cm) to ++(225:0.54cm);
\draw (dd)++(157.5:0.7cm) to ++(270:0.54cm);
\draw (dd)++(202.5:0.7cm) to ++(315:0.54cm);
\draw (dd)++(247.5:0.7cm) to ++(0:0.54cm);
\draw (dd)++(292.5:0.7cm) to ++(45:0.54cm);
\draw (dd)++(337.5:0.7cm) to ++(90:0.54cm);
\draw [thick, blue](ll)++(67.5:0.7cm) to ++(247.5:1.4cm);
\draw [thick, blue](dd)++(67.5:0.7cm) to ++(202.5:0.99cm);
\draw [thick, blue](dd)++(-22.5:0.7cm) to ++(202.5:0.99cm);
\draw [thick, blue](rr)++(67.5:0.7cm) to ++(225:1.29cm);
\draw [thick, blue](rr)++(22.5:0.7cm) to ++(225:1.29cm);
\draw [thick, blue](u)++(112.5:0.7cm) to ++(-112.5:0.99cm);
\draw [thick, blue](u)++(22.5:0.7cm) to ++(-112.5:0.99cm);
\draw [thick, blue](dr)++(22.5:0.7cm) to ++(180:1.29cm);
\draw [thick, blue](dr)++(-22.5:0.7cm) to ++(180:1.29cm);
\draw [thick, blue](dl)++(67.5:0.7cm) to ++(-67.5:0.99cm);
\draw [thick, blue](dl)++(157.5:0.7cm) to ++(-67.5:0.99cm);
\draw [thick, blue](ul)++(112.5:0.7cm) to ++(270:1.29cm);
\draw [thick, blue](ul)++(67.5:0.7cm) to ++(270:1.29cm);
\draw [thick, blue](ur)++(22.5:0.7cm) to ++(-157.5:1.4cm);
\draw [thick, blue](d)++(-22.5:0.7cm) to ++(157.5:1.4cm);
\draw [thick, blue](uml)++(112.5:0.7cm) to ++(-67.5:1.4cm);
\draw [thick, blue](dmc)++(112.5:0.7cm) to ++(-45:1.29cm);
\draw [thick, blue](dmc)++(157.5:0.7cm) to ++(-45:1.29cm);
\draw [thick, blue](umr)++(112.5:0.7cm) to ++(-22.5:0.99cm);
\draw [thick, blue](umr)++(202.5:0.7cm) to ++(-22.5:0.99cm);
\end{tikzpicture}}
\end{center}
\end{figure}
\end{example}
Here, we investigate a positive cluster complex of $B_n$ (and $C_n$) type. First, we give a generalization of the closed star.
 Let $K$ be a simplicial complex and $v$ a vertex of $K$. For any subset $V$ of a vertex set of $K$, we denote by
 \[
 \st_K(V)=\bigcup_{v\in V}\st_K(v).
 \]
 the union of closed stars of $v$. 

 We denote by $I$ the vertex set consisting of all cluster variables in the initial cluster $\xx$. The main theorem in this subsection is the following.
\begin{theorem}\label{thm:positive-simplex-B_n}
Let $\Delta(\xx,B)$ be a cluster complex of $B_n$ or $C_n$ type. Suppose that $Q_B$ has one of the following forms:
\begin{align}
\begin{xy}
    (50,0)*{\circ}="A",(60,0)*{\circ}="B",(70,0)*{\circ}="C",(80,0)*{\circ}="D",(85,0)*{}="E",(90,0)*{}="F",(95,0)*{\circ}="G",(105,0)*{\circ}="H"\ar@{->} "A";"B", \ar@{->} "B";"C", \ar@{->} "C";"D", \ar@{-} "D";"E", \ar@{.} "E";"F",\ar@{->} "F";"G",\ar@{->}^{(1,2)} "G";"H"
\end{xy}\label{assumption:Bn1}\\
\begin{xy}
    (50,0)*{\circ}="A",(60,0)*{\circ}="B",(70,0)*{\circ}="C",(80,0)*{\circ}="D",(85,0)*{}="E",(90,0)*{}="F",(95,0)*{\circ}="G",(105,0)*{\circ}="H"\ar@{<-} "A";"B", \ar@{<-} "B";"C", \ar@{<-} "C";"D", \ar@{<-} "D";"E", \ar@{.} "E";"F",\ar@{-} "F";"G",\ar@{<-}^{(2,1)} "G";"H"
\end{xy}\label{assumption:Bn2}\\
\begin{xy}
    (50,0)*{\circ}="A",(60,0)*{\circ}="B",(70,0)*{\circ}="C",(80,0)*{\circ}="D",(85,0)*{}="E",(90,0)*{}="F",(95,0)*{\circ}="G",(105,0)*{\circ}="H"\ar@{->} "A";"B", \ar@{->} "B";"C", \ar@{->} "C";"D", \ar@{-} "D";"E", \ar@{.} "E";"F",\ar@{->} "F";"G",\ar@{->}^{(2,1)} "G";"H"
\end{xy}\label{assumption:Cn1}\\
\begin{xy}
    (50,0)*{\circ}="A",(60,0)*{\circ}="B",(70,0)*{\circ}="C",(80,0)*{\circ}="D",(85,0)*{}="E",(90,0)*{}="F",(95,0)*{\circ}="G",(105,0)*{\circ}="H"\ar@{<-} "A";"B", \ar@{<-} "B";"C", \ar@{<-} "C";"D", \ar@{<-} "D";"E", \ar@{.} "E";"F",\ar@{-} "F";"G",\ar@{<-}^{(1,2)} "G";"H"
\end{xy}\label{assumption:Cn2}
\end{align}
Then, $\Delta^+(\xx,B)$ is isomorphic to $\st_{\Delta(\xx,B)}(I)$.
\end{theorem}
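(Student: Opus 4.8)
The plan is to work entirely inside the centrally symmetric polygon model of Theorem \ref{thm:arc-variable-Bn}, in which cluster variables are $\Theta$-orbits of diagonals of a regular $(2n+2)$-gon $S$ and clusters are centrally symmetric triangulations. Since $\Delta(B_n)\cong\Delta(C_n)$, the four cases \eqref{assumption:Bn1}--\eqref{assumption:Cn2} can be treated simultaneously. Exactly as in Remark \ref{rem:equivalent-condition-An}, the linear-orientation hypothesis forces the initial triangulation to be a \emph{double fan}: labeling the vertices of $S$ by $0,1,\dots,2n+1$ with $\Theta(i)=i+(n+1)$, we may assume that the initial orbit set $I$ is represented by the diagonals $0\text{-}2,\,0\text{-}3,\,\dots,\,0\text{-}(n+1)$ (the last being the diameter), all incident to the vertex $0$ and, via $\Theta$, to its antipode $n+1$. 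First I would record that each of the four quivers produces such a fan; this is the $B_n/C_n$ analogue of Remark \ref{rem:equivalent-condition-An}, proved in the same way.

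Next I would introduce the mirror fan. Let $\sigma$ be the reflection of $S$ about the axis through the midpoints of the edges $(0,1)$ and $(n+1,n+2)$, i.e.\ $\sigma(i)=1-i \pmod{2n+2}$. A direct check gives $\sigma\Theta=\Theta\sigma$, so $\sigma$ permutes $\Theta$-orbits of diagonals and preserves non-crossing; hence it induces a simplicial automorphism of the arc complex, and therefore of $\Delta(\xx,B)$. Put $N:=\sigma(I)$, which is represented by the diagonals $1\text{-}(n+2),\,1\text{-}(n+3),\,\dots,\,1\text{-}(2n+1)$. Two facts are then immediate from these explicit representatives: $N\cap I=\varnothing$ (no orbit of $N$ is incident to $0$ or $n+1$, while every orbit of $I$ is); and each $\nu\in N$ crosses every orbit of $I$, since the representative $1\text{-}m$ with $m\in\{n+2,\dots,2n+1\}$ separates $1$ from $m$ across each $0\text{-}k$, $k\in\{2,\dots,n+1\}$.

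The heart of the argument is the identity $\Delta^+(\xx,B)=\st_{\Delta(\xx,B)}(N)$, which I would establish by two inclusions. For $\st(N)\subseteq\Delta^+$: if $F\in\st(\nu)$ for some $\nu\in N$, then every vertex of $F$ is compatible with $\nu$; as $\nu$ crosses all of $I$, no vertex of $F$ is initial, so $F$---a simplex of $\Delta(\xx,B)$ all of whose vertices are non-initial---belongs to the full subcomplex $\Delta^+(\xx,B)$ (Definition \ref{def:positiveclustercomplex}). For $\Delta^+\subseteq\st(N)$: by purity (Proposition \ref{pure}) any positive simplex $F$ extends to a positive cluster $C$, i.e.\ a centrally symmetric triangulation containing no orbit of $I$. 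The key observation is that the boundary edge $(0,1)$ lies in a unique triangle $0\text{-}1\text{-}p$ of $C$; if $p\in\{2,\dots,n+1\}$ then the diagonal $0\text{-}p$ represents an orbit of $I$, while if $p\in\{n+2,\dots,2n+1\}$ then $1\text{-}p$ represents an orbit of $N$. These cases exhaust $p$, and since $C$ avoids $I$ we conclude $C$ contains some $\nu\in N$; hence $F\cup\{\nu\}\subseteq C$ is a simplex and $F\in\st(\nu)\subseteq\st(N)$.

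Finally, because $\sigma$ is a simplicial automorphism of $\Delta(\xx,B)$ with $\sigma(I)=N$, it carries $\st_{\Delta(\xx,B)}(I)$ isomorphically onto $\st_{\Delta(\xx,B)}(N)=\Delta^+(\xx,B)$, which is the asserted isomorphism. The step I expect to be the main obstacle is the inclusion $\Delta^+\subseteq\st(N)$: everything hinges on the boundary-edge-triangle observation, and some care is needed to verify uniformly across the four orientations \eqref{assumption:Bn1}--\eqref{assumption:Cn2}, and for both $B_n$ and $C_n$, that the initial triangulation really is the double fan above, so that the representatives of $I$ and of $N=\sigma(I)$ take the stated form. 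The remaining bookkeeping---distinguishing an orbit from its two constituent diagonals when testing crossings---is routine.
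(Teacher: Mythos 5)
Your proposal is correct. It shares the paper's overall architecture --- identify the set $N$ of the $n$ vertices non-linked to $I$ (your $N$ is exactly the paper's $I'$), prove $\Delta^+(\xx,B)=\st_{\Delta(\xx,B)}(N)$ by double inclusion, then transport $\st_{\Delta(\xx,B)}(I)$ onto it --- but the two decisive steps are carried out by genuinely different arguments. For the inclusion $\Delta^+(\xx,B)\subseteq\st_{\Delta(\xx,B)}(N)$ the paper argues by contradiction, running an iterative selection algorithm on the orbits of $T_F$ to produce a single orbit crossing every element of $I'$, which forces that orbit to be initial. You instead enlarge $F$ to a positive cluster $C$ via Proposition \ref{pure} and read an element of $N$ off the unique triangle of $C$ containing the boundary edge $(0,1)$: its third vertex $p$ contributes either the diagonal $(0,p)$, whose orbit lies in $I$, or the diagonal $(1,p)$, whose orbit lies in $N$, and since $C$ avoids $I$ the second case must occur. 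This is shorter and eliminates the algorithmic case analysis, at the cost of invoking purity, which the paper's version does not need. For the isomorphism $\st_{\Delta(\xx,B)}(I)\cong\st_{\Delta(\xx,B)}(N)$, the paper (Lemma \ref{lem:description-positiveconeBn}) verifies that $I'$ is itself a cluster whose exchange matrix equals $B$ up to permutation, whereas your reflection $\sigma$ (commuting with $\Theta$) yields a global simplicial automorphism of $\Delta(\xx,B)$ sending $I$ to $N$, which is marginally stronger and makes the identification of the two closed stars immediate. Both treatments rest on the same normalization of the initial triangulation to a double fan (the paper's Remark \ref{rem:equivalent-condition-Bn}), so no extra unproved input is being smuggled in.
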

\begin{remark}\label{rem:equivalent-condition-Bn}
A matrix $B$ satisfies the assumption in Theorem \ref{thm:positive-simplex-B_n} if and only if a matrix $B$ has the following property: for a triangulation of a $(2n+2)$-gon $S$ obtained by a maximal compatible set $T$ which gives $B_T=B$ and a diameter $\bar{\ell}_n$ of $T$, there exists a vertex $v$ of $S_{\ell_n}$ such that all diagonals of $T$ in $S_{\ell_n}$ share $v$ (see Figure \ref{ex:assumption-of-Bn}). The shape shown at left in Figure \ref{ex:assumption-of-Bn} corresponds with a valued quiver \eqref{assumption:Bn1} and \eqref{assumption:Cn1}, and the shape shown at right corresponds to \eqref{assumption:Bn2} and \eqref{assumption:Cn2}. Because of symmetry, we may assume the left shape in Figure \ref{ex:assumption-of-Bn} without loss of generality.
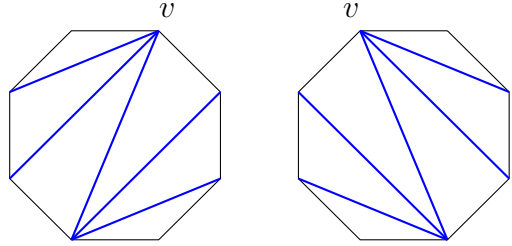
\begin{figure}[ht]
\caption{Triangulation corresponding to $B$ satisfying the assumption in Theorem \ref{thm:positive-simplex-B_n}}\label{ex:assumption-of-Bn}
\[
\begin{tikzpicture}
 \coordinate (u1) at(67.5:1.5); \coordinate (u2) at(112.5:1.5); \coordinate (lu) at(157.5:1.5);
 \coordinate (ld) at(-157.5:1.5); \coordinate (ru) at(22.5:1.5); \coordinate (rd) at(-22.5:1.5);  \coordinate (d1) at(-67.5:1.5); \coordinate (d2) at(-112.5:1.5);
 \draw (u1)--(u2)--(lu)--(ld)--(d2)--(d1)--(rd)--(ru)--(u1);  \draw[blue,thick] (u1)--(lu); \draw[blue,thick] (u1)--(ld); \draw[blue,thick] (u1)--(d2);
 \draw[blue,thick] (d2)--(rd); \draw[blue,thick] (d2)--(ru);
 \node at (67.5:1.8) {$v$};
\end{tikzpicture}
\hspace{10mm}
\begin{tikzpicture}
 \coordinate (u1) at(67.5:1.5); \coordinate (u2) at(112.5:1.5); \coordinate (lu) at(157.5:1.5);
 \coordinate (ld) at(-157.5:1.5); \coordinate (ru) at(22.5:1.5); \coordinate (rd) at(-22.5:1.5);  \coordinate (d1) at(-67.5:1.5); \coordinate (d2) at(-112.5:1.5);
 \draw (u1)--(u2)--(lu)--(ld)--(d2)--(d1)--(rd)--(ru)--(u1);  \draw[blue,thick] (u2)--(ru); \draw[blue,thick] (u2)--(rd); \draw[blue,thick] (u2)--(d1);
 \draw[blue,thick] (d1)--(ld); \draw[blue,thick] (d1)--(lu);
 \node at (112.5:1.8) {$v$};
\end{tikzpicture}
\]
\end{figure}
\end{remark}
The following lemma can be applied to an exchange matrix of tree $B_n$ or $C_n$ type, which contains the case \eqref{assumption:Bn1}--\eqref{assumption:Cn2}.
\begin{lemma}\label{lem:interior-B_n}
Let $B=(b_{ij})$ be an exchange matrix of tree $B_n$ or $C_n$ type. For a cluster complex $\Delta(\xx,B)$, there exist just $n$ vertices that are each non-linked to $I$.
\end{lemma}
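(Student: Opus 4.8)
The plan is to work entirely in the arc model of Theorem \ref{thm:arc-variable-Bn}, where cluster variables are $\Theta$-orbits of diagonals of the $(2n+2)$-gon $S$ and compatibility is non-crossing, so that a vertex $x$ is non-linked to $I$ exactly when its orbit $\bar\ell_x$ crosses every orbit of the initial triangulation $T$. Since the diameter $\bar\ell_n$ of $T$ is a single $\Theta$-invariant diagonal, $\bar\ell_x$ crosses it iff each diagonal of $\bar\ell_x$ crosses $\ell_n$; hence every non-linked orbit runs between the two open halves of $S$ cut out by $\ell_n$. Labelling the vertices cyclically as $q_0,\dots,q_{2n+1}$ with $\ell_n=q_0q_{n+1}$, such an orbit has a representative $q_sq_t$ with $s\in\{1,\dots,n\}$ and $t\in\{n+2,\dots,2n+1\}$; writing $t'=t-(n+1)\in\{1,\dots,n\}$ and using that $\Theta$ sends $q_sq_t$ to $q_{t'}q_{s+n+1}$, the orbit is recorded faithfully by the unordered pair $\{s,t'\}$ of interior vertices of the half-polygon $S_{\ell_n}=q_0q_1\cdots q_{n+1}$ (the case $s=t'$ giving the diameters). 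So I would first reduce the whole count to a combinatorial problem inside the $(n+2)$-gon $S_{\ell_n}$.

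Next I would establish the crossing rule. For an upper half-diagonal $h=q_aq_b$ with $a<b$, a direct check of the cyclic interleaving of the four endpoints shows that $q_sq_t$ crosses $h$ iff $a<s<b$ and crosses $\Theta h$ iff $a<t'<b$; consequently $\bar\ell_x$ crosses the orbit $\bar h$ iff the open interval $(a,b)$ contains $s$ or $t'$. As crossing $\bar\ell_n$ is automatic, the orbit $\{s,t'\}$ is non-linked to $I$ iff the pair $\{s,t'\}$ \emph{stabs} every half-diagonal of $T$, i.e. meets the open vertex-interval $(a,b)$ of each diagonal $q_aq_b$ of the triangulation $T_{1/2}$ of $S_{\ell_n}$. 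Thus the task becomes: count the unordered pairs of interior vertices stabbing all diagonals of the triangulated $(n+2)$-gon.

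Here the tree-type hypothesis is indispensable, and I expect its geometric translation to be the main obstacle: a centred fan (for instance the pentagon fan from the vertex opposite the edge $\ell_n$) is stabbed by only one pair, so the count is \emph{not} uniform over all triangulations. The point is that tree $B_n$ or $C_n$ type forces the diameter orbit to be a leaf of the path quiver $Q_B$; computing its quiver-degree from the definition of $B^\Delta$ shows that the triangle of $T$ on the edge $\ell_n$ is $q_0q_{n+1}q_c$ with $c\in\{1,n\}$, and peeling off this ear leaves a tree-type triangulation of the $(n+1)$-gon with new boundary edge $q_1q_{n+1}$ (taking $c=1$; the case $c=n$ is the reflection $q_i\leftrightarrow q_{n+1-i}$). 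I would record this recursive ear structure as the working description of tree type.

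Finally I would run a simultaneous induction on $m$ proving, for any tree-type triangulation of an $(m+2)$-gon with distinguished boundary edge: (A) there is a unique interior vertex $w$ stabbing all diagonals, and (B) the stabbing pairs are exactly the pairs $\{w,x\}$ with $x$ interior, so there are $m$ of them. The base $m=1$ (a triangle, no diagonals) is immediate. For the step, with ear $q_0q_{m+1}q_1$, diagonal $d_0=q_1q_{m+1}$ of interval $\{2,\dots,m\}$, and sub-triangulation $T'$ on the $(m+1)$-gon, note that $q_1$ stabs neither $d_0$ nor any diagonal of $T'$. A single point stabs all of $T$ iff it lies in $\{2,\dots,m\}$ and stabs all of $T'$, giving $w=w'$ by induction (A). A pair stabs all of $T$ iff it stabs $d_0$ and stabs $T'$: a pair inside $\{2,\dots,m\}$ does so iff it contains $w'$ by induction (B), while the only admissible pair meeting $q_1$ is $\{q_1,w'\}$, since then $w'$ must alone stab $T'$ by (A). Hence the stabbing pairs are exactly $\{w',x\}$ for $x\in\{1,\dots,m\}$, that is $m$ pairs through $w=w'$. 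Taking $m=n$, the non-linked vertices are precisely these $n$ orbits, which proves the lemma.
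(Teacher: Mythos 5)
Your proof is correct, and it turns on the same geometric realization as the paper's: the arc model of Theorem \ref{thm:arc-variable-Bn} plus the observation that the tree-type hypothesis forces an ear structure on the triangulation, producing a distinguished vertex through which all non-linked orbits must pass. The difference is in what is actually argued. The paper's proof is a bare construction: it notes that tree type gives exactly two vertices of $S$ lying on a single triangle, fixes one such vertex $w$, and declares the orbits of the diagonals joining $w$ to the $n$ interior vertices of the opposite half-polygon to be ``the desired orbits,'' leaving unverified both that these orbits cross every orbit of $T$ and, more importantly, that no other orbit does. Your reduction to the stabbing problem in $S_{\ell_n}$ (encoding each $\ell_n$-crossing orbit by the unordered pair $\{s,t'\}$ of its lower endpoints, with the crossing rule $a<s<b$ or $a<t'<b$) and the subsequent ear-peeling induction prove both halves; statement (B) of your induction is precisely the completeness claim the paper glosses over, and your aside about the centred fan correctly isolates where tree type is indispensable, since a non-tree triangulation can admit as few as one stabbing pair. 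The two constructions identify the same objects: your unique stabbing vertex of $S_{\ell_n}$ is the image under $\Theta$ of the paper's ear vertex $w$, and your $n$ pairs $\{w,x\}$ are exactly the orbits of the paper's diagonals $\ell_{v_i}$. In short, your argument is longer but strictly more complete than the one printed in the paper.
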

\begin{proof}
By the canonical bijection between cluster variables and orbits of diagonals of a regular $(2n+2)$-gon $S$, it suffices to show that there exist $n$ orbits of diagonals such that each orbit intersects with all the orbits of diagonals corresponding to the initial cluster variables at the interior of $S$. Let $T$ be a maximal compatible orbit set corresponding to $\xx$. There are just two vertices of $S$ shared by only a single triangle in the triangulation induced by $T$ since the initial exchange matrix $B$ is of the tree $B_n$ type. We fix a vertex $w$ in these two and consider an $(n+2)$-gon $S_{\ell_n}$ cut by a diameter $\ell_n$ and not including $w$. Let $v_1,\dots,v_n$ be the vertices of an $(n+2)$-gon $S_{\ell_n}$ which are not endpoints of $\ell_n$, and $\ell_{v_1},\dots,\ell_{v_n}$ be diagonals combining $w$ with $v_1,\dots,v_n$, respectively. Then, orbits of $\ell_{v_1},\dots,\ell_{v_n}$ by $\Theta$ are the desired orbits.
\end{proof}
We denote by $I'=\{x'_1,\dots,x'_n\}$ the unique set of $n$ vertices that are non-linked to $I$. 
\begin{lemma}\label{lem:description-positiveconeBn}
For a cluster complex $\Delta(\xx,B)$ satisfying the condition in Theorem \ref{thm:positive-simplex-B_n}, $\st_{\Delta(\xx,B)}(I')$ is isomorphic to $\st_{\Delta(\xx,B)}(I)$.
\end{lemma}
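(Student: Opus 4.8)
The plan is to realize the isomorphism by a single symmetry of the ambient $(2n+2)$-gon that interchanges the vertex sets $I$ and $I'$ and descends to the arc complex. By Theorem \ref{thm:arc-variable-Bn} the cluster complex $\Delta(\xx,B)$ is identified with the arc complex whose vertices are $\Theta$-orbits of diagonals, where $\Theta$ is the $\pi$-rotation; and by Remark \ref{rem:equivalent-condition-Bn} (together with $\Delta(B_n)\cong\Delta(C_n)$) we may assume the ``left'' fan picture. I would fix the labelling: name the vertices $P_0,\dots,P_{2n+1}$ cyclically, take the diameter $\ell_n=P_0P_{n+1}$, and let the initial orbit set $I$ be the fan of diagonals $P_0P_2,\dots,P_0P_n$ centered at $v=P_0$ together with (the orbit of) $\ell_n$.

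First I would recall from the proof of Lemma \ref{lem:interior-B_n} the explicit description of $I'$. The whole ($\Theta$-invariant) triangulation has exactly two ears, namely the neighbor $w=P_1$ of $v$ and its image $\Theta(w)=P_{n+2}$; choosing $w=P_1$ and the half not containing it, one reads off that $I'$ consists of the diameter $P_1P_{n+2}$ together with the diagonals $P_1P_{n+3},\dots,P_1P_{2n+1}$ (and their $\Theta$-orbits). Thus $I'$ is itself a ``fan plus diameter'', now centered at the ear $w=P_1$.

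Next I would introduce the reflection $R$ of the polygon across the axis through the midpoints of the antipodal edges $P_0P_1$ and $P_{n+1}P_{n+2}$, that is, $R\colon P_k\mapsto P_{1-k}$ with indices taken modulo $2n+2$. Two facts must be checked, both routine. First, $R$ commutes with $\Theta\colon P_k\mapsto P_{k+n+1}$, since the half-turn is central in the dihedral symmetry group of an even polygon; hence $R$ carries $\Theta$-orbits of diagonals to $\Theta$-orbits and preserves the diameter/pair dichotomy. Second, $R$ is an isometry, so it preserves crossing and non-crossing of diagonals, hence compatibility of orbits. Consequently $R$ induces a simplicial automorphism $\bar R$ of the orbit arc complex, and therefore of $\Delta(\xx,B)$ via the canonical bijection of Theorem \ref{thm:arc-variable-Bn}.

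It then remains to verify $\bar R(I)=I'$, a short index computation: $R$ sends the diameter $P_0P_{n+1}$ to $P_1P_{n+2}$, and the fan diagonals $\{P_0P_j\mid 2\le j\le n\}$ to $\{P_1P_{1-j}\}=\{P_1P_{n+3},\dots,P_1P_{2n+1}\}$, which is exactly the description of $I'$ obtained above. Finally, since any simplicial automorphism $\phi$ satisfies $\phi(\st_K(u))=\st_K(\phi(u))$ for each vertex $u$, and hence $\phi(\st_K(V))=\st_K(\phi(V))$ for a vertex subset $V$, we conclude $\bar R(\st_{\Delta(\xx,B)}(I))=\st_{\Delta(\xx,B)}(\bar R(I))=\st_{\Delta(\xx,B)}(I')$, which is the claimed isomorphism. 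I expect the only genuine subtlety to be locating the correct reflection axis (through the edge $vw$) and confirming that $R$ commutes with $\Theta$, so that $R$ is well defined on orbits; once that is in place, the identity $R(I)=I'$ and the transport of closed stars along $\bar R$ are mechanical.
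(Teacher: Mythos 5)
Your proof is correct. The computations check out: with the fan of $I$ centered at $P_0$ and the diameter $P_0P_{n+1}$, the set $I'$ read off from the proof of Lemma \ref{lem:interior-B_n} is exactly the fan at the ear $P_1$ into the opposite half, the map $R\colon P_k\mapsto P_{1-k}$ is the dihedral reflection through the edge $P_0P_1$, it commutes with the half-turn $\Theta$ (so it acts on orbits), preserves crossings, and sends $I$ to $I'$; transporting closed stars along the induced automorphism $\bar R$ then gives the claim.

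The route differs from the paper's in how the isomorphism is produced. The paper shows that $I'$ is itself a cluster satisfying the condition of Remark \ref{rem:equivalent-condition-Bn} (the mirror-image fan), i.e.\ its exchange matrix equals $B$ up to a permutation of indices, and then asserts the existence of an isomorphism $\st_{\Delta(\xx,B)}(I')\to\st_{\Delta(\xx,B)}(I)$ sending $x'_{\sigma(i)}\mapsto x_i$; implicitly this rests on the fact that seeds with the same exchange matrix generate isomorphic pointed cluster complexes. You instead exhibit a single explicit symmetry of the ambient $(2n+2)$-gon that restricts to the desired isomorphism. Your version is more self-contained and actually yields a stronger statement (a global automorphism of $\Delta(\xx,B)$ interchanging $I$ and $I'$, which is also what underlies Corollary \ref{cor:boundary-Bn} and Corollary \ref{cor:intersetion-B}); the paper's phrasing is shorter and stays at the level of seeds, which generalizes more readily to situations without a convenient polygon model.
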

\begin{proof}
We prove that $I'$ forms a cluster, and an exchange matrix $B'$ associated with this cluster equals $B$ up to the permutation of indices.
By the canonical bijection between cluster variables and orbits of diagonals of a regular $(2n+2)$-gon $S$, it suffices to show that the set of orbits corresponding to $I'$ satisfies the condition in Remark \ref{rem:equivalent-condition-Bn}. For one orbit to intersect all orbits corresponding to $I$, it must have a vertex shared by one triangle as one of its endpoints in the triangulation by $I$. Thus, if the orbit set corresponding to $I$ is that shown at left in Figure \ref{ex:assumption-of-Bn}, then that corresponding to $I'$ is that shown at right. Therefore, there exists an isomorphism and a permutation $\sigma$ such that \begin{align}
    \st_{\Delta(\xx,B)}(I')\to \st_{\Delta(\xx,B)}(I),\quad \{x'_{\sigma(i)}\} \mapsto \{x_i\},
\end{align}
where $1\leq i\leq n$.
\end{proof}
\begin{proof}[Proof of Theorem \ref{thm:positive-simplex-B_n}]
Because of Lemma \ref{lem:description-positiveconeBn}, it suffices to show $\st_{\Delta(\xx,B)}(I')=\Delta^+(\xx,B)$. 

First, we prove that $\st_{\Delta(\xx,B)}(I')\subset\Delta^+(\xx,B)$. It suffices to show that $\st_{\Delta(\xx,B)}(x'_i)\subset \Delta^+(\xx,B)$ for all $i$. This follows from the same discussion as in the proof of Theorem \ref{thm:positive-simplex-A_n}. 

Second, let us prove that $\Delta^+(\xx,B)\subset\st_{\Delta(\xx,B)}(I')$. It suffices to show that, for any simplex $F$ of $\Delta^+(\xx,B)$, there is $x'_i\in I'$ such that $F\cup\{x'_i\}$ is a simplex of $\Delta(\xx,B)$. We assume that there exists a simplex $F$ of $\Delta^+(\xx,B)$ such that $F\cup\{x'_i\}$ is not a simplex of $\Delta(\xx,B)$ for all $x'_i$. By the canonical bijection between cluster variables and orbits of diagonals of a regular $(2n+2)$-gon $S$, we can assume that the set $T_F$ of orbits corresponding to $F$ contains orbits $\bar{m}_1,\dots,\bar{m}_n$ (which may be duplicated) of diagonals such that $\bar{m}_i$ intersects with $\bar{\ell}'_i$, which is an orbit corresponding to $x'_i$, at the interior of $S$.  We note that $\{\bar{\ell}'_1,\dots, \bar{\ell}'_n\}$ satisfies the condition in Remark \ref{rem:equivalent-condition-Bn} by Lemma \ref{lem:description-positiveconeBn}. Then, we can find an orbit $\bar{m}$ of diagonals in $T_F$ intersecting all $\bar{\ell}'_i$ at the interior of $S$ by the following algorithm (see Figure \ref{fig:find-algorithm}):
\begin{itemize}
    \item [(1)] We assume that labels of orbits $\{\bar{\ell}'_1,\dots, \bar{\ell}'_n\}$ are numbered from the outside in order, and denote by $u$ a vertex sharing only one triangle in a triangulation given by $\{\bar{\ell}'_1,\dots, \bar{\ell}'_n\}$.
    \item[(2)] We choose an orbit $\bar{m}_1$ in $T_F$ intersecting with $\bar{\ell}'_1$. Then, one of the diagonals in $\bar{m}_1$ must have an endpoint at $u$. If $\bar{m}_1$ intersects with $\bar{\ell}'_n$, then we have $\bar{m}=\bar{m_1}$. If not, we proceed with the next step.
    \item[(3)] We assume that $\bar{m}_1$ intersects with $\bar{\ell}_i'$ and does not intersect with $\bar{\ell}_{i+1}'$. We choose an orbit $\bar{m}_{i+1}$ in $T_F$ intersecting with $\bar{\ell}'_{i+1}$. Then, one of the diagonals in $\bar{m}_{i+1}$ must have an endpoint at $u$ since if it did not, $\bar{m}_{i+1}$ would intersect with $\bar{m}_1$ at the interior of $S$. If $\bar{m}_{i+1}$ intersects with $\bar{\ell}'_n$ at the interior of $S$, then we have $\bar{m}=\bar{m}_{i+1}$. If not, we repeat to choose $\bar{m}_{j}$ in the same manner as choosing $\bar{m}_{i+1}$ until $\bar{m}_{j}$ intersects with $\bar{\ell}'_n$. Since one of the diagonals in $\bar{m}_{j}$ must have an endpoint at $u$, $\bar{m}_{j}$ intersects with all $\bar{\ell}_i$ at the interior of $S$. Therefore, we have $\bar{m}=\bar{m}_{j}$.
\end{itemize}
\begin{figure}[ht]
\caption{Algorithm for finding $\bar{m}$ ($n=4$).}\label{fig:find-algorithm}
\[
\begin{tikzpicture}
 \coordinate (1) at(18:1.5); \coordinate (2) at(54:1.5);\coordinate (3) at(90:1.5); \coordinate (4) at(126:1.5);
 \coordinate (5) at(162:1.5); \coordinate (10) at(-18:1.5); \coordinate (9) at(-54:1.5);  \coordinate (8) at(-90:1.5); \coordinate (7) at(-126:1.5); \coordinate (6) at(-162:1.5);
 \draw (1)--(2)--(3)--(4)--(5)--(6)--(7)--(8)--(9)--(10)--(1);  \draw[blue,thick] (1)--(3); \draw[blue,thick] (1)--(4); \draw[blue,thick] (1)--(5); \draw[blue,thick] (1)--(6);
 \draw[blue,dashed] (10)--(6);  \draw[blue,dashed] (9)--(6);
 \draw[blue,dashed] (8)--(6);
 \node at (54:1.8) {$u$};
 \fill [white](0.5,1) circle (2mm);
\fill [white](-0.3,1) circle (3mm);
\fill [white](-0.8,0.5) circle (3mm);
\fill [white](0,0) circle (3mm);
\node at (0.5,1) {$\bar{\ell}'_1$};
\node at (-0.3,1) {$\bar{\ell}'_2$};
\node at (-0.8,0.5) {$\bar{\ell}'_3$};
\node at (0,0) {$\bar{\ell}'_4$};
\node at (270:2) {(1)};
\end{tikzpicture}
\hspace{20pt}
\begin{tikzpicture}
 \coordinate (1) at(18:1.5); \coordinate (2) at(54:1.5);\coordinate (3) at(90:1.5); \coordinate (4) at(126:1.5);
 \coordinate (5) at(162:1.5); \coordinate (10) at(-18:1.5); \coordinate (9) at(-54:1.5);  \coordinate (8) at(-90:1.5); \coordinate (7) at(-126:1.5); \coordinate (6) at(-162:1.5);
 \draw (1)--(2)--(3)--(4)--(5)--(6)--(7)--(8)--(9)--(10)--(1);  \draw[blue,thick] (1)--(3); \draw[blue,dashed] (1)--(4); \draw[blue,dashed] (1)--(5); \draw[blue,dashed] (1)--(6);
 \draw[blue,dashed] (10)--(6);  \draw[blue,dashed] (9)--(6);
 \draw[blue,dashed] (8)--(6);
 \draw[red,thick] (2)--(5);
 \draw[red,dashed] (7)--(10);
 \node at (54:1.8) {$u$};
 \fill [white](0,0.8) circle (3mm);
 \node at (0,0.8) {$\bar{m}_1$};
 \node at (270:2) {(2)};
\end{tikzpicture}
\hspace{20pt}
\begin{tikzpicture}
 \coordinate (1) at(18:1.5); \coordinate (2) at(54:1.5);\coordinate (3) at(90:1.5); \coordinate (4) at(126:1.5);
 \coordinate (5) at(162:1.5); \coordinate (10) at(-18:1.5); \coordinate (9) at(-54:1.5);  \coordinate (8) at(-90:1.5); \coordinate (7) at(-126:1.5); \coordinate (6) at(-162:1.5);
 \draw (1)--(2)--(3)--(4)--(5)--(6)--(7)--(8)--(9)--(10)--(1);  \draw[blue,dashed] (1)--(3); \draw[blue,dashed] (1)--(4); \draw[blue,thick] (1)--(5); \draw[blue,dashed] (1)--(6);
 \draw[blue,dashed] (10)--(6);  \draw[blue,dashed] (9)--(6);
 \draw[blue,dashed] (8)--(6);
 \draw[red,dashed] (2)--(5);
 \draw[red,dashed] (7)--(10);
 \draw[red,thick] (2)--(7);
 \node at (54:1.8) {$u$};
 \fill [white] (-0.8,0.2) rectangle (0.8,-0.2);
 \node at (0,0) {$\bar{m}=\bar{m}_3$};
 \node at (270:2) {(3)};
\end{tikzpicture}
\]
\end{figure}
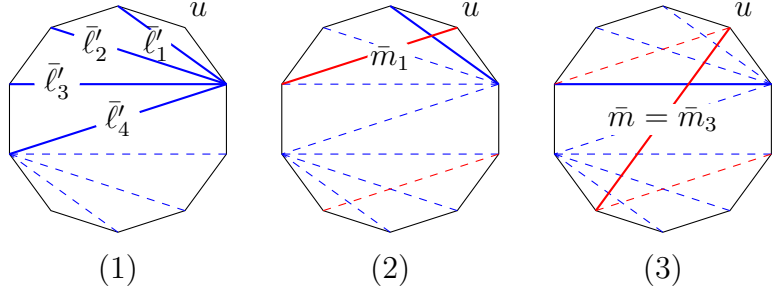
Since an orbit of diagonals is in the initial maximal compatible set if and only if an orbit of diagonals crossing all $\bar{\ell}_i'$, this contradicts the fact that $F$ is a simplex of $\Delta^+(\xx,B)$.
\end{proof}
 Let $K$ be a simplicial complex and $V$ a vertex subset of $K$. The \emph{link of $V$} is a (not necessarily full) subcomplex of $K$ defined by $\lk(V) = \{\sigma\in\st_K(V) \mid \sigma \cap V=\emptyset \}$; that is, simplices of $\lk_K(V)$ are all the simplices $\sigma$ of $K$ with $v \not \in \sigma$ for all $v\in V$ such that there exists $v\in V$ such that $\sigma \cup \{v\}$ is a simplex of $K$.
 \begin{corollary}\label{cor:boundary-Bn}
For a cluster complex $\Delta(\xx,B)$ satisfying the assumption in Theorem \ref{thm:positive-simplex-B_n}, we have $\lk_{\Delta(\xx,B)}(I)=\lk_{\Delta(\xx,B)}(I')$. Furthermore, it is isomorphic to $\Delta(B_{n-1})$.
\end{corollary}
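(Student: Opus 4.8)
The plan is to prove the two assertions separately, working throughout in the orbit-of-diagonals model of a regular $(2n+2)$-gon $S$ furnished by Theorem \ref{thm:arc-variable-Bn}. Here $I$ is the fan of orbits emanating from a vertex $v$ (and its antipode $\Theta(v)$), and, by the proof of Lemma \ref{lem:description-positiveconeBn}, $I'$ is the analogous fan from the neighbouring vertex $w$ (and $\Theta(w)$), as in the two pictures of Figure \ref{ex:assumption-of-Bn}.

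First I would establish the equality $\lk_{\Delta(\xx,B)}(I)=\lk_{\Delta(\xx,B)}(I')$ by identifying both sides with one explicit full subcomplex. The proof of Theorem \ref{thm:positive-simplex-B_n} gives $\st_{\Delta(\xx,B)}(I')=\Delta^+(\xx,B)$, which by definition is the full subcomplex on the orbits not in $I$. Since $\st_{\Delta(\xx,B)}(I')=\bigcup_j\st_{\Delta(\xx,B)}(x'_j)$, every simplex of this star that avoids $I'$ lies in some $\st_{\Delta(\xx,B)}(x'_j)$ and is therefore joinable to $x'_j$, so $\lk_{\Delta(\xx,B)}(I')$ is exactly the full subcomplex of $\Delta(\xx,B)$ on the orbits lying in neither $I$ nor $I'$. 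For the other link I would exploit the symmetry of the situation: non-linkedness is symmetric, so $I$ is a set of $n$ vertices non-linked to $I'$, and by Lemma \ref{lem:interior-B_n} applied to the cluster $I'$ (whose exchange matrix equals $B$ up to a permutation by Lemma \ref{lem:description-positiveconeBn}) it is the full such set. Applying Theorem \ref{thm:positive-simplex-B_n} to the seed $(\xx',B')$ then yields $\st_{\Delta(\xx,B)}(I)=\Delta^+(\xx',B')$, the full subcomplex on the orbits not in $I'$, and the identical computation identifies $\lk_{\Delta(\xx,B)}(I)$ with the full subcomplex on the orbits in neither $I$ nor $I'$. Hence the two links coincide.

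For the isomorphism with $\Delta(B_{n-1})$ I would pass to a smaller polygon. Let $\bar S$ be the $2n$-gon obtained from $S$ by contracting the edge $vw$ and its antipodal edge $\Theta(v)\Theta(w)$ to single vertices; the central symmetry $\Theta$ descends to the central symmetry $\bar\Theta$ of $\bar S$, so that the orbit model of Theorem \ref{thm:arc-variable-Bn} realises the arc complex of $\bar S$ as $\Delta(B_{n-1})$ (note $2n=2(n-1)+2$). I would then check that contraction sends every $\Theta$-orbit of a diagonal lying in neither $I$ nor $I'$ to a $\bar\Theta$-orbit of a genuine diagonal of $\bar S$, and that this assignment is a bijection onto all diagonal-orbits of $\bar S$; the matching count of $n(n-1)$ orbits on each side supports this. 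Finally one verifies that the assignment preserves the crossing relation, so that it induces an isomorphism from the full subcomplex computed above onto $\Delta(B_{n-1})$.

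The genuine difficulty is the last step of the second part. Contraction merges the fan apexes $v,w$ (and $\Theta(v),\Theta(w)$), so one must argue that two remaining diagonals meeting one of the contracted edges become incident—and hence remain non-crossing—exactly when they failed to cross in $S$, while the fan orbits of $I\cup I'$ collapse harmlessly to edges or to already-occupied diagonals without interfering with the remaining orbits. Making this interleaving-preservation precise, together with the special treatment of the diameter orbits (which pass to diameters of $\bar S$), is where the real work lies; everything else reduces to the two applications of Theorem \ref{thm:positive-simplex-B_n} and the bijective count.
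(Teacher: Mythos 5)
Your proposal follows essentially the same route as the paper: the equality of the two links is deduced from Theorem \ref{thm:positive-simplex-B_n} applied to both seeds together with the symmetry between $I$ and $I'$ (the paper phrases this as a double inclusion of simplices, you as an identification of both links with the full subcomplex on the vertices outside $I\cup I'$ — these are equivalent), and the isomorphism with $\Delta(B_{n-1})$ is realized by contracting the edges $(v_1,v_3)$ and $(v_2,v_4)$, which is exactly the paper's surjection $\psi\colon D\to D'$. The one step you explicitly defer — that the contraction induces a bijection $\bar\psi$ on $\Theta$-orbits and preserves the crossing relation — is precisely what the paper supplies, by induction on $n$: one removes the antipodal vertex pair $v_5,v_{n+4}$, applies the inductive hypothesis to the resulting $2(n-1)$-gon, and checks directly that $\psi$ restricts to a bijection on the finitely many remaining diagonals through $v_5$ or $v_{n+4}$. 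With that verification added (and the fullness of the link, which your first part already provides), your outline closes into the paper's proof.
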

\begin{proof}
Here, we prove $\lk_{\Delta(\xx,B)}(I)\subset\lk_{\Delta(\xx,B)}(I')$. Let $F$ be a simplex of $\lk_{\Delta(\xx,B)}(I)$. For all $x_i\in I$, we have $x_i\notin F$. Therefore, $F$ is a simplex of $\Delta^+(\xx,B)$. By Theorem \ref{thm:positive-simplex-B_n}, $F$ is also a simplex of $\st_{\Delta(\xx,B)}(I')$. Furthermore, for all $x'_i\in I'$, we have $x'_i\notin F$ since each $x'_i$ is non-linked to $I$. Therefore, $F$ is a simplex of $\lk_{\Delta(\xx,B)}(I')$. The converse follows from the symmetry of $\lk_{\Delta(\xx,B)}(I)$ and $\lk_{\Delta(\xx,B)}(I')$. Indeed, $I$ is a unique set of $n$ non-linked vertex to $I'$.

Next, we prove the second statement. By using geometric realization, we identify $\lk_{\Delta(\xx,B)}(I)$ with $\Delta(B_{n-1})$ as follows.
Let $S$ be a $(2n+2)$-gon and let $T$ and $T'$ maximal compatible sets corresponding to $\xx$ and $\xx'$ (see Figure \ref{fig:T,T',D-in-B_n}).
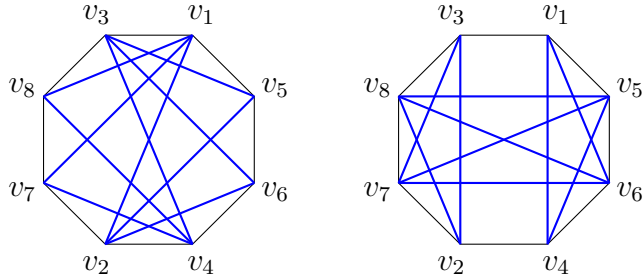
\begin{figure}[ht]
\caption{$T\cup T'$ (left), $D$ (right) in the case that $n=3$ }\label{fig:T,T',D-in-B_n}
\[
\begin{tikzpicture}[baseline=0mm]
 \coordinate (u1) at(67.5:1.5); \coordinate (u2) at(112.5:1.5); \coordinate (lu) at(157.5:1.5);
 \coordinate (ld) at(-157.5:1.5); \coordinate (ru) at(22.5:1.5); \coordinate (rd) at(-22.5:1.5);  \coordinate (d1) at(-67.5:1.5); \coordinate (d2) at(-112.5:1.5);
 \draw (u1)--(u2)--(lu)--(ld)--(d2)--(d1)--(rd)--(ru)--(u1);
 \draw [blue,thick](u1)--(lu);
 \draw [blue,thick](u1)--(ld);
 \draw [blue,thick](u1)--(d2);
 \draw [blue,thick](ru)--(d2);
 \draw [blue,thick](rd)--(d2);
 \draw [blue,thick](u2)--(ru);
 \draw [blue,thick](u2)--(rd);
 \draw [blue,thick](u2)--(d1);
 \draw [blue,thick](lu)--(d1);
 \draw [blue,thick](ld)--(d1);
 \node at (67.5:1.8) {$v_1$};
 \node at (-112.5:1.8) {$v_2$};
 \node at (112.5:1.8) {$v_3$};
 \node at (-67.5:1.8) {$v_4$};
 \node at (22.5:1.8) {$v_5$};
 \node at (157.5:1.8) {$v_8$};
 \node at (-157.5:1.8) {$v_7$};
 \node at (-22.5:1.8) {$v_6$};
\end{tikzpicture}
\hspace{20pt}
\begin{tikzpicture}[baseline=0mm]
 \coordinate (u1) at(67.5:1.5); \coordinate (u2) at(112.5:1.5); \coordinate (lu) at(157.5:1.5);
 \coordinate (ld) at(-157.5:1.5); \coordinate (ru) at(22.5:1.5); \coordinate (rd) at(-22.5:1.5);  \coordinate (d1) at(-67.5:1.5); \coordinate (d2) at(-112.5:1.5);
 \draw (u1)--(u2)--(lu)--(ld)--(d2)--(d1)--(rd)--(ru)--(u1);
 \draw [blue,thick](u1)--(d1);
 \draw [blue,thick](u1)--(rd);
 \draw [blue,thick](u2)--(d2);
 \draw [blue,thick](u2)--(ld);
 \draw [blue,thick](d1)--(ru);
 \draw [blue,thick](d2)--(lu);
 \draw [blue,thick](lu)--(ru);
 \draw [blue,thick](ld)--(ru);
 \draw [blue,thick](lu)--(rd);
 \draw [blue,thick](ld)--(rd);
 \node at (67.5:1.8) {$v_1$};
 \node at (-112.5:1.8) {$v_2$};
 \node at (112.5:1.8) {$v_3$};
 \node at (-67.5:1.8) {$v_4$};
 \node at (22.5:1.8) {$v_5$};
 \node at (157.5:1.8) {$v_8$};
 \node at (-157.5:1.8) {$v_7$};
 \node at (-22.5:1.8) {$v_6$};
\end{tikzpicture}
\]
\end{figure}
Let $\{v_1,v_2\}$ and $\{v_3,v_4\}$ be pairs of vertices of $S$ sharing only a single triangle in a triangulation given by $T$ and $T'$, respectively. We assume the edges $(v_1,v_3)$ and $(v_2,v_4)$ are diagonals of $S$, and $v_1$ is a neighbor of $v_3$. We name the remainder of the vertices $v_5,\dots,v_{2n+2}$ in the clockwise order from the right neighbor of $v_1$. Let $S'$ be a $2n$-gon obtained by reducing $(v_1,v_3)$ and $(v_2,v_4)$ to points respectively. Let $D$ be the set of all diagonals of $S$ except for those contained in $T$ or $T'$ (see Figure \ref{fig:T,T',D-in-B_n}), and $D'$ the set of all diagonals of $S'$. We consider the surjection $\psi\colon D \to D',
(v_i,v_j)\mapsto (v_i,v_j)$ (see Figure \ref{fig:sujection-D-to-D'}).
\begin{figure}[ht]
\caption{Surjection $D$ to $D'$ ($n=3$) }\label{fig:sujection-D-to-D'}
\[
\begin{tikzpicture}[baseline=0mm]
 \coordinate (u1) at(67.5:1.5); \coordinate (u2) at(112.5:1.5); \coordinate (lu) at(157.5:1.5);
 \coordinate (ld) at(-157.5:1.5); \coordinate (ru) at(22.5:1.5); \coordinate (rd) at(-22.5:1.5);  \coordinate (d1) at(-67.5:1.5); \coordinate (d2) at(-112.5:1.5);
 \draw (u1)--(u2)--(lu)--(ld)--(d2)--(d1)--(rd)--(ru)--(u1);
 \draw [blue,thick](u1)--(d1);
 \draw [blue,thick](u1)--(rd);
 \draw [blue,thick](u2)--(d2);
 \draw [blue,thick](u2)--(ld);
 \draw [blue,thick](u2)--(ld);
 \draw [blue,thick](d1)--(ru);
 \draw [blue,thick](d2)--(lu);
 \draw [blue,thick](d1)--(ru);
 \draw [blue,thick](d2)--(lu);
 \draw [blue,thick](lu)--(ru);
 \draw [blue,thick](ld)--(ru);
 \draw [blue,thick](lu)--(rd);
 \draw [blue,thick](ld)--(rd);
 \node at (67.5:1.8) {$v_1$};
 \node at (-112.5:1.8) {$v_2$};
 \node at (112.5:1.8) {$v_3$};
 \node at (-67.5:1.8) {$v_4$};
 \node at (22.5:1.8) {$v_5$};
 \node at (157.5:1.8) {$v_8$};
 \node at (-157.5:1.8) {$v_7$};
 \node at (-22.5:1.8) {$v_6$};
\end{tikzpicture}
\hspace{20pt}\to \hspace{20pt}
\begin{tikzpicture}[baseline=0mm]
 \coordinate (u) at(90:1.5);  \coordinate (lu) at(145:1);
 \coordinate (ld) at(-145:1); \coordinate (ru) at(35:1); \coordinate (rd) at(-35:1);  \coordinate (d) at(-90:1.5);
 \draw (u)--(lu)--(ld)--(d)--(rd)--(ru)--(u);
 \draw [blue,thick](u)--(d);
 \draw [blue,thick](u)--(rd);
 \draw [blue,thick](u)--(ld);
 \draw [blue,thick](u)--(ld);
 \draw [blue,thick](d)--(ru);
 \draw [blue,thick](d)--(lu);
 \draw [blue,thick](d)--(ru);
 \draw [blue,thick](d)--(lu);
 \draw [blue,thick](lu)--(ru);
 \draw [blue,thick](ld)--(ru);
 \draw [blue,thick](lu)--(rd);
 \draw [blue,thick](ld)--(rd);
 \node at (90:1.8) {$v_1=v_3$};
 \node at (-90:1.8) {$v_2=v_4$};
 \node at (35:1.3) {$v_5$};
 \node at (145:1.3) {$v_8$};
 \node at (-35:1.3) {$v_6$};
 \node at (-145.5:1.3) {$v_7$};
\end{tikzpicture}
\]
\end{figure}
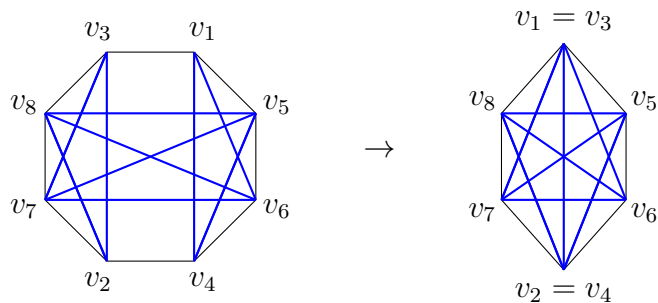
We show that $\psi$ provides a bijection $\bar{\psi}\colon D/\Theta \to D'/\Theta,
\overline{(v_i,v_j)}\mapsto \overline{(v_i,v_j)}$ of orbits by $\Theta$ by induction on $n$. When $n=2$, we can obtain this fact by direct calculation. We assume that $\bar{\psi}$ is bijective when the rank is $n-1$. We prove this for rank $n$. Let $\tilde{S}$ be a $2n$-gon obtained by the convex hull of vertices of $S$ except for $v_5$ and its antipodal vertex $v_{n+4}$. We consider subsets $\tilde{T}$ and $\tilde{T}'$, which are obtained by removing diagonals with $v_5$ or $v_{n+4}$ as endpoints from $T$ and $T'$, respectively. Then, we can consider $\tilde{T}$ and $\tilde{T'}$ as a triangulation of $\tilde{S}$ satisfying the condition in Remark \ref{rem:equivalent-condition-Bn}, and $\tilde{T'}/\Theta$ is an orbit set such that each orbit crosses all orbits of $\tilde{T}/\Theta$ at the interior of $\tilde{S}$. Therefore, by the assumption of induction and a correspondence $\overline{(v_1,v_6)}\mapsto\overline{(v_1,v_6)}$, we have a bijection $\bar{\psi}|_{\tilde{D}}\colon \tilde{D}/\Theta\to\tilde{D'}/\Theta$, where $\tilde{D}$ is the set of all diagonals of $\tilde{S}$, except for those contained in $\tilde{T}$ or $\tilde{T'}$, and $\tilde{D'}$ is the set of all diagonals of $S'$ except for those with $v_5$ or $v_{n+4}$ as endpoints (see Figure \ref{fig:induction-B_n}).
\begin{figure}[ht]
\caption{$\tilde{T}\cup \tilde{T'}$ (left), $\tilde{D}$ (center), $\tilde{D'}$ (right) in the case that $n=3$ }\label{fig:induction-B_n}
\[\begin{tikzpicture}[baseline=0mm]
 \coordinate (u1) at(67.5:1.5); \coordinate (u2) at(112.5:1.5); \coordinate (lu) at(157.5:1.5);
 \coordinate (ld) at(-157.5:1.5); \coordinate (ru) at(22.5:1.5); \coordinate (rd) at(-22.5:1.5);  \coordinate (d1) at(-67.5:1.5); \coordinate (d2) at(-112.5:1.5);
 \draw (u1)--(u2)--(lu);
 \draw [dashed](lu)--(ld)--(d2);
 \draw (d2)--(d1)--(rd);
 \draw [dashed](rd)--(ru)--(u1);
 \draw [blue,thick](u1)--(lu);
 \draw (u1)--(rd);
 \draw [blue,thick](u1)--(d2);
 \draw [blue,thick](u2)--(rd);
 \draw [blue,thick](d2)--(rd);
 \draw [blue,thick](u2)--(d1);
 \draw [blue,thick](d1)--(lu);
 \draw (d2)--(lu);
 \node at (67.5:1.8) {$v_1$};
 \node at (-112.5:1.8) {$v_2$};
 \node at (112.5:1.8) {$v_3$};
 \node at (-67.5:1.8) {$v_4$};
 \node at (22.5:1.8) {$v_5$};
 \node at (157.5:1.8) {$v_8$};
 \node at (-157.5:1.8) {$v_7$};
 \node at (-22.5:1.8) {$v_6$};
\end{tikzpicture}
\hspace{20pt}
\begin{tikzpicture}[baseline=0mm]
 \coordinate (u1) at(67.5:1.5); \coordinate (u2) at(112.5:1.5); \coordinate (lu) at(157.5:1.5);
 \coordinate (ld) at(-157.5:1.5); \coordinate (ru) at(22.5:1.5); \coordinate (rd) at(-22.5:1.5);  \coordinate (d1) at(-67.5:1.5); \coordinate (d2) at(-112.5:1.5);
 \draw (u1)--(u2)--(lu);
 \draw [dashed](lu)--(ld)--(d2);
 \draw (d2)--(d1)--(rd);
 \draw [dashed](rd)--(ru)--(u1);
 \draw [blue,thick](u1)--(d1);
 \draw [blue,thick](u1)--(rd);
 \draw [blue,thick](lu)--(rd);
 \draw [blue,thick](u2)--(d2);
 \draw [blue,thick](d2)--(lu);
 \node at (67.5:1.8) {$v_1$};
 \node at (-112.5:1.8) {$v_2$};
 \node at (112.5:1.8) {$v_3$};
 \node at (-67.5:1.8) {$v_4$};
 \node at (22.5:1.8) {$v_5$};
 \node at (157.5:1.8) {$v_8$};
 \node at (-157.5:1.8) {$v_7$};
 \node at (-22.5:1.8) {$v_6$};
\end{tikzpicture}
\hspace{20pt}
\begin{tikzpicture}[baseline=0mm]
 \coordinate (u) at(90:1.5);  \coordinate (lu) at(145:1);
 \coordinate (ld) at(-145:1); \coordinate (ru) at(35:1); \coordinate (rd) at(-35:1);  \coordinate (d) at(-90:1.5);
 \draw (u)--(lu);
 \draw [dashed](lu)--(ld)--(d);
 \draw (d)--(rd);
 \draw [dashed](rd)--(ru)--(u);
 \draw [blue,thick](u)--(d);
 \draw [blue,thick](u)--(rd);
 \draw [blue,thick](d)--(lu);
 \draw [blue,thick](d)--(lu);
 \draw [blue,thick](lu)--(rd);
 \node at (90:1.8) {$v_1=v_3$};
 \node at (-90:1.8) {$v_2=v_4$};
 \node at (35:1.3) {$v_5$};
 \node at (145:1.3) {$v_8$};
 \node at (-35:1.3) {$v_6$};
 \node at (-145.5:1.3) {$v_7$};
\end{tikzpicture}
\]
\end{figure}
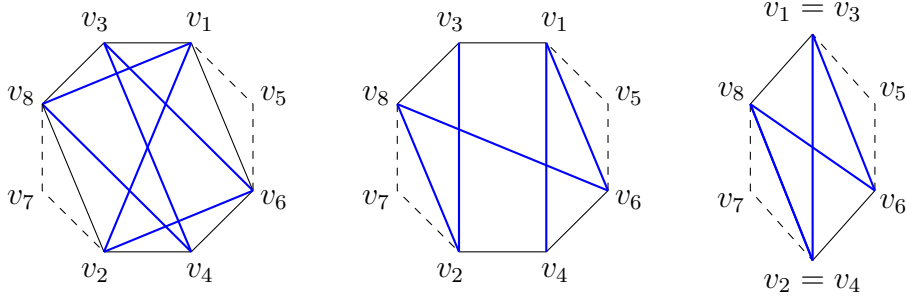
Moreover, we have
\begin{align*}
    D\setminus\tilde{D}=\{(v_5,v_i)\mid i \neq 1,2,3,6\}\cup \{(v_{n+4},v_j)\mid j\neq 1,2,4,n+5\}.
\end{align*}
Therefore, $\psi|_{D\setminus\tilde{D}}\colon D\setminus\tilde{D} \to D'\setminus\tilde{D'}$ is bijective. Thus, $\bar{\psi}|_{D\setminus\tilde{D}}$ is bijective as well. This implies that $\bar{\psi}$ is a bijection.
Furthermore, $\bar{\psi}$ preserves the compatibility of orbits; that is, if $\overline{(v_i,v_j)}$ intersects with $\overline{(v_k,v_\ell)}$ at the interior of $S$, then they also do at the interior $S'$, and vice versa.
We note that $\lk_{\Delta(\xx,B)}(I)$ is a full subcomplex of $\Delta(\xx,B)$ since $\lk_{\Delta(\xx,B)}(I)$ is the intersection of full subcomplexes $\Delta^+(\xx,B)$ and $\Delta^+(\xx',B')$ Therefore, $\bar{\psi}$ can be lifted to an isomorphism between $\lk_{\Delta(\xx,B)}(I)$ and $\Delta(B_{n-1})$.
\end{proof}

\begin{corollary}\label{cor:intersetion-B}
 For a cluster complex $\Delta(\xx,B)$ satisfying the assumption in Theorem \ref{thm:positive-simplex-B_n}, we have
 \[\Delta(\xx,B)=\st_{\Delta(\xx,B)}(I)\cup\st_{\Delta(\xx,B)}(I')\] and \[\lk_{\Delta(\xx,B)}(I)=\st_{\Delta(\xx,B)}(I)\cap\st_{\Delta(\xx,B)}(I').\]
\end{corollary}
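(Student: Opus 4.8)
The plan is to deduce both identities purely formally from the set-level splitting of $\Delta(\xx,B)$ into positive and non-positive simplices, together with the equality $\st_{\Delta(\xx,B)}(I')=\Delta^+(\xx,B)$ that is established in the course of proving Theorem \ref{thm:positive-simplex-B_n} (it is this equality, rather than the bare isomorphism $\Delta^+\cong\st_{\Delta(\xx,B)}(I)$ appearing in the statement, that I will invoke). First I would record two elementary observations. Since $\Delta^+(\xx,B)$ is the full subcomplex on the non-initial cluster variables, a simplex $\sigma$ of $\Delta(\xx,B)$ lies in $\Delta^+(\xx,B)$ exactly when $\sigma\cap I=\emptyset$. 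Moreover, every simplex meeting $I$ lies in $\st_{\Delta(\xx,B)}(I)$: if $x\in\sigma\cap I$ then $\sigma\cup\{x\}=\sigma$ is a simplex, so $\sigma\in\st_{\Delta(\xx,B)}(x)\subseteq\st_{\Delta(\xx,B)}(I)$. Hence $\Delta(\xx,B)$ is the disjoint union of $\Delta^+(\xx,B)=\st_{\Delta(\xx,B)}(I')$ and the set $\widehat{\Delta_I}$ of simplices meeting $I$, with $\widehat{\Delta_I}\subseteq\st_{\Delta(\xx,B)}(I)$.

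For the first identity, the inclusion $\st_{\Delta(\xx,B)}(I)\cup\st_{\Delta(\xx,B)}(I')\subseteq\Delta(\xx,B)$ is immediate, as closed stars are subcomplexes. For the reverse inclusion I would take an arbitrary simplex $\sigma\in\Delta(\xx,B)$ and split on whether $\sigma\cap I=\emptyset$: if it is empty then $\sigma\in\Delta^+(\xx,B)=\st_{\Delta(\xx,B)}(I')$, and otherwise $\sigma\in\widehat{\Delta_I}\subseteq\st_{\Delta(\xx,B)}(I)$. Either way $\sigma\in\st_{\Delta(\xx,B)}(I)\cup\st_{\Delta(\xx,B)}(I')$, which gives equality.

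For the second identity I would argue both inclusions. If $\sigma\in\lk_{\Delta(\xx,B)}(I)$ then by definition $\sigma\in\st_{\Delta(\xx,B)}(I)$ and $\sigma\cap I=\emptyset$; the latter places $\sigma$ in $\Delta^+(\xx,B)=\st_{\Delta(\xx,B)}(I')$, so $\sigma\in\st_{\Delta(\xx,B)}(I)\cap\st_{\Delta(\xx,B)}(I')$. Conversely, if $\sigma\in\st_{\Delta(\xx,B)}(I)\cap\st_{\Delta(\xx,B)}(I')$, then $\sigma\in\st_{\Delta(\xx,B)}(I')=\Delta^+(\xx,B)$ forces $\sigma\cap I=\emptyset$, and together with $\sigma\in\st_{\Delta(\xx,B)}(I)$ this is precisely the defining condition for $\sigma\in\lk_{\Delta(\xx,B)}(I)$.

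The argument is essentially bookkeeping once Theorem \ref{thm:positive-simplex-B_n} is available, so I do not anticipate a genuine obstacle; the only points demanding care are to use the set-level equality $\st_{\Delta(\xx,B)}(I')=\Delta^+(\xx,B)$ rather than the weaker isomorphism in the theorem's statement, and to keep the definitions of closed star and link straight, in particular that membership in $\lk_{\Delta(\xx,B)}(I)$ requires both $\sigma\in\st_{\Delta(\xx,B)}(I)$ and $\sigma\cap I=\emptyset$. I note that Corollary \ref{cor:boundary-Bn} (which gives $\lk_{\Delta(\xx,B)}(I)=\lk_{\Delta(\xx,B)}(I')$) is not strictly needed here, though it renders the symmetric roles of $I$ and $I'$ transparent and could be used to phrase the forward inclusion of the second identity symmetrically.
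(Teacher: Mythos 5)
Your proof is correct and follows essentially the same route as the paper: both arguments reduce to the set-level equality $\st_{\Delta(\xx,B)}(I')=\Delta^+(\xx,B)$ established in the proof of Theorem \ref{thm:positive-simplex-B_n}, plus elementary unwinding of the definitions of closed star and link. The only (harmless) difference is that the paper passes through Corollary \ref{cor:boundary-Bn} (i.e.\ $\lk_{\Delta(\xx,B)}(I)=\lk_{\Delta(\xx,B)}(I')$) in both inclusions of the second identity, whereas you correctly observe this can be bypassed by extracting $\sigma\cap I=\emptyset$ directly from $\sigma\in\st_{\Delta(\xx,B)}(I')=\Delta^+(\xx,B)$.
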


\begin{proof}
The first equation follows from Lemma \ref{lem:interior-B_n}, and we provide a proof for the second here. For any $F \in \lk_{\Delta(\xx,B)}(I)$, $F$ is in $\st_{\Delta(\xx,B)}(I)$, and by Corollary \ref{cor:boundary-Bn}, $F$ is also in $\st_{\Delta(\xx,B)}(I')$. Therefore, we have $\lk_{\Delta(\xx,B)}(I)\subset\st_{\Delta(\xx,B)}(I)\cap\st_{\Delta(\xx,B)}(I')$. Let $F\in \st_{\Delta(\xx,B)}(I)\cap\st_{\Delta(\xx,B)}(I')$. Since $F\in \st_{\Delta(\xx,B)}(I)$, we have $F\cap I'=\emptyset$. By this fact and the assumption $F\in \st_{\Delta(\xx,B)}(I')$, we have $F\in \lk_{\Delta(\xx,B)}(I')$. Therefore, by Corollary \ref{cor:boundary-Bn}, we have  $F\in \lk_{\Delta(\xx,B)}(I)$.
\end{proof}

By using Corollary \ref{cor:intersetion-B}, we can calculate the number of simplices in $\Delta^+(\xx,B)$ where $B$ is an exchange matrix of tree $B_n$ or $C_n$ type from cluster complexes of $B_{n}$ type and $B_{n-1}$ type as in Corollary \ref{cor:positive-face-from-all-face-n-1}.
\begin{corollary}[\cite{cha04}*{(46)}]\label{cor:positive-face-from-all-face-B_nB_n-1}
Let $B$ be an exchange matrix the quiver $Q_B$ of which has a diagram of $B_n$ or $C_n$ type in Figure \ref{fig:valued-quiver} as its underlying graph. Let $f_{n,k}$ be the number of $k$-dimensional simplices of $\Delta(B_{n})$, and $f^+_{n,k}$ the number of $k$-dimensional simplices of $\Delta^+(\xx,B)$. Then, we have
\begin{align}\label{eq:B_n-positice-face-vector}
    f^+_{n,k}=
    \dfrac{1}{2}(f_{n,k}+f_{n-1,k})=\begin{pmatrix}n\\ k+1\end{pmatrix}\begin{pmatrix}n+k\\ k+1\end{pmatrix}.
\end{align}
\end{corollary}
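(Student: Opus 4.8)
The plan is to obtain $f^+_{n,k}$ from an inclusion--exclusion on face \emph{counts}, using the decomposition of $\Delta(\xx,B)$ provided by Corollary~\ref{cor:intersetion-B}, and then to reduce the resulting recursion to the closed form by a short binomial manipulation. Essentially all of the geometric content has already been packaged into the preceding corollaries, so the work here is mostly bookkeeping and one algebraic identity.

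First I would set up the counting identity. Corollary~\ref{cor:intersetion-B} gives
\[
\Delta(\xx,B)=\st_{\Delta(\xx,B)}(I)\cup\st_{\Delta(\xx,B)}(I'),\qquad
\st_{\Delta(\xx,B)}(I)\cap\st_{\Delta(\xx,B)}(I')=\lk_{\Delta(\xx,B)}(I).
\]
Counting $k$-dimensional simplices on both sides and applying inclusion--exclusion yields
\[
f_{n,k}=\#\{k\text{-simplices of }\st_{\Delta(\xx,B)}(I)\}+\#\{k\text{-simplices of }\st_{\Delta(\xx,B)}(I')\}-\#\{k\text{-simplices of }\lk_{\Delta(\xx,B)}(I)\}.
\]
By Theorem~\ref{thm:positive-simplex-B_n} we have $\st_{\Delta(\xx,B)}(I)\cong\Delta^+(\xx,B)$, and by Lemma~\ref{lem:description-positiveconeBn} the two closed stars $\st_{\Delta(\xx,B)}(I)$ and $\st_{\Delta(\xx,B)}(I')$ are isomorphic; hence each of the first two terms equals $f^+_{n,k}$. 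By Corollary~\ref{cor:boundary-Bn} the intersection $\lk_{\Delta(\xx,B)}(I)$ is isomorphic to $\Delta(B_{n-1})$, so the last term is $f_{n-1,k}$. Substituting gives $f_{n,k}=2f^+_{n,k}-f_{n-1,k}$, which is exactly the first asserted equality $f^+_{n,k}=\tfrac12(f_{n,k}+f_{n-1,k})$.

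Second I would insert the known all-faces numbers. By \cite{cha04} the type $B_n$ cluster complex satisfies $f_{n,k}=\binom{n}{k+1}\binom{n+k+1}{k+1}$, which one can sanity-check at $k=n-1$ (it gives $\binom{2n}{n}$, the number of type $B_n$ clusters) and at $k=0$ (it gives $n(n+1)$ cluster variables). Substituting this formula and its rank-$(n-1)$ analogue into the recursion, the remaining task is the identity
\[
\binom{n}{k+1}\binom{n+k+1}{k+1}+\binom{n-1}{k+1}\binom{n+k}{k+1}=2\binom{n}{k+1}\binom{n+k}{k+1}.
\]
The hard part, such as it is, will be this algebraic verification: writing $m=k+1$ and applying Pascal's rule to both $\binom{n+m}{m}$ and $\binom{n-1}{m}$, the non-cancelling cross terms collapse to the single identity $\binom{n}{m}\binom{n+m-1}{m-1}=\binom{n-1}{m-1}\binom{n+m-1}{m}$, which is immediate upon expanding both sides into factorials. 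This yields the closed form $f^+_{n,k}=\binom{n}{k+1}\binom{n+k}{k+1}$ and completes the proof.

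I expect the main pitfall to be purely organizational rather than substantive: one must apply inclusion--exclusion to the face counts and not to the complexes themselves, since $\st_{\Delta(\xx,B)}(I)$ and $\st_{\Delta(\xx,B)}(I')$ genuinely overlap in $\lk_{\Delta(\xx,B)}(I)$, and one must carefully match the dimension index $k$ against the convention used in Chapoton's formula (dimension $k$ versus cardinality $k+1$) before running the binomial cancellation.
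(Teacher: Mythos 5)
Your proposal is correct and follows essentially the same route as the paper: both use the decomposition of Corollary \ref{cor:intersetion-B} together with the isomorphisms $\st_{\Delta(\xx,B)}(I)\cong\st_{\Delta(\xx,B)}(I')\cong\Delta^+(\xx,B)$ and $\lk_{\Delta(\xx,B)}(I)\cong\Delta(B_{n-1})$ to derive $f^+_{n,k}=\tfrac12(f_{n,k}+f_{n-1,k})$, and then substitute Chapoton's formula $f_{n,k}=\binom{n}{k+1}\binom{n+k+1}{k+1}$. Your explicit verification of the final binomial identity is a welcome addition that the paper leaves implicit.
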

\begin{proof}
By Corollary \ref{cor:intersetion-B} and Corollary \ref{cor:independent-orientation}, we have the difference of face vectors of $\Delta^+(\xx,B)$ and $\Delta(\xx,B)\setminus\Delta^+(\xx,B)$ equals the face vector of $\Delta(B_{n-1})$. Therefore, we have
\[f(\Delta(B_n))=f(\Delta^+(\xx,B))+(f(\Delta^+(\xx,B)-f(\Delta(B_{n-1}))),\]
and by simplifying this equation, we obtain 
\begin{align}\label{facevectorBn}
f(\Delta^+(\xx,B))=\dfrac{1}{2}(f(\Delta(B_n))+f(\Delta(B_{n-1}))).
\end{align}
By \cite{cha04}*{(44)}, we have
\begin{align}\label{eq:formula-faces-B_n}
    f_{n,k}=\begin{pmatrix}n\\ k+1\end{pmatrix}\begin{pmatrix}n+k+1\\ k+1\end{pmatrix}
\end{align}
By \eqref{facevectorBn} and \eqref{eq:formula-faces-B_n}, we obtain the desired equation \eqref{eq:B_n-positice-face-vector}.
\end{proof}
\begin{example}
Let $(\xx,B)$ be a seed which is the same as in Example \ref{ex:correspondence-arc-variable-B3}. Then, $\Delta^+(\xx,B)$ is a domain shown filled in with gray in Figure \ref{positivecomplexB_n}.
 \begin{figure}[ht]
\caption{positive cluster complex of $B_3$ type\label{positivecomplexB_n}}
\hspace{5pt}
\begin{center}
\scalebox{0.8}{
\begin{tikzpicture}
\coordinate (0) at (0,0);
\coordinate (u*) at (90:8);
\coordinate (u) at (90:6);
\coordinate (ul) at (150:4);
\coordinate (ur) at (30:4);
\coordinate (uml) at (150:2);
\coordinate (umr) at (30:2);
\coordinate (dmc) at (-90:2);
\coordinate (dl) at (-150:6);
 \coordinate (dl*) at (-150:8);
\coordinate (dr) at (-30:6);
\coordinate (dr*) at (-30:8);
\coordinate (d) at (-90:4);
\coordinate (ll) at (150:6);
\coordinate (rr) at (30:6);
\coordinate (dd) at (-90:6);
\draw (u) to (ul);
\draw (u) to (ur);
\draw (ul) to (uml);
\draw (ur) to (umr);
\draw (umr) to (uml);
\draw (u) to (uml);
\draw (u) to (umr);
\draw (uml) to (dl);
\draw (umr) to (dr);
\draw (ul) to (dl);
\draw (ur) to (dr);
\draw (uml) to (dmc);
\draw (umr) to (dmc);
\draw (dl) to (dmc);
\draw (umr) [out=290,in=45] .. controls (-35:4) and (-45:4) ..(d);
\draw (ur) [out=290,in=45] .. controls (-20:9) and (-30:9) .. (dd);
\draw (dl) to (d);
\draw (dmc) to (d);
\draw (dr) to (d);
\draw (dd) to (d);
\draw (dd) to (dl);
\draw (dd) to (dr);
\draw (ll) to (ul);
\draw (ll) to (dl);
\draw (ll) to (u);
\draw (rr) to (ur);
\draw (rr) to (u);
\draw(ll) [out=90,in=180]to (u*);
\draw(rr) [out=90,in=0]to (u*);
\draw(ll) [out=210,in=120]to (dl*);
\draw(dd) [out=210,in=300]to (dl*);
\draw(dd) [out=330,in=240]to (dr*);
\draw(rr) [out=330,in=60]to (dr*);
\fill [white](u) circle (0.9cm);
\fill[white] (ul) circle (0.9cm);
\fill[white] (ur) circle (0.9cm);
\fill[white] (uml) circle (0.9cm);
\fill[white] (umr) circle (0.9cm);
\fill[white] (dmc) circle (0.9cm);
\fill[white] (dl) circle (0.9cm);
\fill[white] (dr) circle (0.9cm);
\fill[white] (d) circle (0.9cm);
\fill[white] (ll) circle (0.9cm);
\fill[white] (rr) circle (0.9cm);
\fill[white] (dd) circle (0.9cm);
\draw (u)++(22.5:0.7cm) to ++(135:0.54cm);
\draw (u)++(67.5:0.7cm) to ++(180:0.54cm);
\draw (u)++(112.5:0.7cm) to ++(225:0.54cm);
\draw (u)++(157.5:0.7cm) to ++(270:0.54cm);
\draw (u)++(202.5:0.7cm) to ++(315:0.54cm);
\draw (u)++(247.5:0.7cm) to ++(0:0.54cm);
\draw (u)++(292.5:0.7cm) to ++(45:0.54cm);
\draw (u)++(337.5:0.7cm) to ++(90:0.54cm);
\draw (ur)++(22.5:0.7cm) to ++(135:0.54cm);
\draw (ur)++(67.5:0.7cm) to ++(180:0.54cm);
\draw (ur)++(112.5:0.7cm) to ++(225:0.54cm);
\draw (ur)++(157.5:0.7cm) to ++(270:0.54cm);
\draw (ur)++(202.5:0.7cm) to ++(315:0.54cm);
\draw (ur)++(247.5:0.7cm) to ++(0:0.54cm);
\draw (ur)++(292.5:0.7cm) to ++(45:0.54cm);
\draw (ur)++(337.5:0.7cm) to ++(90:0.54cm);
\draw (ul)++(22.5:0.7cm) to ++(135:0.54cm);
\draw (ul)++(67.5:0.7cm) to ++(180:0.54cm);
\draw (ul)++(112.5:0.7cm) to ++(225:0.54cm);
\draw (ul)++(157.5:0.7cm) to ++(270:0.54cm);
\draw (ul)++(202.5:0.7cm) to ++(315:0.54cm);
\draw (ul)++(247.5:0.7cm) to ++(0:0.54cm);
\draw (ul)++(292.5:0.7cm) to ++(45:0.54cm);
\draw (ul)++(337.5:0.7cm) to ++(90:0.54cm);
\draw (umr)++(22.5:0.7cm) to ++(135:0.54cm);
\draw (umr)++(67.5:0.7cm) to ++(180:0.54cm);
\draw (umr)++(112.5:0.7cm) to ++(225:0.54cm);
\draw (umr)++(157.5:0.7cm) to ++(270:0.54cm);
\draw (umr)++(202.5:0.7cm) to ++(315:0.54cm);
\draw (umr)++(247.5:0.7cm) to ++(0:0.54cm);
\draw (umr)++(292.5:0.7cm) to ++(45:0.54cm);
\draw (umr)++(337.5:0.7cm) to ++(90:0.54cm);
\draw (uml)++(22.5:0.7cm) to ++(135:0.54cm);
\draw (uml)++(67.5:0.7cm) to ++(180:0.54cm);
\draw (uml)++(112.5:0.7cm) to ++(225:0.54cm);
\draw (uml)++(157.5:0.7cm) to ++(270:0.54cm);
\draw (uml)++(202.5:0.7cm) to ++(315:0.54cm);
\draw (uml)++(247.5:0.7cm) to ++(0:0.54cm);
\draw (uml)++(292.5:0.7cm) to ++(45:0.54cm);
\draw (uml)++(337.5:0.7cm) to ++(90:0.54cm);
\draw (dl)++(22.5:0.7cm) to ++(135:0.54cm);
\draw (dl)++(67.5:0.7cm) to ++(180:0.54cm);
\draw (dl)++(112.5:0.7cm) to ++(225:0.54cm);
\draw (dl)++(157.5:0.7cm) to ++(270:0.54cm);
\draw (dl)++(202.5:0.7cm) to ++(315:0.54cm);
\draw (dl)++(247.5:0.7cm) to ++(0:0.54cm);
\draw (dl)++(292.5:0.7cm) to ++(45:0.54cm);
\draw (dl)++(337.5:0.7cm) to ++(90:0.54cm);
\draw (dr)++(22.5:0.7cm) to ++(135:0.54cm);
\draw (dr)++(67.5:0.7cm) to ++(180:0.54cm);
\draw (dr)++(112.5:0.7cm) to ++(225:0.54cm);
\draw (dr)++(157.5:0.7cm) to ++(270:0.54cm);
\draw (dr)++(202.5:0.7cm) to ++(315:0.54cm);
\draw (dr)++(247.5:0.7cm) to ++(0:0.54cm);
\draw (dr)++(292.5:0.7cm) to ++(45:0.54cm);
\draw (dr)++(337.5:0.7cm) to ++(90:0.54cm);
\draw (dmc)++(22.5:0.7cm) to ++(135:0.54cm);
\draw (dmc)++(67.5:0.7cm) to ++(180:0.54cm);
\draw (dmc)++(112.5:0.7cm) to ++(225:0.54cm);
\draw (dmc)++(157.5:0.7cm) to ++(270:0.54cm);
\draw (dmc)++(202.5:0.7cm) to ++(315:0.54cm);
\draw (dmc)++(247.5:0.7cm) to ++(0:0.54cm);
\draw (dmc)++(292.5:0.7cm) to ++(45:0.54cm);
\draw (dmc)++(337.5:0.7cm) to ++(90:0.54cm);
\draw (d)++(22.5:0.7cm) to ++(135:0.54cm);
\draw (d)++(67.5:0.7cm) to ++(180:0.54cm);
\draw (d)++(112.5:0.7cm) to ++(225:0.54cm);
\draw (d)++(157.5:0.7cm) to ++(270:0.54cm);
\draw (d)++(202.5:0.7cm) to ++(315:0.54cm);
\draw (d)++(247.5:0.7cm) to ++(0:0.54cm);
\draw (d)++(292.5:0.7cm) to ++(45:0.54cm);
\draw (d)++(337.5:0.7cm) to ++(90:0.54cm);
\draw (ll)++(22.5:0.7cm) to ++(135:0.54cm);
\draw (ll)++(67.5:0.7cm) to ++(180:0.54cm);
\draw (ll)++(112.5:0.7cm) to ++(225:0.54cm);
\draw (ll)++(157.5:0.7cm) to ++(270:0.54cm);
\draw (ll)++(202.5:0.7cm) to ++(315:0.54cm);
\draw (ll)++(247.5:0.7cm) to ++(0:0.54cm);
\draw (ll)++(292.5:0.7cm) to ++(45:0.54cm);
\draw (ll)++(337.5:0.7cm) to ++(90:0.54cm);
\draw (rr)++(22.5:0.7cm) to ++(135:0.54cm);
\draw (rr)++(67.5:0.7cm) to ++(180:0.54cm);
\draw (rr)++(112.5:0.7cm) to ++(225:0.54cm);
\draw (rr)++(157.5:0.7cm) to ++(270:0.54cm);
\draw (rr)++(202.5:0.7cm) to ++(315:0.54cm);
\draw (rr)++(247.5:0.7cm) to ++(0:0.54cm);
\draw (rr)++(292.5:0.7cm) to ++(45:0.54cm);
\draw (rr)++(337.5:0.7cm) to ++(90:0.54cm);
\draw (dd)++(22.5:0.7cm) to ++(135:0.54cm);
\draw (dd)++(67.5:0.7cm) to ++(180:0.54cm);
\draw (dd)++(112.5:0.7cm) to ++(225:0.54cm);
\draw (dd)++(157.5:0.7cm) to ++(270:0.54cm);
\draw (dd)++(202.5:0.7cm) to ++(315:0.54cm);
\draw (dd)++(247.5:0.7cm) to ++(0:0.54cm);
\draw (dd)++(292.5:0.7cm) to ++(45:0.54cm);
\draw (dd)++(337.5:0.7cm) to ++(90:0.54cm);
\draw [thick, blue](ll)++(67.5:0.7cm) to ++(247.5:1.4cm);
\draw [thick, blue](dd)++(67.5:0.7cm) to ++(202.5:0.99cm);
\draw [thick, blue](dd)++(-22.5:0.7cm) to ++(202.5:0.99cm);
\draw [thick, blue](rr)++(67.5:0.7cm) to ++(225:1.29cm);
\draw [thick, blue](rr)++(22.5:0.7cm) to ++(225:1.29cm);
\draw [thick, blue](u)++(112.5:0.7cm) to ++(-112.5:0.99cm);
\draw [thick, blue](u)++(22.5:0.7cm) to ++(-112.5:0.99cm);
\draw [thick, blue](dr)++(22.5:0.7cm) to ++(180:1.29cm);
\draw [thick, blue](dr)++(-22.5:0.7cm) to ++(180:1.29cm);
\draw [thick, blue](dl)++(67.5:0.7cm) to ++(-67.5:0.99cm);
\draw [thick, blue](dl)++(157.5:0.7cm) to ++(-67.5:0.99cm);
\draw [thick, blue](ul)++(112.5:0.7cm) to ++(270:1.29cm);
\draw [thick, blue](ul)++(67.5:0.7cm) to ++(270:1.29cm);
\draw [thick, blue](ur)++(22.5:0.7cm) to ++(-157.5:1.4cm);
\draw [thick, blue](d)++(-22.5:0.7cm) to ++(157.5:1.4cm);
\draw [thick, blue](uml)++(112.5:0.7cm) to ++(-67.5:1.4cm);
\draw [thick, blue](dmc)++(112.5:0.7cm) to ++(-45:1.29cm);
\draw [thick, blue](dmc)++(157.5:0.7cm) to ++(-45:1.29cm);
\draw [thick, blue](umr)++(112.5:0.7cm) to ++(-22.5:0.99cm);
\draw [thick, blue](umr)++(202.5:0.7cm) to ++(-22.5:0.99cm);
\fill [gray, opacity=.4] (u)--(ul)--(dl)--(d)--(dr)--(ur)--(u);
\end{tikzpicture}}
\end{center}
\end{figure}
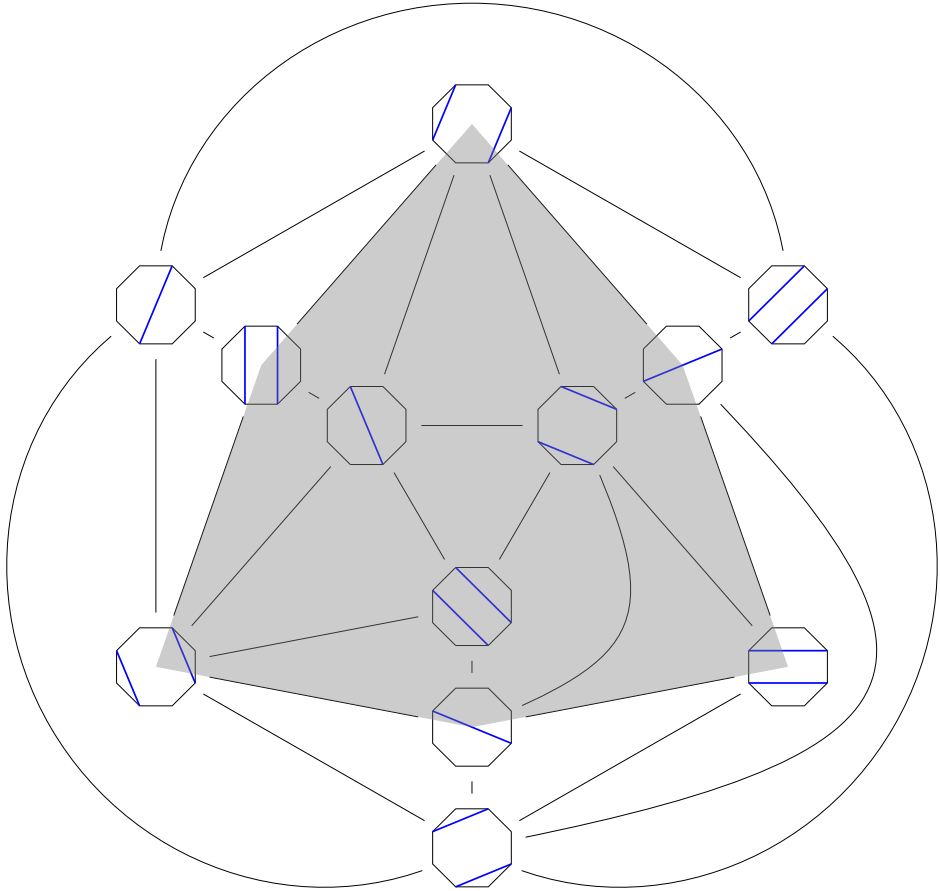
\end{example}
\subsection{Type $D_n$}\label{subsec:focus-Dn}
In this subsection, we focus on cluster complexes of $D_n$ type, a cluster complex $\Delta(\xx,B)$ such that $B$ is mutation equivalent to an exchange matrix of the tree $D_n$ type. Notably, we consider the case that $B$ is an exchange matrix of the tree $D_n$ type (in other words, $Q_B$ has a diagram of $D_n$ type in Figure \ref{fig:valued-quiver} as its underlying diagram).

Similar to $A_n$ type or $B_n,C_n$ type, we begin by explaining the realization of cluster structure by marked surfaces, which is given by \cite{fzii}*{Subsection 12.4}.
Let $S$ be a regular $2n$-gon. We consider orbits of their diagonals by a $\pi$ rotation $\Theta$ as $B_n$ and $C_n$ type. However, we provide two distinguished orbits for (an orbit of) a diameter. In this work, we distinguish these two by the diameter labels 1 and 2. We say that, for orbits, $\bar{\ell}$ is \emph{compatible} with $\bar{\ell'}$ if they are diagonals falling under any of the following:
\begin{itemize}
\item $\bar{\ell}$ and $\bar{\ell}'$ are diameters with the same diameter label,
\item $\bar{\ell}$ and $\bar{\ell}'$ are the same diameters with different diameter labels,
\item $\bar{\ell}$ and $\bar{\ell}'$ do not cross each other in the interior of $S$.
\end{itemize}
We refer to a set of pairwise compatible orbits of diagonals a \emph{compatible set}. We label these orbits by $\{\bar{\ell}_1,\dots,\bar{\ell}_n\}$, and define the \emph{labeled maximal compatible set} in the same way as $A_n$ or $B_n,C_n$ type. We note that a maximal compatible set $T$ consists of $n-2$ orbits composed of two diagonals and 2 (distinguished) diameters. We assume that these 2 diameters are $\bar{\ell}_{n-1}=\{\ell_{n-1}\},\bar{\ell}_n=\{\ell_n\}$. We define a matrix $B_T$ as the matrix defined from $T$ as follows. Let $S_{\ell_n}$ be one of $(n+2)$-gon cut out by a diameter $\ell_n$. For each triangle $\Delta$ in a triangulation of $S_{\ell_n}$ given by $T$, we define the $n\times n$ integer matrix $B^\Delta=(b^\Delta_{ij})$ by setting
\begin{align*}
b^\Delta_{ij} =\begin{cases}
 1 \quad&\text{if $\Delta$ has diagonals labeled $\bar{\ell}_{i}$ and  $\bar{\ell}_{j}$, with $\bar{\ell}_j$ following $\bar{\ell}_i$ in clockwise order},\\
 -1 \quad &\text{if the same holds as the above, in counterclockwise order,}\\
 0 \quad &\text{otherwise}.
\end{cases}
\end{align*}
The matrix $B=B_T=(b_{ij})$ is then defined by
\begin{align*}
B=\sum_\Delta B^\Delta,
\end{align*}
where the summation runs over all triangles $\Delta$ in a triangulation of $S_{\ell_n}$ given by $T$. This matrix is independent of the choice of diameter $\ell_n$ in $T$. Therefore, we can use $\ell_{n-1}$ instead of $\ell_n$.
\begin{example}
Let $T=(\bar{\ell_1},\bar{\ell_2},\bar{\ell_3},\bar{\ell_4})$ be a labeled maximal compatible set given as in Figure \ref{fig:triangulation-matrix-correspondence-Dn}. Then, we have $B_T=\begin{bmatrix}
0&0&1&-1\\
0&0&-1&1\\
-1&1&0&0\\
1&-1&0&0
\end{bmatrix}$. The corresponding quiver is $\begin{xy}(-10,0)*+{1}="H",(0,0)*+{2}="I",(10,10)*+{3}="J", (10,-10)*+{{4}}="K" \ar@{<-}"I";"J"  \ar@ {<-}"K";"I" \ar@{->}"H";"J" \ar@{<-}"H";"K"\end{xy}$.
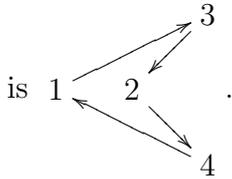
\begin{figure}[ht]
\caption{Labeled maximal compatible set of an octagon}\label{fig:triangulation-matrix-correspondence-Dn}
\[
\begin{tikzpicture}[baseline=0cm]
\coordinate (0) at (0,0);
\coordinate (1) at (22.5:1.5);
\coordinate (2) at (67.5:1.5);
\coordinate (3) at (112.5:1.5);
\coordinate (4) at (157.5:1.5);
\coordinate (5) at (202.5:1.5);
\coordinate (6) at (247.5:1.5);
\coordinate (7) at (292.5:1.5);
\coordinate (8) at (337.5:1.5);
\draw (1) to (2);
\draw (2) to (3);
\draw (3) to (4);
\draw (4) to (5);
\draw (5) to (6);
\draw (6) to (7);
\draw (7) to (8);
\draw (8) to (1);
\draw [blue,thick] (1) to (3);
\draw [blue,thick] (3) to (5);
\draw [blue,thick] (1) to (5);
\draw [blue,thick] (3) to (7);
\draw [blue,dashed] (5) to (7);
\draw [blue,dashed] (7) to (1);
\fill [white](67.5:1) circle (3mm);
\fill [white](157.5:1) circle (3mm);
\fill [white](202.5:0.8) circle (3mm);
\fill [white](292.5:0.8) circle (3mm);
\fill [white](0,0) circle (2mm);
\node at (0,0) {\textcolor{blue}{1}};
\node at (157.5:1) {$\bar{\ell}_1$};
\node at (67.5:1) {$\bar{\ell}_2$};
\node at (202.5:0.8) {$\bar{\ell}_3$};
\node at (292.5:0.8){$\bar{\ell}_4$};
\end{tikzpicture}
\]
\end{figure}
\end{example}
For $i\in \{1,\dots,n\}$ and a maximal compatible set $T$, we define a \emph{flip $\varphi_i(T)$ in direction $i$} as a transformation that obtains the unique maximal compatible set $T'$ from $T$ by removing $\bar{\ell_i}$ and adding the other diagonal $\bar{\ell'_i}$ to $T\setminus\{\bar{\ell_i}\}$.
We explain that flips establish the same structure as seed mutations of $D_n$ type. We define a \emph{labeled triangulation pair} $(T,B_T)$, a \emph{flip of a labeled triangulation pair} $\varphi_i(T,B_T):=(\varphi_i(T),B_{\varphi_i(T)})$, a \emph{triangulation pattern} $P\colon t\mapsto \Xi_t$, and \emph{arc complex} $\Delta(T,B_T)$ in the same way as Subsection \ref{subsec:focus-An}. In parallel with $A_n$ and $B_n, C_n$ type, the following property can be easily obtained by \cite{fzii}*{Subsection 12.4} and checking exchange relations \cite{fzii}*{(12.19)--(12.23)}.
\begin{theorem}\label{thm:arc-variable-Dn}
Let $S$ be a regular $2n$-gon and $T$ be a maximal compatible set.
\begin{itemize}
\item[(1)]We have $B_{\varphi_i(T)}=\mu_i(B_T)$, where $\mu_i$ is a mutation in direction $i$ of an exchange matrix.
\item[(2)]We have the canonical bijection between the sets of all orbits of diagonals of $S$ and the set of cluster variables in $\{\xx_t\}_{t\in\TT_n}$ by the correspondence $\bar{\ell}_{i;t}\mapsto x_{i;t}$. Furthermore, this bijection induces an isomorphism between an arc complex $\Delta(T,B_T)$ and a cluster complex $\Delta(\xx,B_T)$.
\end{itemize}
\end{theorem}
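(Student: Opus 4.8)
The plan is to follow the same two-step strategy used for types $A_n$ and $B_n,C_n$ in Theorems \ref{thm:arc-variable} and \ref{thm:arc-variable-Bn}, exploiting the fact that matrix mutation is a purely local operation while the global variable correspondence is propagated along the tree $\TT_n$. Throughout I would work in the geometric model of \cite{fzii}*{Subsection 12.4}: orbits of diagonals of the regular $2n$-gon $S$ under the half-turn $\Theta$, together with the two distinguished labels attached to a diameter.

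For part (1), I would argue that $B_{\varphi_i(T)}=\mu_i(B_T)$ can be verified locally. Since $B_T=\sum_\Delta B^\Delta$ is assembled triangle by triangle, and a flip $\varphi_i$ alters only the triangles incident to the orbit $\bar{\ell}_i$, the only entries $b_{jk;T}$ that change are those indexed by orbits sharing a triangle with $\bar{\ell}_i$. I would then compare, case by case, the transformation of these signed incidence counts under the flip with the mutation rule \eqref{eq:matrix-mutation}. The only cases that are genuinely new relative to type $A_n$ are those in which $\bar{\ell}_i$ is one of the two diameters $\bar{\ell}_{n-1},\bar{\ell}_n$, or is adjacent to them; here a flip of a diameter returns the other diameter label, and I must check that the sign conventions built into $B^\Delta$ reproduce the mutated matrix. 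This is finite local bookkeeping.

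For part (2), I would define the correspondence $\bar{\ell}_{i;t}\mapsto x_{i;t}$ at the root $t_0$ and extend it by induction along $\TT_n$. Part (1) guarantees that the triangulation pattern $P^\Xi$ and the cluster pattern $P^\Sigma$ sit over the same tree with matching exchange data; comparing the flip relations with the exchange relations \cite{fzii}*{(12.19)--(12.23)} shows that a flip $\varphi_i$ produces exactly the orbit corresponding to the cluster variable produced by $\mu_i$. Well-definedness — that the same orbit appearing at two vertices of $\TT_n$ always yields the same cluster variable, and conversely — then follows because both the arc-flip graph and the exchange graph are quotients of $\TT_n$ by identifications governed by the identical local data. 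Once the vertex bijection is in place, I would upgrade it to an isomorphism $\Delta(T,B_T)\cong\Delta(\xx,B_T)$ by noting that a set of orbits is a simplex of the arc complex iff it is pairwise compatible iff it extends to a maximal compatible set, which under the bijection corresponds precisely to a subset of a cluster; Lemma \ref{lem:existance-cluster} packages the passage from pairwise compatibility to a common cluster.

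The main obstacle is the pair of distinguished diameters, the feature distinguishing $D_n$ from the classical single-diagonal models. I expect two delicate points: (i) the nonstandard compatibility rule, under which the two diameter labels on a single geometric diameter are declared compatible, must be shown to match the statement that the corresponding two cluster variables lie in a common cluster; and (ii) the exchange relation at a diameter is the two-term $D$-type relation of \cite{fzii}*{(12.19)--(12.23)} rather than the single binomial of type $A_n$, so both the local mutation check of part (1) and the exchange-relation match of part (2) must be carried out with particular care at the branch node of the $D_n$ diagram.
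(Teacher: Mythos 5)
Your proposal is correct and follows essentially the same route as the paper, which proves this theorem simply by appealing to the geometric model of \cite{fzii}*{Subsection 12.4} and checking the exchange relations \cite{fzii}*{(12.19)--(12.23)}; your outline is a fleshed-out version of exactly that check, with the right delicate points (the two labelled diameters and the branch node) correctly identified.
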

We note that $B_T$ is a skew-symmetric matrix which is mutation equivalent to an exchange matrix of tree $D_n$ type.
\begin{remark}
For convenience, we regard $D_2$ type as $A_1 \times A_1$ type. Further, the Dynkin diagram of the $D_3$ type has the same shape as that of $A_3$. However, they differ in terms of the order of their vertices. In the $A_3$ type, a vertex indexed by $1$ is an end of one of the diagrams. Conversely, in the $D_3$ type, it is the central vertex of the diagrams.
\end{remark}
\begin{example}\label{ex:correspondence-arc-variable-D3}
 Let $B=\begin{bmatrix}
 0&-1&-1\\
 1&0&0\\
 1&0&0
 \end{bmatrix}$ of type $D_3$. The corresponding quiver is $\begin{xy}(0,0)*+{1}="I",(10,10)*+{2}="J", (10,-10)*+{{3}}="K" \ar@{<-}"I";"J"  \ar@ {->}"K";"I" \end{xy}$. Let $T$ be a set $\left\{\bar{\ell}_1=\begin{tikzpicture}[baseline=-1mm,scale=0.3]
 \coordinate (u) at(90:1); \coordinate (lu) at(150:1); \coordinate (ld) at(-150:1);
 \coordinate (ru) at(30:1); \coordinate (rd) at(-30:1); \coordinate (d) at(-90:1);
 \draw (u)--(lu)--(ld)--(d)--(rd)--(ru)--(u); \node at(0,1.3) {}; \node at(0,-1.3) {}; \draw[blue,thick] (ru)--(d); \draw[blue,thick] (u)--(ld);
\end{tikzpicture}, \ \bar{\ell}_2=
\begin{tikzpicture}[baseline=-1mm,scale=0.3]
 \coordinate (u) at(90:1); \coordinate (lu) at(150:1); \coordinate (ld) at(-150:1);
 \coordinate (ru) at(30:1); \coordinate (rd) at(-30:1); \coordinate (d) at(-90:1);
 \draw (u)--(lu)--(ld)--(d)--(rd)--(ru)--(u); \node at(0,1.3) {}; \node at(0,-1.3) {}; \draw[blue,thick] (u)--(d);
\fill [white](0,0) circle (4mm);
\node at (0,0){\textcolor{blue}{\tiny 1}};
\end{tikzpicture},\ \bar{\ell}_3=
\begin{tikzpicture}[baseline=-1mm,scale=0.3]
 \coordinate (u) at(90:1); \coordinate (lu) at(150:1); \coordinate (ld) at(-150:1);
 \coordinate (ru) at(30:1); \coordinate (rd) at(-30:1); \coordinate (d) at(-90:1);
 \draw (u)--(lu)--(ld)--(d)--(rd)--(ru)--(u); \node at(0,1.3) {}; \node at(0,-1.3) {}; \draw[blue,thick] (u)--(d);
 \fill [white](0,0) circle (4mm);
\node at (0,0){\textcolor{blue}{\tiny 2}};
\end{tikzpicture}
\right\}$ of orbits of diagonals. Then, we have $B=B_T$. For example, we consider a cluster pattern $P^{\Sigma}(\xx,B)$ and a triangulation pattern $P^\Xi(T,B_T)$. Then, we have $\mu_{x_3}\left[\left(x_1,x_2,x_3\right)\right]=\left[\left(x_1,x_2,\dfrac{x_2+1}{x_3}\right)\right]$ by a mutation in direction $x_3$. Meanwhile, we have $\varphi_{\ell_3}\left(
\begin{tikzpicture}[baseline=-1mm,scale=0.3]
 \coordinate (u) at(90:1);
 \coordinate (lu) at(150:1);
 \coordinate (ld) at(-150:1);
 \coordinate (ru) at(30:1);
 \coordinate (rd) at(-30:1);
 \coordinate (d) at(-90:1);
 \draw (u)--(lu)--(ld)--(d)--(rd)--(ru)--(u);
 \node at(0,1.3) {};
 \node at(0,-1.3) {};
 \draw[blue,thick] (u)--(ld);
 \draw[blue,thick] (u)--(d);
 \draw[blue,thick] (d)--(ru);
 \fill [white](0,0) circle (4mm);
\node at (0,0){\textcolor{blue}{\tiny 12}};
\end{tikzpicture}\right)=
\begin{tikzpicture}[baseline=-1mm,scale=0.3]
 \coordinate (u) at(90:1);
 \coordinate (lu) at(150:1);
 \coordinate (ld) at(-150:1);
 \coordinate (ru) at(30:1);
 \coordinate (rd) at(-30:1);
 \coordinate (d) at(-90:1);
 \draw (u)--(lu)--(ld)--(d)--(rd)--(ru)--(u);
 \node at(0,1.3) {};
 \node at(0,-1.3) {};
 \draw[blue,thick] (u)--(ld);
 \draw[blue,thick] (u)--(d);
 \draw[blue,thick] (ld)--(ru);
 \draw[blue,thick] (d)--(ru);
 \fill [white](0,0) circle (4mm);
\node at (0,0){\textcolor{blue}{\tiny 1}};
\end{tikzpicture}
$ by a flip in direction $\ell_3$. Therefore, for the canonical bijection, $\dfrac{x_1+1}{x_3}$ corresponds to \begin{tikzpicture}[baseline=-1mm,scale=0.3]
 \coordinate (u) at(90:1);
 \coordinate (lu) at(150:1);
 \coordinate (ld) at(-150:1);
 \coordinate (ru) at(30:1);
 \coordinate (rd) at(-30:1);
 \coordinate (d) at(-90:1);
 \draw (u)--(lu)--(ld)--(d)--(rd)--(ru)--(u);
 \node at(0,1.3) {};
 \node at(0,-1.3) {};
 \draw[blue,thick] (ld)--(ru);
 \fill [white](0,0) circle (4mm);
\node at (0,0){\textcolor{blue}{\tiny 1}};
\end{tikzpicture}.
The canonical bijection between the set of cluster variables and the set of all orbits of diagonals is given in Table \ref{exampDn}, and the cluster complex of $D_3$ type is given in Figure \ref{D3complex}.
\begin{table}[ht]
\begin{tabular}{c|c}
 Cluster variable $x$ & diagonal corresponding to $x$
\end{tabular}
\vspace{2mm}\\
\begin{minipage}{0.3\hsize}
\begin{center}\begin{tabular}{c|c}
 $x_1$ &
\begin{tikzpicture}[baseline=-1mm,scale=0.5]
 \coordinate (u) at(90:1); \coordinate (lu) at(150:1); \coordinate (ld) at(-150:1);
 \coordinate (ru) at(30:1); \coordinate (rd) at(-30:1); \coordinate (d) at(-90:1);
 \draw (u)--(lu)--(ld)--(d)--(rd)--(ru)--(u); \node at(0,1.3) {}; \node at(0,-1.3) {}; \draw[blue,thick] (u)--(ld);
 \draw[blue,thick] (d)--(ru);
\end{tikzpicture}
\\\hline
 $x_2$ &
\begin{tikzpicture}[baseline=-1mm,scale=0.5]
 \coordinate (u) at(90:1); \coordinate (lu) at(150:1); \coordinate (ld) at(-150:1);
 \coordinate (ru) at(30:1); \coordinate (rd) at(-30:1); \coordinate (d) at(-90:1);
 \draw (u)--(lu)--(ld)--(d)--(rd)--(ru)--(u); \node at(0,1.3) {}; \node at(0,-1.3) {}; \draw[blue,thick] (u)--(d);
 \fill [white](0,0) circle (4mm);
\node at (0,0){\textcolor{blue}{\footnotesize 1}};
\end{tikzpicture}
\\\hline
 $x_3$ &
\begin{tikzpicture}[baseline=-1mm,scale=0.5]
 \coordinate (u) at(90:1); \coordinate (lu) at(150:1); \coordinate (ld) at(-150:1);
 \coordinate (ru) at(30:1); \coordinate (rd) at(-30:1); \coordinate (d) at(-90:1);
 \draw (u)--(lu)--(ld)--(d)--(rd)--(ru)--(u); \node at(0,1.3) {}; \node at(0,-1.3) {}; \draw[blue,thick] (u)--(d);
  \fill [white](0,0) circle (4mm);
\node at (0,0){\textcolor{blue}{\footnotesize 2}};
\end{tikzpicture}
\end{tabular}\end{center}
\end{minipage}
\begin{minipage}{0.3\hsize}
\begin{center}\begin{tabular}{c|c}
 $\cfrac{x_2x_3+1}{x_1}$ &
\begin{tikzpicture}[baseline=-1mm,scale=0.5]
 \coordinate (u) at(90:1); \coordinate (lu) at(150:1); \coordinate (ld) at(-150:1);
 \coordinate (ru) at(30:1); \coordinate (rd) at(-30:1); \coordinate (d) at(-90:1);
 \draw (u)--(lu)--(ld)--(d)--(rd)--(ru)--(u); \node at(0,1.3) {}; \node at(0,-1.3) {}; \draw[blue,thick] (lu)--(d); \draw[blue,thick] (u)--(rd);
\end{tikzpicture}
\\\hline
 $\cfrac{x_1+1}{x_2}$ &
\begin{tikzpicture}[baseline=-1mm,scale=0.5]
 \coordinate (u) at(90:1); \coordinate (lu) at(150:1); \coordinate (ld) at(-150:1);
 \coordinate (ru) at(30:1); \coordinate (rd) at(-30:1); \coordinate (d) at(-90:1);
 \draw (u)--(lu)--(ld)--(d)--(rd)--(ru)--(u); \node at(0,1.3) {}; \node at(0,-1.3) {}; \draw[blue,thick] (ru)--(ld);
  \fill [white](0,0) circle (4mm);
\node at (0,0){\textcolor{blue}{\footnotesize 2}};
\end{tikzpicture}
\\\hline
 $\cfrac{x_1+1}{x_3}$ &
\begin{tikzpicture}[baseline=-1mm,scale=0.5]
 \coordinate (u) at(90:1); \coordinate (lu) at(150:1); \coordinate (ld) at(-150:1);
 \coordinate (ru) at(30:1); \coordinate (rd) at(-30:1); \coordinate (d) at(-90:1);
 \draw (u)--(lu)--(ld)--(d)--(rd)--(ru)--(u); \node at(0,1.3) {}; \node at(0,-1.3) {}; \draw[blue,thick] (ru)--(ld);
  \fill [white](0,0) circle (4mm);
\node at (0,0){\textcolor{blue}{\footnotesize 1}};
\end{tikzpicture}
\end{tabular}\end{center}
\end{minipage}
\begin{tabular}{c|c}
 $\cfrac{x_1+x_2x_3+1}{x_1x_2}$ &
\begin{tikzpicture}[baseline=-1mm,scale=0.5]
 \coordinate (u) at(90:1); \coordinate (lu) at(150:1); \coordinate (ld) at(-150:1);
 \coordinate (ru) at(30:1); \coordinate (rd) at(-30:1); \coordinate (d) at(-90:1);
 \draw (u)--(lu)--(ld)--(d)--(rd)--(ru)--(u); \node at(0,1.3) {}; \node at(0,-1.3) {}; \draw[blue,thick] (lu)--(rd);
  \fill [white](0,0) circle (4mm);
\node at (0,0){\textcolor{blue}{\footnotesize 2}};
\end{tikzpicture}
\\\hline
 $\cfrac{x_1+x_2x_3+1}{x_1x_3}$ &
\begin{tikzpicture}[baseline=-1mm,scale=0.5]
 \coordinate (u) at(90:1); \coordinate (lu) at(150:1); \coordinate (ld) at(-150:1);
 \coordinate (ru) at(30:1); \coordinate (rd) at(-30:1); \coordinate (d) at(-90:1);
 \draw (u)--(lu)--(ld)--(d)--(rd)--(ru)--(u); \node at(0,1.3) {}; \node at(0,-1.3) {}; \draw[blue,thick] (rd)--(lu);
  \fill [white](0,0) circle (4mm);
\node at (0,0){\textcolor{blue}{\footnotesize 1}};
\end{tikzpicture}
\\\hline
 $\cfrac{x_1^2+x_2x_3+2x_1+1}{x_1x_2x_3}$ &
\begin{tikzpicture}[baseline=-1mm,scale=0.5]
 \coordinate (u) at(90:1); \coordinate (lu) at(150:1); \coordinate (ld) at(-150:1);
 \coordinate (ru) at(30:1); \coordinate (rd) at(-30:1); \coordinate (d) at(-90:1);
 \draw (u)--(lu)--(ld)--(d)--(rd)--(ru)--(u); \node at(0,1.3) {}; \node at(0,-1.3) {}; \draw[blue,thick] (lu)--(ru); \draw[blue,thick] (ld)--(rd);
\end{tikzpicture}
  \end{tabular}
 \vspace{5mm}
 \caption{Canonical bijection between the set of cluster variables and the set of orbits of diagonal of $D_3$ type}\label{exampDn}
\end{table}
\begin{figure}[ht]
\caption{Cluster complex of $D_3$ type \label{D3complex}}
\begin{center}
\scalebox{0.8}{
\begin{tikzpicture}
\coordinate (0) at (0,0);
\coordinate (u*) at (90:5.33);
\coordinate (u) at (90:4);
\coordinate (ul) at (150:4);
\coordinate (ur) at (30:4);
\coordinate (uml) at (150:2);
\coordinate (umr) at (30:2);
\coordinate (dmc) at (-90:2);
\coordinate (dl) at (-150:4);
 \coordinate (dl*) at (-150:5.33);
\coordinate (dr) at (-30:4);
\coordinate (dr*) at (-30:5.33);
\coordinate (d) at (-90:4);
\draw (u) to (ul);
\draw (u) to (ur);
\draw (ul) to (uml);
\draw (ur) to (umr);
\draw (umr) to (uml);
\draw (u) to (uml);
\draw (u) to (umr);
\draw (uml) to (dl);
\draw (umr) to (dr);
\draw (ul) to (dl);
\draw (ur) to (dr);
\draw (uml) to (dmc);
\draw (umr) to (dmc);
\draw (dl) to (dmc);
\draw (dr) to (dmc);
\draw (dl) to (d);
\draw (dmc) to (d);
\draw (dr) to (d);
\draw(ul) [out=90,in=180]to (u*);
\draw(ur) [out=90,in=0]to (u*);
\draw(ul) [out=210,in=120]to (dl*);
\draw(d) [out=210,in=300]to (dl*);
\draw(d) [out=330,in=240]to (dr*);
\draw(ur) [out=330,in=60]to (dr*);
\fill [white](u) circle (0.9cm);
\fill[white] (ul) circle (0.9cm);
\fill[white] (ur) circle (0.9cm);
\fill[white] (uml) circle (0.9cm);
\fill[white] (umr) circle (0.9cm);
\fill[white] (dmc) circle (0.9cm);
\fill[white] (dl) circle (0.9cm);
\fill[white] (dr) circle (0.9cm);
\fill[white] (d) circle (0.9cm);
\draw (u)++(210:0.7cm) to ++(90:0.7cm);
\draw (u)++(150:0.7cm) to ++(30:0.7cm);
\draw (u)++(90:0.7cm) to ++(330:0.7cm);
\draw (u)++(30:0.7cm) to ++(270:0.7cm);
\draw (u)++(330:0.7cm) to ++(210:0.7cm);
\draw (u)++(270:0.7cm) to ++(150:0.7cm);
\draw (ur)++(210:0.7cm) to ++(90:0.7cm);
\draw (ur)++(150:0.7cm) to ++(30:0.7cm);
\draw (ur)++(90:0.7cm) to ++(330:0.7cm);
\draw (ur)++(30:0.7cm) to ++(270:0.7cm);
\draw (ur)++(330:0.7cm) to ++(210:0.7cm);
\draw (ur)++(270:0.7cm) to ++(150:0.7cm);
\draw (ul)++(210:0.7cm) to ++(90:0.7cm);
\draw (ul)++(150:0.7cm) to ++(30:0.7cm);
\draw (ul)++(90:0.7cm) to ++(330:0.7cm);
\draw (ul)++(30:0.7cm) to ++(270:0.7cm);
\draw (ul)++(330:0.7cm) to ++(210:0.7cm);
\draw (ul)++(270:0.7cm) to ++(150:0.7cm);
\draw (umr)++(210:0.7cm) to ++(90:0.7cm);
\draw (umr)++(150:0.7cm) to ++(30:0.7cm);
\draw (umr)++(90:0.7cm) to ++(330:0.7cm);
\draw (umr)++(30:0.7cm) to ++(270:0.7cm);
\draw (umr)++(330:0.7cm) to ++(210:0.7cm);
\draw (umr)++(270:0.7cm) to ++(150:0.7cm);
\draw (uml)++(210:0.7cm) to ++(90:0.7cm);
\draw (uml)++(150:0.7cm) to ++(30:0.7cm);
\draw (uml)++(90:0.7cm) to ++(330:0.7cm);
\draw (uml)++(30:0.7cm) to ++(270:0.7cm);
\draw (uml)++(330:0.7cm) to ++(210:0.7cm);
\draw (uml)++(270:0.7cm) to ++(150:0.7cm);
\draw (dl)++(210:0.7cm) to ++(90:0.7cm);
\draw (dl)++(150:0.7cm) to ++(30:0.7cm);
\draw (dl)++(90:0.7cm) to ++(330:0.7cm);
\draw (dl)++(30:0.7cm) to ++(270:0.7cm);
\draw (dl)++(330:0.7cm) to ++(210:0.7cm);
\draw (dl)++(270:0.7cm) to ++(150:0.7cm);
\draw (dr)++(210:0.7cm) to ++(90:0.7cm);
\draw (dr)++(150:0.7cm) to ++(30:0.7cm);
\draw (dr)++(90:0.7cm) to ++(330:0.7cm);
\draw (dr)++(30:0.7cm) to ++(270:0.7cm);
\draw (dr)++(330:0.7cm) to ++(210:0.7cm);
\draw (dr)++(270:0.7cm) to ++(150:0.7cm);
\draw (dmc)++(210:0.7cm) to ++(90:0.7cm);
\draw (dmc)++(150:0.7cm) to ++(30:0.7cm);
\draw (dmc)++(90:0.7cm) to ++(330:0.7cm);
\draw (dmc)++(30:0.7cm) to ++(270:0.7cm);
\draw (dmc)++(330:0.7cm) to ++(210:0.7cm);
\draw (dmc)++(270:0.7cm) to ++(150:0.7cm);
\draw (d)++(210:0.7cm) to ++(90:0.7cm);
\draw (d)++(150:0.7cm) to ++(30:0.7cm);
\draw (d)++(90:0.7cm) to ++(330:0.7cm);
\draw (d)++(30:0.7cm) to ++(270:0.7cm);
\draw (d)++(330:0.7cm) to ++(210:0.7cm);
\draw (d)++(270:0.7cm) to ++(150:0.7cm);
\draw [thick, blue](uml)++(150:0.7cm) to ++(-30:1.39cm);
\draw [thick, blue](umr)++(30:0.7cm) to ++(210:1.39cm);
\draw [thick, blue](dmc)++(90:0.7cm) to ++(270:1.39cm);
\draw [thick, blue](dl)++(90:0.7cm) to ++(-60:1.21cm);
\draw [thick, blue](dl)++(150:0.7cm) to ++(-60:1.21cm);
\draw [thick, blue](dr)++(90:0.7cm) to ++(240:1.21cm);
\draw [thick, blue](dr)++(30:0.7cm) to ++(240:1.21cm);
\draw [thick, blue](u)++(150:0.7cm) to ++(0:1.21cm);
\draw [thick, blue](u)++(210:0.7cm) to ++(0:1.21cm);
\draw [thick, blue](ul)++(150:0.7cm) to ++(-30:1.39cm);
\draw [thick, blue](d)++(90:0.7cm) to ++(270:1.39cm);
\draw [thick, blue](ur)++(30:0.7cm) to ++(210:1.39cm);
\fill [white](umr) circle (2mm);
\node at (umr){\textcolor{blue}{1}};
\fill [white](uml) circle (2mm);
\node at (uml){\textcolor{blue}{1}};
\fill [white](dmc) circle (2mm);
\node at (dmc){\textcolor{blue}{1}};
\fill [white](ur) circle (2mm);
\node at (ur){\textcolor{blue}{2}};
\fill [white](ul) circle (2mm);
\node at (ul){\textcolor{blue}{2}};
\fill [white](d) circle (2mm);
\node at (d){\textcolor{blue}{2}};
\end{tikzpicture}}
\end{center}
\end{figure}
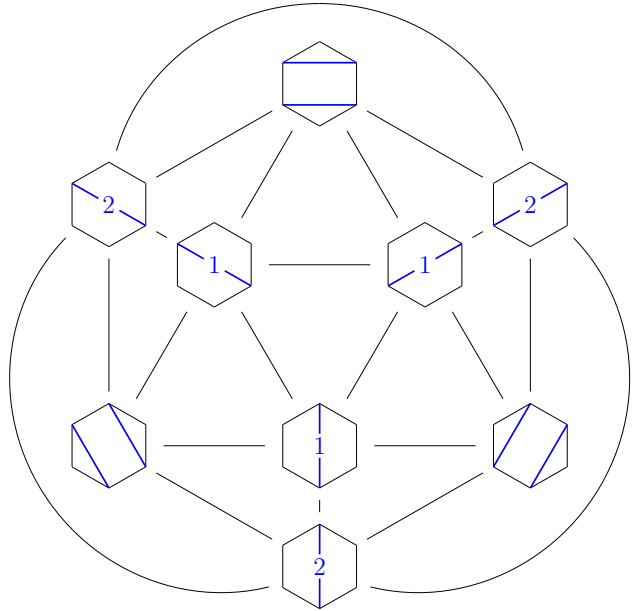
\end{example}
By using the canonical bijection given by Theorem \ref{thm:arc-variable-Dn}, we identify orbits of diagonal of a $2n$-gon with cluster variables of a cluster pattern of $D_n$ type.
\begin{definition}
Let $K$ and $L$ be simplicial complexes the vertex sets of which are disjoint, and $M$ and $N$ be subcomplexes of $K$ and $L$, respectively, such that $M\cong N$. We define a \emph{complex $G$ obtained by gluing $K$ and $L$ together along $M$ and $N$} as $G=K'\cup L'$, where $K'\cong K$, $L\cong L'$ and $K'\cap L'\cong M\cong N$.
\end{definition}
We denote by $I$ the vertex set consisting of all cluster variables in the initial cluster $\xx$. We define $\Delta_{\mathrm{glue}}$ as a full subcomplex of $\Delta(\xx,B)$ the vertex set of which consists of all vertices of $\st_{\Delta(\xx,B)}(I\setminus\{x_{n-1},x_n\})$ which are not in $\{x_1,\dots,x_{n-2}\}$ and compatible with both $x_{n-1}$ and ${x_n}$.
The main theorem in the subsection is given below.
\begin{theorem}\label{thm:positive-simplex-D_n}
Let $\Delta(\xx,B)$ be a cluster complex of $D_n$ type. Suppose $Q_B$ has one of the following forms.
\begin{align}
\begin{xy}
   (50,0)*{\circ}="A",(60,0)*{\circ}="B",(70,0)*{\circ}="C",(80,0)*{\circ}="D",(85,0)*{}="E",(90,0)*{}="F",(95,0)*{\circ}="G",(105,5)*{\circ}="H",(105,-5)*{\circ}="I",\ar@{->} "A";"B", \ar@{->} "B";"C", \ar@{->} "C";"D", \ar@{-} "D";"E", \ar@{.} "E";"F",\ar@{->} "F";"G",\ar@{->} "G";"H",\ar@{->} "G";"I"
\end{xy}\label{assumption:Dn1}\\
\begin{xy}
   (50,0)*{\circ}="A",(60,0)*{\circ}="B",(70,0)*{\circ}="C",(80,0)*{\circ}="D",(85,0)*{}="E",(90,0)*{}="F",(95,0)*{\circ}="G",(105,5)*{\circ}="H",(105,-5)*{\circ}="I",\ar@{<-} "A";"B", \ar@{<-} "B";"C", \ar@{<-} "C";"D", \ar@{<-} "D";"E", \ar@{.} "E";"F",\ar@{-} "F";"G",\ar@{<-} "G";"H",\ar@{<-} "G";"I"
\end{xy}\label{assumption:Dn2}
\end{align}
Then, $\Delta^+(\xx,B)$ is isomorphic to a complex obtained by gluing $\join(\Delta_0,\Delta_1,\Delta(A_{n-3}))$ and $\st_{\Delta(\xx,B)}(I\setminus\{x_{n-1},x_n\})$ together along $\join (\Delta_1,\Delta(A_{n-3}))$ and $\Delta_{\mathrm{glue}}$, where $\Delta_i$ is the $i$-dimensional simplex.
\end{theorem}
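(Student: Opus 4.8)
The plan is to work entirely inside the geometric model of Theorem \ref{thm:arc-variable-Dn}, realizing $\Delta(\xx,B)$ as the complex of $\Theta$-orbits of diagonals of a regular $2n$-gon $S$ carrying the two distinguished diameter labels. Under hypothesis \eqref{assumption:Dn1}--\eqref{assumption:Dn2}, the initial cluster is the ``linearly oriented'' triangulation whose $n-2$ non-diameter orbits $x_1,\dots,x_{n-2}$ form a fan through a single vertex $v$ (and its antipode $v'$ under $\Theta$), and whose last two coordinates $x_{n-1},x_n$ are the two labels of the central diameter $\overline{vv'}$; this is the $D_n$ counterpart of the geometric criteria in Remarks \ref{rem:equivalent-condition-An} and \ref{rem:equivalent-condition-Bn}. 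Every compatibility is then read off from crossings, subject to the $D_n$-specific rule that two distinct diameters are compatible exactly when they carry the same label, and that a non-diameter orbit is compatible with $x_{n-1}$ iff it is compatible with $x_n$ iff it does not cross $\overline{vv'}$.

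First I would decompose $\Delta^+(\xx,B)$ according to the central diameter: the orbits crossing $\overline{vv'}$ form the ``bulk,'' and the orbits compatible with both diameter labels form the boundary along which the extra cone is attached. For the bulk I would treat the fan $\{x_1,\dots,x_{n-2}\}$ by the $A_n$-type duality of Lemma \ref{lem:description-positivecone} combined with the dual-cluster relabelling of Lemma \ref{lem:description-positiveconeBn}: there is a dual fan $I'$ of positive orbits, each crossing all fan diagonals, such that the corresponding part of $\Delta^+(\xx,B)$ is carried isomorphically onto $\st_{\Delta(\xx,B)}(I\setminus\{x_{n-1},x_n\})$ by a flip and relabelling, exactly as $\st(I')\cong\st(I)$ was produced for type $B_n$. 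The remaining positive orbits, namely those forced to interact with the doubled central diameter, should assemble together with the overlap into the cone $\join(\Delta_0,\Delta_1,\Delta(A_{n-3}))$, which by Definition \ref{def:cone-star-link} is nothing but $\cone(\join(\Delta_1,\Delta(A_{n-3})))$, the apex $\Delta_0$ being one distinguished positive orbit obtained by flipping a diameter label.

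The heart of the argument is the identification of the gluing locus with both $\Delta_{\mathrm{glue}}$ and $\join(\Delta_1,\Delta(A_{n-3}))$. By definition $\Delta_{\mathrm{glue}}$ consists of the positive orbits in $\st(I\setminus\{x_{n-1},x_n\})$ that avoid the fan duals and are compatible with both central-diameter labels; by the $D_n$ compatibility rule these are precisely the non-diameter orbits not crossing $\overline{vv'}$, which live in a single half of $S$ and hence inherit the cluster structure of a lower polygon, contributing the factor $\Delta(A_{n-3})$, while the residual doubled diameter contributes the edge $\Delta_1$. I would make this precise by the same surjection-of-reduced-polygons induction on $n$ used in the proof of Corollary \ref{cor:boundary-Bn}, deleting an antipodal pair of vertices to pass from a $2n$-gon to a $2(n-1)$-gon, and checking the base cases $D_3\cong A_3$ and $D_4$ directly. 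Fullness of all the stars and links involved, and of $\Delta_{\mathrm{glue}}$ itself, follows from Lemma \ref{lem:fullsub} and the existence criterion Lemma \ref{lem:existance-cluster}, exactly as in the proofs of Theorems \ref{thm:positive-simplex-A_n} and \ref{thm:positive-simplex-B_n}.

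The main obstacle is the bookkeeping created by the doubled diameter. Unlike in types $A_n$ and $B_n$, the extra piece is not itself a classical cluster complex but a join whose $\Delta_1$-factor records the two labels of a single geometric diameter; tracking which positive orbits survive in $\Delta_{\mathrm{glue}}$, how they meet the dual fan $I'$, and how the two labels transform under the flip identifying the bulk with $\st(I\setminus\{x_{n-1},x_n\})$ is where essentially all of the difficulty lies. I expect the inductive reduction of the polygon, together with the join formula \eqref{eq:join-facevector} as a consistency check against the $D_n$ face vector of Theorem \ref{thm:intro-count1}, to be the technical crux that closes the argument.
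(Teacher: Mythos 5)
Your overall route is the paper's route: work in the $2n$-gon model, construct the dual fan $I'$ of vertices non-linked to $I$ (Lemma \ref{lem:interior-D_n}), identify $\st_{\Delta(\xx,B)}(I')$ with $\st_{\Delta(\xx,B)}(I\setminus\{x_{n-1},x_n\})$ (Lemma \ref{lem:description-positiveconeDn}), attach a single extra cone, and compute the gluing locus by the reduced-polygon induction of Corollary \ref{cor:boundary-Bn}. The issue is that the two points you defer to ``bookkeeping'' are where the proof actually lives, and your sketched answers to both are wrong. For the apex: the unique positive vertex outside $\st_{\Delta(\xx,B)}(I')$ is the non-diameter orbit $x^*=\{(v_1,v_4),(v_2,v_3)\}$, \emph{not} something ``obtained by flipping a diameter label'' --- flipping a labelled diameter of the initial triangulation produces another labelled diameter, namely one of the two labels of $(v_3,v_4)$, and those sit in the $\Delta_1$-factor of the \emph{base} of the cone, not at its apex. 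More seriously, you never argue that exactly one positive vertex lies outside $\st_{\Delta(\xx,B)}(I')$, which is what legitimizes the decomposition into $\st(I')$ plus a single cone; the paper obtains this by a count (Lemma \ref{lem:unique-variable-in-Dn}): $n^2$ cluster variables, minus the $(n-2)+(n-1)^2$ vertices of $\st(I')$ (using $\lk_{\Delta(\xx,B)}(I')\cong\Delta(D_{n-1})$, Lemma \ref{cor:boundary-closedstar}), minus the $n$ initial ones, leaves $1$. Some such argument is indispensable and absent from your plan.

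For the gluing locus, ``compatible with both labels of the central diameter'' is \emph{not} the same as ``non-diameter orbit not crossing it'' once you intersect with $\st_{\Delta(\xx,B)}(I\setminus\{x_{n-1},x_n\})$. The apex $x^*$ itself is a non-diameter orbit not crossing $(v_1,v_2)$, yet it is incompatible with every $x_i$, $i\le n-2$, so it must be excluded from $\Delta_{\mathrm{glue}}$; the representatives of the initial fan must also be removed; and the $\Delta_1$-factor of $\Delta_{\mathrm{glue}}$ consists of the two labels $x_{n-1},x_n$ of the \emph{initial} diameter, which are initial (not positive) vertices, whereas on the cone side the $\Delta_1$-factor consists of the two labels of the dual diameter $(v_3,v_4)$, which \emph{do} cross $(v_1,v_2)$. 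Reconciling these two asymmetric descriptions under the isomorphism $\st_{\Delta(\xx,B)}(I')\cong\st_{\Delta(\xx,B)}(I\setminus\{x_{n-1},x_n\})$ is exactly the content of Lemmas \ref{lem:characterization-cone} and \ref{lem:description-join}, and your proposal --- which itself flags this as ``where essentially all of the difficulty lies'' --- does not supply it; a face-vector consistency check against Theorem \ref{thm:intro-count1} cannot substitute for the simplicial identification.
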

\begin{remark}\label{rem:equivalent-condition-Dn}
A matrix $B$ satisfies the assumption in Theorem \ref{thm:positive-simplex-D_n} if and only if the matrix has the following property. For an orbit set of an $2n$-gon $S$ obtained by a maximal compatible set $T$ which gives $B_T=B$ and diameters $\bar{\ell}_{n-1}$ and $\bar{\ell}_n$ of $T$, $\bar{\ell}_{n-1}$ and $\bar{\ell}_{n}$ are the same diameter with different diameter labels, and there is a vertex $v$ of $S_{\ell_n}$ such that all diagonals in $T$ in $S_{\ell_n}$ share $v$ (see Figure \ref{ex:assumption-of-Dn}). The shape shown at left in Figure \ref{ex:assumption-of-Dn} corresponds to the quiver \eqref{assumption:Dn1}, and that at right corresponds to \eqref{assumption:Dn2}. Because of symmetry, we may assume the version at left in Figure \ref{ex:assumption-of-Dn} without loss of generality.
\begin{figure}[ht]
\caption{Triangulation corresponding to $B$ satisfying the assumption in Theorem  \ref{thm:positive-simplex-D_n}}\label{ex:assumption-of-Dn}
\[
\begin{tikzpicture}
 \coordinate (u1) at(67.5:1.5); \coordinate (u2) at(112.5:1.5); \coordinate (lu) at(157.5:1.5);
 \coordinate (ld) at(-157.5:1.5); \coordinate (ru) at(22.5:1.5); \coordinate (rd) at(-22.5:1.5);  \coordinate (d1) at(-67.5:1.5); \coordinate (d2) at(-112.5:1.5);
 \draw (u1)--(u2)--(lu)--(ld)--(d2)--(d1)--(rd)--(ru)--(u1);  \draw[blue,thick] (u1)--(lu); \draw[blue,thick] (u1)--(ld); \draw[blue,thick] (u1)--(d2);
 \draw[blue,thick] (d2)--(rd); \draw[blue,thick] (d2)--(ru);
 \node at (67.5:1.8) {$v$};
 \fill [white](0,0) circle (2mm);
\node at (0,0){\textcolor{blue}{12}};
\end{tikzpicture}
\hspace{10mm}
\begin{tikzpicture}
 \coordinate (u1) at(67.5:1.5); \coordinate (u2) at(112.5:1.5); \coordinate (lu) at(157.5:1.5);
 \coordinate (ld) at(-157.5:1.5); \coordinate (ru) at(22.5:1.5); \coordinate (rd) at(-22.5:1.5);  \coordinate (d1) at(-67.5:1.5); \coordinate (d2) at(-112.5:1.5);
 \draw (u1)--(u2)--(lu)--(ld)--(d2)--(d1)--(rd)--(ru)--(u1);  \draw[blue,thick] (u2)--(ru); \draw[blue,thick] (u2)--(rd); \draw[blue,thick] (u2)--(d1);
 \draw[blue,thick] (d1)--(ld); \draw[blue,thick] (d1)--(lu);
 \node at (112.5:1.8) {$v$};
 \fill [white](0,0) circle (2mm);
\node at (0,0){\textcolor{blue}{12}};
\end{tikzpicture}
\]
\end{figure}
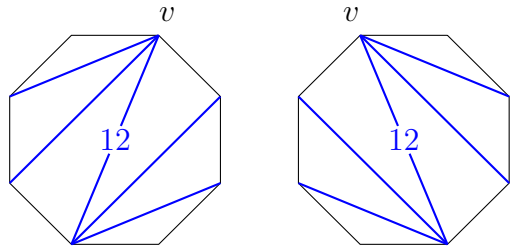
\end{remark}
\begin{lemma}\label{lem:interior-D_n}
Let $B=(b_{ij})$ be an exchange matrix of tree $D_n$ type. For a cluster complex $\Delta(\xx,B)$, there exist just $n-2$ vertices non-linked to $I$.
\end{lemma}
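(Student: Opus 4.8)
The plan is to argue geometrically through the canonical bijection of Theorem \ref{thm:arc-variable-Dn}, mirroring the proof of Lemma \ref{lem:interior-B_n}. After identifying cluster variables with $\Theta$-orbits of diagonals of the $2n$-gon $S$, being non-linked to $I$ translates into the statement that the orbit $\bar{m}$ crosses every orbit of the initial maximal compatible set $T$ in the interior of $S$, so the problem becomes one of counting such orbits. First I would dispose of the diameters: any diameter orbit shares a diameter label with one of the two initial diameters $\bar{\ell}_{n-1},\bar{\ell}_{n}$, hence is compatible with $I$ and cannot be non-linked. Thus every non-linked orbit is a non-diameter orbit, and in particular it must cross the common diameter $\ell$ of $\bar{\ell}_{n-1},\bar{\ell}_{n}$.

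Write $S=S_{+}\cup S_{-}$ for the two polygons cut out by $\ell$ (so $S_{+}$ may be taken to be $S_{\ell_n}$), and let $S_{+}^{\circ}$ denote the $n-1$ vertices of $S_{+}$ other than the endpoints of $\ell$. A non-diameter orbit crossing $\ell$ has a representative $(a,b)$ with $a\in S_{+}^{\circ}$ and $b\in S_{-}^{\circ}$, and applying $\Theta$ shows that each such orbit is recorded uniquely by the unordered pair $\{a,\Theta b\}\subset S_{+}^{\circ}$. Since the $n-2$ non-diameter orbits of $T$ restrict to a triangulation of $S_{+}$ by $n-2$ diagonals, I would record for each such diagonal $\delta$ its \emph{far region} $F(\delta)$, the set of vertices of $S_{+}^{\circ}$ that $\delta$ separates from the edge $\ell$. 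A short crossing computation then gives that $\bar{m}$ crosses the orbit $\{\delta,\Theta\delta\}$ iff $a\in F(\delta)$ or $\Theta b\in F(\delta)$, so $\bar{m}$ is non-linked precisely when the pair $\{a,\Theta b\}$ meets every far region $F(\delta)$.

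The key structural input is that, for $B$ of tree $D_n$ type, the induced triangulation of $S_{+}$ is a snake. The two initial diameters form the terminal fork of the $D_n$ Dynkin diagram, so $\ell$ lies in an ear triangle of $S_{+}$ and the remaining diagonals form a single path under the triangle-adjacency that computes $B$. Consequently the far regions are totally nested, their intersection is a single vertex $u$ (the unique ear of $T$ lying in $S_{+}$), and the chord from $u$ across $\ell$ crosses every diagonal of $S_{+}$. I would then conclude that a pair $\{a,\Theta b\}$ meets all far regions if and only if it contains $u$, because the smallest far region is exactly $\{u\}$. Hence the non-linked orbits are precisely those recorded by the pairs $\{u,x\}$ with $x\in S_{+}^{\circ}\setminus\{u\}$, and there are exactly $n-2$ of these.

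The main obstacle is the structural claim that tree $D_n$ type forces $S_{+}$ to be snake-triangulated with $\ell$ in an ear, so that a unique deepest vertex $u$ exists; an arbitrary centrally symmetric triangulation need not have this property, and in that case the count genuinely changes. I expect to establish it by translating the shape of the $D_n$ Dynkin diagram—a path with a terminal fork—through the rule that two orbits are $B$-adjacent exactly when their diagonals share a triangle in $S_{\ell_n}$, which forces the non-diameter diagonals of $S_{+}$ into a single path and the diameter into a leaf attached at one end. The remaining verification, that no orbit missing $u$ crosses all of $T$ and that every orbit through $u$ does, then follows from the nestedness of the far regions, exactly as in the unique-diagonal argument of Lemma \ref{lem:unique-interior}.
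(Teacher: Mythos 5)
Your proposal is correct and follows essentially the same route as the paper: both pass to $\Theta$-orbits of diagonals via Theorem \ref{thm:arc-variable-Dn} and identify the non-linked orbits as the $n-2$ chords joining the ear vertex of the initial triangulation to the vertices of the opposite half of the $2n$-gon, exactly in the style of Lemma \ref{lem:interior-B_n}. The only difference is one of detail: the paper merely exhibits these $n-2$ orbits and calls them ``the desired orbits,'' whereas your far-region/nestedness argument also verifies completeness --- that no other orbit (in particular no relabelled diameter) is non-linked --- a check the paper leaves implicit, at the cost of having to justify the snake structure of $S_{+}$ and the coincidence of the two initial diameters, which you correctly flag as the remaining obstacle.
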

\begin{proof}
By the canonical bijection between cluster variables and orbits of diagonals of a regular $2n$-gon $S$, it suffices to show that there are just $n-2$ orbits of diagonals such that each of them is not compatible with any orbits of diagonals corresponding to the initial cluster variables at the interior of $S$. Since the initial exchange matrix $B$ is of tree $D_n$ type, the initial maximal compatible set $T$ in which there are only two vertices of $S$ shared by only one triangle. Let $v, v'$ be vertices of these two, and we consider an $(n+1)$-gon $S_{\ell_n}$, which is cut out by a diameter $\ell_n$ and not including $v$. Let $v_1,\dots,v_{n-2}$ be the vertices of an $(n+1)$-gon $S_{\ell_n}$, which are not endpoints of $\ell_n$ or $v'$, and $\ell_{v_1},\dots,\ell_{v_{n-2}}$ diagonals combining $v$ with $v_1,\dots,v_{n-2}$, respectively. Then, orbits of $\ell_{v_1},\dots,\ell_{v_{n-2}}$ by $\Theta$ are the desired orbits.
\end{proof}
We denote by $I'=\{x'_1,\dots,x'_{n-2}\}$ the unique set of $n$ vertices non-linked to $I$. 
We introduce some additional notation for convenience. Let $S$ be a regular $2n$-gon and $T$ a maximal compatible set corresponding to $I$ and $T'$ a compatible set corresponding to $I'$. We name four vertices $v_1,v_2,v_3,v_4$ such that $\{v_1,v_2\}$ and $\{v_3,v_4\}$ are pairs of vertices of $S$ sharing only one triangle in a triangulation given by $T$ and $T'$, respectively. Hereafter, let $(v,v')$ denote the edge connecting $v$ and $v'$. We assume the edges $(v_1,v_3)$ and $(v_2,v_4)$ are diagonals of $S$. We name the rest of vertices $v_5,\dots,v_{2n}$ from the right neighbor of $v_1$ in clockwise order. (See Figure \ref{fig:notations}.)
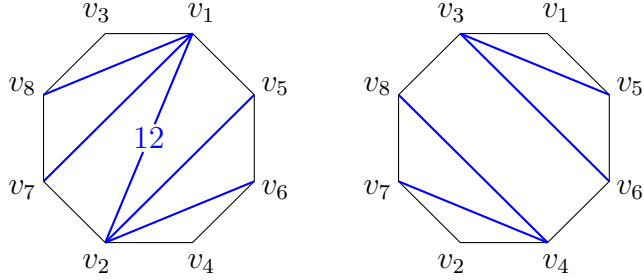
\begin{figure}[ht]
\caption{$T$ (left) and $T'$ (right) in the case that $n=4$ }\label{fig:notations}
\[
\begin{tikzpicture}[baseline=0mm]
 \coordinate (u1) at(67.5:1.5); \coordinate (u2) at(112.5:1.5); \coordinate (lu) at(157.5:1.5);
 \coordinate (ld) at(-157.5:1.5); \coordinate (ru) at(22.5:1.5); \coordinate (rd) at(-22.5:1.5);  \coordinate (d1) at(-67.5:1.5); \coordinate (d2) at(-112.5:1.5);
 \draw (u1)--(u2)--(lu)--(ld)--(d2)--(d1)--(rd)--(ru)--(u1);
 \draw[blue,thick] (u1)--(lu); \draw[blue,thick] (u1)--(ld); \draw[blue,thick] (u1)--(d2);
 \draw[blue,thick] (d2)--(rd); \draw[blue,thick] (d2)--(ru);
 \node at (67.5:1.8) {$v_1$};
 \node at (-112.5:1.8) {$v_2$};
 \node at (112.5:1.8) {$v_3$};
 \node at (-67.5:1.8) {$v_4$};
 \node at (22.5:1.8) {$v_5$};
 \node at (157.5:1.8) {$v_8$};
 \node at (-157.5:1.8) {$v_7$};
 \node at (-22.5:1.8) {$v_6$};
 \fill [white](0) circle (2mm);
\node at (0){\textcolor{blue}{12}};
\end{tikzpicture}
\hspace{20pt}
\begin{tikzpicture}[baseline=0mm]
 \coordinate (u1) at(67.5:1.5); \coordinate (u2) at(112.5:1.5); \coordinate (lu) at(157.5:1.5);
 \coordinate (ld) at(-157.5:1.5); \coordinate (ru) at(22.5:1.5); \coordinate (rd) at(-22.5:1.5);  \coordinate (d1) at(-67.5:1.5); \coordinate (d2) at(-112.5:1.5);
 \draw (u1)--(u2)--(lu)--(ld)--(d2)--(d1)--(rd)--(ru)--(u1);
 \draw [blue,thick](u2)--(ru);
 \draw [blue,thick](u2)--(rd);
 \draw [blue,thick](lu)--(d1);
 \draw [blue,thick](ld)--(d1);
 \node at (67.5:1.8) {$v_1$};
 \node at (-112.5:1.8) {$v_2$};
 \node at (112.5:1.8) {$v_3$};
 \node at (-67.5:1.8) {$v_4$};
 \node at (22.5:1.8) {$v_5$};
 \node at (157.5:1.8) {$v_8$};
 \node at (-157.5:1.8) {$v_7$};
 \node at (-22.5:1.8) {$v_6$};
\end{tikzpicture}
\]
\end{figure}
\begin{lemma}\label{lem:description-positiveconeDn}
For a cluster complex $\Delta(\xx,B)$ satisfying the assumption in Theorem \ref{thm:positive-simplex-D_n}, $\st_{\Delta(\xx,B)}(I')$ is isomorphic to $\st_{\Delta(\xx,B)}(I\setminus\{x_{n-1},x_n\})$.
\end{lemma}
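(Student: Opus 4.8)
The plan is to move to the polygon model provided by Theorem \ref{thm:arc-variable-Dn} and to realize the desired isomorphism by a single symmetry of the $2n$-gon $S$. Under the canonical bijection, $I\setminus\{x_{n-1},x_n\}=\{x_1,\dots,x_{n-2}\}$ corresponds to the non-diameter orbits of the maximal compatible set $T$, that is, the fan of diagonals issuing from $v_1$ together with their $\Theta$-images based at $v_2$; by the construction in the proof of Lemma \ref{lem:interior-D_n}, the set $I'$ corresponds to the analogous fan of non-diameter orbits issuing from the opposite apex, displayed as the fan at $v_3$ (and $v_4$) in Figure \ref{fig:notations}. Since the closed star of a vertex set is the union of the closed stars of its vertices, any simplicial automorphism $\Phi$ of $\Delta(\xx,B)$ with $\Phi(\{x_1,\dots,x_{n-2}\})=I'$ satisfies $\Phi(\st_{\Delta(\xx,B)}(\{x_1,\dots,x_{n-2}\}))=\st_{\Delta(\xx,B)}(I')$. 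Thus it suffices to exhibit such a $\Phi$.

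I would obtain $\Phi$ geometrically. With the labelling of Figure \ref{fig:notations}, the two diameters of $T$ lie on the segment $\overline{v_1v_2}$ and the two diameters of $T'$ on $\overline{v_3v_4}$, with $v_1$ a neighbour of $v_3$ and $v_2$ a neighbour of $v_4$, so that $\overline{v_1v_3}$ and $\overline{v_2v_4}$ are opposite edges of $S$. Let $R$ be the reflection of $S$ in the axis through the midpoints of these two edges; it interchanges $v_1\leftrightarrow v_3$ and $v_2\leftrightarrow v_4$ and fixes no vertex. Being an isometry of $S$, $R$ preserves the crossing of diagonals, and since $R\Theta=\Theta R$ it descends to a bijection of the orbits of diagonals. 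Letting $R$ preserve the two diameter labels, one checks that $R$ respects the three clauses defining compatibility in the $D_n$ model (two diameters of equal label, a diameter and its relabelling, and non-crossing pairs), so that $R$ induces a simplicial automorphism of the arc complex and hence, through Theorem \ref{thm:arc-variable-Dn}, of $\Delta(\xx,B)$.

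It remains to check that $R$ carries the fan of $T$ onto the fan of $T'$. A non-diameter orbit of $\{x_1,\dots,x_{n-2}\}$ has the form $\{(v_1,v_j),\,\Theta(v_1,v_j)\}$ with $v_j$ on the arc opposite $v_1$; applying $R$ yields $\{(v_3,R(v_j)),\,\Theta(v_3,R(v_j))\}$, and because $R$ maps the arc opposite $v_1$ bijectively onto the arc opposite $v_3$ this is exactly a non-diameter orbit of the fan at $v_3$, i.e. a vertex of $I'$. Running over all $n-2$ orbits gives $R(\{x_1,\dots,x_{n-2}\})=I'$, and restricting the induced automorphism produces the asserted isomorphism $\st_{\Delta(\xx,B)}(I\setminus\{x_{n-1},x_n\})\cong\st_{\Delta(\xx,B)}(I')$. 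I expect the only real work to be in this last verification for general $n$: one must confirm that $R$ pairs the fan orbits of $T$ with those of $T'$ exactly, keeping careful track of the two labelled diameters and of the two vertices incident to a single triangle, rather than any conceptual difficulty. This can be made rigorous either through the explicit vertex correspondence $v_i\mapsto R(v_i)$ together with Remark \ref{rem:equivalent-condition-Dn}, or, in parallel with the inductive argument in the proof of Corollary \ref{cor:boundary-Bn}, by induction on $n$.
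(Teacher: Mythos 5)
Your proof is correct, but it takes a genuinely different route from the paper's. The paper disposes of this lemma in one sentence by referring to Lemma \ref{lem:description-positiveconeBn}: the intended argument is that the orbit configuration underlying $I'$ (together, in the $D_n$ case, with the two labelled diameters on $(v_3,v_4)$) has the same shape as the initial one in the sense of Remark \ref{rem:equivalent-condition-Dn}, hence forms a cluster whose exchange matrix equals $B$ up to a permutation $\sigma$; since the cluster pattern --- and therefore the cluster complex together with the stars of subsets of a given cluster --- is determined by the exchange matrix of any one of its seeds, one obtains an isomorphism $\st_{\Delta(\xx,B)}(I')\to\st_{\Delta(\xx,B)}(I\setminus\{x_{n-1},x_n\})$, $x'_{\sigma(i)}\mapsto x_i$. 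You instead realize the same underlying symmetry concretely: the reflection $R$ of the $2n$-gon through the midpoints of the opposite sides $\overline{v_1v_3}$ and $\overline{v_2v_4}$ commutes with $\Theta$, preserves crossings, and (carrying the diameter labels along) respects all three compatibility clauses of the $D_n$ model, hence induces a simplicial automorphism of the arc complex and, via Theorem \ref{thm:arc-variable-Dn}, of $\Delta(\xx,B)$; it visibly carries the fan at $v_1$ onto the fan at $v_3$, i.e.\ $I\setminus\{x_{n-1},x_n\}$ onto $I'$, and stars of vertex sets are preserved by any simplicial automorphism. Your version buys explicitness and self-containment within the surface model --- note that your $R$ also sends the two labelled diameters on $(v_1,v_2)$ to those on $(v_3,v_4)$, so it recovers as a byproduct the paper's assertion that $I'$ extends to a cluster with the same exchange matrix --- at the cost of the label-preservation check and the bookkeeping that the $n-2$ fan orbits match up, which you correctly identify as the only step needing care for general $n$. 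The paper's version is shorter and uniform with the $B_n$ case, but leans on the unstated (though standard) principle that a seed's exchange matrix determines the complex around it.
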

\begin{proof}
The situation is the same as in Lemma \ref{lem:description-positiveconeBn}.
\end{proof}
Unlike $A_n$ or $B_n,C_n$ type, the closed star of non-linked vertices to $I$ does not coincide with $\Delta^+(\xx,B)$. However, the link of non-linked vertices is isomorphic to $\Delta(D_{n-1})$; we provide a proof below.
\begin{lemma}\label{cor:boundary-closedstar}
For a cluster complex $\Delta(\xx,B)$ satisfying the assumption in Theorem \ref{thm:positive-simplex-D_n}, we have $\lk_{\Delta(\xx,B)}(I') \cong\Delta(D_{n-1})$.
\end{lemma}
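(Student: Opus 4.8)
The plan is to argue entirely inside the geometric model provided by Theorem \ref{thm:arc-variable-Dn}, identifying $\Delta(\xx,B)$ with the arc complex of the regular $2n$-gon $S$ whose vertices are orbits of diagonals under the rotation $\Theta$, with the two distinguished diameter labels of Subsection \ref{subsec:focus-Dn}. Recall from Lemma \ref{lem:interior-D_n} and the subsequent set-up (Figure \ref{fig:notations}) that $I'=\{x'_1,\dots,x'_{n-2}\}$ is realized as a fan of $n-2$ orbits sharing a common apex vertex, and that the vertices are named $v_1,\dots,v_{2n}$. First I would record a geometric description of the vertices of $\lk_{\Delta(\xx,B)}(I')$: unwinding the ``$\exists$'' form of the generalized link, an orbit $\bar m\notin I'$ is a vertex of the link precisely when $\bar m$ is compatible with at least one orbit of the fan, equivalently when $\bar m$ does not cross every fan-orbit simultaneously. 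This turns the statement into a purely combinatorial claim about orbits of diagonals of $S$.

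Next I would introduce the contraction of $S$ onto a $(2n-2)$-gon $S'$ obtained by collapsing the two diagonals $(v_1,v_3)$ and $(v_2,v_4)$, exactly as in the proof of Corollary \ref{cor:boundary-Bn}. The contraction merges $v_1$ with $v_3$ and $v_2$ with $v_4$, so the resulting antipodal pair carries the diameters of $S'$. I would then define an orbit map $\bar\psi$ sending each orbit of $S$ that survives in $\lk_{\Delta(\xx,B)}(I')$ to its image in $S'$, and prove that $\bar\psi$ is a bijection onto the orbit set of $S'$ (equipped with its two diameter labels) which preserves the compatibility relation, including the special diameter rules of Subsection \ref{subsec:focus-Dn}. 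As in Corollary \ref{cor:boundary-Bn}, I expect this to proceed by induction on $n$: contracting in addition a suitable pair of antipodal non-special vertices (playing the role of $v_5$ and $v_{n+4}$ there) reduces the count to the rank $n-1$ case, and the finitely many orbits incident to those vertices are checked by hand. Finally, after verifying that $\lk_{\Delta(\xx,B)}(I')$ is a full subcomplex (along the lines of the fullness argument closing Corollary \ref{cor:boundary-Bn}), the compatibility-preserving orbit bijection $\bar\psi$ lifts to a simplicial isomorphism $\lk_{\Delta(\xx,B)}(I')\cong\Delta(D_{n-1})$ via Theorem \ref{thm:arc-variable-Dn} applied to $S'$.

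The main obstacle is the bookkeeping of the distinguished diameter labels. Unlike the type-$B$ situation of Corollary \ref{cor:boundary-Bn}, a diameter of $S'$ is the common image of several diagonals of $S$, namely the two genuine diameters $(v_1,v_2)$ and $(v_3,v_4)$ together with the skew orbit $\overline{(v_1,v_4)}=\overline{(v_2,v_3)}$, and each genuine diameter of $S$ already carries two labels. The delicate point is therefore to show that, once we restrict to $\lk_{\Delta(\xx,B)}(I')$, exactly two diameter-type orbits survive and that they match the two labels of $D_{n-1}$ on $S'$ with the correct compatibility behaviour (same label compatible, same diameter with different labels compatible, crossing diameters incompatible). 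I would control this by tracking which of these candidate orbits are compatible with some fan-orbit of $I'$ and which instead lie in $I$ or cross all of $I'$, so that precisely the non-surviving diameters are discarded; this matching, rather than the $B_n$-style induction, is the genuinely type-$D$ content of the argument.
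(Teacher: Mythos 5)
Your proposal reproduces the paper's proof almost step for step: the same contraction of $S$ to the $(2n-2)$-gon $S'$ by collapsing $(v_1,v_3)$ and $(v_2,v_4)$, the same orbit map $\bar\psi$ shown to be a compatibility-preserving bijection by the $B_n$-style induction, and the same lift to a simplicial isomorphism once fullness of the link is known. Your "main obstacle" analysis is also correct and matches what the paper encodes in its definition of the set $D$: of the orbits of $S$ mapping onto the diameter of $S'$, only the two labeled diameters on $(v_3,v_4)$ (namely $x'_{n-1},x'_n$) survive in $\lk_{\Delta(\xx,B)}(I')$, the two on $(v_1,v_2)$ being initial variables that cross all of $I'$, and the skew orbit $\overline{(v_1,v_4)}=\overline{(v_2,v_3)}$ being the vertex $x^*$, which is non-linked to $I'$.

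The one step where your plan as written would fail is the fullness of $\lk_{\Delta(\xx,B)}(I')$. In Corollary \ref{cor:boundary-Bn} fullness comes for free because there $\lk_{\Delta(\xx,B)}(I)=\st_{\Delta(\xx,B)}(I)\cap\st_{\Delta(\xx,B)}(I')=\Delta^+(\xx',B')\cap\Delta^+(\xx,B)$ is an intersection of full subcomplexes. In type $D_n$ this identity is unavailable: $I'$ has only $n-2$ elements, $\st_{\Delta(\xx,B)}(I')$ is a proper (and not a priori full) subcomplex of $\Delta^+(\xx,B)$ because $x^*$ lies outside it, and the candidate intersection $\Delta^+(\xx,B)\cap\Delta^+(\xx',B')$ would wrongly exclude the vertices $x'_{n-1},x'_n$, which do belong to the link. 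The paper therefore proves fullness by a dedicated combinatorial algorithm: assuming a pairwise compatible subset $F$ of $D/\Theta$ meets the "non-compatible" locus of every orbit of $T'$, it greedily selects orbits $\bar n_1,\bar n_2,\dots$ of $F$ with an endpoint at the apex $u$ and strictly increasing reach into the fan $\bar\ell'_1,\dots,\bar\ell'_{n-2}$, reaching a contradiction at $\bar\ell'_{n-2}$. You would need to supply this argument (or an equivalent one) in place of the appeal to the $B_n$ case; everything else in your outline is sound.
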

\begin{proof}
 Let $S'$ be a $2n$-gon obtained by reducing $(v_1,v_3)$ and $(v_2,v_4)$ to points, respectively. Let $D$ be the set of diagonals of $S$ which are compatible with at least one orbit of $T'$ and not in $T'$ (we distinguish two labeled diameters with the same edge). These are diagonals corresponding to a vertex set of $\lk_{\Delta(\xx,B)}(I')$. Let $D'$  be the set of all diagonals of $S'$.
We consider the surjection $\psi\colon D \to D',
(v_i,v_j)\mapsto (v_i,v_j)$ (see Figure \ref{fig:sujection-D-to-D'Dn}). For diameters $(v_i,v_j)$ labeled 1 and 2, we note that $\psi$ gives a correspondence between diameters with the same label.
\begin{figure}[ht]
\caption{Surjection $D$ to $D'$ ($n=4$) }\label{fig:sujection-D-to-D'Dn}
\[
\begin{tikzpicture}[baseline=0mm]
 \coordinate (u1) at(67.5:1.5); \coordinate (u2) at(112.5:1.5); \coordinate (lu) at(157.5:1.5);
 \coordinate (ld) at(-157.5:1.5); \coordinate (ru) at(22.5:1.5); \coordinate (rd) at(-22.5:1.5);  \coordinate (d1) at(-67.5:1.5); \coordinate (d2) at(-112.5:1.5);
 \draw (u1)--(u2)--(lu)--(ld)--(d2)--(d1)--(rd)--(ru)--(u1);
 \draw [blue,thick](u1)--(rd);
 \draw [blue,thick](u2)--(ld);
 \draw [blue,thick](u2)--(ld);
 \draw [blue,thick](d1)--(ru);
 \draw [blue,thick](d2)--(lu);
 \draw [blue,thick](d1)--(ru);
 \draw [blue,thick](d2)--(lu);
 \draw [blue,thick](lu)--(ru);
 \draw [blue,thick](ld)--(rd);
 \draw [blue,thick](ld)--(ru);
 \draw [blue,thick](lu)--(rd);
 \draw [blue,thick](u2)--(d1);
 \node at (67.5:1.8) {$v_1$};
 \node at (-112.5:1.8) {$v_2$};
 \node at (112.5:1.8) {$v_3$};
 \node at (-67.5:1.8) {$v_4$};
 \node at (22.5:1.8) {$v_5$};
 \node at (157.5:1.8) {$v_8$};
 \node at (-157.5:1.8) {$v_7$};
 \node at (-22.5:1.8) {$v_6$};
 \fill [white](0,0) circle (3mm);
\node at (0,0){\textcolor{blue}{12}};
\end{tikzpicture}
\hspace{20pt}\to \hspace{20pt}
\begin{tikzpicture}[baseline=0mm]
 \coordinate (u) at(90:1.5);  \coordinate (lu) at(145:1);
 \coordinate (ld) at(-145:1); \coordinate (ru) at(35:1); \coordinate (rd) at(-35:1);  \coordinate (d) at(-90:1.5);
 \draw (u)--(lu)--(ld)--(d)--(rd)--(ru)--(u);
 \draw [blue,thick](u)--(d);
 \draw [blue,thick](u)--(rd);
 \draw [blue,thick](u)--(ld);
 \draw [blue,thick](u)--(ld);
 \draw [blue,thick](d)--(ru);
 \draw [blue,thick](d)--(lu);
 \draw [blue,thick](d)--(ru);
 \draw [blue,thick](d)--(lu);
 \draw [blue,thick](lu)--(ru);
 \draw [blue,thick](ld)--(ru);
 \draw [blue,thick](lu)--(rd);
 \draw [blue,thick](ld)--(rd);
 \node at (90:1.8) {$v_1=v_3$};
 \node at (-90:1.8) {$v_2=v_4$};
 \node at (35:1.3) {$v_5$};
 \node at (145:1.3) {$v_8$};
 \node at (-35:1.3) {$v_6$};
 \node at (-145.5:1.3) {$v_7$};
 \fill [white](0,0) circle (3mm);
\node at (0,0){\textcolor{blue}{12}};
\end{tikzpicture}
\]
\end{figure}
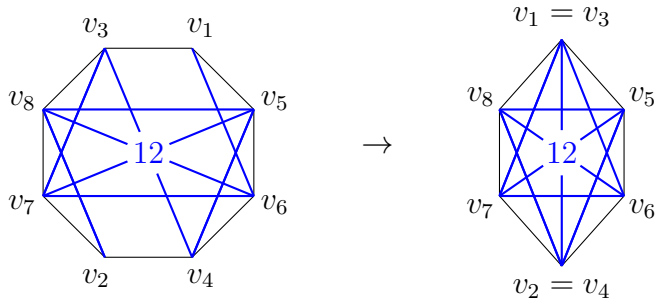
As in the proof of Lemma \ref{cor:boundary-Bn}, we can show that $\psi$ gives a bijection $\bar{\psi}\colon D/\Theta \to D'/\Theta,
\overline{(v_i,v_j)}\mapsto \overline{(v_i,v_j)}$ of orbits by $\Theta$ by using the induction of $n$. Furthermore, $\bar{\psi}$ preserves the compatibility of orbits; that is, if $\overline{(v_i,v_j)}$ intersects with $\overline{(v_k,v_\ell)}$ at the interior of $S$, then they do so at the interior of  $S'$, and vice versa.
Next, we show that $\lk_{\Delta(\xx,B)}(I')$ is a full subcomplex of $\Delta(\xx,B)$.
It suffices to show that, for any compatible set $F$ of $D/\Theta$, an orbit of $T'$ is compatible with all orbits of $F$.
We assume that there exists a compatible set $F$ of $D/\Theta$ such that each orbit of $T'$ is not compatible with at least one orbit of $F$. \begin{figure}
\caption{Orbits of $T'$, $D$, and $\bar{n}_1, \bar{n}_2$ in the case that $n=5$ \label{fig:contradiction-algorithm}}
\[
\begin{tikzpicture}
 \coordinate (1) at(18:1.5); \coordinate (2) at(54:1.5);\coordinate (3) at(90:1.5); \coordinate (4) at(126:1.5);
 \coordinate (5) at(162:1.5); \coordinate (10) at(-18:1.5); \coordinate (9) at(-54:1.5);  \coordinate (8) at(-90:1.5); \coordinate (7) at(-126:1.5); \coordinate (6) at(-162:1.5);
 \draw (1)--(2)--(3)--(4)--(5)--(6)--(7)--(8)--(9)--(10)--(1);  \draw[blue,thick] (1)--(3); \draw[blue,thick] (1)--(4); \draw[blue,thick] (1)--(5);
 \draw[blue,dashed] (10)--(6);  \draw[blue,dashed] (9)--(6);
 \draw[blue,dashed] (8)--(6);
 \node at (54:1.8) {$u$};
 \fill [white](0.5,1) circle (2mm);
\fill [white](-0.3,1) circle (3mm);
\fill [white](-0.8,0.5) circle (3mm);
\fill [white](0,0) circle (3mm);
\node at (0.5,1) {$\bar{\ell}'_1$};
\node at (-0.3,1) {$\bar{\ell}'_2$};
\node at (-0.8,0.5) {$\bar{\ell}'_3$};
\end{tikzpicture}
\hspace{20pt}
\begin{tikzpicture}
 \coordinate (1) at(18:1.5); \coordinate (2) at(54:1.5);\coordinate (3) at(90:1.5); \coordinate (4) at(126:1.5);
 \coordinate (5) at(162:1.5); \coordinate (10) at(-18:1.5); \coordinate (9) at(-54:1.5);  \coordinate (8) at(-90:1.5); \coordinate (7) at(-126:1.5); \coordinate (6) at(-162:1.5);
 \draw (1)--(2)--(3)--(4)--(5)--(6)--(7)--(8)--(9)--(10)--(1);
 \draw[blue,thick] (1)--(6);
 \draw[blue,thick] (1)--(8);
 \draw[blue,thick] (1)--(9);
 \draw[blue,thick] (2)--(4);
 \draw[blue,thick] (2)--(5);
 \draw[blue,thick] (3)--(5);
 \draw[blue,thick] (3)--(6);
 \draw[blue,thick] (3)--(8);
 \draw[blue,thick] (3)--(9);
 \draw[blue,thick] (3)--(10);
 \draw[blue,thick] (4)--(6);
 \draw[blue,thick] (4)--(8);
 \draw[blue,thick] (4)--(9);
 \draw[blue,thick] (4)--(10);
 \draw[blue,thick] (5)--(8);
 \draw[blue,thick] (5)--(9);
 \draw[blue,thick] (5)--(10);
 \draw[blue,thick] (7)--(9);
 \draw[blue,thick] (7)--(10);
 \draw[blue,thick] (8)--(10);
 \fill [white](0,0) circle (3mm);
\node at (0,0){\textcolor{blue}{12}};
\end{tikzpicture}
\hspace{20pt}
\begin{tikzpicture}
 \coordinate (1) at(18:1.5); \coordinate (2) at(54:1.5);\coordinate (3) at(90:1.5); \coordinate (4) at(126:1.5);
 \coordinate (5) at(162:1.5); \coordinate (10) at(-18:1.5); \coordinate (9) at(-54:1.5);  \coordinate (8) at(-90:1.5); \coordinate (7) at(-126:1.5); \coordinate (6) at(-162:1.5);
 \draw (1)--(2)--(3)--(4)--(5)--(6)--(7)--(8)--(9)--(10)--(1);
 \node at (54:1.8) {$u$};
 \draw[red,thick] (2)--(4);
 \draw[red,dashed] (7)--(9);
 \draw[blue,dashed] (1)--(3); \draw[blue,dashed] (1)--(4); \draw[blue,dashed] (1)--(5);
 \draw[blue,dashed] (10)--(6);  \draw[blue,dashed] (9)--(6);
 \draw[blue,dashed] (8)--(6);
 \fill [white](90:1.2) circle (2.2mm);
 \node at (90:1.2) {$\bar{n}_1$};
\end{tikzpicture}
\hspace{20pt}
\begin{tikzpicture}
 \coordinate (1) at(18:1.5); \coordinate (2) at(54:1.5);\coordinate (3) at(90:1.5); \coordinate (4) at(126:1.5);
 \coordinate (5) at(162:1.5); \coordinate (10) at(-18:1.5); \coordinate (9) at(-54:1.5);  \coordinate (8) at(-90:1.5); \coordinate (7) at(-126:1.5); \coordinate (6) at(-162:1.5);
 \draw (1)--(2)--(3)--(4)--(5)--(6)--(7)--(8)--(9)--(10)--(1);
 \node at (54:1.8) {$u$};
 \draw[red,thick] (2)--(5);
 \draw[red,dashed] (7)--(10);
 \draw[red,dashed] (2)--(4);
 \draw[red,dashed] (7)--(9);
  \draw[blue,dashed] (1)--(3); \draw[blue,dashed] (1)--(4); \draw[blue,dashed] (1)--(5);
 \draw[blue,dashed] (10)--(6);  \draw[blue,dashed] (9)--(6);
 \draw[blue,dashed] (8)--(6);
 \fill [white](-0.4,0.7) circle (2.2mm);
 \node at (-0.4,0.7) {$\bar{n}_2$};
\end{tikzpicture}
\]
\end{figure}
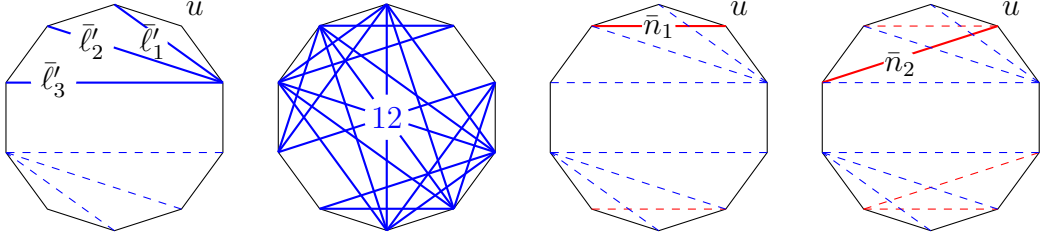
We can assume that labels of orbits $\{\bar{\ell}'_1,\dots, \bar{\ell}'_{n-2}\}$ are numbered in order from the outside without loss of generality, and denote by $u$ a vertex sharing only one triangle in a triangulation given by $\{\bar{\ell}'_1,\dots, \bar{\ell}'_n\}$ (see the first figure in Figure \ref{fig:contradiction-algorithm}). Since $F$ has an orbit which is not compatible with $\bar{\ell}_1'$, we can choose one and refer to this as $\bar{n}_1$ (see the third figure in Figure \ref{fig:contradiction-algorithm}). Then, one of endpoints of $\bar{n}_1$ is $u$. We assume $\bar{n}_1$ is not compatible with $\bar{\ell}'_1,\dots,\bar{\ell}'_{k_1}$. Then, we note that $k_1\neq n-2$ since there is no orbit in $D/\Theta$ which is not compatible with $\bar{\ell}'_{n-2}$ and one of the endpoints of which is $u$. If $k_1<n-3$. Then, we can choose $\bar{n}_2$ an orbit of $D/\Theta$, which is not compatible with $\bar{\ell}'_{k_1+1}$ and compatible with $\bar{n}_1$ (See the fourth figure in Figure \ref{fig:contradiction-algorithm}). We note that one of endpoints of $\bar{n}_2$ is $u$. We assume $\bar{n}_2$ is not compatible with $\bar{\ell}'_1,\dots,\bar{\ell}'_{k_2}$. If $k_2< n-3$, we choose $n_i$ repeatedly in the same as above. We can repeat this operation until $k_i=n-3$. Then, all orbits which are not compatible with $\bar{\ell}'_{n-2}$ are also not compatible with $\bar{n}_{i-1}$, which conflicts with the assumption. Therefore, $D/\Theta$ is a full subcomplex of $\Delta(\xx,B)$ and $\bar{\psi}$ can be lifted to an isomorphism between $\lk_{\Delta(\xx,B)}(I')$ and $\Delta(D_{n-1})$.
\end{proof}
It follows from the next lemma that $\st_{\Delta(\xx,B)}(I')$ does not coincide with $\Delta^+(\xx,B)$.
\begin{lemma}\label{lem:unique-variable-in-Dn}
For a cluster complex $\Delta(\xx,B)$ satisfying the assumption in Theorem \ref{thm:positive-simplex-D_n}, there exists a unique cluster variable $x^*$ such that it is neither in $\st_{\Delta(\xx,B)}(I')$ as a vertex nor in $I$.
\end{lemma}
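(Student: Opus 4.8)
The plan is to argue entirely in the geometric model of Theorem~\ref{thm:arc-variable-Dn}, representing cluster variables by orbits of diagonals of a regular $2n$-gon $S$ under the $\pi$-rotation $\Theta$, and to use the notation fixed in Figure~\ref{fig:notations}. There $I$ is the maximal compatible set $T$ consisting of the two fans with apexes $v_1$ and $v_2=\Theta(v_1)$ together with the labeled diameter $\overline{(v_1,v_2)}$, while $I'$ is $T'$, the two fans with apexes $v_3$ and $v_4=\Theta(v_3)$, with $v_1$ and $v_3$ adjacent. The first step is to reformulate the statement: a vertex $y$ lies in $\st_{\Delta(\xx,B)}(I')$ iff $y\in I'$ or $y$ is compatible with some orbit of $I'$, so $y$ fails to lie in $\st_{\Delta(\xx,B)}(I')$ exactly when $y$ crosses (is incompatible with) every orbit of $I'$; as $I'$ is a compatible set this already forces $y\notin I'$. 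Thus the lemma becomes the claim that exactly one orbit crosses every orbit of $I'$ and is not in $I$, and my candidate is $x^*=\overline{(v_1,v_4)}$, the orbit of the diagonal joining the two apexes $v_1$ and $v_4$.

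The heart of the argument is to determine all orbits that cross every orbit of $I'$, by two forcing steps. Every orbit of $I'$ has $v_3$ (equivalently $v_4$) as an endpoint; in particular $\overline{(v_3,v_5)}\in I'$, where $v_5$ is the clockwise neighbor of $v_1$, and $v_1$ is the unique vertex strictly between $v_3$ and $v_5$. Hence any diagonal crossing $(v_3,v_5)$ must have $v_1$ as an endpoint, while crossing its $\Theta$-image forces $v_2$; since orbits are $\Theta$-symmetric, any orbit crossing $\overline{(v_3,v_5)}$ has a representative $(v_1,w)$. I then impose crossing on the remaining fan orbits. A diagonal $(v_1,w)$ crosses a fan diagonal $(v_3,u)$ iff $w$ lies on the arc from $v_3$ to $u$ avoiding $v_1$; taking the intersection over all fan targets $u$ (the binding constraint coming from the target farthest from $v_5$) confines $w$ to the arc running from the vertex following $v_3$ up to and including $v_4$, on the side of $v_3$ away from $v_1$. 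A direct check shows the antipodal representative $(v_2,\Theta w)$ never produces a crossing for $w$ outside this arc, so the orbits crossing all of $I'$ are exactly the $n$ orbits $\overline{(v_1,w)}$ with $w$ ranging over these $n$ positions, one of which is the labeled diameter $\overline{(v_1,v_2)}$.

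It remains to compare this list of $n$ orbits with $I$. The $v_1$-fan of $T$ together with the diameter consists precisely of the orbits $\overline{(v_1,w)}$ with $w$ running up to the antipode $v_2$ of $v_1$, that is, the first $n-1$ of the candidate positions (the diameter position contributing the two labeled variables $x_{n-1},x_n$); these account for all $n$ cluster variables of $I$. The only remaining candidate is $w=v_4$: the orbit $\overline{(v_1,v_4)}$ is not a diameter, since $v_4$ is not the antipode of $v_1$, so it is a single cluster variable, and it does not belong to $I$. Therefore $x^*=\overline{(v_1,v_4)}$ is the unique vertex lying neither in $\st_{\Delta(\xx,B)}(I')$ nor in $I$. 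The step I expect to be most delicate is the second forcing argument: because we work with $\Theta$-orbits rather than individual diagonals, I must verify that the $\Theta$-image of a candidate orbit cannot contribute an extra crossing that would widen the admissible range of $w$, and I must keep the diameter's double labeling straight so that the count $|I|=n$ comes out exactly and leaves exactly one surplus orbit.
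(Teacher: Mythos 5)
Your proof is correct, but it takes a genuinely different route from the paper's. The paper argues by pure counting: the total number of cluster variables of type $D_n$ is $n^2$ (the number of almost positive roots), the vertex set of $\st_{\Delta(\xx,B)}(I')$ has $(n-2)+(n-1)^2=n^2-n-1$ elements by Lemmas \ref{lem:interior-D_n} and \ref{cor:boundary-closedstar} (the latter giving $\lk_{\Delta(\xx,B)}(I')\cong\Delta(D_{n-1})$, which has $(n-1)^2$ vertices), and since the $n$ initial variables all avoid $\st_{\Delta(\xx,B)}(I')$, the surplus is $n^2-(n^2-n-1)-n=1$; the identification $x^*=\overline{(v_1,v_4)}$ is then only asserted at the end. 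You instead determine explicitly, inside the polygon model, the full set of orbits incompatible with every orbit of $I'$: the two forcing steps (endpoint $v_1$ forced by the outermost orbit $\overline{(v_3,v_5)}$, then the second endpoint confined to the $n$ positions from $v_{2n}$ down to $v_4$ by the innermost orbit) are exactly right, and your attention to the $\Theta$-image not widening the admissible range and to the doubly labeled diameter contributing two of the $n$ variables of $I$ is precisely where such an argument could otherwise go wrong. What your route buys: it is independent of the $n^2$ count and of Lemma \ref{cor:boundary-closedstar} (whose proof is itself a nontrivial induction), and it proves rather than asserts the identification of $x^*$, which the paper relies on in Lemmas \ref{lem:characterization-cone} and \ref{lem:description-join}. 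What it costs is length and the case-checking you flagged; the paper's count is a one-liner once the surrounding lemmas are in place.
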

\begin{proof}
The number of cluster variables of cluster algebras of $D_n$ type is $n^2$ since this number corresponds with that of almost positive roots by \cite{fzii}*{Theorem 1.9}. By lemmas \ref{lem:interior-D_n} and \ref{cor:boundary-closedstar}, the number of $0$-dimensional simplices of $\st_{\Delta(\xx,B)}(I')$ is $(n-2)+(n-1)^2=n^2-n-1$. Since the initial cluster variables are not in $\st_{\Delta(\xx,B)}(I')$, the number of cluster variables which is neither in $\st_{\Delta(\xx,B)}(I)$ as a $0$-dimensional simplex nor in $I$ is $n^2-(n^2-n-1)-n=1$. Indeed, the orbit of $\{(v_1,v_4),(v_2,v_3)\}$ is the unique orbit satisfying this condition.
\end{proof}
Clearly, we have $\st_{\Delta(\xx,B)}(I')\subset\Delta^+(\xx,B)$. Therefore, $\Delta^+(\xx,B)$ is a complex obtained by gluing a cone into $\st_{\Delta(\xx,B)}(I')$ by Lemma \ref{lem:unique-variable-in-Dn}, and this cone is described by $\join(\{x^*\},\lk_{\Delta(\xx,B)}(x^*)\cap\st_{\Delta(\xx,B)}(I'))$, where $x^*$ is the same notation in Lemma \ref{lem:unique-variable-in-Dn}.  We consider determining the shape of this cone and begin by characterizing the cone. We denote by $x'_{n-1}$ and $x'_{n}$ cluster variables corresponding to diameters $(v_3,v_4)$ with diameter labels 1 and 2  of $2n$-gon $S$ respectively.
\begin{lemma}\label{lem:characterization-cone}
We assume the same situation as in Lemma \ref{lem:unique-variable-in-Dn}. 
The complex 
\begin{align*}
    \join(\{x^*\},\lk_{\Delta(\xx,B)}(x^*)\cap\st_{\Delta(\xx,B)}(I'))
\end{align*}
    is a full subcomplex of $\Delta(\xx,B)$ the vertex set of which is
\begin{align}\label{eq:setV}
    V=\{y\in\Xcal(\xx,B)\mid y \notin I' \text{ and $y$ is compatible with both $x'_{n-1}$ and $x'_{n}$}\},
\end{align}
where $\Xcal(\xx,B)$ is the vertex set of $\Delta(\xx,B)$.
\end{lemma}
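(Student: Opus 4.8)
The plan is to argue entirely in the geometric realization of Theorem \ref{thm:arc-variable-Dn}, translating every statement about cluster variables into one about orbits of diagonals of the regular $2n$-gon $S$. Write $d=(v_3,v_4)$ for the diameter whose two labeled copies are $x'_{n-1}$ and $x'_{n}$, and let $H$ be the $(n+1)$-gon cut off from $S$ by $d$ that contains $v_1$ (so $H$ has $v_3$ as a corner and $I'$ is realized as the fan of diagonals emanating from $v_3$ inside $H$, which are the $n-2$ orbits of Lemma \ref{lem:interior-D_n}). The first step is a bookkeeping lemma: a non-diameter orbit is compatible with \emph{both} labeled diameters on $d$ iff it does not cross $d$, i.e.\ iff its representative lying in $H$ is a genuine diagonal of $H$; and a diameter orbit is compatible with both $x'_{n-1},x'_n$ only if it is $d$ itself. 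Hence, via the $H$-representative, I would identify
\[
V=\{x'_{n-1},x'_{n}\}\ \sqcup\ \bigl(\{\text{diagonals of }H\}\setminus\{\text{fan of }I'\text{ at }v_3\}\bigr),
\]
and, using the correspondence in Lemma \ref{lem:unique-variable-in-Dn}, I would check that $x^*$ is exactly the ear $(v_1,v_4)$ at $v_3$ in $H$ (its other representative being the ear $(v_2,v_3)$ at $v_4$).

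The second step establishes the cone structure. The key geometric observation is that among the diagonals of $H$, the ones crossed by the ear $(v_1,v_4)$ are precisely the fan diagonals at $v_3$, that is, exactly $I'$; since $I'$ is removed from $V$, the orbit $x^*$ is compatible with every element of $V\setminus\{x^*\}$ (it also shares an endpoint with $d$, so it is compatible with $x'_{n-1}$ and $x'_{n}$). Consequently, for any face $\sigma$ of $\Delta(\xx,B)$ with all vertices in $V$, every pair of vertices of $\sigma\cup\{x^*\}$ is compatible, so by Lemma \ref{lem:existance-cluster} the set $\sigma\cup\{x^*\}$ lies in a common cluster and is a face. Writing $\Delta[W]$ for the full subcomplex of $\Delta(\xx,B)$ induced on a vertex set $W$, this yields
\[
\Delta[V]=\join\bigl(\{x^*\},\,\Delta[V\setminus\{x^*\}]\bigr),
\]
which is already a full subcomplex with vertex set $V$.

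The final step is to identify the base $\Delta[V\setminus\{x^*\}]$ with $\lk_{\Delta(\xx,B)}(x^*)\cap\st_{\Delta(\xx,B)}(I')$. Matching vertices in both directions uses the bookkeeping lemma together with the fact (from Lemma \ref{lem:unique-variable-in-Dn}) that no $x'_i$ is compatible with $x^*$, so that a vertex compatible with $x^*$ automatically avoids $I'$. For faces, the inclusion into $\lk(x^*)$ is immediate from the previous paragraph, so the crux is to show that a face $\tau\subseteq V\setminus\{x^*\}$ lies in $\st_{\Delta(\xx,B)}(I')$, i.e.\ that a \emph{single} fan diagonal $x'_i$ is simultaneously compatible with all of $\tau$ (not merely each vertex separately). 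Here I would view $\tau$ as a partial triangulation of $H$ (the diameter, if present in $\tau$, shares $v_3$ with every fan diagonal and causes no obstruction) and consider the region $R$ of this partial triangulation containing the corner $v_3$; because the ear $x^*=(v_1,v_4)$ is excluded from $\tau$, the region $R$ cannot be the ear triangle and has at least four vertices, so it admits a diagonal from $v_3$, which is a fan diagonal compatible with all of $\tau$. Applying Lemma \ref{lem:existance-cluster} once more gives $\tau\cup\{x'_i\}\in\Delta(\xx,B)$, hence $\tau\in\st_{\Delta(\xx,B)}(I')$. The main obstacle is precisely this last region argument: one must handle the presence of diameters in $\tau$ and the $\Theta$-orbit identifications carefully, and verify that ``region at $v_3$ is not an ear'' is exactly equivalent to $x^*\notin\tau$; the compatibility and vertex-matching bookkeeping of the earlier steps is routine once the realization is set up.
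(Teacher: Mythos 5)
Your argument is correct, and it reaches the conclusion by a recognizably different route from the paper's. The paper's proof is almost entirely a vertex-level argument: it observes that both sides are full subcomplexes of $\Delta(\xx,B)$ --- the target $\Delta_V$ by definition, and $\join(\{x^*\},\lk\cap\st)$ because $\lk_{\Delta(\xx,B)}(x^*)$ is full (Lemma \ref{lem:fullsub}), $\lk_{\Delta(\xx,B)}(I')$ is full (established inside Lemma \ref{cor:boundary-closedstar} by an iterative ``choose $\bar n_{i+1}$'' algorithm), and $\lk(x^*)\cap\st(I')=\lk(x^*)\cap\lk(I')$ --- and then matches the two vertex sets. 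You instead work directly in the polygon $H$: you prove the cone structure $\Delta[V]=\join(\{x^*\},\Delta[V\setminus\{x^*\}])$ from the single geometric fact that the ear $(v_1,v_4)$ crosses exactly the fan at $v_3$, and you replace the paper's appeal to the fullness of $\lk(I')$ by a direct region argument: the region of the partial triangulation $\tau$ containing the corner $v_3$ is not the ear triangle (since $x^*\notin\tau$), so it admits a fan diagonal from $v_3$ compatible with all of $\tau$ at once. This is a cleaner and more self-contained substitute for the paper's algorithm in Lemma \ref{cor:boundary-closedstar}, at the cost of redoing in $H$ some bookkeeping the paper gets for free from earlier lemmas; your version has the advantage of isolating exactly where the hypothesis $x^*\notin\tau$ enters.

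One small point you should spell out rather than leave to ``routine bookkeeping'': in the vertex-matching direction $\lk(x^*)\cap\st(I')\Rightarrow V$, compatibility with $x^*$ does not by itself prevent an orbit from crossing the diameter $d=(v_3,v_4)$. You need the short polygon computation showing that an orbit crossing $d$ in its interior and avoiding both $(v_1,v_4)$ and $(v_2,v_3)$ is forced to be a labeled diameter on $(v_1,v_2)$, hence lies in $I$ and crosses every element of $I'$, so it cannot be a vertex of $\st_{\Delta(\xx,B)}(I')$. This is exactly the case analysis the paper performs, and with it your vertex matching closes; also remember to check $\Theta$-antipodal representatives when verifying compatibility of orbits whose $H$-representatives do not cross, which you already flag.
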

\begin{proof}
 We abbreviate $\lk_{\Delta(\xx,B)}(x^*)\cap\st_{\Delta(\xx,B)}(I')$ to $\lk\cap\st$. We denote by $\Delta_V$ a full subcomplex of $\Delta(\xx,B)$ in which the vertex set is $V$. 
 
 First, we prove $\join(\{x^*\},\lk\cap\st)\subset\Delta_V$. Since $\Delta_V$ is a full subcomplex, it suffices to show that $\Delta_V$ contains all $0$-dimensional simplices of $\join(\{x^*\},\lk\cap\st)$. We fix any vertex $\{y\}$ of $\join(\{x^*\},\lk\cap\st)$. If $y=x^{*}$, then $\Delta_V$ clearly contains $\{y\}$ since $x^*$ corresponds to the orbit of diagonals $\{(v_1,v_4),(v_2,v_3)\}$ of $S$ by the proof of Lemma \ref{lem:unique-variable-in-Dn}. We assume $y\neq x^{*}$ and show that $\{y\}\in\Delta_V$. By this assumption, $\{y\}$ is a $0$-dimensional simplex of $\lk\cap\st$. Since $\{x^*\}\notin \st_{\Delta(\xx,B)}(I')$ and $\{y\}\in\lk_{\Delta(\xx,B)}(x^*)$, we have $y\notin I'$. We assume that $y$ is not compatible with $x'_{n-1}$ or $x'_{n}$. Then, the corresponding orbit of diagonal $\bar{\ell}_y$ must cross $(v_3,v_4)$ at interior of $S$. Moreover, since $\{y\}\in\lk_{\Delta(\xx,B)}(x^*)$, $\bar{\ell_y}$ is compatible with $\{(v_1,v_4),(v_2,v_3)\}$. An orbit satisfying this condition has a diameter of only $(v_1,v_2)$. Therefore, $\bar{\ell}_y$ is a diameter $(v_1,v_2)$ in which the diameter label is 1 or 2. In either case, we have $y \in I$. This contradicts $\{y\}\in\st_{\Delta(\xx,B)}(I')$. 
 
 Next, we show converse inclusion. The complex $\join(\{x^*\},\lk\cap\st)$ is also a full subcomplex since $\lk_{\Delta(\xx,B)}(x^*)$ is full by Lemma \ref{lem:fullsub}, $\lk_{\Delta(\xx,B)}(I')$ is also full by Lemma \ref{cor:boundary-closedstar}, and we have
 \begin{align}\label{eq:lk-st}
\lk_{\Delta(\xx,B)}(x^*)\cap\st_{\Delta(\xx,B)}(I')=\lk_{\Delta(\xx,B)}(x^*)\cap\lk_{\Delta(\xx,B)}(I').
\end{align}
Therefore, it suffices to show that $\join(\{x^*\},\lk\cap\st)$ contains all $0$-dimensional simplices of $\Delta_V$. We fix any $0$-dimensional simplex $\{y\}\in\Delta_V$. If $y=x^*$, then $\{y\}$ is in $\join(\{x^*\},\lk\cap\st)$ clearly. We assume $y\neq x^*$, and show that $\{y\}\in\lk\cap\st$. First, cluster variables which are compatible with both $x'_{n-1}$ and $x'_n$, and not with $x^*$ are only $x_1',\dots,x'_{n-2}$. Since $y\notin I'$, we have $\{y\}\in\lk_{\Delta(\xx,B)}(x^*)$. We prove $\{y\}\in\st_{\Delta(\xx,B)}(I')$. We assume $y$ is not compatible with any elements in $I'$. Then, we have $y\in\{x_1,x_2,\dots,x_n,x^*\}$, but since $y\neq x^*$, we have $y\in\{x_1,x_2,\dots,x_n\}$. Any $x_i$ are not compatible with $x'_{n-1}$ or $x'_{n}$. This is a contradiction.
\end{proof}
The following lemma determines the shape of the cone.
\begin{lemma}\label{lem:description-join}
We assume the same situation as Lemma \ref{lem:unique-variable-in-Dn}. The complex $\lk_{\Delta(\xx,B)}(x^*)\cap\st_{\Delta(\xx,B)}(I')$ is isomorphic to $\join(\Delta_1,\Delta(A_{n-3}))$.
\end{lemma}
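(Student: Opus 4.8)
The plan is to argue entirely inside the geometric realization of Theorem \ref{thm:arc-variable-Dn}, using the regular $2n$-gon $S$ together with the diameter edge $(v_3,v_4)$ carrying the two labels of $x'_{n-1}$ and $x'_n$. By Lemma \ref{lem:characterization-cone} the complex $\lk_{\Delta(\xx,B)}(x^*)\cap\st_{\Delta(\xx,B)}(I')$ is a full subcomplex of $\Delta(\xx,B)$, so it is pinned down by its vertex set, which is $V\setminus\{x^*\}$ for $V$ as in \eqref{eq:setV}. Thus the whole proof reduces to describing this vertex set and reading off the induced compatibility relation, and the two diameters together with a suitable half of $S$ will produce the two join factors.

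First I would identify the vertices. An orbit $\bar{\ell}_y$ lies in $V$ precisely when it is compatible with both diameters on the edge $(v_3,v_4)$. Since any diameter supported on a different edge crosses $(v_3,v_4)$ at the centre, the only diameters in $V$ are $x'_{n-1}$ and $x'_n$ themselves, and every other element of $V$ is a non-diameter orbit that does not cross $(v_3,v_4)$ in the interior. Such an orbit has a unique representative among the diagonals of the half-polygon $H$ cut off by $(v_3,v_4)$ and containing $v_2$; this $H$ is an $(n+1)$-gon. Discarding $x^*$, whose $H$-representative is the diagonal $(v_2,v_3)$, and the $n-2$ orbits of $I'$, whose $H$-representatives are exactly the $n-2$ diagonals emanating from $v_4$, the surviving orbits biject with the diagonals of the $n$-gon $P$ obtained from $H$ by deleting the vertex $v_4$ (the representative $(v_2,v_3)$ becomes a boundary edge of $P$). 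A cardinality check, $(n+1)(n-2)/2-n(n-3)/2=n-1=1+(n-2)$, confirms that nothing is lost. Hence $V\setminus\{x^*\}=\{x'_{n-1},x'_n\}\sqcup W$, with $W$ in bijection with the diagonals of $P$.

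Next I would set up the join decomposition. For two orbits whose $H$-representatives both lie in the closed half $\overline{H}$, the antipodal representatives lie in the opposite closed half, so cross-crossings across the diameter never occur; consequently two elements of $W$ are compatible exactly when their $H$-representatives do not cross inside $H$, equivalently inside $P$. Since $P$ is an $n$-gon, its arc complex is $\Delta(A_{n-3})$, and I conclude that the full subcomplex on $W$ is isomorphic to $\Delta(A_{n-3})$ (this is the same non-crossing bookkeeping as in Lemmas \ref{cor:boundary-Bn} and \ref{cor:boundary-closedstar}, but here no $\Theta$-quotient subtlety arises because each orbit meets $H$ in a single diagonal). It then remains to check that $x'_{n-1}$ and $x'_n$ act as two cone points: they are compatible with each other (the same edge with different diameter labels) and with every orbit of $W$ (each such orbit avoids crossing $(v_3,v_4)$). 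Therefore the two diameters span a $1$-simplex $\Delta_1$, each of whose vertices is compatible with all of $\Delta(A_{n-3})$, which is exactly the assertion that $\lk_{\Delta(\xx,B)}(x^*)\cap\st_{\Delta(\xx,B)}(I')\cong\join(\Delta_1,\Delta(A_{n-3}))$.

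The step I expect to be most delicate is the vertex identification: verifying that the only diameters surviving in $V$ are $x'_{n-1},x'_n$, that the $H$-representatives of $I'$ are precisely the $n-2$ diagonals through $v_4$, and that deleting $v_4$ turns the remaining diagonals of $H$ into the diagonals of an honest $n$-gon. This is where one must track the $\Theta$-action and the roles of the four distinguished vertices $v_1,\dots,v_4$ carefully, and it is the natural place to invoke an induction on $n$ in the spirit of Lemma \ref{cor:boundary-closedstar} if a direct count proves awkward. Once the vertex set and its compatibility are correctly pinned down, the join structure follows formally.
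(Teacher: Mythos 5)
Your proof is correct, and its skeleton matches the paper's: both start from Lemma \ref{lem:characterization-cone} to reduce everything to the vertex set $V\setminus\{x^*\}$ of a full subcomplex, and both end with the same picture of two labeled diameters on $(v_3,v_4)$ coning over the arc complex of an $n$-gon. Where you genuinely diverge is in how the orbit combinatorics is handled. The paper reuses the machinery of Lemma \ref{cor:boundary-closedstar}: it views the relevant diagonals as a subset $E$ of the set $D$ there, pushes them through the contraction map $\psi$ into the reduced $2n$-gon $S'$, identifies $\lk\cap\st$ with a full subcomplex of $\Delta(D_{n-1})$ via the bijection $\bar\psi$ (whose injectivity on orbits ultimately rests on the induction in that earlier lemma), and then cuts $S'$ along $(v_1,v_2)$ to expose the $\join(\Delta_1,\Delta(A_{n-3}))$ structure. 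You instead stay inside $S$ and exploit the fact that every non-diameter orbit compatible with both $x'_{n-1}$ and $x'_n$ has a \emph{unique} representative among the diagonals of the closed half-polygon $H$ cut off by $(v_3,v_4)$; this canonical-representative device makes the orbit-to-diagonal correspondence a tautology rather than something requiring $\psi$ and induction, and your observation that crossings between a diagonal of $\overline{H}$ and one of the opposite closed half can only occur on the segment $(v_3,v_4)$ itself (hence never in the interior) cleanly reduces orbit-compatibility to non-crossing inside $H$. The resulting identification of the surviving representatives with the diagonals of the $n$-gon $P=H\setminus\{v_4\}$, checked by your cardinality count, recovers exactly the paper's polygon cut out by $(v_2,v_3)$. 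Your route is therefore somewhat more self-contained and elementary at this step; the paper's route has the advantage of recycling the already-built map $\bar\psi$ and of placing $\lk\cap\st$ explicitly inside $\Delta(D_{n-1})$, which is the form in which it is reused elsewhere (e.g.\ in the face-vector computation of Corollary \ref{cor:positive-face-from-all-face-D_nD_n-1}). The delicate points you flag (that the only diameters in $V$ are the two labels on $(v_3,v_4)$, and that the $H$-representatives of $I'$ are precisely the $n-2$ diagonals through $v_4$) do check out against the compatibility rules for labeled diameters and Figure \ref{fig:notations}, so no induction is actually needed in your version.
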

\begin{proof}
 Let $\bar{\ell}_{x^*}$ be an orbit corresponding to $x^*$. Let $E$ be the set of diagonals of $S$ corresponding to a vertex set of $\lk_{\Delta(\xx,B)}(x^*)\cap\st_{\Delta(\xx,B)}(I')$ (as $D$ in the proof of Lemma \ref{cor:boundary-closedstar}, we distinguish two labeled diameters with the same edge). By the characterization of Lemma \ref{lem:characterization-cone}, $E$ consists of two labeled diameters obtained by an edge $(v_3,v_4)$, all diagonals of the $(n-3)$-gon cut out by $(v_1,v_4)$, and all diagonals of the ($n-3)$-gon cut out by $(v_2,v_3)$ (see the subfigure shown at left in Figure \ref{fig:sujection-D-to-D'Dn-link}).
By \eqref{eq:lk-st}, $E$ is a subset of $D$ in Lemma \ref{cor:boundary-closedstar}, and $E/\Theta$ is a subset of $D/\Theta$. We consider a restriction $\psi|_{E}$ of $\psi$ to $E$, where $\psi$ is the same function as that in Lemma \ref{cor:boundary-closedstar}. The map $\psi|_{E}$ induces an injection $\bar{\psi}|_{E}\colon E/\Theta \to D'/\Theta$, and by the fullness of $\lk_{\Delta(\xx,B)}(x^*)\cap\st_{\Delta(\xx,B)}(I')$, it is lifted to isomorphism between $\lk_{\Delta(\xx,B)}(x)\cap\st_{\Delta(\xx,B)}(I')$ and a full subcomplex of $\Delta(D_{n-1})$ (see Figure \ref{fig:sujection-D-to-D'Dn-link}).
\begin{figure}[ht]
\caption{Restriction $\psi$ to $E$ ($n=4$) }\label{fig:sujection-D-to-D'Dn-link}
\[
\begin{tikzpicture}[baseline=0mm]
 \coordinate (u1) at(67.5:1.5); \coordinate (u2) at(112.5:1.5); \coordinate (lu) at(157.5:1.5);
 \coordinate (ld) at(-157.5:1.5); \coordinate (ru) at(22.5:1.5); \coordinate (rd) at(-22.5:1.5);  \coordinate (d1) at(-67.5:1.5); \coordinate (d2) at(-112.5:1.5);
 \draw (u1)--(u2)--(lu)--(ld)--(d2)--(d1)--(rd)--(ru)--(u1);
 \draw [blue,thick](u1)--(rd);
 \draw [blue,thick](u2)--(ld);
 \draw [blue,thick](u2)--(ld);
 \draw [blue,thick](d1)--(ru);
 \draw [blue,thick](d2)--(lu);
 \draw [blue,thick](d1)--(ru);
 \draw [blue,thick](d2)--(lu);
 \draw [blue,thick](u2)--(d1);
 \node at (67.5:1.8) {$v_1$};
 \node at (-112.5:1.8) {$v_2$};
 \node at (112.5:1.8) {$v_3$};
 \node at (-67.5:1.8) {$v_4$};
 \node at (22.5:1.8) {$v_5$};
 \node at (157.5:1.8) {$v_8$};
 \node at (-157.5:1.8) {$v_7$};
 \node at (-22.5:1.8) {$v_6$};
 \fill [white](0,0) circle (2mm);
\node at (0,0){\textcolor{blue}{12}};
\end{tikzpicture}
\hspace{20pt}\to \hspace{20pt}
\begin{tikzpicture}[baseline=0mm]
 \coordinate (u) at(90:1.5);  \coordinate (lu) at(145:1);
 \coordinate (ld) at(-145:1); \coordinate (ru) at(35:1); \coordinate (rd) at(-35:1);  \coordinate (d) at(-90:1.5);
 \draw (u)--(lu)--(ld)--(d)--(rd)--(ru)--(u);
 \draw [blue,thick](u)--(d);
 \draw [blue,thick](u)--(rd);
 \draw [blue,thick](u)--(ld);
 \draw [blue,thick](u)--(ld);
 \draw [blue,thick](d)--(ru);
 \draw [blue,thick](d)--(lu);
 \draw [blue,thick](d)--(ru);
 \draw [blue,thick](d)--(lu);
 \draw [blue,dashed](lu)--(ru);
 \draw [blue,dashed](ld)--(rd);
 \draw [blue,dashed](ld)--(ru);
 \draw [blue,dashed](rd)--(lu);
 \node at (90:1.8) {$v_1=v_3$};
 \node at (-90:1.8) {$v_2=v_4$};
 \node at (35:1.3) {$v_5$};
 \node at (145:1.3) {$v_8$};
 \node at (-35:1.3) {$v_6$};
 \node at (-145.5:1.3) {$v_7$};
 \fill [white](0,0) circle (2mm);
\node at (0,0){\textcolor{blue}{12}};
\end{tikzpicture}
\]
\end{figure}
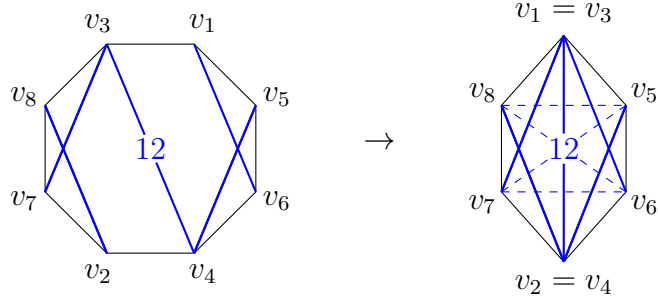
Furthermore, this subcomplex is isomorphic to $\join(\Delta_1,\Delta(A_{n-3}))$. We will confirm this fact. We denote by $E'=\psi(E)$ the image of $\psi|_E$. Let $S''$ be an $n$-gon obtained by cutting out $S'$ at a diameter $(v_1,v_2)$. By symmetry, we can assume $S''$ has $v_{5}$ as a vertex without loss of generality. Let $E'_\circ$ be a subset of $E'$ consists of all diagonals of $S$ except for two labeled diameters obtained by an edge $(v_1, v_2)$. Let $E''$ be all diagonals of $S''$. We consider a map $\eta:E''\to E'_\circ/\Theta, (v_i,v_j)\mapsto\overline{(v_i,v_j)}$, which is a bijection by the construction of $E$, which is lifted to the isomorphism of complexes. Moreover, two labeled diameters obtained by an edge $(v_1,v_2)$ are compatible with all orbits of $E'_{\circ}/\Theta$.
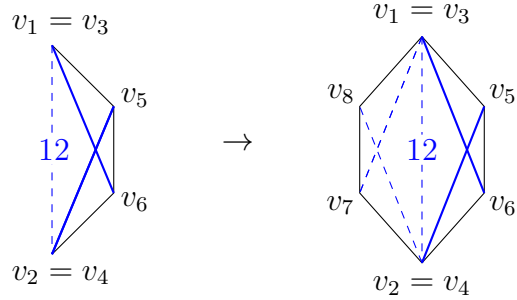
\begin{figure}[ht]
\caption{Map $\eta$ ($n=4$) }\label{fig:sujection-D-to-D'Dn-link2}
\[
\begin{tikzpicture}[baseline=0mm]
 \coordinate (u1) at(67.5:1.5);   \coordinate (ru) at(22.5:1.5); \coordinate (rd) at(-22.5:1.5);  \coordinate (d1) at(-67.5:1.5);
 \draw (d1)--(rd)--(ru)--(u1);
 \draw [blue,thick](u1)--(rd);
 \draw [blue,thick](d1)--(ru);
 \draw [blue,thick](d1)--(ru);
 \draw [blue,dashed](u1)--(d1);
 \node at (67.5:1.8) {$v_1=v_3$};
 \node at (-67.5:1.8) {$v_2=v_4$};
 \node at (22.5:1.8) {$v_5$};
 \node at (-22.5:1.8) {$v_6$};
 \fill [white](0.6,0) circle (2mm);
\node at (0.6,0){\textcolor{blue}{12}};
\end{tikzpicture}
\hspace{20pt}\to \hspace{20pt}
\begin{tikzpicture}[baseline=0mm]
 \coordinate (u) at(90:1.5);  \coordinate (lu) at(145:1);
 \coordinate (ld) at(-145:1); \coordinate (ru) at(35:1); \coordinate (rd) at(-35:1);  \coordinate (d) at(-90:1.5);
 \draw (u)--(lu)--(ld)--(d)--(rd)--(ru)--(u);
 \draw [blue,dashed](u)--(d);
 \draw [blue,thick](u)--(rd);
 \draw [blue,dashed](u)--(ld);
 \draw [blue,dashed](u)--(ld);
 \draw [blue,thick](d)--(ru);
 \draw [blue,dashed](d)--(lu);
 \node at (90:1.8) {$v_1=v_3$};
 \node at (-90:1.8) {$v_2=v_4$};
 \node at (35:1.3) {$v_5$};
 \node at (145:1.3) {$v_8$};
 \node at (-35:1.3) {$v_6$};
 \node at (-145.5:1.3) {$v_7$};
 \fill [white](0,0) circle (2mm);
\node at (0,0){\textcolor{blue}{12}};
\end{tikzpicture}
\]
\end{figure}
Since two labeled diameters are compatible with each other, the full subcomplex the vertex set of which is $E'$ is isomorphic to $\join(\Delta_1,\Delta(A_{n-3}))$.
\end{proof}
\begin{proof}[Proof of Theorem \ref{thm:positive-simplex-D_n}]
 First, we prove the equation
 \[
 \Delta^+(\xx,B)=\join(\{x^*\},\lk_{\Delta(\xx,B)}(x^*)\cap\st_{\Delta(\xx,B)}(I'))\cup \st_{\Delta(\xx,B)}(I'),
 \]
We abbreviate $\join(\{x^*\},\lk_{\Delta(\xx,B)}(x^*)\cap\st_{\Delta(\xx,B)}(I'))$ to $\join(\{x^*\},\lk\cap\st)$ again. The right-hand side is included in the left. We prove the converse inclusion. We denote by $J$ a vertex set of $\st_{\Delta(\xx,B)}(I')$. Let $F$ be a simplex of $\Delta^+(\xx,B)$. Since $F$ consists of cluster variables which are not initial variables, $F$ is a subset of $\{x^*\}\cup J$ by Lemma \ref{lem:unique-variable-in-Dn}. Since $\lk_{\Delta(\xx,B)}(I')$ is a full subcomplex of $\Delta(\xx,B)$, $\st_{\Delta(\xx,B)}(I')$ is also a full subcomplex of $\Delta(\xx,B)$. Therefore, if $F$ does not contain $x^*$, then $F$ is a simplex of $\st_{\Delta(\xx,B)}(I')$. If $F$ contains $x^*$, then each $0$-dimensional simplices of $F$ are simplices of $\join(\{x^*\},\lk\cap\st)$. By the fullness of $\join(\{x^*\},\lk\cap\st)$, $F$ belongs to $\join(\{x^*\},\lk\cap\st)$.
 To complete the proof of Theorem \ref{thm:positive-simplex-D_n}, we verify the following three.
 \begin{itemize}
     \item[(1)] $\lk\cap\st\cong\join(\Delta_1,\Delta(A_{n-3}))$.
     \item[(2)] $\join(\{x^*\},\lk\cap\st)\cong\join(\Delta_0,\Delta_1,\Delta(A_{n-3}))$,
     \item[(3)] there exists an isomorphism  $\st_{\Delta(\xx,B)}(I')\cong\st_{\Delta(\xx,B)}(I\setminus\{x_{n-1},x_n\})$ such that it induces $\lk\cap\st\cong\Delta_{\mathrm{glue}}$.
 \end{itemize}
  The statement (1) follows from Lemma \ref{lem:description-join} and (2) follows from (1).
  We show (3). The existence of an isomorphism follows from Lemma \ref{lem:description-positiveconeDn}. We show that it induces $\lk\cap\st\cong\Delta_{\mathrm{glue}}$. We can assume $\sigma$ in the proof of Lemma \ref{lem:description-positiveconeDn} is the identity without loss of generality. Owing to $\lk\cap\st=\join(\{x^*\},\lk\cap\st)\cap\st_{\Delta(\xx,B)}(I')$ and Lemma \ref{lem:characterization-cone}, $\lk\cap\st$ is a full subcomplex the vertex set of which is $V\cap\st_{\Delta(\xx,B)}(I')$, where $V$ is that of \eqref{eq:setV}. Since $x'_i$ corresponds to $x_i$ by the isomorphism in the proof of Lemma \ref{lem:description-positiveconeDn}, $V\cap\st_{\Delta(\xx,B)}(I')$ corresponds to $V'\cap\st_{\Delta(\xx,B)}(I\setminus\{x_{n-1},x_n\})$, where
  \begin{align*}
    V'=\{y\in\Xcal(\xx,B)\mid y \notin I \text{ and $y$ is compatible with both $x_{n-1}$ and $x_{n}$}\}.
\end{align*}
Since $\Delta_{\mathrm{glue}}$ is a full subcomplex of $\Delta(\xx,B)$ in which the vertex set is $V'\cap\st_{\Delta(\xx,B)}(I\setminus\{x_{n-1},x_n\})$, this completes the proof.
\end{proof}
Similar to $A_n,B_n$ and $C_n$ type, we give a conceptual proof of the following corollary given by Krattenthaler.
\begin{corollary}[\cite{krat}*{Theorem FD}]\label{cor:positive-face-from-all-face-D_nD_n-1}
Let $B$ be an exchange matrix whose quiver $Q_B$ has a diagram of $D_n$ type in Figure \ref{fig:valued-quiver} as its underlying graph. Let $f_{{D_n},k}$ be the number of $k$-dimensional simplices of $\Delta(D_{n})$, $f_{{A_n},k}$ be the number of $k$-dimensional simplices of $\Delta(A_{n})$ and $f^+_{n,k}$ the number of $k$-dimensional simplices of $\Delta^+(\xx,B)$. If $k= -1$, then we have $f^+_{n,-1}=1$. If $k\neq -1$, then we obtain
\begin{align}
    f^+_{n,k}&=
    \dfrac{1}{2}(f_{{D_n},k}+f_{D_{n-1},k}+f_{A_{n-3},k-1}+f_{A_{n-3},k-2})-f_{A_{n-2},k-1}-f_{A_{n-2},k-1}\label{eq:positive-facevector-D_n}\\\nonumber
    &=\begin{pmatrix}n\\ k+1\end{pmatrix}\begin{pmatrix}n+k-1\\ k+1\end{pmatrix}+\begin{pmatrix}n-1\\ k\end{pmatrix}\begin{pmatrix}n-k-2\\ k\end{pmatrix}-\dfrac{1}{n-1}\begin{pmatrix}n-1\\ k\end{pmatrix}\begin{pmatrix}n+k-1\\ k+1\end{pmatrix}.
\end{align}
\end{corollary}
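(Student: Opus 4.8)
The plan is to turn the geometric decomposition of Theorem \ref{thm:positive-simplex-D_n} into an identity of face vectors, using the elementary counting principle that if a complex $G=K\cup L$ is a union of two subcomplexes with intersection $M=K\cap L$, then $f(G)=f(K)+f(L)-f(M)$ dimension by dimension, together with the join formula \eqref{eq:join-facevector}. Applying this to the gluing $\Delta^+(\xx,B)\cong\join(\Delta_0,\Delta_1,\Delta(A_{n-3}))\cup\st_{\Delta(\xx,B)}(I\setminus\{x_{n-1},x_n\})$ along the common subcomplex $\join(\Delta_1,\Delta(A_{n-3}))\cong\Delta_{\mathrm{glue}}$ gives
\begin{align*}
f(\Delta^+(\xx,B))={}&f(\join(\Delta_0,\Delta_1,\Delta(A_{n-3})))+f(\st_{\Delta(\xx,B)}(I\setminus\{x_{n-1},x_n\}))\\
&-f(\join(\Delta_1,\Delta(A_{n-3}))).
\end{align*}
Thus the problem splits into evaluating two join terms and one star term, which will supply the $A$-contributions and the $D$-contributions of \eqref{eq:positive-facevector-D_n}, respectively.

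First I would dispose of the join terms. Since $\Delta_0$ is a point, $\join(\Delta_0,-)=\cone(-)$ and $f(\cone(K))=f(K)+[f(K)]_1$, so the two join terms collapse:
\begin{align*}
f(\join(\Delta_0,\Delta_1,\Delta(A_{n-3})))-f(\join(\Delta_1,\Delta(A_{n-3})))=[f(\join(\Delta_1,\Delta(A_{n-3})))]_1.
\end{align*}
Moreover $\Delta_1=\cone(\Delta_0)$ gives $\join(\Delta_1,\Delta(A_{n-3}))=\cone^2(\Delta(A_{n-3}))$, equivalently the $(1,2,1)$-convolution of $f(\Delta(A_{n-3}))$, so this term is a combination of the numbers $f_{A_{n-3},k-1},f_{A_{n-3},k-2},f_{A_{n-3},k-3}$. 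Using Theorem \ref{thm:positive-simplex-A_n} to recognize $\cone(\Delta(A_{n-3}))\cong\Delta^+$ of type $A_{n-2}$ (whose face vector is $f_{A_{n-3}}+[f_{A_{n-3}}]_1$ by Corollary \ref{cor:positive-face-from-all-face-n-1}), I can re-express these combinations so that, after assembly, they match the $\tfrac12(f_{A_{n-3},k-1}+f_{A_{n-3},k-2})$ piece of the target; the explicit values are read off from \eqref{eq:formula-faces-A_n}.

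The crux is the star term $f(\st_{\Delta(\xx,B)}(I\setminus\{x_{n-1},x_n\}))$, which by Lemma \ref{lem:description-positiveconeDn} equals $f(\st_{\Delta(\xx,B)}(I'))$. I would compute it by an inclusion--exclusion over a cover of the full complex $\Delta(\xx,B)$ that mirrors the $B_n$ argument of Corollary \ref{cor:positive-face-from-all-face-B_nB_n-1}: cover $\Delta(\xx,B)$ by the closed stars of the initial cluster and of the non-linked set $I'$, tracking the exceptional vertex $x^*$ of Lemma \ref{lem:unique-variable-in-Dn} separately, and use $\lk_{\Delta(\xx,B)}(I')\cong\Delta(D_{n-1})$ (Lemma \ref{cor:boundary-closedstar}) to identify the overlap. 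This is precisely where the symmetric combination $\tfrac12(f_{D_n,k}+f_{D_{n-1},k})$ is forced. The main obstacle is that $x^*$ breaks the exact two-fold symmetry that makes the $B_n$ case clean, so the simplices passing through $x^*$ (governed by $\lk\cap\st\cong\join(\Delta_1,\Delta(A_{n-3}))$ from Lemma \ref{lem:description-join}) contribute correction terms; the delicate bookkeeping is to show these corrections account for exactly the $-f_{A_{n-2},k-1}-f_{A_{n-2},k-2}$ summands so that the half-integer coefficients reassemble into \eqref{eq:positive-facevector-D_n}.

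Finally, combining the two steps yields the first equality of \eqref{eq:positive-facevector-D_n}. The closed form then follows by substituting \eqref{eq:formula-faces-A_n}, the analogous known expression for $f_{D_n,k}$, and simplifying. I expect this last simplification to be the most laborious part: it requires collapsing several products of binomial coefficients with shifted upper and lower indices (ranks $n,n-1,n-2,n-3$ against dimensions $k,k-1,k-2$) into the displayed three-term expression, which I would carry out with the Chu--Vandermonde identity and routine binomial manipulations. It is conceptually straightforward but error-prone precisely because of the many simultaneous index shifts, so this is where I would be most careful.
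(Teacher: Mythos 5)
Your first half coincides with the paper's own route: both start from the gluing decomposition in Theorem \ref{thm:positive-simplex-D_n}, apply $f(K\cup L)=f(K)+f(L)-f(K\cap L)$, and reduce the two join terms to the single correction $[f(\join(\Delta_1,\Delta(A_{n-3})))]_1$. The gap is in the star term, which is the real content of the corollary, and it is twofold. First, the mechanism that produces the coefficients $\tfrac12$ is never identified. The paper gets them by observing that $\st_{\Delta(\xx,B)}(I\setminus\{x_{n-1},x_n\})$ consists of $\widehat{\Delta_\xx}$ minus the simplices containing $x_{n-1}$ or $x_n$, together with $\lk_{\Delta(\xx,B)}(I\setminus\{x_{n-1},x_n\})\cong\Delta(D_{n-1})$; substituting $f(\widehat{\Delta_\xx})=f(\Delta(D_n))-f(\Delta^+(\xx,B))$ makes the unknown appear on both sides with net coefficient $2$, and solving yields the halves. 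Your substitute --- covering $\Delta(\xx,B)$ by $\st(I)\cup\st(I')$ as in the $B_n$ case --- is not available here: a simplex containing $x^*$ lies in neither $\st(I')$ (since $x^*$ is incompatible with every element of $I'$) nor, without further argument, in $\st(I)$, and the overlap of the two stars is no longer just a link. You acknowledge that $x^*$ breaks the symmetry but defer exactly the bookkeeping that constitutes the proof.

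Second, and more concretely wrong: you attribute the $-f_{A_{n-2},k-1}-f_{A_{n-2},k-2}$ corrections to the simplices through $x^*$, ``governed by $\lk\cap\st\cong\join(\Delta_1,\Delta(A_{n-3}))$''. That complex can only ever contribute rank-$(n-3)$ type-$A$ data, which you have already spent on the join terms. In the paper the $A_{n-2}$ terms come from an entirely different source: the sets $\underline{\Delta_{\{x_{n-1}\}\subset\xx}}$ and $\underline{\Delta_{\{x_{n}\}\subset\xx}}$ of simplices containing exactly one of the two deleted initial variables, which Lemma \ref{lem:bijection-between-B-and-Bx} identifies with $[f(\Delta^+(\xx-\{x_n\},B_{\backslash\{x_n\}}))]_1=[f(\cone(\Delta(A_{n-2})))]_1$ because the deleted quiver is linearly oriented of type $A_{n-1}$ (Theorem \ref{thm:positive-simplex-A_n} and Corollary \ref{cor:positive-face-from-all-face-n-1}). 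Without this step your inclusion--exclusion cannot produce the $A_{n-2}$ summands, so the proposed derivation of the first equality in \eqref{eq:positive-facevector-D_n} does not go through; the final binomial simplification, which you correctly identify as routine, is not where the difficulty lies.
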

\begin{proof}
When $k=-1$, the claim is clear. We consider the case of $k\neq -1$. By Corollary \ref{cor:independent-orientation}, it suffices to consider the linearly oriented case. First, we show the following equation:
\begin{align}\label{facevectorDn}
f(\Delta^+(\xx,B))&=\dfrac{1}{2}f(\Delta(D_n))+\dfrac{1}{2}f(\Delta(D_{n-1}))-[f(\Delta(A_{n-2}))]_1-[f(\Delta(A_{n-2}))]_2\\
&+\dfrac{1}{2}[f(\Delta(A_{n-3}))]_1+\dfrac{1}{2}[f(\Delta(A_{n-3}))]_2.\nonumber
\end{align}
We note that
\begin{align*}
&\st_{\Delta(\xx,B)}(I-\{x_{n-1},x_n\})\\
&=\widehat{\Delta_\xx}\setminus(\underline{\Delta_{\{x_{n-1}\}\subset \xx}}\cup\underline{\Delta_{\{x_n\}\subset \xx}}\cup\underline{\Delta_{\{x_{n-1},x_n\}\subset \xx}})\cup \lk_{\Delta(\xx,B)}(I-\{x_{n-1},x_n\}),
\end{align*}
and by Lemmas \ref{lem:description-positiveconeDn} and \ref{cor:boundary-closedstar}, we have
$\lk_{\Delta(\xx,B)}(I-\{x_{n-1},x_n\})\cong \lk_{\Delta(\xx,B)}(I')\cong \Delta(D_{n-1})$.
By Theorem \ref{thm:positive-simplex-D_n} and the above argument, we obtain
\begin{align*}
    f(\Delta(D_n))&=f(\widehat{\Delta_\xx})+f(\Delta^+(\xx,B))\\
    &=f(\widehat{\Delta_\xx})+f(\st_{\Delta(\xx,B)}(I-\{x_{n-1},x_n\}))+[f(\join(\Delta_1,\Delta(A_3)))]_1\\
     &=2f(\widehat{\Delta_\xx})-f(\underline{\Delta_{\{x_{n-1}\}\subset \xx}})-f(\underline{\Delta_{\{x_n\}\subset \xx}})-f(\underline{\Delta_{\{x_{n-1},x_n\}\subset \xx}})\\
     &+f(\Delta(D_{n-1}))+[f(\join(\Delta_1,\Delta(A_3)))]_1,
\end{align*}
where the notation is the same in the proof of Theorem \ref{thm:mutation-positivecomplex}.
Then, we have
\begin{align*}
    f(\underline{\Delta_{\{x_{n-1}\}\subset \xx}})&=f(\underline{\Delta_{\{x_{n}\}\subset \xx}})=[f(\Delta^+(\xx-\{x_n\},B_{\backslash\{x_n\}}))]_1,\\
     f(\underline{\Delta_{\{x_{n-1},x_{n}\}\subset \xx}})&=[f(\Delta^+(\xx-\{x_{n-1},x_n\},B_{\backslash\{x_{n-1},x_n\}}))]_2
\end{align*}
by using the discussion in the proof of Theorem \ref{thm:mutation-positivecomplex} and Lemma \ref{cor:generalized-bijection-between-B-and-Bx}. Furthermore, since $B_{\backslash\{x_{n-1}\}}$, $B_{\backslash\{x_{n}\}}$, $B_{\backslash\{x_{n-1},x_n\}}$ is of tree $A_n$ type, we have
\begin{align*}
   [f(\Delta^+(\xx-\{x_n\},B_{\backslash\{x_n\}}))]_1&=[f(\cone(\Delta({A_{n-2})})]_1\\&=[f(\Delta({A_{n-2})})]_1+[f(\Delta({A_{n-2})})]_2,\\
    [f(\Delta^+(\xx-\{x_{n-1},x_n\},B_{\backslash\{x_{n-1},x_n\}}))]_2&=[f(\cone(\Delta({A_{n-3})}))]_2\\&=[f(\Delta({A_{n-3})})]_2+[f(\Delta({A_{n-3})})]_3
\end{align*}
by Corollary \ref{cor:positive-face-from-all-face-n-1}.
Finally, we have
\begin{align*}
[f(\join(\Delta_1,\Delta(A_{n-3})))]_1=[f(\Delta(A_{n-3}))]_1+2[f(\Delta(A_{n-3}))]_2+[f(\Delta(A_{n-3}))]_3
\end{align*}
by \eqref{eq:join-facevector}.
By substituting and simplifying the above equations, we have \eqref{facevectorDn}.
The combinatorial formula of $f_{D_n,k}$ has already given by \cite{oeis}*{A080721}, and it is
\begin{align}\label{eq:formula-faces-D_n}
    f_{D_n,k}=\begin{pmatrix}n\\ k+1\end{pmatrix}\left(\begin{pmatrix}n+k+1\\ k+1\end{pmatrix}-\begin{pmatrix}n+k-1\\ k\end{pmatrix}\right).
\end{align}
By \eqref{facevectorDn} and \eqref{eq:formula-faces-D_n}, if $k\neq0$, we have
\begin{align}
    f^+_{D_n,k}&=
    \dfrac{1}{2}(f_{{D_n},k}+f_{D_{n-1},k}+f_{A_{n-3},k-1}+f_{A_{n-3},k-2})-f_{A_{n-2},k-1}-f_{A_{n-2},k-2}\\
    &=\dfrac{1}{2}\begin{pmatrix}n\\ k+1\end{pmatrix}\left(\begin{pmatrix}n+k+1\\ k+1\end{pmatrix}-\begin{pmatrix}n+k-1\\ k\end{pmatrix}\right)+\dfrac{1}{2}\begin{pmatrix}n-1\\ k+1\end{pmatrix}\left(\begin{pmatrix}n+k\\ k+1\end{pmatrix}-\begin{pmatrix}n+k-2\\ k\end{pmatrix}\right)\nonumber\\
    &-\dfrac{1}{k+1}\begin{pmatrix}n-2\\ k\end{pmatrix}\begin{pmatrix}n+k\\ k\end{pmatrix}-\dfrac{1}{k}\begin{pmatrix}n-2\\ k-1\end{pmatrix}\begin{pmatrix}n+k-1\\ k-1\end{pmatrix}+\dfrac{1}{2k+2}\begin{pmatrix}n-3\\ k\end{pmatrix}\begin{pmatrix}n+k-1\\ k\end{pmatrix}\nonumber\\
    &+\dfrac{1}{2k}\begin{pmatrix}n-3\\ k-1\end{pmatrix}\begin{pmatrix}n+k-2\\ k-1\end{pmatrix}\nonumber.
\end{align}
If $k=0$, we have $f^+_{D_n,0}=n^2-n$.
By organizing \eqref{eq:formula-faces-D_n}, we obtain the desired equation \eqref{eq:positive-facevector-D_n}. This equation is also correct in the case that $k=0$. 
\end{proof}
\begin{remark}
The formula is given more generally in \cite{krat}*{Theorem FD}. The equation \eqref{eq:positive-facevector-D_n} corresponds with \cite{krat}*{Theorem FD} in the case that $m = 1,\ell = 0$. 
\end{remark}
\begin{example}
Let $(\xx,B)$ be a seed, which is the same as in Example \ref{ex:correspondence-arc-variable-D3}. Then, $\Delta^+(\xx,B)$ is a domain which is filled with gray in Figure \ref{positivecomplexD_n}.
\begin{figure}[ht]
\caption{positive cluster complex of $D_3$ type \label{positivecomplexD_n}}
\vspace{5pt}
\begin{center}
\scalebox{0.8}{
\begin{tikzpicture}
\coordinate (0) at (0,0);
\coordinate (u*) at (90:5.33);
\coordinate (u) at (90:4);
\coordinate (ul) at (150:4);
\coordinate (ur) at (30:4);
\coordinate (uml) at (150:2);
\coordinate (umr) at (30:2);
\coordinate (dmc) at (-90:2);
\coordinate (dl) at (-150:4);
 \coordinate (dl*) at (-150:5.33);
\coordinate (dr) at (-30:4);
\coordinate (dr*) at (-30:5.33);
\coordinate (d) at (-90:4);
\draw (u) to (ul);
\draw (u) to (ur);
\draw (ul) to (uml);
\draw (ur) to (umr);
\draw (umr) to (uml);
\draw (u) to (uml);
\draw (u) to (umr);
\draw (uml) to (dl);
\draw (umr) to (dr);
\draw (ul) to (dl);
\draw (ur) to (dr);
\draw (uml) to (dmc);
\draw (umr) to (dmc);
\draw (dl) to (dmc);
\draw (dr) to (dmc);
\draw (dl) to (d);
\draw (dmc) to (d);
\draw (dr) to (d);
\draw(ul) [out=90,in=180]to (u*);
\draw(ur) [out=90,in=0]to (u*);
\draw(ul) [out=210,in=120]to (dl*);
\draw(d) [out=210,in=300]to (dl*);
\draw(d) [out=330,in=240]to (dr*);
\draw(ur) [out=330,in=60]to (dr*);
\fill [white](u) circle (0.9cm);
\fill[white] (ul) circle (0.9cm);
\fill[white] (ur) circle (0.9cm);
\fill[white] (uml) circle (0.9cm);
\fill[white] (umr) circle (0.9cm);
\fill[white] (dmc) circle (0.9cm);
\fill[white] (dl) circle (0.9cm);
\fill[white] (dr) circle (0.9cm);
\fill[white] (d) circle (0.9cm);
\draw (u)++(210:0.7cm) to ++(90:0.7cm);
\draw (u)++(150:0.7cm) to ++(30:0.7cm);
\draw (u)++(90:0.7cm) to ++(330:0.7cm);
\draw (u)++(30:0.7cm) to ++(270:0.7cm);
\draw (u)++(330:0.7cm) to ++(210:0.7cm);
\draw (u)++(270:0.7cm) to ++(150:0.7cm);
\draw (ur)++(210:0.7cm) to ++(90:0.7cm);
\draw (ur)++(150:0.7cm) to ++(30:0.7cm);
\draw (ur)++(90:0.7cm) to ++(330:0.7cm);
\draw (ur)++(30:0.7cm) to ++(270:0.7cm);
\draw (ur)++(330:0.7cm) to ++(210:0.7cm);
\draw (ur)++(270:0.7cm) to ++(150:0.7cm);
\draw (ul)++(210:0.7cm) to ++(90:0.7cm);
\draw (ul)++(150:0.7cm) to ++(30:0.7cm);
\draw (ul)++(90:0.7cm) to ++(330:0.7cm);
\draw (ul)++(30:0.7cm) to ++(270:0.7cm);
\draw (ul)++(330:0.7cm) to ++(210:0.7cm);
\draw (ul)++(270:0.7cm) to ++(150:0.7cm);
\draw (umr)++(210:0.7cm) to ++(90:0.7cm);
\draw (umr)++(150:0.7cm) to ++(30:0.7cm);
\draw (umr)++(90:0.7cm) to ++(330:0.7cm);
\draw (umr)++(30:0.7cm) to ++(270:0.7cm);
\draw (umr)++(330:0.7cm) to ++(210:0.7cm);
\draw (umr)++(270:0.7cm) to ++(150:0.7cm);
\draw (uml)++(210:0.7cm) to ++(90:0.7cm);
\draw (uml)++(150:0.7cm) to ++(30:0.7cm);
\draw (uml)++(90:0.7cm) to ++(330:0.7cm);
\draw (uml)++(30:0.7cm) to ++(270:0.7cm);
\draw (uml)++(330:0.7cm) to ++(210:0.7cm);
\draw (uml)++(270:0.7cm) to ++(150:0.7cm);
\draw (dl)++(210:0.7cm) to ++(90:0.7cm);
\draw (dl)++(150:0.7cm) to ++(30:0.7cm);
\draw (dl)++(90:0.7cm) to ++(330:0.7cm);
\draw (dl)++(30:0.7cm) to ++(270:0.7cm);
\draw (dl)++(330:0.7cm) to ++(210:0.7cm);
\draw (dl)++(270:0.7cm) to ++(150:0.7cm);
\draw (dr)++(210:0.7cm) to ++(90:0.7cm);
\draw (dr)++(150:0.7cm) to ++(30:0.7cm);
\draw (dr)++(90:0.7cm) to ++(330:0.7cm);
\draw (dr)++(30:0.7cm) to ++(270:0.7cm);
\draw (dr)++(330:0.7cm) to ++(210:0.7cm);
\draw (dr)++(270:0.7cm) to ++(150:0.7cm);
\draw (dmc)++(210:0.7cm) to ++(90:0.7cm);
\draw (dmc)++(150:0.7cm) to ++(30:0.7cm);
\draw (dmc)++(90:0.7cm) to ++(330:0.7cm);
\draw (dmc)++(30:0.7cm) to ++(270:0.7cm);
\draw (dmc)++(330:0.7cm) to ++(210:0.7cm);
\draw (dmc)++(270:0.7cm) to ++(150:0.7cm);
\draw (d)++(210:0.7cm) to ++(90:0.7cm);
\draw (d)++(150:0.7cm) to ++(30:0.7cm);
\draw (d)++(90:0.7cm) to ++(330:0.7cm);
\draw (d)++(30:0.7cm) to ++(270:0.7cm);
\draw (d)++(330:0.7cm) to ++(210:0.7cm);
\draw (d)++(270:0.7cm) to ++(150:0.7cm);
\draw [thick, blue](uml)++(150:0.7cm) to ++(-30:1.39cm);
\draw [thick, blue](umr)++(30:0.7cm) to ++(210:1.39cm);
\draw [thick, blue](dmc)++(90:0.7cm) to ++(270:1.39cm);
\draw [thick, blue](dl)++(90:0.7cm) to ++(-60:1.21cm);
\draw [thick, blue](dl)++(150:0.7cm) to ++(-60:1.21cm);
\draw [thick, blue](dr)++(90:0.7cm) to ++(240:1.21cm);
\draw [thick, blue](dr)++(30:0.7cm) to ++(240:1.21cm);
\draw [thick, blue](u)++(150:0.7cm) to ++(0:1.21cm);
\draw [thick, blue](u)++(210:0.7cm) to ++(0:1.21cm);
\draw [thick, blue](ul)++(150:0.7cm) to ++(-30:1.39cm);
\draw [thick, blue](d)++(90:0.7cm) to ++(270:1.39cm);
\draw [thick, blue](ur)++(30:0.7cm) to ++(210:1.39cm);
\fill [white](umr) circle (2mm);
\node at (umr){\textcolor{blue}{1}};
\fill [white](uml) circle (2mm);
\node at (uml){\textcolor{blue}{1}};
\fill [white](dmc) circle (2mm);
\node at (dmc){\textcolor{blue}{1}};
\fill [white](ur) circle (2mm);
\node at (ur){\textcolor{blue}{2}};
\fill [white](ul) circle (2mm);
\node at (ul){\textcolor{blue}{2}};
\fill [white](d) circle (2mm);
\node at (d){\textcolor{blue}{2}};
\fill [gray,opacity=.4] (ul)[out=90,in=180] to (u*)[out=0,in=90] to (ur)--(umr)--(uml)--(dl)--(ul);
\end{tikzpicture}}
\end{center}
\end{figure}
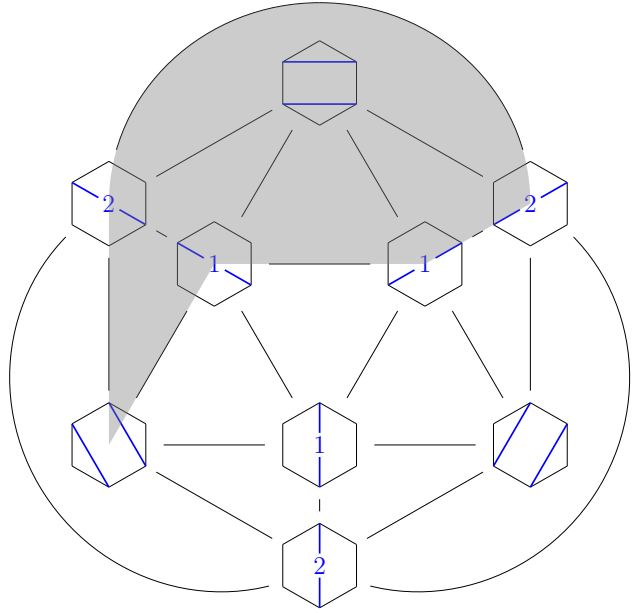
\end{example}
\section{Algorithm to calculate face vectors of positive cluster complexes}
In this section, we provide an algorithm to calculate face vectors of positive cluster complexes along with a concrete example. First, we give face vectors of positive cluster complexes of tree Dynkin type under a specific condition. We have already obtained the formula of face vectors for the classical type in Section 4.
\begin{theorem}\label{thm:count1}
Let $B$ be an exchange matrix of tree $A_n,B_n,C_n,\text{ or }D_n$ type. Then, $f(\Delta^+(\xx,B))$ is given by the following list.
\begin{itemize}
    \item[($A_n$)] $\dfrac{1}{k+2}\begin{pmatrix}n\\ k+1\end{pmatrix}\begin{pmatrix}n+k+1\\ k+1\end{pmatrix}$,
    \vspace{3mm}
    \item[($B_n$)] $ \begin{pmatrix}n\\ k+1\end{pmatrix}\begin{pmatrix}n+k\\ k+1\end{pmatrix}$,
      \vspace{3mm}
    \item[($C_n$)] same as $B_n$ type,
      \vspace{3mm}
    \item[($D_n$)] $\begin{pmatrix}n\\ k+1\end{pmatrix}\begin{pmatrix}n+k-1\\ k+1\end{pmatrix}+\begin{pmatrix}n-1\\ k\end{pmatrix}\begin{pmatrix}n-k-2\\ k\end{pmatrix}-\dfrac{1}{n-1}\begin{pmatrix}n-1\\ k\end{pmatrix}\begin{pmatrix}n+k-1\\ k+1\end{pmatrix}$.
  \end{itemize}
\end{theorem}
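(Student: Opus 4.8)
The plan is to obtain Theorem \ref{thm:count1} as a direct consolidation of the three face-vector computations already carried out in Section 4, one for each family of underlying Dynkin diagram. No new combinatorial identity is required: the entire content is the observation that the explicit formulas proved there hold for \emph{every} orientation of the relevant diagram, which is exactly what the hypothesis ``exchange matrix of tree $X_n$ type'' amounts to.

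First I would recall that, by Remark \ref{rem:finite-charactrization}, saying $B$ is an exchange matrix of tree $A_n$ (resp. $B_n$, $C_n$, $D_n$) type is the same as saying that $Q_B$ has the corresponding diagram of Figure \ref{fig:valued-quiver} as its underlying graph. This is precisely the hypothesis under which Corollaries \ref{cor:positive-face-from-all-face-n-1}, \ref{cor:positive-face-from-all-face-B_nB_n-1}, and \ref{cor:positive-face-from-all-face-D_nD_n-1} are stated. For the $A_n$ case, the $k$th entry $\frac{1}{k+2}\binom{n}{k+1}\binom{n+k+1}{k+1}$ is exactly Corollary \ref{cor:positive-face-from-all-face-n-1}. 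For the $B_n$ case, the entry $\binom{n}{k+1}\binom{n+k}{k+1}$ is Corollary \ref{cor:positive-face-from-all-face-B_nB_n-1}, and the $C_n$ case follows from the same corollary together with the isomorphism $\Delta(B_n)\cong\Delta(C_n)$ recorded after Theorem \ref{thm:arc-variable-Bn} (which forces the two positive complexes to share a face vector). For the $D_n$ case, the displayed three-term expression is Corollary \ref{cor:positive-face-from-all-face-D_nD_n-1}.

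The one structural point worth making explicit is why the specific-orientation hypotheses of Theorems \ref{thm:positive-simplex-A_n}, \ref{thm:positive-simplex-B_n}, and \ref{thm:positive-simplex-D_n} --- each of which fixes a particular (linearly oriented) quiver --- do not restrict the conclusion to those orientations. This is supplied by Corollary \ref{cor:independent-orientation}: the face vector $f(\Delta^+(\xx,B))$ depends only on the underlying graph of $Q_B$. Hence it suffices to have computed it for one convenient orientation, and each of the three corollaries already builds in this reduction, so the formulas are valid for every $B$ of the stated tree type.

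Since all of the ingredients are in place, I do not expect a genuine obstacle here; the only thing to check is bookkeeping --- that the $f$-vector indexing matches across the three cited sources and that the $C_n$ case is correctly routed through $B_n$. The substantive work (the explicit simplicial descriptions of Section 4 and the underlying Chapoton and Krattenthaler enumerations) was discharged in proving those corollaries, so the proof of Theorem \ref{thm:count1} reduces to assembling and citing them.
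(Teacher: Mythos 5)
Your proposal is correct and coincides with the paper's own proof, which simply cites Corollaries \ref{cor:positive-face-from-all-face-n-1}, \ref{cor:positive-face-from-all-face-B_nB_n-1}, and \ref{cor:positive-face-from-all-face-D_nD_n-1}; the orientation-independence you invoke via Corollary \ref{cor:independent-orientation} is already built into the proofs of those corollaries. Nothing further is needed.
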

\begin{proof}
It follows from Corollaries \ref{cor:positive-face-from-all-face-n-1}, \ref{cor:positive-face-from-all-face-B_nB_n-1}, and \ref{cor:positive-face-from-all-face-D_nD_n-1}. 
\end{proof}
For exceptional type, we calculate face vectors using \texttt{SageMath} and the software package \texttt{ClusterSeed}.
\begin{theorem}\label{thm:count2}
Let $B$ be an exchange matrix of tree $E_6,E_7,E_8,F_4,\text{ or }G_2$ type. Then, $f(\Delta^+(\xx,B))$ is given by the following list.
  \begin{itemize}
    \item[($E_6$)] $(1,36,300,1035,1720,1368,418)$,
    \item[($E_7$)] $(1,63,777,3927,9933,13299,9009,2431)$,
    \item[($E_8$)] $(1,120,2135,15120,54327,108360,121555,71760,17342)$,
    \item[($F_4$)] $(1, 24, 101, 144, 66)$,
    \item[($G_2$)] $(1, 6, 5)$.
  \end{itemize}
\end{theorem}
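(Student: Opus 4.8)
The statement is purely numerical, and unlike the classical types of Section 4, the exceptional types $E_6,E_7,E_8,F_4,G_2$ admit no convenient surface model; the plan is therefore to reduce the assertion to a finite enumeration and carry that enumeration out by machine. Two preliminary reductions make the object to be enumerated explicit. First, by Corollary \ref{cor:independent-orientation} the face vector $f(\Delta^+(\xx,B))$ depends only on the underlying Dynkin diagram of $Q_B$, so for each type it suffices to fix a single convenient exchange matrix $B$, e.g. a linear or bipartite orientation. Second, by Proposition \ref{pure} the complex $\Delta^+(\xx,B)$ is pure of dimension $n-1$, and the argument proving it (through Lemma \ref{initial-mutate}) shows that every face built from non-initial variables extends to a cluster containing no initial variable. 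Hence the facets of $\Delta^+(\xx,B)$ are precisely the clusters $\xx_s$ with $\xx_s\cap I=\emptyset$, where $I$ is the initial cluster, and $\Delta^+(\xx,B)$ is exactly the simplicial complex generated by these facets. In particular every subset of non-initial variables lying in such a facet is automatically a face, so the whole face vector is determined by the list of clusters avoiding $I$.

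The computation then runs as follows. Using \texttt{SageMath} with the \texttt{ClusterSeed} package, I would generate the (finite) mutation class of the chosen seed of each exceptional type, obtaining the full list of seeds and, after identifying duplicate cluster variables by comparison of their Laurent expansions, the finite vertex set together with the distinguished subset $I$ of $n$ initial variables. As a first internal check one notes that the non-initial variables are in bijection with the positive roots, so their count must equal $f^+_0$ (for instance $36,63,120,24,6$ in the five cases). One then extracts the clusters disjoint from $I$, takes the simplicial complex they generate, and reads off, for each $-1\le k\le n-1$, the number $f^+_k$ of $(k+1)$-element subsets of non-initial variables contained in at least one such cluster. Carrying this out for $E_6,E_7,E_8,F_4,G_2$ produces the five vectors of the statement.

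The main obstacle is practical rather than conceptual, and it is concentrated in type $E_8$: there the number of cluster variables is the number of almost positive roots, $120+8=128$, and the number of clusters is the type-$E_8$ Coxeter--Catalan number $25080$, so the enumeration and the subset-counting must be organized efficiently, and one must confirm that the sign and labelling conventions of \texttt{ClusterSeed} match ours (in particular that two variables are identified exactly when their Laurent expansions agree). To guard against implementation error I would cross-validate the output in three independent ways. First, against Theorem \ref{thm:mutation-positivecomplex}: the differences of the computed face vectors under single mutations must obey the recursion there, which ties the five vectors to one another and to the classical data of Theorem \ref{thm:count1}. Second, the reduced Euler characteristic must vanish, i.e. $\sum_{k=-1}^{n-1}(-1)^k f^+_k=0$, since the positive cluster complex is contractible; one checks directly that each listed vector satisfies this. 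Third, the top entry $f^+_{n-1}$, the number of facets, should match the known count of positive clusters of the corresponding type in the literature on generalized associahedra. Agreement of all three checks is what I would take as confirmation of the tabulated values.
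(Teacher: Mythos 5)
Your proposal is correct and matches the paper's approach: the paper proves Theorem \ref{thm:count2} by direct machine computation with \texttt{SageMath} and \texttt{ClusterSeed}, exactly as you describe, and gives no further argument. Your added reductions (invariance under orientation via Corollary \ref{cor:independent-orientation}, purity via Proposition \ref{pure}) and your cross-checks (Euler characteristic, positive-cluster counts, consistency with Theorem \ref{thm:mutation-positivecomplex}) go beyond what the paper records but are sound and only strengthen the verification.
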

Combining Theorem \ref{thm:count1} with Theorem \ref{thm:count2}, we have the following corollary.
\begin{corollary}\label{cor:invariant-orientation-Sec5}
Let $B$ be an exchange matrix of tree Dynkin type. Then, $f(\Delta^+(\xx,B))$ is given by the list in Theorem \ref{thm:count1} and \ref{thm:count2}.
\end{corollary}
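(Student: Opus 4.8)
The plan is to reduce Corollary~\ref{cor:invariant-orientation-Sec5} to the two enumerative results already in hand, Theorems~\ref{thm:count1} and~\ref{thm:count2}, by combining the finite-type classification with the orientation-invariance statement of Corollary~\ref{cor:independent-orientation}. First I would invoke the characterization of tree finite type recalled in Remark~\ref{rem:finite-charactrization}: since $B$ is of tree Dynkin type, $A(B)$ is a finite Cartan matrix, so the underlying graph of the valued quiver $Q_B$ is one of the nine diagrams in Figure~\ref{fig:valued-quiver}, namely $A_n$, $B_n$, $C_n$, $D_n$, $E_6$, $E_7$, $E_8$, $F_4$, or $G_2$. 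This exhausts all possibilities and splits the argument into a classical case and an exceptional case.

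In the classical case ($A_n,B_n,C_n,D_n$) I would simply cite Theorem~\ref{thm:count1}, whose formulas are exactly the list asserted here. That theorem is already stated for an arbitrary exchange matrix of the given tree type: its proof runs through Corollaries~\ref{cor:positive-face-from-all-face-n-1}, \ref{cor:positive-face-from-all-face-B_nB_n-1}, and~\ref{cor:positive-face-from-all-face-D_nD_n-1}, each of which has the reduction to a convenient orientation built in via Corollary~\ref{cor:independent-orientation}. Hence no further work is needed for these four families, and $f(\Delta^+(\xx,B))$ equals the stated value for every orientation.

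In the exceptional case the content of Theorem~\ref{thm:count2} is a finite computation (carried out with \texttt{SageMath} and \texttt{ClusterSeed}) giving the face vector of $\Delta^+(\xx,B)$ for one chosen orientation of each of $E_6,E_7,E_8,F_4,G_2$. To promote this to an arbitrary exchange matrix $B$ of the corresponding tree type, I would apply Corollary~\ref{cor:independent-orientation}: any two skew-symmetrizable matrices whose valued quivers share the same underlying graph in Figure~\ref{fig:valued-quiver} yield positive cluster complexes with identical face vectors. Thus the single computed value equals $f(\Delta^+(\xx,B))$ for every orientation of that diagram, and combining the two cases completes the proof. There is no deep obstacle: the corollary is essentially a bookkeeping assembly of earlier results, and the only point requiring care is that the logical weight for the exceptional diagrams rests entirely on Corollary~\ref{cor:independent-orientation}, so I would verify that its hypothesis—equality of the full labeled underlying graph, including the valued edges of $F_4$ and $G_2$—holds, which it does by construction.
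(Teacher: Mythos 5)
Your proposal is correct and follows essentially the same route as the paper, which simply combines Theorems \ref{thm:count1} and \ref{thm:count2} (each already stated for an arbitrary orientation, with the reduction to a convenient orientation handled by Corollary \ref{cor:independent-orientation} inside their proofs). Your extra care in re-invoking Corollary \ref{cor:independent-orientation} for the exceptional diagrams is harmless but not strictly needed, since Theorem \ref{thm:count2} already asserts the result for any exchange matrix of the given tree type.
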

By using these facts and Theorem \ref{thm:mutation-positivecomplex}, we calculate Example \ref{ex:not-equal-simplex} again.

Let $(\xx,B)$ and $(\xx'',B'')$ be the same as in Example \ref{ex:not-equal-simplex}. The vector $f(\Delta^+(\xx,B))-f(\Delta^+(\xx'',B''))$ is given by the following steps:
\begin{itemize}
    \item [(1)] Since $(\xx'',B'')=\mu_1(\xx,B)$, we consider full subquivers of $Q_B$ and $Q_{B''}$ excluded vertices labeled by 1. We denote these quivers as $\hat{Q}_B$ and $\hat{Q}_{B''}$, and they are as follows.
    \begin{align*}
        \hat{Q}_B=\begin{xy} (10,0)*+{2}="J", (20,0)*+{{3}}="K" \end{xy},\quad
        \hat{Q}_{B''}=\begin{xy}(10,0)*+{2}="J", (20,0)*+{{3}}="K" \ar@{->}"K";"J" \end{xy}.
    \end{align*}
    We note that they are products of quivers that are mutation equivalent to Dynkin quivers.
    \item[(2)] Let $\hat{B}$ and $\hat{B''}$ be the corresponding exchange matrices of $\hat{Q}_B$ and $\hat{Q}_{B''}$. We calculate $f(\Delta^+((x_2,x_3),\hat{B}))$ and $f(\Delta^+((x_2,x_3),\hat{B''}))$. Since $\hat{Q}_B$ is Dynkin $A_1\times A_1$ diagram and $\hat{Q}_{B''}$ is Dynkin $A_2$ diagram, we have
    \begin{align*}
        f(\Delta^+((x_2,x_3),\hat{B}))&=(1,2,1)\\
        f(\Delta^+((x_2,x_3),\hat{B''}))&=(1,3,2).
    \end{align*}
    by \eqref{eq:positive-join}, \eqref{eq:join-facevector} and Theorem \ref{thm:count1}.
    \item[(3)] We calculate the shift to the right by 1 of the difference of $f(\Delta^+((x_2,x_3),\hat{B}))$ and $f(\Delta^+((x_2,x_3),\hat{B''})$; that is,
        \begin{align*}
           [
        f(\Delta^+((x_2,x_3),\hat{B''}))]_1- [f(\Delta^+((x_2,x_3),\hat{B}))]_1 =(0,0,1,1).
        \end{align*} By Theorem \ref{thm:mutation-positivecomplex}, it corresponds with $f(\Delta^+(\xx,B))-f(\Delta^+(\xx'',B''))$.
\end{itemize}
Furthermore, we can calculate $f(\Delta^+(B,\xx))$ using Theorem \ref{thm:count1} (and Corollary \ref{cor:invariant-orientation-Sec5}), and the value is $(1,6,10,5)$. Therefore, we have
\begin{align*}
    f(\Delta^+(\xx'',B''))=(1,6,10,5)+(0,0,-1,-1)=(1,6,9,4).
\end{align*}
By repeating this algorithm, we can calculate face vectors of all positive cluster complexes of $A_n$ type.
In the case of other type, we can also calculate the same way as the above. If a non-Dynkin quiver $Q'$ appears when step (1) is complete, we need to calculate the face vector of a positive cluster complex associated with $Q'$ by applying this algorithm to $Q'$.

\section{Application to $\tau$-tilting theory of cluster-tilted algebras}\label{applicationtotautilting}
\subsection{$\tau$-tilting simplicial complexes}
In this section, we describe applications to the representation theory of algebras. We begin by introducing some notations and definitions. Through this section, $K$ is an algebraically closed field. Let $\Lambda$ be a finite dimensional basic $K$-algebra. We denote by $\mod \Lambda$ a category of finitely generated modules of $\Lambda$, and by $\proj\Lambda$ the full subcategory of $\mod\Lambda$ the objects of which are projective modules. We consider that a module $M\in\mod \Lambda$ is a \emph{$\tau$-rigid} module if $\Hom_{\Ccal}(M,\tau M)=0$, where $\tau$ is the AR-translation in $\Lambda$. Let $|M|$ be the number of non-isomorphic indecomposable direct summands of $M$. If $M$ is a $\tau$-rigid module and satisfies $|M|=|\Lambda|$, we say that $M$ is a \emph{$\tau$-tilting} module. 
We denote by $\tau\textrm{-tilt}\Lambda$ the set of isomorphism classes of basic $\tau$-tilting modules of $\Lambda$.

Next, we introduce a generalization of $\tau$-rigid, or $\tau$-tilting modules given by \cite{air}.
\begin{definition}
Let $(M,P)$ be a pair with $M\in\mod\Lambda$ and $P\in\proj\Lambda$.
\begin{itemize}
    \item [(1)] We say that $(M,P)$ is a \emph{$\tau$-rigid pair} of $\Lambda$ if $M$ is $\tau$-rigid and $\Hom_{\Lambda}(P,M)=0$,
     \item [(2)] We say that $(M,P)$ is a \emph{$\tau$-tilting pair} of $\Lambda$ if $(M,P)$ is a $\tau$-rigid pair and $|M|+|P|=|\Lambda|$.
\end{itemize}
\end{definition}
We say that a $\tau$-rigid pair $(M,P)$ is \emph{indecomposable} (resp. \emph{basic}) if $M\oplus P$ is indecomposable (resp. basic). Note that, if $(M,P)$ is indecomposable, then either $M$ or $P$ is 0. We consider the following simplicial complexes induced by $\tau$-rigid pairs and $\tau$-rigid modules.
\begin{definition}
\noindent
\begin{itemize}
    \item [(1)]
We define a \emph{support $\tau$-tilting simplicial complex} $\Delta(s\Lambda)$ of $\Lambda$ as a complex the vertex set of which comprises isomorphism classes of indecomposable $\tau$-rigid pairs and simplices of which are isomorphism classes of basic $\tau$-rigid pairs of $\Lambda$.
\item[(2)]
We define a \emph{$\tau$-tilting simplicial complex} $\Delta(\Lambda)$ of $\Lambda$ as a complex the vertex set of which is of isomorphism classes of indecomposable $\tau$-rigid modules and simplices of which are isomorphism classes of basic $\tau$-rigid modules of $\Lambda$. 
\end{itemize}
\end{definition}

For a finite dimensional basic $K$-algebra $\Lambda$, we denote by $Q_\Lambda$ a quiver, which satisfies $\Lambda\cong KQ_{\Lambda}/I$, where $I$ is an admissible ideal. 
In parallel with seed mutation, we define a $\tau$-tilting mutation. 
\begin{defprop}[\cite{air}*{Theorem 2.18}]
Let $\Lambda$ be a finite dimensional $K$-algebra and $(M\oplus N,P\oplus Q)$ a $\tau$-tilting pair of $\Lambda$, where $(N,Q)$ is an indecomposable $\tau$-rigid pair. There exists a unique indecomposable $\tau$-rigid pair $(N',Q')$ such that $(N,Q)\neq (N',Q')$ and $(M\oplus N',P\oplus Q')$ is a $\tau$-tilting pair. We say that $(M\oplus N',P\oplus Q')$ is the \emph{$\tau$-tilting mutation} of $(M,P)$ in direction $(N,Q)$, and we denote by $(M\oplus N',P\oplus Q')=\mu_{(N,P)}(M\oplus N,P\oplus Q)$.
\end{defprop}

On $\Delta(s\Lambda)$ or $\Delta(\Lambda)$, the mutation is the operation of moving from a maximal simplex to a neighboring maximal simplex. We also introduce the support $\tau$-tilting modules.

\begin{definition}
Let $M\in\mod \Lambda$. We say that $M$ is a \emph{support $\tau$-tilting module} if there exists an idempotent $e\in \Lambda$ such that $M$ is a $\tau$-tilting $\Lambda/\Lambda e \Lambda$-module.
\end{definition}

Let $M\in\mod \Lambda$. If $M$ is a $\tau$-tilting $\Lambda/\Lambda e\Lambda$-module, then $(M,\Lambda e)$ is a $\tau$-tilting pair of $\Lambda$. Conversely, if $(M,\Lambda e)$ is a $\tau$-tilting pair of $\Lambda$, then $M$ is a $\tau$-tilting $\Lambda/\Lambda e\Lambda$-module. We can identify the support $\tau$-tilting modules with $\tau$-tilting pairs through this correspondence. When $\Lambda$ has finitely many isomorphism classes of basic $\tau$-tilting modules, we say that $\Lambda$ is of \emph{finite $\tau$-tilting type} or \emph{$\tau$-tilting-finite algebra}. 

\subsection{Cluster categories and cluster-tilted algebras}
Let $Q$ be an acyclic (non-valued) quiver and $\Ccal_Q=\Dcal^b(KQ)/\tau^{-1}[1]$ a cluster category of $Q$.
For an additive category $\Ccal$, we say that $T\in\Ccal$ is a \emph{cluster tilting object} of $\Ccal$ if $T$ satisfies the following two conditions:
\begin{itemize}
    \item $\Hom_{\Ccal}(T,T[1])=0$,
    \item $\Hom_{\Ccal}(T,Y[1])=0$ implies $Y\in\add T$,
\end{itemize}
where $\add T$ is the full subcategory of $\Ccal$ whose objects are direct summands of direct sums of $T$.
We define a cluster tilting mutation parallel to a seed mutation or a $\tau$-tilting mutation.
\begin{defprop}[\cite{IY}*{Theorem 5.3}]
Let $\Ccal_Q$ be a cluster category and $T=U\oplus X$ a cluster tilting object in $\Ccal_Q$, where $X$ is an indecomposable object. There is a unique indecomposable $X'$ such that $X\neq X'$ holds and $T=U\oplus X'$ is cluster tilting objects. We say that $T\oplus U'$ is the \emph{cluster tilting mutation} of $T\oplus U$ in direction $U$, and we denote by $T\oplus U'=\mu_{U}(T\oplus U)$.
\end{defprop}

We consider algebras with a connection with cluster categories.
We say that $\Lambda$ is a \emph{cluster-tilted algebra} if there exists an acyclic quiver $Q$ and a cluster tilting object $T$ in $\Ccal_Q$ such that $\Lambda\cong (\End_{\Ccal_Q}T)^{\text{op}}$. 
Let $T,T'$ be a cluster tilting objects and $\Lambda=(\End_{\Ccal_Q}T)^{\text{op}},\ \Lambda'=(\End_{\Ccal_Q}T')^{\text{op}}$. We note that if $T'$ is obtained from $T$ by a cluster tilting mutation, then $Q_{\Lambda'}$ is obtained from $Q_\Lambda$ by a quiver mutation (\cite{birs}*{Theorem I.1,6}). The object $T=KQ$ is a cluster tilting object in $\Ccal_Q$, and then we have $Q_\Lambda=Q$. Therefore, for any cluster-tilted algebra $\Lambda$ of $\Ccal_Q$, $Q_\Lambda$ is mutation equivalent to $Q$. Moreover, for any quiver $Q$ which is mutation equivalent to an acyclic quiver, the corresponding cluster-tilted algebra $\Lambda=KQ/I$ is uniquely determined (see \cite{birsmith}*{Theorem 2.3}). 

We can identify $\tau$-tilting simplicial complexes with positive cluster complexes by the following theorem. 
\begin{theorem}\label{thm:iso-tilting-cluster}
Let $Q_\Lambda$ be a quiver which is mutation equivalent to a Dynkin quiver, and $B_\Lambda$ a skew-symmetric matrix whose associated quiver is $Q_\Lambda$. Then, we have $\Delta(s\Lambda)\cong\Delta(\xx,B_\Lambda)$ and its restriction gives $\Delta(\Lambda)\cong\Delta^+(\xx,B_\Lambda)$.
\end{theorem}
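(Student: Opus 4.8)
The plan is to realize both $\Delta(s\Lambda)$ and $\Delta(\xx,B_\Lambda)$ as two incarnations of a single combinatorial object attached to the cluster category $\Ccal_Q$, and then to compose the two resulting identifications. Fix an acyclic quiver $Q$ whose underlying graph is the Dynkin graph of $Q_\Lambda$, together with the cluster tilting object $T=KQ\in\Ccal_Q$, so that $\Lambda\cong(\End_{\Ccal_Q}T)^{\mathrm{op}}$. Let $\Delta(\Ccal_Q)$ denote the simplicial complex whose vertices are the isomorphism classes of indecomposable rigid objects of $\Ccal_Q$ and whose simplices are the isomorphism classes of basic rigid objects (equivalently, basic direct summands of cluster tilting objects). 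Throughout I would use that in $\Ccal_Q$ every basic rigid object is a direct summand of a cluster tilting object, and dually in $\mod\Lambda$ every basic $\tau$-rigid pair is a direct summand of a $\tau$-tilting pair (\cite{IY} and \cite{air}, respectively); this is what guarantees that ``being a simplex'' is detected by pairwise-compatible vertices on both sides.

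First I would invoke the Buan--Marsh--Reineke--Reiten--Todorov correspondence \cite{bmrrt}: sending an indecomposable rigid object of $\Ccal_Q$ to its associated cluster variable is a bijection onto the set of cluster variables of $\mathcal{A}(\xx,B_\Lambda)$, under which cluster tilting objects correspond exactly to clusters. Since a vertex subset spans a simplex of $\Delta(\Ccal_Q)$ iff the objects form a basic rigid object, and a subset of cluster variables spans a simplex of $\Delta(\xx,B_\Lambda)$ iff it is contained in a common cluster, and each of these is detected by completion to a maximal object by the previous paragraph, this bijection is an isomorphism $\Delta(\Ccal_Q)\cong\Delta(\xx,B_\Lambda)$ of simplicial complexes.

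Next I would use the Adachi--Iyama--Reiten functor $\Hom_{\Ccal_Q}(T,-)\colon\Ccal_Q\to\mod\Lambda$ \cite{air}. Splitting an indecomposable rigid object $U$ according to whether or not it lies in $\add T[1]$, this functor induces a bijection from indecomposable rigid objects to indecomposable $\tau$-rigid pairs of $\Lambda$: it sends $U\notin\add T[1]$ to the pair $(\Hom_{\Ccal_Q}(T,U),0)$, with $\Hom_{\Ccal_Q}(T,U)$ an indecomposable $\tau$-rigid module, and it sends the summand $T_i[1]$ of $T[1]$ to the pair $(0,P_i)$, where $P_i$ is the corresponding indecomposable projective. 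The same functor carries cluster tilting objects to $\tau$-tilting pairs, i.e.\ to support $\tau$-tilting modules, and it respects basic direct summands, so it upgrades to an isomorphism $\Delta(\Ccal_Q)\cong\Delta(s\Lambda)$. Composing with the isomorphism of the previous step yields $\Delta(\xx,B_\Lambda)\cong\Delta(s\Lambda)$.

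It remains to match the two distinguished full subcomplexes. Under the BMRRT correspondence the initial cluster $\xx$ is the one whose cluster variables have the negative simple roots as denominators, and these correspond precisely to the indecomposable summands $T_1[1],\dots,T_n[1]$ of $T[1]$; hence the non-initial cluster variables correspond to the indecomposable rigid objects not in $\add T[1]$. By the third paragraph the AIR functor sends the former to the ``projective'' pairs $(0,P_i)$ and the latter to the ``module'' pairs $(M,0)$ with $M$ an indecomposable $\tau$-rigid module. Since $\Delta^+(\xx,B_\Lambda)$ is by definition the full subcomplex on the non-initial cluster variables, and $\Delta(\Lambda)$ is the full subcomplex of $\Delta(s\Lambda)$ on the vertices of the form $(M,0)$ (a basic $\tau$-rigid pair has no projective summand exactly when it is a basic $\tau$-rigid module), the composite isomorphism restricts to $\Delta^+(\xx,B_\Lambda)\cong\Delta(\Lambda)$. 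The main obstacle I anticipate is bookkeeping rather than conceptual: one must pin down that the initial seed defining $\Delta^+$ is the one attached to $T=KQ$, so that the initial variables match the shifted projectives $T[1]$, and one must check that the simplex-level identifications of the first two steps are genuinely compatible, which rests on the completion statements for rigid objects and for $\tau$-rigid pairs recorded at the outset.
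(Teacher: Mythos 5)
Your overall strategy --- factoring both complexes through the cluster category $\Ccal_Q$, applying the Adachi--Iyama--Reiten bijection \cite{air}*{Theorem 4.1} on one side and the correspondence between cluster variables and indecomposable rigid objects on the other --- is exactly the route the paper takes. However, there is a genuine gap in your setup. You fix $T=KQ$ and assert $\Lambda\cong(\End_{\Ccal_Q}T)^{\mathrm{op}}$; but $(\End_{\Ccal_Q}KQ)^{\mathrm{op}}\cong KQ$ is hereditary, so this only covers the case where $Q_\Lambda$ is itself acyclic. The theorem concerns an arbitrary $Q_\Lambda$ mutation equivalent to a Dynkin quiver (for instance the $3$-cycle in type $A_3$ appearing as $Q_{\Lambda''}$ in the paper), and for such $\Lambda$ the correct $T$ is the cluster tilting object obtained from $KQ$ by the mutation sequence carrying $Q$ to $Q_\Lambda$. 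This is not mere bookkeeping: $\Delta^+(\xx,B_\Lambda)$ is taken relative to the seed whose exchange matrix is $B_\Lambda$, and the paper's Example \ref{ex:not-equal-simplex} shows that positive cluster complexes at mutation-equivalent seeds can have different face vectors ($(1,6,10,5)$ versus $(1,6,9,4)$). With your choice of $T$, your final paragraph matches $\Delta(\Lambda)$ with the positive part relative to the \emph{acyclic} seed (negative-simple-root denominators $\leftrightarrow$ summands of $KQ[1]$), which is the wrong subcomplex whenever $Q_\Lambda$ is not acyclic, so the restriction statement would fail.

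The repair is the step the paper supplies and you omit: the vertex bijection between cluster variables and indecomposable rigid objects must be anchored at the chosen $T$, so that the initial variables of the seed $(\xx,B_\Lambda)$ go precisely to the summands of $T[1]$ and hence, under the AIR bijection, to the pairs $(0,P_i)$. The paper obtains this from the cluster character $CC_T$ taken relative to $T$ (citing Fu--Keller and Cerulli Irelli--Keller--Labardini--Plamondon), rather than from the BMRRT correspondence alone, which is naturally anchored at the acyclic seed. Alternatively, you could keep BMRRT and argue that the cluster corresponding to $T$ under that correspondence has exchange matrix $B_\Lambda$ up to permutation, then take that cluster as the initial seed defining $\Delta^+$; either way, identifying the initial cluster with $\add T[1]$ for the \emph{correct} $T$ is the missing ingredient. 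The remainder of your argument --- the completion properties guaranteeing that simplices are detected by pairwise compatibility on both sides, and the splitting of the AIR bijection according to membership in $\add T[1]$ --- is sound.
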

\begin{proof}
By \cite{air}*{Theorem 4.1}, we have a bijection between indecomposable rigid objects in $\Ccal_{Q}$ and indecomposable $\tau$-rigid pairs of $\Lambda$, which is lifted to a bijection between basic rigid objects in $\Ccal_{Q}$ (resp. basic rigid objects in $\Ccal_{Q}$, which do not have direct summand of non-zero direct summands in $\add T[1]$) and summands of basic $\tau$-tilting pairs (resp. basic $\tau$-rigid modules) of $\Lambda$. In contrast, by \cite{fk}*{Proposition 2.3} and \cite{cklp}*{Corollary 3.5}, the compositions of shift function and the inverse map of the cluster character $[1]\circ CC_T^{-1}$ gives a bijection between cluster variables in $P(\xx,B)$ and indecomposable objects in $\Ccal_{Q}$. It is lifted to a bijection between subsets of clusters in $P(\xx,B)$ (resp. subsets of clusters in $P(\xx,B)$ not containing initial variables) and rigid objects in $\Ccal_{Q}$ (resp. rigid objects in $\Ccal_{Q}$ which do not have direct summand of non-zero direct summands in $\add T[1]$). Therefore, we have the desired isomorphism.
\end{proof}
We remark that the isomorphism in Theorem \ref{thm:iso-tilting-cluster} is compatible with $\tau$-tilting/seed mutations. Therefore, we obtain the following proposition.
\begin{proposition}
A cluster-tilted algebra $\Lambda$ is finite representation type if and only if $\Delta(\xx,B_{\Lambda})$ is of finite type.
\end{proposition}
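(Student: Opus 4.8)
The plan is to show that each of the two conditions is equivalent to the assertion that the acyclic quiver $Q$ appearing in $\Lambda\cong(\End_{\Ccal_Q}T)^{\mathrm{op}}$ is an orientation of a simply laced Dynkin diagram; the proposition then follows by transitivity. First I would treat the combinatorial side. By definition $\Delta(\xx,B_\Lambda)$ is of finite type exactly when the exchange graph of $P(\xx,B_\Lambda)$ is finite, and by the finite type classification recalled in Remark \ref{rem:finite-charactrization} (that is, \cite{fzii}*{Theorem 1.4}) this holds if and only if $B_\Lambda$ is mutation equivalent to a matrix of finite Cartan type, equivalently $Q_\Lambda$ is mutation equivalent to a Dynkin quiver. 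The excerpt already records that $Q_\Lambda$ is mutation equivalent to $Q$, and an acyclic quiver lies in the mutation class of a Dynkin quiver precisely when its underlying graph is itself Dynkin; hence this condition is equivalent to $Q$ being Dynkin.

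Next I would treat the representation-theoretic side. The key input is the description of the module category of a cluster-tilted algebra as an additive quotient of the cluster category: by Buan--Marsh--Reiten (\cite{bmrrt}) one has $\mod\Lambda\simeq\Ccal_Q/\add(T[1])$, so that the indecomposable $\Lambda$-modules are in bijection with the indecomposable objects of $\Ccal_Q$ not lying in $\add(T[1])$. Consequently $\Lambda$ is of finite representation type if and only if $\Ccal_Q$ has only finitely many isomorphism classes of indecomposable objects. Because the indecomposables of $\Ccal_Q=\Dcal^b(KQ)/\tau^{-1}[1]$ are parametrized by those of $\Dcal^b(KQ)$ modulo the autoequivalence $\tau^{-1}[1]$, and the latter are finite in number exactly when $KQ$ is representation finite, this in turn holds if and only if $Q$ is Dynkin. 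Combining the two equivalences yields the proposition.

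I expect the only genuine subtlety to be this representation-theoretic input, namely pinning down the precise form of the quotient equivalence $\mod\Lambda\simeq\Ccal_Q/\add(T[1])$ and confirming that it matches up ``finitely many indecomposables'' on the two sides; the cluster-combinatorial half is essentially immediate from the finite type classification already invoked in the excerpt. As an alternative route and a sanity check, one can argue through Theorem \ref{thm:iso-tilting-cluster}: when $Q_\Lambda$ is mutation equivalent to a Dynkin quiver, that theorem identifies $\Delta(s\Lambda)$ with the finite complex $\Delta(\xx,B_\Lambda)$, forcing $\Lambda$ to be $\tau$-tilting finite, and for cluster-tilted algebras $\tau$-tilting finiteness and representation finiteness coincide, since support $\tau$-tilting modules correspond to cluster-tilting objects of $\Ccal_Q$, of which there are finitely many exactly in the Dynkin case.
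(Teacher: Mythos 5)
Your argument is correct, but it follows a genuinely different route from the paper. The paper's proof is two lines: it uses the mutation-compatible isomorphism $\Delta(s\Lambda)\cong\Delta(\xx,B_\Lambda)$ of Theorem \ref{thm:iso-tilting-cluster} to conclude that $\Lambda$ is $\tau$-tilting finite if and only if $\Delta(\xx,B_\Lambda)$ is of finite type, and then invokes Zito's theorem that a cluster-tilted algebra is $\tau$-tilting finite if and only if it is representation finite. You instead reduce both conditions to ``$Q$ is Dynkin'': the combinatorial side via the Fomin--Zelevinsky finite type classification together with the fact that an acyclic quiver in a finite-type mutation class must itself be Dynkin, and the module-theoretic side via the additive quotient equivalence $\mod\Lambda\simeq\Ccal_Q/\add(T[1])$ and Gabriel's theorem. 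Your route bypasses $\tau$-tilting theory entirely and is self-contained modulo the quotient equivalence, whereas the paper's route is shorter given the machinery already set up in that section but outsources the key representation-theoretic equivalence to an external citation. Two small points: the quotient equivalence $\mod\Lambda\simeq\Ccal_Q/\add(T[1])$ is from Buan--Marsh--Reiten's \emph{Cluster-tilted algebras} paper, not the reference \cite{bmrrt} you name; and in your ``alternative route'' the justification offered for the coincidence of $\tau$-tilting finiteness and representation finiteness (counting cluster-tilting objects) only controls the $\tau$-tilting side, so as stated it does not replace Zito's theorem --- but since that passage is only a sanity check and your main argument already establishes representation finiteness independently, this does not affect the validity of the proof.
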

\begin{proof}
From the above discussion, $\Lambda$ is finite $\tau$-tilting type if and only if $\Delta(\xx,B_{\Lambda})$ is of finite type. Moreover, we have the equivalence of finite $\tau$-tilting type and finite representation type by \cite{zito}*{Theorem 3.1}.
\end{proof}
By Theorem \ref{thm:iso-tilting-cluster}, $\Delta(s\Lambda)$ and $\Delta(\Lambda)$ depend only on their quiver $Q_{\Lambda}$. In particular, $\Delta(s\Lambda)$ depends only on its mutation equivalence class. 
\subsection{Corollaries in representation theory}
Using an isomorphism in Theorem \ref{thm:iso-tilting-cluster} and results from Section 3, we have some corollaries on the cluster-tilted algebra of finite representation type.  Let $T=U\oplus X$ be a cluster tilting object in $\Ccal_Q$, where $X$ is indecomposable.

We denote by $e_X\in \Lambda$ an idempotent corresponding $X$ and $Q_\Lambda\setminus X$ a full subquiver of $Q$ consisting of all vertices of $Q$ except that corresponding to $X$. The following corollaries are analogs of Theorem \ref{thm:mutation-positivecomplex} and its corollaries.
\begin{corollary}\label{cor:mutation-tau-tilting-complex} 
Let $\Lambda=(\End_{\Ccal_Q}T)^{\mathrm{op}},\ \Lambda'=(\End_{\Ccal_Q}T')^{\mathrm{op}}$ be cluster-tilted algebras of finite representation type. We assume that $T'$ is obtained from $T$ by a mutation in direction $X$. Conversely, we assume that $T$ is obtained from $T'$ by a mutation in direction $X'$. We denote by $\Lambda_{-}=\Lambda/\Lambda e_X\Lambda$ and $\Lambda'_{-}=\Lambda/\Lambda e_{X'}\Lambda$. Then, we have
\begin{align}
   f(\Delta(\Lambda))-f(\Delta(\Lambda'))
   =\left[f(\Delta(\Lambda'_-))\right]_1-\left[f(\Delta(\Lambda_-))\right]_1.
\end{align}
\end{corollary}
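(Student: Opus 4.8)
The plan is to transport the face-vector identity of Theorem~\ref{thm:mutation-positivecomplex} across the isomorphism of Theorem~\ref{thm:iso-tilting-cluster}. First I would apply Theorem~\ref{thm:iso-tilting-cluster} to $\Lambda$ and to $\Lambda'$, obtaining $\Delta(\Lambda)\cong\Delta^+(\xx,B_\Lambda)$ and $\Delta(\Lambda')\cong\Delta^+(\xx',B_{\Lambda'})$, where $B_\Lambda$ and $B_{\Lambda'}$ are the skew-symmetric matrices attached to $Q_\Lambda$ and $Q_{\Lambda'}$. Because $T'$ is the cluster-tilting mutation of $T$ in direction $X$, the quiver $Q_{\Lambda'}$ is obtained from $Q_\Lambda$ by a quiver mutation (\cite{birs}*{Theorem I.1,6}), and hence $(\xx',B_{\Lambda'})$ is obtained from $(\xx,B_\Lambda)$ by a seed mutation. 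Since the isomorphism of Theorem~\ref{thm:iso-tilting-cluster} is compatible with mutations (as remarked directly after that theorem), this seed mutation is in the direction of the cluster variable $x$ corresponding to $X$, with $x\mapsto x'$ matching $X\mapsto X'$. As $\Lambda,\Lambda'$ are of finite representation type the complexes are of finite type, so Theorem~\ref{thm:mutation-positivecomplex} applies and gives
\[
f(\Delta^+(\xx,B_\Lambda))-f(\Delta^+(\xx',B_{\Lambda'}))=\left[f(\Delta^+(\xx'-\{x'\},B_{\Lambda'\backslash\{x'\}}))\right]_1-\left[f(\Delta^+(\xx-\{x\},B_{\Lambda\backslash\{x\}}))\right]_1.
\]

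The crux is to identify the two deleted-vertex terms with $\Delta(\Lambda_-)$ and $\Delta(\Lambda'_-)$. For this I would establish the lemma that $\Lambda_-=\Lambda/\Lambda e_X\Lambda$ is again a cluster-tilted algebra whose Gabriel quiver is the full subquiver $Q_\Lambda\setminus X$. Realizing $\Lambda$ as the Jacobian algebra $\mathcal{P}(Q_\Lambda,W)$ of a quiver with potential, one checks that factoring out $e_X$ kills precisely the paths through $X$ and carries the Jacobian relations of $(Q_\Lambda,W)$ to those of the restricted quiver with potential $(Q_\Lambda\setminus X,W')$, where $W'$ collects the cyclic terms of $W$ that avoid $X$; thus $\Lambda_-\cong\mathcal{P}(Q_\Lambda\setminus X,W')$. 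A full subquiver of a finite-type quiver is again of finite type, so $Q_\Lambda\setminus X$ is mutation equivalent to a Dynkin quiver, and by the uniqueness of the cluster-tilted algebra attached to such a quiver (\cite{birsmith}*{Theorem 2.3}) the algebra $\Lambda_-$ is the cluster-tilted algebra of $Q_\Lambda\setminus X$.

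Granting this lemma, Theorem~\ref{thm:iso-tilting-cluster} applied to $\Lambda_-$ yields $\Delta(\Lambda_-)\cong\Delta^+(\xx-\{x\},B_{\Lambda\backslash\{x\}})$: deleting the vertex $X$ of $Q_\Lambda=Q_{B_\Lambda}$ is the same operation as deleting the row and column indexed by $x$ from $B_\Lambda$, so the skew-symmetric matrix of $Q_\Lambda\setminus X$ is $B_{\Lambda\backslash\{x\}}$ and the initial seed of the restricted cluster pattern is $(\xx-\{x\},B_{\Lambda\backslash\{x\}})$. The identical argument for $\Lambda'_-$ gives $f(\Delta(\Lambda'_-))=f(\Delta^+(\xx'-\{x'\},B_{\Lambda'\backslash\{x'\}}))$. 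Substituting the four resulting face-vector equalities into the displayed identity produces exactly $f(\Delta(\Lambda))-f(\Delta(\Lambda'))=[f(\Delta(\Lambda'_-))]_1-[f(\Delta(\Lambda_-))]_1$. The main obstacle I anticipate is the lemma of the second paragraph: the combinatorial deletion on $B_\Lambda$ is immediate, but the representation-theoretic claim that the quotient stays cluster-tilted with the expected relations rests on the quiver-with-potential description together with the stability of finite type under passage to full subquivers, both of which I would cite or record as a preliminary. Alternatively, one may bypass potentials by identifying $\Delta(\Lambda_-)$ with the relevant restriction of $\Delta(s\Lambda)$ through $\tau$-tilting reduction and then matching it with $\Delta(\xx-\{x\},B_{\Lambda\backslash\{x\}})$ via Lemma~\ref{lem:bijection-between-B-and-Bx}.
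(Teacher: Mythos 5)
Your proposal is correct and follows essentially the same route as the paper: transport Theorem \ref{thm:mutation-positivecomplex} through the isomorphism of Theorem \ref{thm:iso-tilting-cluster}, with the key point being that $\Lambda/\Lambda e_X\Lambda$ is again the cluster-tilted algebra of the full subquiver $Q_\Lambda\setminus X$. The only difference is that the paper simply cites \cite{bmr08}*{Theorem 2.13} for that key fact, whereas you sketch a re-derivation via quivers with potentials (or $\tau$-tilting reduction); citing the known result is the cleaner option here.
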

\begin{proof}
By \cite{bmr08}*{Theorem 2.13}, $\Lambda/\Lambda e_X\Lambda$ is the cluster-tilted algebra associated with a quiver $Q_\Lambda\setminus X$. Therefore, the statement follows from Theorems \ref{thm:mutation-positivecomplex} and \ref{thm:iso-tilting-cluster}.
\end{proof}
\begin{corollary}\label{cor:invariant-sink/source-tilting}
 Let $\Lambda$ and $\Lambda'$ be cluster-tilted algebras of finite representation type. If $Q_{\Lambda'}$ is obtained from $Q_{\Lambda}$ by a sink or source quiver mutation, then we have a bijection between the simplicial set of $\Delta(\Lambda)$ and that of $\Delta(\Lambda')$. Furthermore, this bijection induces a bijection between simplices of $\Delta(\Lambda)$ and $\Delta(\Lambda')$ in each dimension. In particular, $\Lambda$ and $\Lambda'$ have the same numbers of isomorphism classes of basic $\tau$-rigid modules.
\end{corollary}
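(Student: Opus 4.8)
The plan is to transport the statement to positive cluster complexes via the isomorphism of Theorem \ref{thm:iso-tilting-cluster} and then invoke Corollary \ref{thm:invariant-sink/source}. First I would record that, since $\Lambda$ and $\Lambda'$ are of finite representation type, the associated cluster complexes $\Delta(\xx,B_\Lambda)$ and $\Delta(\xx',B_{\Lambda'})$ are of finite type (by the equivalence between finite representation type and finite type established just before this subsection); in particular $Q_\Lambda$ and $Q_{\Lambda'}$ are mutation equivalent to simply laced Dynkin quivers. This is exactly the hypothesis needed to apply Theorem \ref{thm:iso-tilting-cluster}, which yields isomorphisms of simplicial complexes
\[
\Delta(\Lambda)\cong\Delta^+(\xx,B_\Lambda),\qquad \Delta(\Lambda')\cong\Delta^+(\xx',B_{\Lambda'}),
\]
where $B_\Lambda$ (resp. $B_{\Lambda'}$) is the skew-symmetric exchange matrix whose quiver is $Q_\Lambda$ (resp. $Q_{\Lambda'}$).

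Next I would translate the hypothesis on quivers into the language of seeds. Because $Q_{\mu_k(B)}=\mu_k(Q_B)$ for skew-symmetric $B$, and because a sink (resp. source) mutation of an exchange matrix is by definition a mutation in a direction that is a sink (resp. source) vertex of $Q_B$, the assumption that $Q_{\Lambda'}$ is obtained from $Q_\Lambda$ by a sink or source quiver mutation means precisely that $B_{\Lambda'}$ is obtained from $B_\Lambda$ by a sink or source mutation of skew-symmetric matrices. Hence the non-labeled seed $[(\xx',B_{\Lambda'})]$ is obtained from $[(\xx,B_\Lambda)]$ by a sink or source mutation, and Corollary \ref{thm:invariant-sink/source} applies to give that $\Delta^+(\xx,B_\Lambda)$ and $\Delta^+(\xx',B_{\Lambda'})$ have the same number of simplices in each dimension.

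Combining this with the two isomorphisms above, $\Delta(\Lambda)$ and $\Delta(\Lambda')$ have the same number of $k$-simplices for every $k$. Choosing an arbitrary bijection between the sets of $k$-simplices for each $k$ and taking their union produces a bijection between the simplicial sets of $\Delta(\Lambda)$ and $\Delta(\Lambda')$ that preserves dimension; this is the asserted bijection, and by construction it restricts to a bijection in each dimension. For the final assertion I would use the identification of basic $\tau$-rigid modules of $\Lambda$ with simplices of $\Delta(\Lambda)$ (the empty simplex corresponding to the zero module), under which a module with $k$ indecomposable summands corresponds to a $(k-1)$-simplex. Since the counts agree dimension by dimension, $\Lambda$ and $\Lambda'$ have the same number of basic $\tau$-rigid modules with any fixed number of summands, and in particular the same total number of isomorphism classes of basic $\tau$-rigid modules.

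I expect no genuine obstacle here: the mathematical content is entirely carried by Theorem \ref{thm:iso-tilting-cluster} and Corollary \ref{thm:invariant-sink/source}. The only points requiring care are the bookkeeping translation of a sink/source \emph{quiver} mutation into a sink/source \emph{seed} mutation, and the verification that finite representation type supplies the Dynkin mutation-equivalence hypothesis of Theorem \ref{thm:iso-tilting-cluster}; both are routine once the correspondence $Q_{\mu_k(B)}=\mu_k(Q_B)$ is invoked.
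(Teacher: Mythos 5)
Your proposal is correct and follows exactly the paper's route: the paper's proof is a one-line citation of Theorem \ref{thm:iso-tilting-cluster} and Corollary \ref{thm:invariant-sink/source}, and your argument just spells out the same two ingredients together with the routine bookkeeping (sink/source quiver mutation versus seed mutation, and finite representation type giving the Dynkin mutation-equivalence hypothesis).
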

\begin{proof}
It follows Corollary \ref{thm:invariant-sink/source} and Theorem \ref{thm:iso-tilting-cluster}.
\end{proof}
Particularly, the following corollary has been given by \cite{mrz}*{Proposition 6.1} and \cite{eno}*{Theorem A.3} in other manners.
\begin{corollary}\label{cor:independent-orientation-tilting}
 Let $\Lambda$ and $\Lambda'$ be finite dimensional hereditary algebras. If $Q_\Lambda$ and $Q_{\Lambda'}$ have the same underlying graph, then we have a bijection between the simplicial set of $\Delta(\Lambda)$ and that of $\Delta(\Lambda')$. Furthermore, this bijection induces a bijection between simplices of $\Delta(\Lambda)$ and $\Delta(\Lambda')$ in each dimension.
\end{corollary}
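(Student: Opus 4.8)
The plan is to deduce this exactly as Corollary~\ref{cor:independent-orientation} was deduced from Corollary~\ref{thm:invariant-sink/source}, now using its representation-theoretic counterpart Corollary~\ref{cor:invariant-sink/source-tilting}. Since $K$ is algebraically closed and $\Lambda$ is basic, a finite dimensional hereditary algebra is a path algebra, so $\Lambda\cong KQ_\Lambda$ and $\Lambda'\cong KQ_{\Lambda'}$ with $Q_\Lambda$ and $Q_{\Lambda'}$ acyclic; in particular each is the cluster-tilted algebra attached to the cluster tilting object $KQ_\Lambda$ (resp.\ $KQ_{\Lambda'}$) in its cluster category, as recorded before Theorem~\ref{thm:iso-tilting-cluster}. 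The shared underlying graph is a (simply laced) Dynkin diagram, so both algebras are of finite representation type and Corollary~\ref{cor:invariant-sink/source-tilting} is applicable.

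First I would invoke \cite{fzii}*{Proposition 9.2} to write $Q_{\Lambda'}$ as the result of applying to $Q_\Lambda$ a finite sequence of sink or source quiver mutations followed by a permutation of the vertices. The point to verify is that this sequence stays inside the class of hereditary algebras of finite representation type to which Corollary~\ref{cor:invariant-sink/source-tilting} applies. For this I would observe that a sink or source mutation of an acyclic quiver is again acyclic: mutating at a source $k$ creates no new arrows, since a source admits no path $i\to k\to j$ through it, and merely reverses the arrows at $k$, turning $k$ into a sink; the sink case is dual. Hence every quiver along the sequence is acyclic with the same Dynkin underlying graph, so its associated cluster-tilted algebra is again a hereditary path algebra of finite representation type.

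Then I would apply Corollary~\ref{cor:invariant-sink/source-tilting} at each individual sink/source step, obtaining at every step a bijection between the simplicial sets of the two $\tau$-tilting simplicial complexes that preserves the dimension of each simplex. Composing these bijections along the sequence, and noting that a permutation of vertices only relabels the indecomposable summands and so induces an isomorphism of $\tau$-tilting simplicial complexes (Theorem~\ref{thm:iso-tilting-cluster} shows $\Delta(\Lambda)$ depends only on $Q_\Lambda$), yields the desired dimension-preserving bijection between $\Delta(\Lambda)$ and $\Delta(\Lambda')$.

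The main obstacle is not any deep computation but the bookkeeping just described: confirming that the entire mutation path from $Q_\Lambda$ to $Q_{\Lambda'}$ consists of algebras to which Corollary~\ref{cor:invariant-sink/source-tilting} genuinely applies (hereditary, finite representation type), which rests on the preservation of acyclicity under source/sink mutation, and disposing of the final permutation of indices cleanly. Once these are in place the statement follows, recovering \cite{mrz}*{Proposition 6.1} and \cite{eno}*{Theorem A.3}.
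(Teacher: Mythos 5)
Your proof is correct and is essentially the paper's argument with the induction unfolded: the paper simply cites Corollary~\ref{cor:independent-orientation} together with Theorem~\ref{thm:iso-tilting-cluster}, and Corollary~\ref{cor:independent-orientation} is itself proved by exactly the sink/source chain from \cite{fzii}*{Proposition 9.2} that you run step by step through Corollary~\ref{cor:invariant-sink/source-tilting}. Your extra check that sink/source mutations preserve acyclicity is harmless but not needed, since Corollary~\ref{cor:invariant-sink/source-tilting} only requires the intermediate quivers to give cluster-tilted algebras of finite representation type, which is automatic for quivers mutation-equivalent to a Dynkin quiver.
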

\begin{proof}
The above follows from Corollary \ref{cor:independent-orientation} and Theorem \ref{thm:iso-tilting-cluster}.
\end{proof}
Using the result in Section 4, we have explicit descriptions of $\Delta(\Lambda)$ in some special cases. The following corollary is an explicit description of a ($\tau$-)tilting simplicial complex of special $A_n$ type.

\begin{corollary}\label{cor:explicit-descriptiton-An}
Let $\Lambda_{A_n}$ be a $n$-dimensional hereditary algebra whose quiver is given by \eqref{assumption:An}.
Then $\Delta(\Lambda_{A_n})$ is isomorphic to a cone of $\Delta(s\Lambda_{A_{n-1}})$.
\end{corollary}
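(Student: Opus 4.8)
The plan is to combine the dictionary of Theorem \ref{thm:iso-tilting-cluster} with the geometric description of positive cluster complexes of linearly oriented $A_n$ type from Theorem \ref{thm:positive-simplex-A_n}. First I would let $B_{\Lambda_{A_n}}$ denote a skew-symmetric matrix whose associated quiver $Q_{\Lambda_{A_n}}$ is the linearly oriented Dynkin quiver \eqref{assumption:An} of type $A_n$. Since $\Lambda_{A_n}$ is hereditary with this quiver, $\Lambda_{A_n}\cong KQ_{\Lambda_{A_n}}$ is a cluster-tilted algebra whose quiver is mutation equivalent to a Dynkin quiver, so Theorem \ref{thm:iso-tilting-cluster} applies and its restriction gives $\Delta(\Lambda_{A_n})\cong\Delta^+(\xx,B_{\Lambda_{A_n}})$.

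Next I would invoke Theorem \ref{thm:positive-simplex-A_n} directly: because $Q_{B_{\Lambda_{A_n}}}$ has exactly the form \eqref{assumption:An}, the positive cluster complex $\Delta^+(\xx,B_{\Lambda_{A_n}})$ is isomorphic to $\cone(\Delta(A_{n-1}))$. Combining with the previous step yields $\Delta(\Lambda_{A_n})\cong\cone(\Delta(A_{n-1}))$.

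It then remains to identify the base $\Delta(A_{n-1})$ of this cone with the support $\tau$-tilting simplicial complex $\Delta(s\Lambda_{A_{n-1}})$. Here I would apply the first isomorphism of Theorem \ref{thm:iso-tilting-cluster} to $\Lambda_{A_{n-1}}$: writing $B_{\Lambda_{A_{n-1}}}$ for a skew-symmetric matrix whose quiver $Q_{\Lambda_{A_{n-1}}}$ is of type $A_{n-1}$, one gets $\Delta(s\Lambda_{A_{n-1}})\cong\Delta(\xx,B_{\Lambda_{A_{n-1}}})$. Since $B_{\Lambda_{A_{n-1}}}$ is mutation equivalent, as a non-labeled seed, to an exchange matrix of tree $A_{n-1}$ type, the orientation-independence of cluster complexes noted after Example \ref{clustercomplexofA2} gives $\Delta(\xx,B_{\Lambda_{A_{n-1}}})\cong\Delta(A_{n-1})$, whence $\Delta(s\Lambda_{A_{n-1}})\cong\Delta(A_{n-1})$. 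Substituting this isomorphism into the cone, and using that $\cone(-)$ respects isomorphisms of simplicial complexes, completes the proof and yields $\Delta(\Lambda_{A_n})\cong\cone(\Delta(s\Lambda_{A_{n-1}}))$.

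The only point requiring care — rather than a genuine obstacle — is the bookkeeping between the two flavors of complex: $\Delta(\Lambda)$ (the $\tau$-tilting complex, matched with the positive cluster complex $\Delta^+$) versus $\Delta(s\Lambda)$ (the support $\tau$-tilting complex, matched with the full cluster complex $\Delta$), so that the cone base correctly acquires the prefix $s$. The substantive content is carried entirely by Theorem \ref{thm:positive-simplex-A_n}; this corollary is a translation of that result through the correspondence of Theorem \ref{thm:iso-tilting-cluster}, together with the independence of cluster complexes of a fixed Dynkin type from the chosen orientation.
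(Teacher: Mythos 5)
Your proposal is correct and is essentially the paper's own proof, which derives the corollary from Theorem \ref{thm:positive-simplex-A_n} combined with the correspondence of Theorem \ref{thm:iso-tilting-cluster}; your extra care in identifying the cone base $\Delta(A_{n-1})$ with $\Delta(s\Lambda_{A_{n-1}})$ via orientation-independence just makes explicit what the paper leaves implicit. (The paper also gives a second, independent proof in Appendix \ref{anotherproof} via $\tau$-tilting reduction, but your route matches the main-text argument.)
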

\begin{proof}
The statement follows from Theorems \ref{thm:positive-simplex-A_n} and \ref{thm:iso-tilting-cluster}.
\end{proof}
\begin{remark}
We also provide a proof of Corollary \ref{cor:explicit-descriptiton-An} using $\tau$-tilting reduction. (See Appendix \ref{anotherproof}.)
\end{remark}
Before describing a ($\tau$-)tilting simplicial complex of special $D_n$ type, we define the compatibility of indecomposable modules.
\begin{definition}
For indecomposable modules $T,S$ of $\Lambda$, we say that $T$ is \emph{compatible} with $S$ if $T\oplus S$ is $\tau$-rigid.
\end{definition}
\begin{corollary}\label{cor:explicit-descriptiton-Dn}
Let $\Lambda_{D_n}$ be a $n$-dimensional hereditary algebra whose quiver is given by \eqref{assumption:Dn1} or \eqref{assumption:Dn2}. Let $I=\{T_1,\dots,T_n\}$ be the set of indecomposable summands of $T$ and $\Delta_{\mathrm{glue}}$ a full subcomplex of $\Delta(s\Lambda_{D_n})$ whose vertex set consists of all $0$-dimensional simplices of $\st_{\Delta(s\Lambda_{D_n})}(I\setminus\{T_{n-1},T_{n}\})$, which are not in $\{T_1,\dots,T_{n-2}\}$ and compatible with both $T_{n-1}$ and $T_n$.
Then, $\Delta(\Lambda_{D_n})$ is isomorphic to a complex obtained by gluing $\join(\Delta_0,\Delta_1,\Delta(s\Lambda_{A_{n-3}}))$ and $\st_{\Delta(s\Lambda_{D_n})}(I\setminus\{T_{n-1},T_{n}\})$ together along $\join (\Delta_1,\Delta(s\Lambda_{A_{n-3}}))$ and $\Delta_{\mathrm{glue}}$, where $\Delta_i$ is the $i$-dimensional simplex.
\end{corollary}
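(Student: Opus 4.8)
The plan is to obtain the statement as a direct corollary of Theorem \ref{thm:positive-simplex-D_n}, transported across the isomorphism of Theorem \ref{thm:iso-tilting-cluster}; no new geometric input is required, and the entire task is to verify that this isomorphism carries each piece of the cluster-theoretic gluing decomposition onto the claimed representation-theoretic piece. First I would set up the dictionary. Since $\Lambda_{D_n}$ is hereditary with quiver given by \eqref{assumption:Dn1} or \eqref{assumption:Dn2}, which is an orientation of the Dynkin diagram $D_n$, I may take $T=KQ_{\Lambda_{D_n}}$ as cluster tilting object in $\Ccal_{Q_{\Lambda_{D_n}}}$ and let $B_\Lambda$ be the skew-symmetric matrix with $Q_{B_\Lambda}=Q_{\Lambda_{D_n}}$; then $Q_{B_\Lambda}$ satisfies exactly the hypotheses of Theorem \ref{thm:positive-simplex-D_n}. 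Theorem \ref{thm:iso-tilting-cluster} supplies a simplicial isomorphism $\Phi\colon\Delta(\xx,B_\Lambda)\xrightarrow{\sim}\Delta(s\Lambda_{D_n})$ restricting to $\Delta^+(\xx,B_\Lambda)\xrightarrow{\sim}\Delta(\Lambda_{D_n})$, and I would fix the labeling so that the initial cluster variables $x_1,\dots,x_n$ correspond to $T_1,\dots,T_n$ in the sense used in the statement, identifying $I=\{x_1,\dots,x_n\}$ with the summands of $T$.

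Next I would transport each ingredient of Theorem \ref{thm:positive-simplex-D_n} along $\Phi$. Being a simplicial isomorphism, $\Phi$ sends closed stars to closed stars, so $\st_{\Delta(\xx,B_\Lambda)}(I\setminus\{x_{n-1},x_n\})$ maps to $\st_{\Delta(s\Lambda_{D_n})}(I\setminus\{T_{n-1},T_n\})$. It also carries the full subcomplex $\Delta_{\mathrm{glue}}$ defined on the cluster side to the full subcomplex $\Delta_{\mathrm{glue}}$ defined in the corollary, because $\Phi$ identifies cluster compatibility with $\tau$-rigidity of the direct sum, so that ``compatible with both $x_{n-1}$ and $x_n$'' corresponds to ``compatible with both $T_{n-1}$ and $T_n$'' in the sense of the Definition preceding the corollary. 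The factor $\Delta(A_{n-3})$, a full cluster complex of type $A_{n-3}$, corresponds to $\Delta(s\Lambda_{A_{n-3}})$ by applying Theorem \ref{thm:iso-tilting-cluster} to the relevant $A_{n-3}$ subquiver, which is well defined since $\Delta(s\Lambda)$ depends only on the mutation class; the abstract simplices $\Delta_0$ and $\Delta_1$ are unchanged.

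Finally, gluing of simplicial complexes is a colimit, hence preserved by the isomorphism $\Phi$: the complex glued from $\join(\Delta_0,\Delta_1,\Delta(A_{n-3}))$ and $\st_{\Delta(\xx,B_\Lambda)}(I\setminus\{x_{n-1},x_n\})$ along $\join(\Delta_1,\Delta(A_{n-3}))$ and the cluster-side $\Delta_{\mathrm{glue}}$ maps to the complex glued from $\join(\Delta_0,\Delta_1,\Delta(s\Lambda_{A_{n-3}}))$ and $\st_{\Delta(s\Lambda_{D_n})}(I\setminus\{T_{n-1},T_n\})$ along $\join(\Delta_1,\Delta(s\Lambda_{A_{n-3}}))$ and $\Delta_{\mathrm{glue}}$. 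Combining this with the cluster-side gluing description furnished by Theorem \ref{thm:positive-simplex-D_n} yields the desired isomorphism for $\Delta(\Lambda_{D_n})$.

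The main obstacle I anticipate is bookkeeping rather than conceptual: making the index matching precise so that the internal vertices $x^*$, $x'_{n-1}$, $x'_n$ and the initial variables $x_{n-1},x_n$ occurring inside Theorem \ref{thm:positive-simplex-D_n} and its $\Delta_{\mathrm{glue}}$ are carried to exactly the modules named in the corollary, and checking that $\Phi$ restricts to isomorphisms on the two subcomplexes being glued. The latter requires these subcomplexes to be full, which is already established on the cluster side within the proof of Theorem \ref{thm:positive-simplex-D_n} and is inherited on the module side through $\Phi$. Once these identifications are fixed, the corollary follows formally.
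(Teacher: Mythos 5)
Your proposal is correct and takes essentially the same route as the paper: the paper's proof is precisely ``the statement follows from Theorems \ref{thm:positive-simplex-D_n} and \ref{thm:iso-tilting-cluster},'' and your argument simply spells out the transport of each piece of the gluing decomposition (stars, the full subcomplex $\Delta_{\mathrm{glue}}$, the join factors, and the gluing itself) across the isomorphism of Theorem \ref{thm:iso-tilting-cluster}. The only difference is that you make explicit the bookkeeping the paper leaves implicit.
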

\begin{proof}
The statement follows from Theorems \ref{thm:positive-simplex-D_n} and \ref{thm:iso-tilting-cluster}.
\end{proof}
\begin{example}
Let $Q$ be a quiver $Q= \begin{xy}(0,0)*+{1}="I",(10,0)*+{2}="J", (20,0)*+{{3}}="K" \ar@{->}"I";"J"  \ar@/_4mm/"K";"I" \end{xy}$. We consider a cluster category $\Ccal_Q$ and its cluster tilting object $T=\substack{1\\2}\oplus 2\oplus\substack{3\\1\\2}$. Then $\Lambda:=(\End_{\Ccal_Q}T)^\mathrm{op}\cong KQ$ and $\Delta(s\Lambda)$ and $\Delta(\Lambda)$ of $\Lambda$ are shown in Figure \ref{fig:support-tau-tilting}. 
\begin{figure}[ht]
\caption{Support $\tau$-tilting simplicial complex and $\tau$-tilting simplicial complex of $\Lambda$}\label{fig:support-tau-tilting}
\vspace{5pt}
\begin{center}
\scalebox{0.8}{
\begin{tikzpicture}
\coordinate (0) at (0,0);
\coordinate (u*) at (90:5.33);
\coordinate (u) at (90:4);
\coordinate (ul) at (150:4);
\coordinate (ur) at (30:4);
\coordinate (uml) at (150:2);
\coordinate (umr) at (30:2);
\coordinate (dmc) at (-90:2);
\coordinate (dl) at (-150:4);
 \coordinate (dl*) at (-150:5.33);
\coordinate (dr) at (-30:4);
\coordinate (dr*) at (-30:5.33);
\coordinate (d) at (-90:4);
\draw (u) to (ul);
\draw (u) to (ur);
\draw (ul) to (uml);
\draw (ur) to (umr);
\draw (umr) to (uml);
\draw (u) to (uml);
\draw (u) to (umr);
\draw (uml) to (dl);
\draw (umr) to (dr);
\draw (ul) to (dl);
\draw (ur) to (dr);
\draw (uml) to (dmc);
\draw (umr) to (dmc);
\draw (dl) to (dmc);
\draw (dr) to (dmc);
\draw (dl) to (d);
\draw (dmc) to (d);
\draw (dr) to (d);
\draw(ul) [out=90,in=180]to (u*);
\draw(ur) [out=90,in=0]to (u*);
\draw(ul) [out=210,in=120]to (dl*);
\draw(d) [out=210,in=300]to (dl*);
\draw(d) [out=330,in=240]to (dr*);
\draw(ur) [out=330,in=60]to (dr*);
\fill [white](u) circle (0.8cm);
\fill[white] (ul) circle (0.9cm);
\fill[white] (ur) circle (0.7cm);
\fill[white] (uml) circle (0.7cm);
\fill[white] (umr) circle (0.7cm);
\fill[white] (dmc) circle (0.9cm);
\fill[white] (dl) circle (0.8cm);
\fill[white] (dr) circle (0.8cm);
\fill[white] (d) circle (0.7cm);
\node at (dmc) {$\left(0,\ \substack{3\\1\\2}\right)$};
\node at (umr) {{\Large$\left(0,\ 2\right)$}};
\node at (dr) {{\large$\left(0,\ \substack{1\\2}\right)$}};
\node at (ur) {{\Large$\left(3,\ 0\right)$}};
\node at (d) {{\Large$\left(2,\ 0\right)$}};
\node at (uml) {{\Large$\left(1,\ 0\right)$}};
\node at (ul) {$\left(\substack{3\\1\\2},\ 0\right)$};
\node at (dl) {{\large$\left(\substack{1\\2},\ 0\right)$}};
\node at (u) {{\large$\left(\substack{3\\1},\ 0\right)$}};
\end{tikzpicture}
\begin{tikzpicture}[baseline=-5cm]
 \coordinate (0) at (0,0);
 \coordinate (1) at (18:3);
 \coordinate (2) at (90:3);
 \coordinate (3) at (162:3);
 \coordinate (4) at (234:3);
 \coordinate (5) at (306:3);
 \draw(0) to (1);
 \draw(0) to (2);
 \draw(0) to (3);
 \draw(0) to (4);
 \draw(0) to (5);
 \draw(1) to (2);
 \draw(2) to (3);
 \draw(3) to (4);
 \draw(4) to (5);
 \draw(5) to (1);
\fill[white](0) circle (0.8cm);
\fill[white] (1) circle (0.7cm);
\fill[white] (2) circle (0.8cm);
\fill[white] (3) circle (0.7cm);
\fill[white] (4) circle (0.7cm);
\fill[white] (5) circle (0.8cm);
\node at (0) {$\substack{3\\1\\2}$};
\node at (1) {{\LARGE $1$}};
\node at (2) {{\Large$\substack{3\\1}$}};
\node at (3) {{\LARGE $3$}};
\node at (4) {{\LARGE $2$}};
\node at (5) {{\Large $\substack{1\\2}$}};
\end{tikzpicture}
}
\end{center}
\end{figure}
Let $T'$ be a cluster tilting object obtained from $T$ by a mutation in direction $\substack{3\\1\\2}$. Then, we have $T'=\substack{1\\2}\oplus 2\oplus\substack{3\\1\\2}\ [1]$ and $\Lambda':=(\End_{\Ccal_Q}T')^{\mathrm{op}}\cong KQ'$, where $Q'= \begin{xy}(0,0)*+{1}="I",(10,0)*+{2}="J", (20,0)*+{{3}}="K" \ar@{->}"I";"J"  \ar@/^4mm/"I";"K" \end{xy}$. Complexes $\Delta(s\Lambda')$ and $\Delta(\Lambda')$ of $\Lambda'$ are in Figure \ref{fig:support-tau-tilting2}. 
\begin{figure}[ht]
\caption{Support $\tau$-tilting simplicial complex and $\tau$-tilting simplicial complex of $\Lambda'$}\label{fig:support-tau-tilting2}
\vspace{5pt}
\begin{center}
\scalebox{0.8}{
\begin{tikzpicture}
\coordinate (0) at (0,0);
\coordinate (u*) at (90:5.33);
\coordinate (u) at (90:4);
\coordinate (ul) at (150:4);
\coordinate (ur) at (30:4);
\coordinate (uml) at (150:2);
\coordinate (umr) at (30:2);
\coordinate (dmc) at (-90:2);
\coordinate (dl) at (-150:4);
 \coordinate (dl*) at (-150:5.33);
\coordinate (dr) at (-30:4);
\coordinate (dr*) at (-30:5.33);
\coordinate (d) at (-90:4);
\draw (u) to (ul);
\draw (u) to (ur);
\draw (ul) to (uml);
\draw (ur) to (umr);
\draw (umr) to (uml);
\draw (u) to (uml);
\draw (u) to (umr);
\draw (uml) to (dl);
\draw (umr) to (dr);
\draw (ul) to (dl);
\draw (ur) to (dr);
\draw (uml) to (dmc);
\draw (umr) to (dmc);
\draw (dl) to (dmc);
\draw (dr) to (dmc);
\draw (dl) to (d);
\draw (dmc) to (d);
\draw (dr) to (d);
\draw(ul) [out=90,in=180]to (u*);
\draw(ur) [out=90,in=0]to (u*);
\draw(ul) [out=210,in=120]to (dl*);
\draw(d) [out=210,in=300]to (dl*);
\draw(d) [out=330,in=240]to (dr*);
\draw(ur) [out=330,in=60]to (dr*);
\fill [white](u) circle (0.7cm);
\fill[white] (ul) circle (0.8cm);
\fill[white] (ur) circle (0.7cm);
\fill[white] (uml) circle (0.8cm);
\fill[white] (umr) circle (0.7cm);
\fill[white] (dmc) circle (0.7cm);
\fill[white] (dl) circle (0.9cm);
\fill[white] (dr) circle (0.9cm);
\fill[white] (d) circle (0.7cm);
\node at (dmc) {{\Large $\left(3,\ 0\right)$}};
\node at (umr) {{\Large$\left(0,\ 2\right)$}};
\node at (dr) {{\large$\left(0,\ \substack{1\\2\ 3}\right)$}};
\node at (ur) {{\Large$\left(0,\ 3\right)$}};
\node at (d) {{\Large$\left(2,\ 0\right)$}};
\node at (uml){{\large$\left(\substack{1\\3},\ 0\right)$}};
\node at (ul){{\large$\left(\substack{1\\2},\ 0\right)$}};
\node at (dl) {{\large$\left(\substack{1\\2\ 3},\ 0\right)$}};
\node at (u) {{\Large$\left(1,\ 0\right)$}};
\end{tikzpicture}
\begin{tikzpicture}[baseline=-50mm]
 \coordinate (0) at (306:3);
 \coordinate (1) at (18:3);
 \coordinate (2) at (90:3);
 \coordinate (3) at (0,0);
 \coordinate (4) at (234:3);
 \coordinate (5) at (306:6);
 \draw(0) to (1);
 \draw(1) to (3);
 \draw(0) to (3);
 \draw(0) to (4);
 \draw(0) to (5);
 \draw(1) to (2);
 \draw(2) to (3);
 \draw(3) to (4);
 \draw(4) to (5);
 \draw(5) to (1);
\fill[white](0) circle (0.8cm);
\fill[white] (1) circle (0.8cm);
\fill[white] (2) circle (0.7cm);
\fill[white] (3) circle (0.8cm);
\fill[white] (4) circle (0.7cm);
\fill[white] (5) circle (0.7cm);
\node at (0) {{\Large$\substack{1\\2\ 3}$}};
\node at (1) {{\Large$\substack{1\\3}$}};
\node at (2) {{\LARGE$1$}};
\node at (3) {{\Large$\substack{1\\2}$}};
\node at (4) {{\LARGE$2$}};
\node at (5) {{\LARGE$3$}};
\end{tikzpicture}
}
\end{center}
\end{figure}
On the other hand, let $T''$ be a cluster tilting object obtained from $T$ by a mutation in direction $\substack{1\\2}$. Then, we have $T''=3\oplus 2\oplus\substack{3\\1\\2}$ and $\Lambda':=(\End_{\Ccal_Q}T'')^\mathrm{op}\cong KQ''/I$, where $Q''= \begin{xy}(0,0)*+{1}="I",(10,0)*+{2}="J", (20,0)*+{{3}}="K" \ar_{\beta}@{<-}"I";"J" \ar^{\alpha}@/^4mm/"I";"K" \ar_{\gamma}@{<-}"J";"K"\end{xy}$ and $I=\langle\alpha\beta,\beta\gamma,\gamma\alpha\rangle$. The complexes $\Delta(s\Lambda'')$ and $\Delta(\Lambda'')$ of $\Lambda'$ are in Figure \ref{fig:support-tau-tilting3}. 
\begin{figure}[ht]
\caption{Support $\tau$-tilting simplicial complex and $\tau$-tilting simplicial complex of $\Lambda''$}\label{fig:support-tau-tilting3}
\vspace{5pt}
\begin{center}
\scalebox{0.8}{
\begin{tikzpicture}
\coordinate (0) at (0,0);
\coordinate (u*) at (90:5.33);
\coordinate (u) at (90:4);
\coordinate (ul) at (150:4);
\coordinate (ur) at (30:4);
\coordinate (uml) at (150:2);
\coordinate (umr) at (30:2);
\coordinate (dmc) at (-90:2);
\coordinate (dl) at (-150:4);
 \coordinate (dl*) at (-150:5.33);
\coordinate (dr) at (-30:4);
\coordinate (dr*) at (-30:5.33);
\coordinate (d) at (-90:4);
\draw (u) to (ul);
\draw (u) to (ur);
\draw (ul) to (uml);
\draw (ur) to (umr);
\draw (umr) to (uml);
\draw (u) to (uml);
\draw (u) to (umr);
\draw (uml) to (dl);
\draw (umr) to (dr);
\draw (ul) to (dl);
\draw (ur) to (dr);
\draw (uml) to (dmc);
\draw (umr) to (dmc);
\draw (dl) to (dmc);
\draw (dr) to (dmc);
\draw (dl) to (d);
\draw (dmc) to (d);
\draw (dr) to (d);
\draw(ul) [out=90,in=180]to (u*);
\draw(ur) [out=90,in=0]to (u*);
\draw(ul) [out=210,in=120]to (dl*);
\draw(d) [out=210,in=300]to (dl*);
\draw(d) [out=330,in=240]to (dr*);
\draw(ur) [out=330,in=60]to (dr*);
\fill [white](u) circle (0.7cm);
\fill[white] (ul) circle (0.8cm);
\fill[white] (ur) circle (0.8cm);
\fill[white] (uml) circle (0.8cm);
\fill[white] (umr) circle (0.8cm);
\fill[white] (dmc) circle (0.8cm);
\fill[white] (dl) circle (0.7cm);
\fill[white] (dr) circle (0.7cm);
\fill[white] (d) circle (0.8cm);
\node at (dmc) {{\large $\left(0,\ \substack{3\\2}\right)$}};
\node at (umr) {{\large$\left(0,\ \substack{2\\1}\right)$}};
\node at (dr) {{\Large$\left(1,\ 0\right)$}};
\node at (ur) {{\large$\left(\substack{1\\3},\ 0\right)$}};
\node at (d) {{\large$\left(\substack{2\\1},\ 0\right)$}};
\node at (uml){{\large$\left(0,\ \substack{1\\3}\right)$}};
\node at (ul){{\large$\left(\substack{3\\2},\ 0\right)$}};
\node at (dl) {{\Large$\left(2,\ 0\right)$}};
\node at (u) {{\Large$\left(3,\ 0\right)$}};
\end{tikzpicture}
\begin{tikzpicture}[baseline=-50mm]
 \coordinate (0) at (306:3);
 \coordinate (1) at (198:5);
 \coordinate (2) at (90:3);
 \coordinate (3) at (0,0);
 \coordinate (4) at (234:3);
 \coordinate (5) at (162:3);
 \draw(0) to (3);
 \draw(0) to (4);
 \draw(2) to (3);
 \draw(3) to (4);
 \draw(3) to (5);
 \draw(2) to (5);
 \draw(4) to (5);
 \draw(1) to (4);
 \draw(1) to (5);
\fill[white](0) circle (0.7cm);
\fill[white] (1) circle (0.7cm);
\fill[white] (2) circle (0.7cm);
\fill[white] (3) circle (0.8cm);
\fill[white] (4) circle (0.8cm);
\fill[white] (5) circle (0.8cm);
\node at (2) {{\LARGE 3}};
\node at (3) {{\Large$\substack{3\\2}$}};
\node at (5) {{\Large$\substack{1\\3}$}};
\node at (4) {{\Large$\substack{2\\1}$}};
\node at (0) {{\LARGE 2}};
\node at (1) {{\LARGE 1}};
\end{tikzpicture}
}
\end{center}
\end{figure}
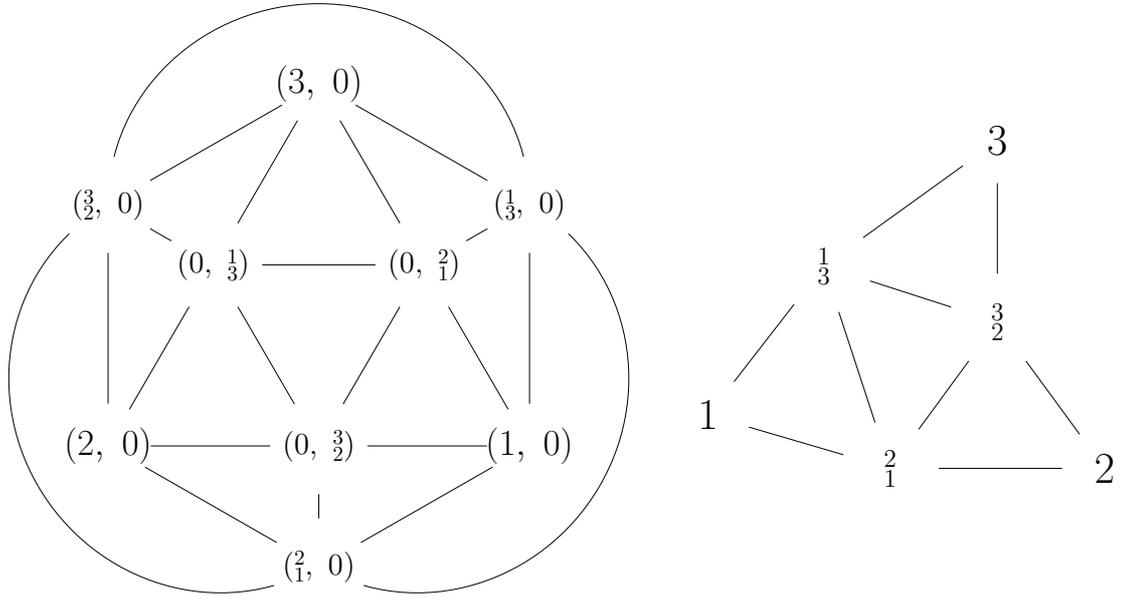
\end{example}
By Theorem \ref{thm:iso-tilting-cluster}, we have
$\Delta(s\Lambda)\cong\Delta(s\Lambda')\cong\Delta(s\Lambda'')\cong\Delta(\xx,B)$,
$\Delta(\Lambda)\cong\Delta^+(\xx,B)$, $\Delta(\Lambda')\cong\Delta^+(\xx',B')$, and 
$\Delta(\Lambda'')\cong\Delta^+(\xx'',B'')$, where $\Delta^+(\xx,B)$, $\Delta(\xx,B)$, $\Delta^+(\xx',B'')$, and $\Delta^+(\xx'',B'')$ is the same notation as those in Examples \ref{ex:not-iso-positive-complex} and \ref{ex:not-equal-simplex}.

\subsection{$h$-vectors and Hasse quivers}\label{applicationtotautilting2}

We consider the application of $h$-vectors of $\tau$-tilting simplicial complexes.

\begin{definition}
Let $K$ be a pure simplicial complex and $K_{\max}$ the set of all maximal simplices in $K$. We say that $K$ is \emph{shellable} if there exists a total order $\preceq$ in $K_{\max}$ such that $\left(\cup_{S'\prec S}S'\right)\cap S$ is a pure proper maximal simplicial complex for all $S\in K_{\max}$.  
\end{definition}

We prove the shellability of $\tau$-tilting simplicial complexes. 

\begin{theorem}\label{shellability}
For a $\tau$-tilting-finite algebra $\Lambda$, $\Delta(\Lambda)$ is shellable.
\end{theorem}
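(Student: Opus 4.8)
The plan is to exhibit $\Delta(\Lambda)$ as a pure pseudomanifold-with-boundary whose facets are the basic $\tau$-tilting modules, and then to shell it by a linear extension of $\tau\textrm{-}\mathrm{tilt}\Lambda$ read from top to bottom.

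First I would establish purity. By the Bongartz completion \cite{air}, every basic $\tau$-rigid $\Lambda$-module is a direct summand of a basic $\tau$-tilting module; hence every simplex of $\Delta(\Lambda)$ is contained in a facet with $n := |\Lambda|$ vertices, so $\Delta(\Lambda)$ is pure of dimension $n-1$ and its facets are precisely the elements of $\tau\textrm{-}\mathrm{tilt}\Lambda$. Next I would record the local structure of codimension-one faces: a face of dimension $n-2$ is an almost complete $\tau$-rigid module, which by \cite{air} admits exactly two completions to a basic support $\tau$-tilting module, at most two of which are genuine $\tau$-tilting modules. Thus every codimension-one face of $\Delta(\Lambda)$ lies in one or two facets, and two facets share such a face if and only if they are joined by an arrow of $\mathsf{Hasse}(\tau\textrm{-}\mathrm{tilt}\Lambda)$, i.e.\ by a mutation that does not alter the support.

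Second, I would fix the shelling order. Since $\Lambda$ is $\tau$-tilting finite, $\tau\textrm{-}\mathrm{tilt}\Lambda$ is a finite lattice for the torsion (mutation) order, with maximum $\Lambda$; choose any linear extension $T_1 \succ T_2 \succ \cdots \succ T_m$ listing the facets in decreasing order, so that $T_1 = \Lambda$. For each $T_j$ set
\[
\mathcal{R}(T_j) = \{\, X \in T_j \mid T_j \setminus \{X\} \text{ is a face of some } T_i \text{ with } i < j \,\}.
\]
Processing top-down forces the earlier facets meeting $T_j$ in codimension one to be exactly the covers of $T_j$ lying above it, so $|\mathcal{R}(T_j)|$ equals the number $T(T_j)$ of arrows of $\mathsf{Hasse}(\tau\textrm{-}\mathrm{tilt}\Lambda)$ ending at $T_j$. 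This is precisely the bookkeeping behind \eqref{anotherreph-vector-intro} and is the reason for reading the order downward: once shellability is proved, the shelling $h$-vector will be $h_i(\Lambda) = \#\{\,T_j : |\mathcal{R}(T_j)| = i\,\}$.

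The hard part is the verification that this order is a shelling, i.e.\ that for all $i < j$ there is $k < j$ with $T_i \cap T_j \subseteq T_k \cap T_j$ and $T_k \cap T_j$ of dimension $n-2$. Translating through the facet--module dictionary, given $T_i > T_j$ with common summand module $S$ corresponding to $T_i \cap T_j$, I must produce a cover $T_k$ of $T_j$, lying in the interval $[T_j, T_i]$, that still contains $S$ as a summand. I expect to deduce this from the mutation theory of \cite{air} combined with the lattice (semidistributivity) structure of $\tau\textrm{-}\mathrm{tilt}\Lambda$, localized to the interval $[T_j, T_i]$: among the indecomposable summands of $T_j$ not shared with $T_i$, at least one should admit an upward mutation preserving all of $S$ and staying below $T_i$. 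The genuine subtlety, which does not occur for the full sphere $\Delta(s\Lambda)$, is the presence of boundary codimension-one faces — almost complete $\tau$-tilting modules with a unique $\tau$-tilting completion, created precisely by deleting the projective (coefficient) vertices; I will need to check that such boundary faces never obstruct the criterion. Establishing this local lattice lemma is where I expect the main work to lie.
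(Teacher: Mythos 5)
Your outline stalls exactly where the content of the theorem lies. The preparatory steps are fine: purity via Bongartz completion, the identification of codimension-one faces with almost complete $\tau$-rigid modules and of facet adjacency with arrows of $\mathsf{Hasse}(\tau\textrm{-}\mathrm{tilt}\Lambda)$, and the bookkeeping $|\mathcal{R}(T_j)|=T(T_j)$. But the shelling condition itself --- that for $i<j$ there is some $k<j$ with $T_i\cap T_j\subseteq T_k\cap T_j$ of codimension one --- is only announced: you say you ``expect to deduce'' it from a local lattice lemma about the interval $[T_j,T_i]$, and that lemma is never stated precisely, let alone proved. What you would need is the interval property: for a $\tau$-rigid summand $S$, the facets containing $S$ form an interval of the poset in which every non-maximal element receives an arrow from inside the interval, so that some cover of $T_j$ below $T_i$ still contains $S$. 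That is essentially the content of Demonet--Iyama--Jasso's proof that $\Delta(s\Lambda)$ is shellable (quoted in the paper as Theorem \ref{shellability-support}) and rests on $\tau$-tilting reduction; it is not a routine consequence of \cite{air}. Your worry about boundary codimension-one faces is likewise left unresolved, and the assertion that $\tau\textrm{-}\mathrm{tilt}\Lambda$ is a lattice is made without justification (and is not needed merely to choose a linear extension).

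The paper's proof sidesteps all of this. It takes the shellability of the larger complex $\Delta(s\Lambda)$, for any linear extension of $\preceq$ or $\preceq^{\mathrm{op}}$, as known (Theorem \ref{shellability-support}), proves the elementary Lemma \ref{tau-tiltlarger} that $|M|\ge|N|$ along every arrow $M\to N$ of $\mathsf{Hasse}(s\tau\textrm{-}\mathrm{tilt}\Lambda)$, and deduces that some linear extension lists all $\tau$-tilting modules before all remaining support $\tau$-tilting modules. Since an initial segment of a shelling order shells the subcomplex generated by those facets, and that subcomplex is exactly $\Delta(\Lambda)$ by Bongartz completion, the theorem follows. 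If you wish to keep the direct approach, the unproved interval lemma is where all the work lies; otherwise, reduce to the support case as the paper does.
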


Let $s\tau\textrm{-}\mathrm{tilt}\Lambda$ be the set of all isomorphism classes of basic support $\tau$-tilting modules in $\mod \Lambda$. To prove Theorem \ref{shellability}, we introduce the partial order $\preceq$ in $s\tau\textrm{-}\mathrm{tilt}\Lambda$. For $M,N\in s\tau\textrm{-}\mathrm{tilt}\Lambda$, we define $M \preceq N$ if $\mathsf{Fac} M\subset \mathsf{Fac} N$. The \emph{Hasse quiver} $\mathsf{Hasse}(s\tau\textrm{-}\mathrm{tilt}\Lambda)$ of $\Lambda$ is the following quiver:
\begin{itemize}
    \item The vertex set is $s\tau\textrm{-}\mathrm{tilt}\Lambda$.
    \item Let $M,N$ be support $\tau$-tilting modules in $\Lambda$. We draw an arrow $M\to N$ if $M\succ N$ and there is no support $\tau$-tilting module $L$ such that $M\succ L \succ N$.
    \end{itemize} 
We denote by $\mathsf{Hasse}(\tau\textrm{-}\mathrm{tilt}\Lambda)$ the full subquiver of $\mathsf{Hasse}(s\tau\textrm{-}\mathrm{tilt}\Lambda)$ whose vertex set consists of $\tau$-tilting module of $\Lambda$.  

\begin{example}
Let $\Lambda_{A_3}$ be a cluster-tilted algebra associated with the linearly oriented quiver of $A_3$ type. Then, $\mathsf{Hasse}(s\tau\textrm{-}\mathrm{tilt}\Lambda)$ and $\mathsf{Hasse}(\tau\textrm{-}\mathrm{tilt}\Lambda)$ are as in Figure \ref{hasseA3}. 
 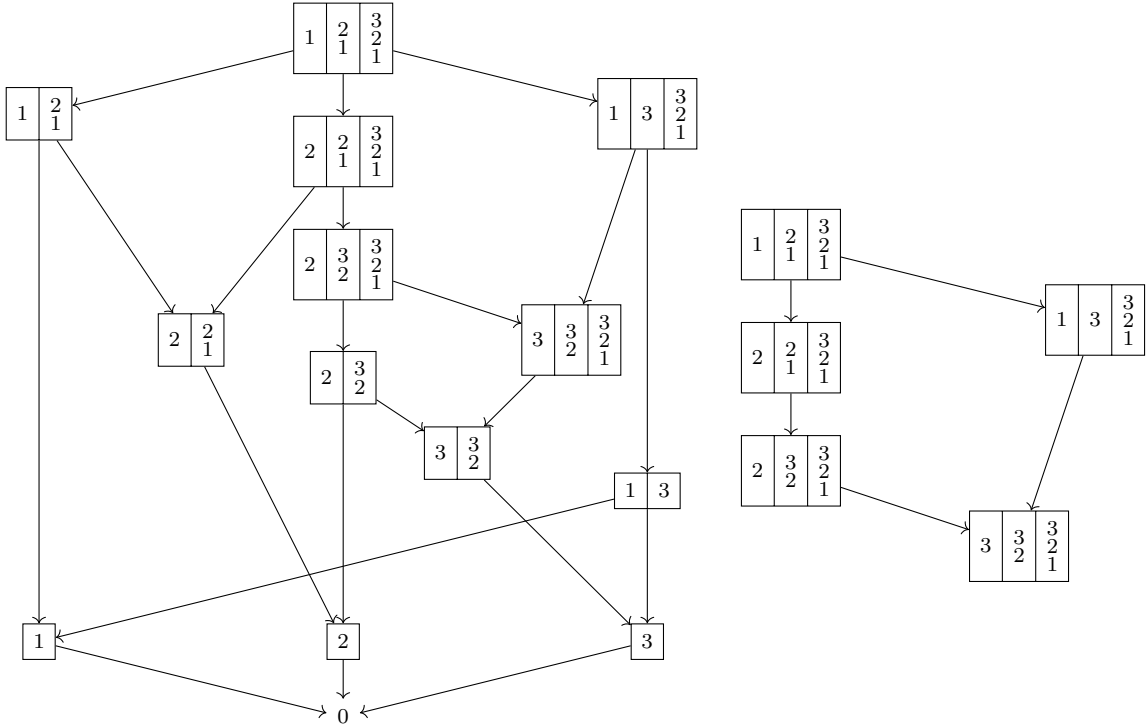
\begin{figure}[htp]
  \caption{$\mathsf{Hasse}(s\tau\text{-tilt} \Lambda_{A_3})$ and $\mathsf{Hasse}(\tau\text{-tilt} \Lambda_{A_3})$}
    \label{hasseA3}
\vspace{3mm}    
    \begin{tikzpicture}
      \node[block=3] (121321) at (0,10) {\nodepart{one}$\sst{1}$\nodepart{two}$\sst{2 \\ 1}$\nodepart{three}$\sst{3 \\ 2 \\ 1}$};
      \node[block=2] (121) at (-4,9) {\nodepart{one}$\sst{1}$\nodepart{two}$\sst{2 \\ 1}$};
      \node[block=3] (221321) at (0,8.5) {\nodepart{one}$\sst{2}$\nodepart{two}$\sst{2 \\ 1}$\nodepart{three}$\sst{3 \\ 2 \\ 1}$};
      \node[block=3] (13321) at (4,9) {\nodepart{one}$\sst{1}$\nodepart{two}$\sst{3}$\nodepart{three}$\sst{3 \\ 2 \\ 1}$};
      \node[block=2] (221) at (-2,6) {\nodepart{one}$\sst{2}$\nodepart{two}$\sst{2 \\ 1}$};
      \node[block=3] (232321) at (0,7) {\nodepart{one}$\sst{2}$\nodepart{two}$\sst{3 \\ 2}$\nodepart{three}$\sst{3 \\ 2 \\ 1}$};
      \node[block=2] (232) at (0,5.5) {\nodepart{one}$\sst{2}$\nodepart{two}$\sst{3 \\ 2}$};
      \node[block=3] (332321) at (3,6) {\nodepart{one}$\sst{3}$\nodepart{two}$\sst{3 \\ 2}$\nodepart{three}$\sst{3 \\ 2 \\ 1}$};
      \node[block=2] (13) at (4,4) {\nodepart{one}$\sst{1}$\nodepart{two}$\sst{3}$};
      \node[rectangle,draw] (2) at (0,2) {\nodepart{one}$\sst{2}$};
      \node[block=2] (332) at (1.5,4.5) {\nodepart{one}$\sst{3}$\nodepart{two}$\sst{3 \\ 2}$};
      \node[rectangle,draw] (3) at (4,2) {\nodepart{one}$\sst{3}$};
      \node[rectangle,draw] (1) at (-4,2) {\nodepart{one}$\sst{1}$};
      \node (0) at (0,1) {$\sst{0}$};

      \draw[->] (121321) -- (121);
      \draw[->] (121321) -- (221321);
      \draw[->] (121321) -- (13321);
      \draw[->] (121) -- (221);
      \draw[->] (121) -- (1);
      \draw[->] (221321) -- (221);
      \draw[->] (221321) -- (232321);
      \draw[->] (13321) -- (332321);
      \draw[->] (13321) --  (13);
      \draw[->] (221) -- (2);
      \draw[->] (232321) -- (232);
      \draw[->] (232321) -- (332321);
      \draw[->] (232) -- (332);
      \draw[->] (232) -- (2);
      \draw[->] (332321) -- (332);
      \draw[->] (13) --  (1);
      \draw[->] (13) -- (3);
      \draw[->] (332) -- (3);
      \draw[->] (1) -- (0);
      \draw[->] (2) -- (0);
      \draw[->] (3) -- (0);
    \end{tikzpicture}
    \hspace{3mm}
    \begin{tikzpicture}[baseline=35mm]
      \node[block=3] (121321) at (0,10) {\nodepart{one}$\sst{1}$\nodepart{two}$\sst{2 \\ 1}$\nodepart{three}$\sst{3 \\ 2 \\ 1}$};
      \node[block=3] (221321) at (0,8.5) {\nodepart{one}$\sst{2}$\nodepart{two}$\sst{2 \\ 1}$\nodepart{three}$\sst{3 \\ 2 \\ 1}$};
      \node[block=3] (13321) at (4,9) {\nodepart{one}$\sst{1}$\nodepart{two}$\sst{3}$\nodepart{three}$\sst{3 \\ 2 \\ 1}$};
      \node[block=3] (232321) at (0,7) {\nodepart{one}$\sst{2}$\nodepart{two}$\sst{3 \\ 2}$\nodepart{three}$\sst{3 \\ 2 \\ 1}$};
      \node[block=3] (332321) at (3,6) {\nodepart{one}$\sst{3}$\nodepart{two}$\sst{3 \\ 2}$\nodepart{three}$\sst{3 \\ 2 \\ 1}$};
      \draw[->] (121321) -- (221321);
      \draw[->] (121321) -- (13321);
      \draw[->] (221321) -- (232321);
      \draw[->] (13321) -- (332321);
      \draw[->] (232321) -- (332321);
    \end{tikzpicture}
  \end{figure}
\end{example}
We use the following characterization of the Hasse quiver of $s\tau\textrm{-}\mathrm{tilt}\Lambda$.
\begin{theorem}[\cite{air}*{Theorem 2.33}]\label{mutation-charactorization}
For $\tau$-tilting pairs $(M,P)$ and $(M',P')$, the following conditions are equivalent.
\begin{itemize}
    \item[(i)] $(M,P)$ is a mutation of $(M',P')$,
    \item[(ii)] $M$ is connected to $M'$ by an edge in $\mathsf{Hasse}(s\tau\textrm{-}\mathrm{tilt}\Lambda)$.
\end{itemize}
\end{theorem}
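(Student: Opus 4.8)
The plan is to prove the two implications separately by combining the poset definition of $\preceq$ given above with the theory of left and right mutations from \cite{air}. I identify each support $\tau$-tilting module with its $\tau$-tilting pair, and I use the order characterization already in force: for support $\tau$-tilting modules $N\preceq M$ means $\mathsf{Fac}\,N\subseteq\mathsf{Fac}\,M$, which \cite{air} reformulates as the vanishing $\Hom_\Lambda(M,\tau N)=0$ together with the matching condition on projective parts. The first reduction is to recall that any mutation of $(M',P')$ is either a \emph{left} mutation $\mu_X^-$, which strictly decreases the order, or a \emph{right} mutation $\mu_X^+$, which strictly increases it, and that these are mutually inverse; hence, after possibly interchanging the two pairs, I may assume throughout that $(M,P)=\mu_X^-(M',P')$, so that $M\prec M'$, where $X$ is the indecomposable summand being exchanged.

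For (i)$\Rightarrow$(ii) I must show that a left mutation yields a covering relation, i.e. that no support $\tau$-tilting module $L$ satisfies $M\prec L\prec M'$. Here I would invoke the exchange sequence attached to $\mu_X^-$: there is a minimal left $\add(M'/X)$-approximation $X\xrightarrow{f}E$ whose cokernel supplies the new summand of $M$, and this sequence controls $\mathsf{Fac}\,M$ as an extremal element of the interval of torsion classes below $M'$ — concretely, the largest torsion class contained in $\mathsf{Fac}\,M'$ that does not contain $X$. Given any $L$ with $M\preceq L\preceq M'$, its torsion class $\mathsf{Fac}\,L$ lies in $[\mathsf{Fac}\,M,\mathsf{Fac}\,M']$, and this extremality forces $\mathsf{Fac}\,L$ to coincide with one of the two endpoints, whence $L\in\{M,M'\}$ by the order characterization. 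This is the substantive half and the main obstacle: it rests entirely on the precise description of $\mathsf{Fac}\,M$ coming from the approximation, which is exactly the content of the mutation theorems of \cite{air}.

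For the converse (ii)$\Rightarrow$(i) I start from a Hasse arrow $M'\to N$, so $N\prec M'$ with nothing strictly between. Since $\mathsf{Fac}\,N\subsetneq\mathsf{Fac}\,M'$, there is an indecomposable summand $X$ of $M'$ with $X\notin\mathsf{Fac}\,N$, for otherwise $\mathsf{Fac}\,M'\subseteq\mathsf{Fac}\,N$. Forming the left mutation $M'':=\mu_X^-(M')$, the first implication shows that $M''$ is covered by $M'$. It then remains to identify $N=M''$. Using once more the description of $\mathsf{Fac}\,M''$ as the largest torsion class inside $\mathsf{Fac}\,M'$ avoiding $X$, together with $X\notin\mathsf{Fac}\,N$, I obtain $\mathsf{Fac}\,N\subseteq\mathsf{Fac}\,M''$, hence $N\preceq M''\prec M'$. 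The covering hypothesis on $M'\to N$ forbids anything strictly between $N$ and $M'$, so $M''=N$, and therefore $N$ is a single mutation of $M'$.

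Finally I would note that the whole argument is symmetric in the two pairs: swapping the roles of $M$ and $M'$ replaces left mutations by right mutations and recovers the statement as phrased for an \emph{unoriented} edge of $\mathsf{Hasse}(s\tau\textrm{-}\mathrm{tilt}\Lambda)$. The only non-formal inputs are the existence and uniqueness of mutation (\cite{air}*{Theorem 2.18}) and the description of $\mathsf{Fac}$ of a mutated module via the approximation triangle; granting these, both directions are short order-theoretic arguments, and the delicate point common to both is the extremality of $\mathsf{Fac}$ of a mutated module inside its interval of torsion classes.
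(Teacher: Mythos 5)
The paper itself contains no proof of this statement: it is quoted directly from \cite{air} (their Theorem 2.33/Corollary 2.34), so your proposal has to stand on its own as a reconstruction of that argument. Its architecture does mirror the standard one (reduce to left mutations, show a left mutation is a covering relation, get the converse by sandwiching), but the lemma on which you hang \emph{both} implications is false. You claim that for a left mutation $M=\mu_X^-(M')$ the class $\mathsf{Fac}\,M$ is ``the largest torsion class contained in $\mathsf{Fac}\,M'$ that does not contain $X$,'' and you assert this is ``exactly the content of the mutation theorems of \cite{air}.'' It is not, and it fails already for $\Lambda=KQ$ with $Q\colon 1\to 2$: take $M'=\Lambda=P_1\oplus P_2$ (so $P_2=S_2$) and $X=P_1$. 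The mutation of $\Lambda$ at $P_1$ is the pair $(S_2,P_1)$, so $\mathsf{Fac}\,M=\add S_2$; but $\add S_1$ is also a torsion class contained in $\mathsf{Fac}\,M'=\mod\Lambda$ avoiding $P_1$, and $\add S_1\not\subseteq\add S_2$. In fact no largest such torsion class exists: $\add S_1$ and $\add S_2$ are incomparable maximal ones. The correct statement must carry the lower bound $\mathsf{Fac}(M'/X)$: the torsion classes $\mathcal{T}$ with $\mathsf{Fac}(M'/X)\subseteq\mathcal{T}\subseteq{}^{\perp}\bigl(\tau(M'/X)\bigr)\cap (P')^{\perp}$ are exactly the two classes $\mathsf{Fac}\,M$ and $\mathsf{Fac}\,M'$. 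That two-element-interval statement is precisely the nontrivial existence theorem accompanying the cited result in \cite{air} (for any $T>U$ in $s\tau\textrm{-}\mathrm{tilt}\Lambda$ there is a left mutation $T'$ of $T$ with $T>T'\geq U$), equivalently Jasso's $\tau$-tilting reduction \cite{jasso} applied to the almost complete part; it does not follow from the inputs you allow yourself (existence/uniqueness of mutation and the left/right dichotomy).

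The damage is asymmetric. Your (i)$\Rightarrow$(ii) is repairable: for $M\preceq L\preceq M'$ the missing lower bound is automatic, since $\mathsf{Fac}(M'/X)\subseteq\mathsf{Fac}\,M\subseteq\mathsf{Fac}\,L$ and $\mathsf{Fac}\,L\subseteq\mathsf{Fac}\,M'\subseteq{}^{\perp}\bigl(\tau(M'/X)\bigr)\cap(P')^{\perp}$, so $\mathsf{Fac}\,L$ lies in the two-element interval and $L\in\{M,M'\}$ (your ``extremality forces endpoints'' also silently skips the case $X\in\mathsf{Fac}\,L$, which needs the easy extra remark that then $M'=X\oplus (M'/X)\in\mathsf{Fac}\,L$, hence $L=M'$). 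But (ii)$\Rightarrow$(i) uses the false ``largest'' form essentially: from a Hasse arrow $M'\to N$ you choose a summand $X\notin\mathsf{Fac}\,N$ and deduce $\mathsf{Fac}\,N\subseteq\mathsf{Fac}\,\mu_X^-(M')$. That deduction is invalid: in the example above, $N=(S_1,P_2)$, $M'=\Lambda$, $X=P_1$ satisfy $X\notin\mathsf{Fac}\,N\subseteq\mathsf{Fac}\,M'$, yet $\add S_1\not\subseteq\add S_2=\mathsf{Fac}\,\mu_{P_1}^-(\Lambda)$. For genuine Hasse arrows the containment is true, but only because $X$ is then forced to be the exchanged summand --- which presupposes the theorem you are proving. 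So the converse direction genuinely requires the existence statement quoted above, and your sketch neither proves it nor correctly substitutes for it. (A minor slip in the same spirit: the order criterion is reversed; $N\preceq M$ corresponds to $\Hom_\Lambda(N,\tau M)=0$ together with the projective condition, not $\Hom_\Lambda(M,\tau N)=0$.)
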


\begin{lemma}\label{tau-tiltlarger}
Let $\Lambda$ be a $\tau$-tilting-finite algebra. For any basic support $\tau$-tilting $\Lambda$-modules $M,N$, if there is an arrow $M\to N$ in $\mathsf{Hasse}(s\tau\text{-}\mathrm{tilt}\Lambda)$, then we have $|M|\geq|N|$. 
\end{lemma}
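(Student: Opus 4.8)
The plan is to translate the inequality $|M| \geq |N|$ into the language of $\tau$-tilting pairs and exploit the mutation characterization provided by Theorem \ref{mutation-charactorization}. For a support $\tau$-tilting module $M$, write $(M, P_M)$ for the corresponding $\tau$-tilting pair, so that $|M| + |P_M| = |\Lambda|$, and recall that $|M|$ equals the number of indecomposable direct summands of $M$ while $|P_M|$ counts the projective part. Since an arrow $M \to N$ in $\mathsf{Hasse}(s\tau\text{-}\mathrm{tilt}\Lambda)$ is exactly an edge in the Hasse quiver, Theorem \ref{mutation-charactorization} tells us that $(N, P_N)$ is a $\tau$-tilting mutation of $(M, P_M)$ in some direction $(X, Q)$, where $(X,Q)$ is an indecomposable $\tau$-rigid pair.

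First I would unpack the definition of $\tau$-tilting mutation. By the Definition \& Proposition governing mutation, writing $(M, P_M) = (M' \oplus X, P' \oplus Q)$ with $(X, Q)$ indecomposable, the mutation replaces $(X, Q)$ with a unique indecomposable $\tau$-rigid pair $(X', Q')$ to yield $(N, P_N) = (M' \oplus X', P' \oplus Q')$. The key arithmetic observation is that both $(M,P_M)$ and $(N,P_N)$ are $\tau$-tilting pairs, so $|M| + |P_M| = |N| + |P_N| = |\Lambda|$, and the common summand $(M', P')$ is unchanged; thus the total change in $|M|$ is governed entirely by what happens to the single indecomposable summand $(X,Q)$ versus $(X',Q')$. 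Since $(X,Q)$ is indecomposable, exactly one of $X$ or $Q$ is zero, and likewise for $(X',Q')$.

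Next I would use the directedness of the arrow. The partial order is $M \preceq N$ iff $\mathsf{Fac}\,M \subseteq \mathsf{Fac}\,N$, and the arrow $M \to N$ means $M \succ N$, i.e.\ $M$ is strictly larger. The four possible configurations for $(X,Q) \rightsquigarrow (X',Q')$ are: module-to-module ($Q = Q' = 0$), module-to-projective ($Q = 0$, $X' = 0$), projective-to-module ($X = 0$, $Q' = 0$), and projective-to-projective ($X = X' = 0$). The content of the lemma is that, \emph{in the direction of a downward arrow}, the case that strictly decreases $|M|$ (namely passing from a genuine module summand $X \neq 0$ to a projective shift $X' = 0$, which removes an indecomposable module summand) is compatible with $M \succ N$, whereas the reverse cannot happen along a downward arrow. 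The main obstacle will be establishing precisely this monotonicity: that a downward mutation step never \emph{increases} the number of module summands, equivalently that the summand being deleted (when $X \neq 0 = X'$) corresponds to descending in the order. I would argue this using \cite{air}, where left mutation (which decreases in the $\preceq$ order) is shown to correspond to taking the cokernel of a minimal left $\add M'$-approximation, and the projective part can only grow or stay equal under such a step; hence $|P_N| \geq |P_M|$, and combined with $|M| + |P_M| = |N| + |P_N|$ this yields $|M| \geq |N|$.

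Concretely, I would invoke the description of mutation in \cite{air} to show that when the arrow points downward the direction summand either stays a module of the same count or is converted from a module into a member of the projective part, so that $|P_N| \ge |P_M|$ always holds along a downward arrow. Combining with the $\tau$-tilting pair identity gives
\begin{align*}
  |M| = |\Lambda| - |P_M| \geq |\Lambda| - |P_N| = |N|,
\end{align*}
which is the desired inequality. The delicate point to verify carefully is that the downward direction of the Hasse arrow is exactly the direction in which the projective part is non-decreasing; I expect this to be the crux of the argument and would cite the relevant mutation-order compatibility from \cite{air} rather than reprove it.
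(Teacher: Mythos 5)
Your overall strategy coincides with the paper's: pass to $\tau$-tilting pairs, use Theorem \ref{mutation-charactorization} to see that $(N,P_N)$ is a mutation of $(M,P_M)$ in an indecomposable direction, and then analyze cases according to whether the exchanged summand sits in the module part or the projective part. Where you diverge is at the crux, which you correctly identify but do not actually settle: you propose to rule out the $|M|$-increasing case (a summand of $P_M$ being replaced by a module summand) by citing ``mutation-order compatibility'' from \cite{air} and by appealing to the description of left mutation as the cokernel of a minimal left approximation. That description only governs mutation at a summand of the \emph{module} part; it says nothing directly about mutation at a summand of $P_M$, which is exactly the case you must exclude, so the step you defer is essentially equivalent to the lemma itself. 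The paper disposes of this case with a one-line elementary observation that your write-up is missing: if the mutation removes an indecomposable summand from $P_M$ and adds $X'$ to the module part, then $N = M \oplus X'$ contains $M$ as a direct summand, hence $\mathsf{Fac}\,M \subseteq \mathsf{Fac}\,N$ and $M \preceq N$, contradicting the arrow $M \to N$, which forces $M \succ N$. In the two remaining cases the module part either keeps its number of summands or loses one to the projective part, so $|M| \geq |N|$. I would replace the deferred citation with this direct argument; with that substitution your proof is complete and agrees with the paper's.
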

\begin{proof}
Let $(M,P)$ and $(N,Q)$ be the corresponding $\tau$-tilting pair of $M$ and $N$. By Theorem \ref{mutation-charactorization}, there exist indecomposable modules $X$ and $X'$ such that $((M\setminus X)\oplus X',P)=(N, Q)$, $(M\setminus X,P\oplus X')=(N, Q)$, or $(M\oplus X',P\setminus X)=(N, Q)$. If $(M\oplus X',P\setminus X)=(N, Q)$, then we have $\textsf{Fac}M\subset\textsf{Fac}N$. This contradicts $M\to N$. Therefore, we have $((M\setminus X)\oplus X',P)=(N, Q)$ or $(M\setminus X,P\oplus X')=(N, Q)$ and $|M|\geq|N|$.
\end{proof}

For support $\tau$-tilting simplicial complexes, the following theorem is known.
\begin{theorem}[\cite{dij}*{Theorem 5.4}]\label{shellability-support}
Let $\Lambda$ be a $\tau$-tilting-finite algebra. Then, $\Delta(s\Lambda)$ is shellable for a total order preserving the partial orders $\preceq$ or $\preceq^{\mathrm{op}}$.
\end{theorem}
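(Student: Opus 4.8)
The plan is to exhibit an explicit shelling order on the facets of $\Delta(s\Lambda)$, namely any total order that refines the partial order $\preceq$ on $s\tau\text{-}\mathrm{tilt}\Lambda$ (and a refinement of $\preceq^{\mathrm{op}}$ for the dual statement). Purity is immediate: every maximal simplex of $\Delta(s\Lambda)$ is a basic $\tau$-tilting pair $(M,P)$ with $|M|+|P|=|\Lambda|$, so all facets have exactly $|\Lambda|$ vertices. Since $\Lambda$ is $\tau$-tilting-finite, $s\tau\text{-}\mathrm{tilt}\Lambda$ is finite and a total order $F_1,\dots,F_t$ refining $\preceq$ exists. Writing $S_N$ for the facet attached to a support $\tau$-tilting module $N$, the task is to show that for each $N$ the intersection $R_N:=\bigl(\bigcup_{M\text{ before }N}S_M\bigr)\cap S_N$ is a union of facets of $S_N$ (hence pure of codimension one) and a proper subcomplex of the closed simplex $\overline{S_N}$, which is precisely the condition in the definition of shellability.

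First I would analyze the codimension-one faces. Each facet $F_i$ of $S_N$ is obtained by deleting a single indecomposable summand $(X_i,Q_i)$ of $N$, and by the uniqueness of $\tau$-tilting mutation (\cite{air}*{Theorem 2.18}) it lies in exactly one other maximal simplex $S_{N^{(i)}}$, where $N^{(i)}$ is the mutation of $N$ in that direction. By Theorem \ref{mutation-charactorization} the modules $N$ and $N^{(i)}$ are joined by an edge of $\mathsf{Hasse}(s\tau\text{-}\mathrm{tilt}\Lambda)$, hence are comparable, so either $N^{(i)}\prec N$ (a downward mutation) or $N^{(i)}\succ N$. Because the order refines $\preceq$, the facet $F_i$ is contained in an earlier maximal simplex exactly when $N^{(i)}\prec N$. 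I would therefore prove the decisive identity
\[
R_N=\bigcup_{\,i:\,N^{(i)}\prec N} F_i ,
\]
whose inclusion $\supseteq$ is clear from the previous sentence.

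The heart of the argument, and the step I expect to be the main obstacle, is the inclusion $\subseteq$: any face $G$ of $S_N$ that also lies in some earlier facet $S_M$ must be contained in one of the downward facets $F_i$. Here I would use that $s\tau\text{-}\mathrm{tilt}\Lambda$ is a finite complete lattice (\cite{dij}) together with $\tau$-tilting reduction: the support $\tau$-tilting modules having the $\tau$-rigid pair $G$ as a common summand form an interval $[G_-,G^+]$ from the co-Bongartz to the Bongartz completion of $G$, which is a sublattice poset-isomorphic to the support $\tau$-tilting poset of the reduced algebra at $G$, its covering relations being exactly the mutations of $\Lambda$ that fix $G$. Both $N$ and $M$ lie in $[G_-,G^+]$; since $M$ comes before $N$ it cannot satisfy $M\succ N$, so the meet $M\wedge N$ again lies in $[G_-,G^+]$ and satisfies $M\wedge N\prec N$ strictly. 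Thus $N$ is not the minimum of $[G_-,G^+]$ and has a downward cover $N^{(i)}$ inside the interval; this cover is a mutation fixing $G$, so $G\subseteq N^{(i)}$, whence $G\subseteq F_i$ with $N^{(i)}\prec N$, as required.

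Finally I would assemble the conclusion. The displayed identity exhibits $R_N$ as a union of codimension-one faces of $S_N$, so it is pure of codimension one and a proper subcomplex of $\overline{S_N}$ (it lies in $\partial S_N$). For every non-minimal $N$ the lattice is connected through mutations, so $N$ admits at least one downward cover and $R_N\neq\varnothing$; the unique minimum $0=(0,\Lambda)$ is placed first, where $R_0=\varnothing$ as required for the initial facet. Running the same argument with joins in place of meets handles the refinement of $\preceq^{\mathrm{op}}$, giving the second shelling order. The routine parts are the purity count and the facet/mutation dictionary supplied by \cite{air}*{Theorem 2.18} and Theorem \ref{mutation-charactorization}; the genuine content is the lattice-and-reduction input used to establish $\subseteq$, which is what obstructs a purely elementary treatment.
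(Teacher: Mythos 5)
The paper does not actually prove this statement: it is imported verbatim from \cite{dij}*{Theorem 5.4} and then used as a black box in the proof of Theorem \ref{shellability}, so there is no internal proof to compare yours against. Judged on its own, your argument is correct and is essentially a reconstruction of the known proof of the cited result. Its three pillars are all legitimate and non-circular: (i) for a $\tau$-tilting-finite algebra, $s\tau\textrm{-}\mathrm{tilt}\Lambda$ is a finite (complete) lattice; (ii) for a basic $\tau$-rigid pair $G$, the set of support $\tau$-tilting modules whose pair contains $G$ as a direct summand is an interval $[G_-,G^+]$ (co-Bongartz to Bongartz completion); (iii) Hasse arrows are exactly mutations (the paper's Theorem \ref{mutation-charactorization}, i.e.\ \cite{air}*{Theorem 2.33}). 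Given these, your meet argument is airtight: $M$ preceding $N$ in a linear extension of $\preceq$ forces $M\wedge N\prec N$ inside the interval, so $N$ admits a lower cover there, and since intervals are convex, a cover inside $[G_-,G^+]$ is a cover in the ambient poset, hence by (iii) a mutation of $N$ fixing $G$; this yields the inclusion $R_N\subseteq\bigcup_{N^{(i)}\prec N}F_i$ and the shelling follows, with the dual order handled by joins.

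Two remarks on precision. First, at the covering step you invoke the full strength of $\tau$-tilting reduction (poset isomorphism with $s\tau\textrm{-}\mathrm{tilt}$ of the reduced algebra); this is not needed there, since convexity of intervals plus Theorem \ref{mutation-charactorization} already gives that lower covers within $[G_-,G^+]$ are mutations fixing $G$. Where reduction-type input \emph{is} genuinely needed is pillar (ii), the interval description of $\{N : G\le N\}$; note that the version of Jasso's theorem reproduced in the paper's appendix (Theorem \ref{jasso}: bijection plus mutation-compatibility) is weaker than what you use, so you should cite the order-theoretic form (Jasso's poset isomorphism, or the torsion-class description $\mathsf{Fac}\,U\subseteq\mathcal{T}\subseteq{}^{\perp}(\tau U)\cap Q^{\perp}$ for $G=(U,Q)$), which covers pairs with nonzero projective part. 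Second, the nonemptiness of $R_N$ for every non-initial facet requires that the unique minimum $(0,\Lambda)$ comes first, which indeed holds in any linear extension of $\preceq$; this matches your treatment of the initial facet.
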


\begin{proof}[Proof of Theorem \ref{shellability}]
We set $|\Lambda|=n$. It suffices to construct a total order preserving the partial order $\preceq^\mathrm{op}$ such that every $\tau$-tilting module is smaller than every non-$\tau$-tilting support $\tau$-tilting module. We prove the dual; that is, there is a total order preserving the partial order $\preceq$ such that every $\tau$-tilting module is larger than all non-$\tau$-tilting support $\tau$-tilting modules. Choose the maximal element in $\Delta(s\Lambda)_{\max}$ in the partial order $\preceq$, remove the chosen element from the whole, and then choose one of the maximal elements for $\preceq$ from the set, and so on, until the set is empty. In this case, for any two support $\tau$-tilting modules, if we determine the order under the rule that the one which is chosen earlier is greater, then this order is the total order such that the original partial order is preserved. Now, we can make the order of choosing support $\tau$-tilting modules such that all $\tau$-tilting modules are chosen before choosing the others. Assuming that this is impossible, there will be arrows from supporting $\tau$-tilting modules with less than $n$ direct commutations to support $\tau$-tilting modules with $n$, which contradicts Lemma \ref{tau-tiltlarger}. 
\end{proof}

Let $K$ be a shellable simplicial complex for the order $\preceq$ of $K_{\max}$. We set $\{F_1,\dots,F_m\}=K_{\max}$, where $F_i\preceq F_j$ if and only if $i\leq j$. Let $N(F_j)$ be the number of vertices $x$ of $F_j$ such that at least one face of codimension 1 of $F_{j}$ not containing $x$ is contained in $F_1, F_2,\dots, F_{j-1}$. Let $h=(h_0,h_1,\dots,h_n)$ be the $h$-vector of $K$. Then, we have
\begin{align*}
    h_i=\#\{1\leq j \leq m \mid N(F_j)=i\}.
\end{align*}
For further details, see \cite{zie}*{Section 8.3}. Let $\Lambda$ be a $\tau$-tilting-finite algebra. Since every codimension 1 simplex in $\Delta(\Lambda)$ is contained 1 or 2 maximal simplices, $N(F_j)$ coincides with the number of $F_k$ such that $k<j$ and the codimension of $F_k\cap F_j$ is 1. Furthermore, this is the number of arrows the terminals of which are the support tilting module corresponding to $F_j$ in $\mathsf{Hasse}(\tau\textrm{-tilt}\Lambda)$. For $M\in\tau\textrm{-tilt}\Lambda$, we denote by $T(M)$ the number of arrows whose terminal is $M$ in $\mathsf{Hasse}(\tau\textrm{-tilt}\Lambda)$. Set 
\begin{align}\label{anotherreph-vector}
    h_i(\Lambda)=\#\{M\in\tau\textrm{-tilt}\Lambda\mid T(M)=i\}.
\end{align}
By Theorem \ref{shellability}, $(h_0(\Lambda),h_1(\Lambda),\dots,h_n(\Lambda))$ is the $h$-vector of $\Delta(\Lambda)$. Therefore, when $\Delta(\Lambda)$ is a cluster-tilted algebra of finite representation type, we have the following corollaries by Corollaries \ref{thm:h-invariant-sink/source}, \ref{cor:h-independent-orientation} and Theorem \ref{thm:iso-tilting-cluster}.

\begin{corollary}
 Let $\Lambda$ and $\Lambda'$ be cluster-tilted algebras of the finite representation type. If $Q_{\Lambda'}$ is obtained from $Q_{\Lambda}$ by a sink or source quiver mutation, then $h_i(\Lambda)$ coincide with $h_i(\Lambda')$ for any $i$.
\end{corollary}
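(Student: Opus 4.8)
The plan is to reduce the statement to the already-established invariance of the $h$-vector of the positive cluster complex under a sink or source mutation, by transporting everything across the isomorphism of Theorem \ref{thm:iso-tilting-cluster}.

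First I would recall that the discussion immediately preceding the statement shows that the vector $(h_0(\Lambda),\dots,h_n(\Lambda))$ defined in \eqref{anotherreph-vector} is precisely the $h$-vector of the $\tau$-tilting simplicial complex $\Delta(\Lambda)$. Indeed, since $\Lambda$ is of finite representation type it is $\tau$-tilting-finite, so $\Delta(\Lambda)$ is a finite pure complex which is shellable by Theorem \ref{shellability}. The standard expression of the $h$-vector of a shellable complex in terms of the restriction numbers $N(F_j)$, together with the identification $N(F_j)=T(M)$ arising from the fact that each codimension-one face of $\Delta(\Lambda)$ lies in at most two maximal simplices, gives that $h_i(\Lambda)$ counts exactly the maximal simplices with restriction number $i$. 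Thus $h_i(\Lambda)$ equals the $i$th entry of $h(\Delta(\Lambda))$.

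Next I would pass to the cluster side. Let $B_\Lambda$ (resp.\ $B_{\Lambda'}$) be the skew-symmetric matrix whose associated quiver is $Q_\Lambda$ (resp.\ $Q_{\Lambda'}$). Since $\Lambda$ and $\Lambda'$ are cluster-tilted of finite representation type, their quivers are mutation equivalent to Dynkin quivers, so Theorem \ref{thm:iso-tilting-cluster} yields $\Delta(\Lambda)\cong\Delta^+(\xx,B_\Lambda)$ and $\Delta(\Lambda')\cong\Delta^+(\xx',B_{\Lambda'})$. As isomorphic simplicial complexes have equal face vectors, they have equal $h$-vectors, whence $h(\Delta(\Lambda))=h(\Delta^+(\xx,B_\Lambda))$ and similarly for $\Lambda'$.

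Finally I would observe that a sink (resp.\ source) quiver mutation $Q_\Lambda\to Q_{\Lambda'}$ is, at the level of skew-symmetric exchange matrices, exactly a sink (resp.\ source) seed mutation $(\xx,B_\Lambda)\to(\xx',B_{\Lambda'})$, because matrix mutation agrees with quiver mutation for skew-symmetric matrices and a sink or source vertex of $Q_B$ determines a sink or source mutation of the seed by definition. Hence Corollary \ref{thm:h-invariant-sink/source} applies and gives $h(\Delta^+(\xx,B_\Lambda))=h(\Delta^+(\xx',B_{\Lambda'}))$. Chaining the equalities of the previous two paragraphs then produces $h_i(\Lambda)=h_i(\Lambda')$ for every $i$, completing the argument. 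The only delicate point is the bookkeeping of the first paragraph, namely that the combinatorially defined $h_i(\Lambda)$ coincides with the genuine $h$-vector of $\Delta(\Lambda)$; once this is in place, the remaining steps are purely formal transports along the isomorphism of Theorem \ref{thm:iso-tilting-cluster}.
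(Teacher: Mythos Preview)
Your proof is correct and follows essentially the same approach as the paper: the paper simply states that the corollary follows from Corollaries \ref{thm:h-invariant-sink/source}, \ref{cor:h-independent-orientation} and Theorem \ref{thm:iso-tilting-cluster}, together with the identification of $(h_0(\Lambda),\dots,h_n(\Lambda))$ with the $h$-vector of $\Delta(\Lambda)$ via shellability. Your version spells out each of these steps in more detail, but the logical structure is identical.
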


\begin{corollary}
  Let $\Lambda$ and $\Lambda'$ be finite dimensional hereditary algebras. If $Q_\Lambda$ and $Q_{\Lambda'}$ have the same underlying graph, then $h_i(\Lambda)$ coincide with $h_i(\Lambda')$ for any $i$.
\end{corollary}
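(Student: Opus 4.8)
The plan is to reduce the statement to the already-established invariance of the $h$-vector of positive cluster complexes under change of orientation, namely Corollary~\ref{cor:h-independent-orientation}, and to transport this invariance to $\tau$-tilting simplicial complexes through the isomorphism of Theorem~\ref{thm:iso-tilting-cluster}. First I would record that a finite dimensional hereditary algebra over $K$ is Morita equivalent to a path algebra $KQ_\Lambda$ with $Q_\Lambda$ acyclic, and that it arises as a cluster-tilted algebra via the canonical cluster tilting object $T=KQ_\Lambda$ in $\Ccal_{Q_\Lambda}$. Since $h_i(\Lambda)$ is defined only when $\Delta(\Lambda)$ is finite, i.e.\ when $\Lambda$ is $\tau$-tilting finite, and a hereditary algebra is $\tau$-tilting finite if and only if it is representation finite if and only if the underlying graph $\Gamma$ of $Q_\Lambda$ is a simply-laced Dynkin diagram, we may assume $\Gamma$ is such a diagram throughout. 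In particular $Q_\Lambda$ and $Q_{\Lambda'}$ are (trivially) mutation equivalent to Dynkin quivers, so the hypotheses of Theorem~\ref{thm:iso-tilting-cluster} are met for both $\Lambda$ and $\Lambda'$.

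Next I would invoke Theorem~\ref{thm:iso-tilting-cluster} to obtain isomorphisms of simplicial complexes $\Delta(\Lambda)\cong\Delta^+(\xx,B_\Lambda)$ and $\Delta(\Lambda')\cong\Delta^+(\xx',B_{\Lambda'})$, where $B_\Lambda$ and $B_{\Lambda'}$ are the skew-symmetric exchange matrices whose associated quivers are $Q_\Lambda$ and $Q_{\Lambda'}$. Because an isomorphism of simplicial complexes preserves face vectors, and the $h$-vector is determined by the face vector (Definition~\ref{hvector}), this gives $h(\Delta(\Lambda))=h(\Delta^+(\xx,B_\Lambda))$ and likewise for $\Lambda'$. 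Since $\Lambda,\Lambda'$ are hereditary, $B_\Lambda,B_{\Lambda'}$ are skew-symmetric, so their valued quivers $Q_{B_\Lambda},Q_{B_{\Lambda'}}$ carry only trivial valuations and their underlying graphs coincide with $\Gamma$; as $\Gamma$ is one of the Dynkin diagrams of Figure~\ref{fig:valued-quiver}, Corollary~\ref{cor:h-independent-orientation} yields $h(\Delta^+(\xx,B_\Lambda))=h(\Delta^+(\xx',B_{\Lambda'}))$. Chaining these equalities gives $h(\Delta(\Lambda))=h(\Delta(\Lambda'))$ entrywise.

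Finally I would close the loop by identifying $h_i(\Lambda)$ as defined in~\eqref{anotherreph-vector} with the $i$-th entry of the $h$-vector of $\Delta(\Lambda)$: this is exactly the content of the shelling computation recorded after Theorem~\ref{shellability}, where, for the maximal simplex $F_j$ corresponding to $M\in\tau\textrm{-}\mathrm{tilt}\Lambda$, one has $N(F_j)=T(M)$ and hence $h_i=\#\{M\mid T(M)=i\}$. Applying the same identification to $\Lambda'$ converts the equality of $h$-vectors into $h_i(\Lambda)=h_i(\Lambda')$ for every $i$, completing the argument. The only genuinely delicate point is the well-definedness discussion in the first step—confirming that the statement is asserted only in the $\tau$-tilting finite (Dynkin) range, where the combinatorial $h_i$ from the Hasse quiver agrees with the shelling $h$-vector; everything afterwards is a transport of structure along isomorphisms together with a citation of Corollary~\ref{cor:h-independent-orientation}. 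An equivalent route would be to deduce the result from the preceding corollary together with the fact that two acyclic orientations of a Dynkin graph are related by a sequence of sink or source mutations (\cite{fzii}*{Proposition 9.2}), but the direct appeal to Corollary~\ref{cor:h-independent-orientation} is cleaner.
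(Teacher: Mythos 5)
Your proposal is correct and follows essentially the same route as the paper: the paper deduces this corollary by combining Theorem \ref{thm:iso-tilting-cluster} with Corollary \ref{cor:h-independent-orientation}, after identifying $h_i(\Lambda)$ with the $i$-th entry of the $h$-vector of $\Delta(\Lambda)$ via the shellability argument of Theorem \ref{shellability}. Your extra care about restricting to the representation-finite (Dynkin) range, where $h_i(\Lambda)$ is well defined and the Hasse-quiver count agrees with the shelling $h$-vector, is implicit in the paper but worth making explicit.
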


\begin{example}
Let $Q= \begin{xy}(0,0)*+{1}="I",(10,0)*+{2}="J", (20,0)*+{{3}}="K",(30,0)*+{{4}}="L" \ar@{->}"J";"I"  \ar@{->}"K";"J" \ar@{->}"L";"K"  \end{xy}$ and $Q'= \begin{xy}(0,0)*+{1}="I",(10,0)*+{2}="J", (20,0)*+{{3}}="K", (30,0)*+{{4}}="L" \ar@{->}"J";"I"  \ar@{->}"K";"J",\ar@{->}"K";"L"\end{xy}$.
Let $\Lambda,\Lambda'$ be cluster-tilted algebras associated with $Q$ and $Q'$, respectively. The $h$-vector of both $\Delta(\Lambda)$ and $\Delta(\Lambda')$ are $h = (1,6,6,1)$ (see Figure \ref{hasselambda}).

 \begin{figure}[htp]
  \caption{$\mathsf{Hasse}(\tau\text{-tilt} \Lambda)$ and $\mathsf{Hasse}(\tau\text{-tilt} \Lambda')$}\label{hasselambda}
  \vspace{4mm}
    \begin{tikzpicture}
      \node[block=4] (121321) at (0,10) {\nodepart{one}$\sst{1}$\nodepart{two}$\sst{2 \\ 1}$\nodepart{three}$\sst{3 \\ 2 \\ 1}$\nodepart{four}$\sst{4\\ 3 \\ 2 \\ 1}$};
      \node[block=4] (121) at (-4,9) {\nodepart{one}$\sst{1}$\nodepart{two}$\sst{2 \\ 1}$\nodepart{three}$\sst{4}$\nodepart{four}$\sst{4\\ 3 \\ 2 \\ 1}$};
      \node[block=4] (221321) at (0,8.5) {\nodepart{one}$\sst{2}$\nodepart{two}$\sst{2 \\ 1}$\nodepart{three}$\sst{3 \\ 2 \\ 1}$\nodepart{four}$\sst{4\\ 3 \\ 2 \\ 1}$};
      \node[block=4] (13321) at (4,9) {\nodepart{one}$\sst{1}$\nodepart{two}$\sst{3}$\nodepart{three}$\sst{3 \\ 2 \\ 1}$\nodepart{four}$\sst{4\\ 3 \\ 2 \\ 1}$};
      \node[block=4] (221) at (-2.5,6.5) {\nodepart{one}$\sst{2}$\nodepart{two}$\sst{2 \\ 1}$\nodepart{three}$\sst{4}$\nodepart{four}$\sst{4\\ 3 \\ 2 \\ 1}$};
      \node[block=4] (232321) at (0,7) {\nodepart{one}$\sst{2}$\nodepart{two}$\sst{3 \\ 2}$\nodepart{three}$\sst{3 \\ 2 \\ 1}$\nodepart{four}$\sst{4\\ 3 \\ 2 \\ 1}$};
      \node[block=4] (232) at (0,5.5) {\nodepart{one}$\sst{2}$\nodepart{two}$\sst{3 \\ 2}$\nodepart{three}$\sst{4\\ 3 \\ 2}$\nodepart{four}$\sst{4\\ 3 \\ 2 \\ 1}$};
      \node[block=4] (332321) at (3,6) {\nodepart{one}$\sst{3}$\nodepart{two}$\sst{3 \\ 2}$\nodepart{three}$\sst{3 \\ 2 \\ 1}$\nodepart{four}$\sst{4\\ 3 \\ 2 \\ 1}$};
      \node[block=4] (13) at (4,4) {\nodepart{one}$\sst{1}$\nodepart{two}$\sst{3}$\nodepart{three}$\sst{4\\ 3}$\nodepart{four}$\sst{4\\ 3 \\ 2 \\ 1}$};
      \node[block=4] (2) at (0,2) {\nodepart{one}$\sst{2}$\nodepart{two}$\sst{4}$\nodepart{three}$\sst{4\\ 3 \\ 2}$\nodepart{four}$\sst{4\\ 3 \\ 2 \\ 1}$};
      \node[block=4] (332) at (2,4.5) {\nodepart{one}$\sst{3}$\nodepart{two}$\sst{3 \\ 2}$\nodepart{three}$\sst{4\\ 3 \\ 2 }$\nodepart{four}$\sst{4\\ 3 \\ 2 \\ 1}$};
      \node[block=4] (3) at (4,2) {\nodepart{one}$\sst{3}$\nodepart{two}$\sst{4\\ 3}$\nodepart{three}$\sst{4\\ 3 \\ 2 }$\nodepart{four}$\sst{4\\ 3 \\ 2 \\ 1}$};
      \node [block=4](1) at (-4,2) {\nodepart{one}$\sst{1}$\nodepart{two}$\sst{4}$\nodepart{three}$\sst{4\\ 3}$\nodepart{four}$\sst{4\\ 3 \\ 2 \\ 1}$};
      \node [block=4](0) at (0,0.5) {\nodepart{one}$\sst{4}$\nodepart{two}$\sst{4\\3}$\nodepart{three}$\sst{4\\ 3\\2}$\nodepart{four}$\sst{4\\ 3 \\ 2 \\ 1}$};

      \draw[->] (121321) -- (121);
      \draw[->] (121321) -- (221321);
      \draw[->] (121321) -- (13321);
      \draw[->] (121) -- (221);
      \draw[->] (121) -- (1);
      \draw[->] (221321) -- (221);
      \draw[->] (221321) -- (232321);
      \draw[->] (13321) -- (332321);
      \draw[->] (13321) --  (13);
      \draw[->] (221) -- (2);
      \draw[->] (232321) -- (232);
      \draw[->] (232321) -- (332321);
      \draw[->] (232) -- (332);
      \draw[->] (232) -- (2);
      \draw[->] (332321) -- (332);
      \draw[->] (13) --  (1);
      \draw[->] (13) -- (3);
      \draw[->] (332) -- (3);
      \draw[->] (1) -- (0);
      \draw[->] (2) -- (0);
      \draw[->] (3) -- (0);
    \end{tikzpicture}
    \vspace{8mm}
    \\
   \begin{tikzpicture}
      \node[block=4] (14213241) at (0,10) {\nodepart{one}$\sst{1}$\nodepart{two}$\sst{4}$\nodepart{three}$\sst{2 \\ 1}$\nodepart{four}$\sst{3\\ 24 \\ 1\ }$};
      \node[block=4] (1213213241) at (-4,9) {\nodepart{one}$\sst{1}$\nodepart{two}$\sst{2 \\ 1}$\nodepart{three}$\sst{3\\2\\1}$\nodepart{four}$\sst{3\\24\\1\ }$};
      \node[block=4] (14343241) at (0,8.5) {\nodepart{one}$\sst{1}$\nodepart{two}$\sst{4}$\nodepart{three}$\sst{3 \\ 4}$\nodepart{four}$\sst{3\\ 24 \\ 1\ }$};
      \node[block=4] (24213241) at (4,9) {\nodepart{one}$\sst{2}$\nodepart{two}$\sst{4}$\nodepart{three}$\sst{2\\1}$\nodepart{four}$\sst{3\\ 24 \\ 1\ }$};
      \node[block=4] (1343213241) at (-4,6) {\nodepart{one}$\sst{1}$\nodepart{two}$\sst{3\\4}$\nodepart{three}$\sst{3\\2\\1}$\nodepart{four}$\sst{3\\ 24 \\ 1\ }$};
      \node[block=4] (2213213241) at (0,7) {\nodepart{one}$\sst{2}$\nodepart{two}$\sst{2\\1}$\nodepart{three}$\sst{3 \\ 2 \\ 1}$\nodepart{four}$\sst{3\\ 24 \\ 1\ }$};
      \node[block=4] (4343243241) at (4,5) {\nodepart{one}$\sst{4}$\nodepart{two}$\sst{3 \\ 4}$\nodepart{three}$\sst{3 \\ 24}$\nodepart{four}$\sst{3\\ 24 \\ 1\ }$};
      \node[block=4] (243243241) at (4,7) {\nodepart{one}$\sst{2}$\nodepart{two}$\sst{4}$\nodepart{three}$\sst{3 \\ 24}$\nodepart{four}$\sst{3\\ 24 \\ 1\ }$};
      \node[block=4] (1334321) at (4,2) {\nodepart{one}$\sst{1}$\nodepart{two}$\sst{3}$\nodepart{three}$\sst{3\\ 4}$\nodepart{four}$\sst{3\\2\\1}$};
      \node[block=4] (343243213241) at (-4,4) {\nodepart{one}$\sst{3\\4}$\nodepart{two}$\sst{3\\24}$\nodepart{three}$\sst{3\\ 2 \\ 1}$\nodepart{four}$\sst{3\\ 24 \\ 1\ }$};
      \node[block=4] (23243213241) at (0,5.5) {\nodepart{one}$\sst{2}$\nodepart{two}$\sst{3 \\ 24}$\nodepart{three}$\sst{ 3 \\ 2 \\ 1 }$\nodepart{four}$\sst{3\\24\\1\ }$};
      \node[block=4] (232324321) at (0,2.5) {\nodepart{one}$\sst{2}$\nodepart{two}$\sst{3\\2}$\nodepart{three}$\sst{3 \\ 24 }$\nodepart{four}$\sst{3 \\ 2 \\ 1}$};
      \node [block=4](3234324321) at (-4,2) {\nodepart{one}$\sst{3\\2}$\nodepart{two}$\sst{3\\4}$\nodepart{three}$\sst{3\\ 24}$\nodepart{four}$\sst{3 \\ 2 \\ 1}$};
      \node [block=4](33234321) at (0,0.5) {\nodepart{one}$\sst{3}$\nodepart{two}$\sst{3\\2}$\nodepart{three}$\sst{3\\ 4}$\nodepart{four}$\sst{3 \\ 2 \\ 1}$};

      \draw[->] (14213241) -- (1213213241);
      \draw[->] (14213241) -- (14343241);
      \draw[->] (14213241) -- (24213241);
      \draw[->] (1213213241) -- (1343213241);
      \draw[->] (1213213241) -- (2213213241);
      \draw[->] (14343241) -- (4343243241);
      \draw[->] (24213241) -- (2213213241);
      \draw[->] (24213241) -- (243243241);
      \draw[->] (1343213241) --  (1334321);
      \draw[->] (1343213241) -- (343243213241);
      \draw[->] (2213213241) -- (23243213241);
      \draw[->] (4343243241) -- (343243213241);
      \draw[->] (243243241) -- (4343243241);
      \draw[->] (23243213241) -- (343243213241);
      \draw[->] (23243213241) -- (232324321);
      \draw[->] (14343241) --  (1343213241);
      \draw[->] (23243213241) -- (232324321);
      \draw[->] (1334321) -- (33234321);
      \draw[->] (343243213241) -- (3234324321);
      \draw[->] (232324321) -- (3234324321);
      \draw[->] (3234324321) -- (33234321);
    \end{tikzpicture}
    \end{figure}
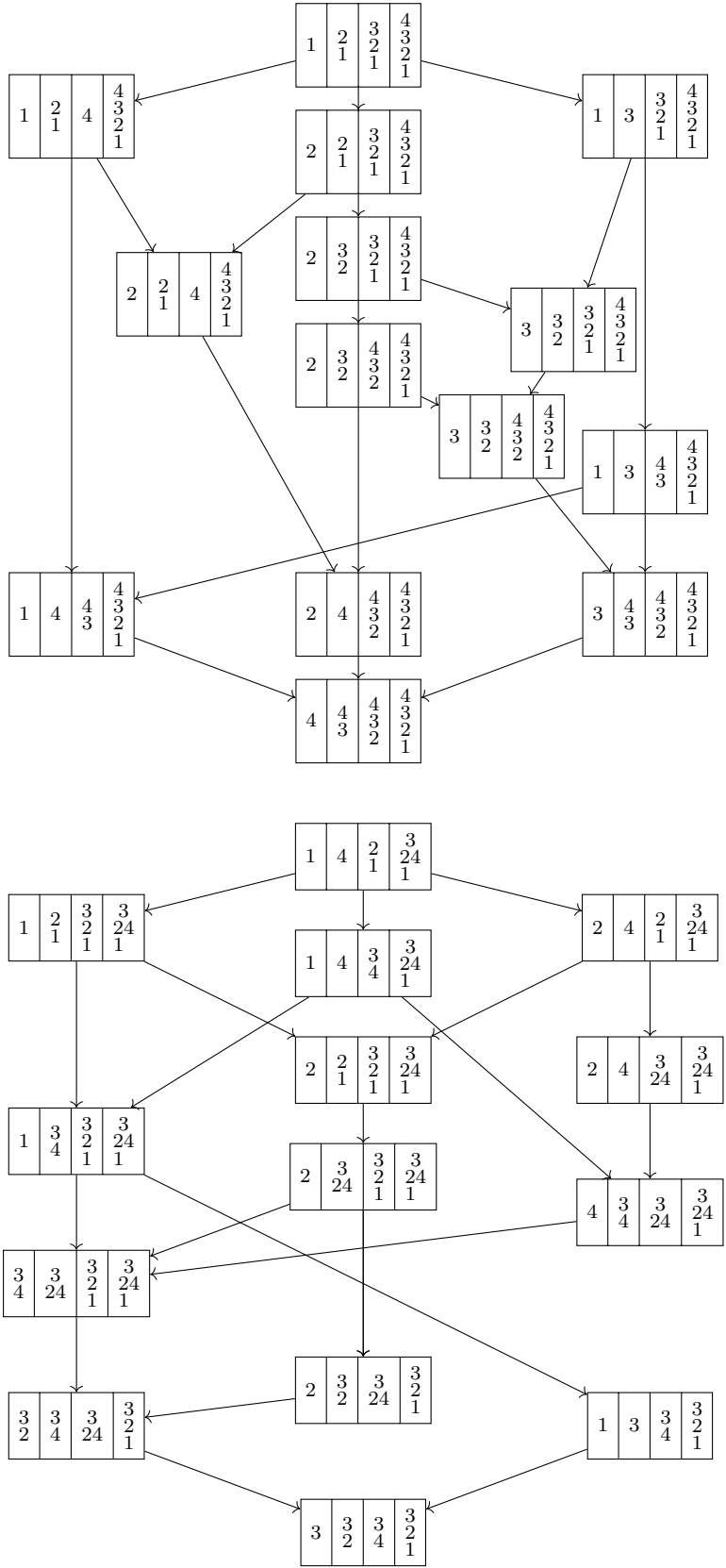
\end{example}

\appendix

\section{Proof of Corollary \ref{cor:explicit-descriptiton-An} using $\tau$-tilting reduction}\label{anotherproof}
In this section, we provide another proof of Corollary \ref{cor:explicit-descriptiton-An} without using cluster algebra theory. 
Through this section, we abbreviate $\Lambda_{A_n}$ to $\Lambda$ and $\Lambda_{A_{n-1}}$ to $\Lambda'$. Moreover, we denote by $P_1\oplus\cdots\oplus P_n$ the indecomposable decomposition of $_\Lambda\Lambda$, where $P_n$ is the unique indecomposable projective-injective module of $\Lambda$.  

First, we begin with the following proposition.

\begin{proposition}\label{delta=p}
$\Delta(\Lambda)=\st_{\Delta(s\Lambda)}(P_n,0)$.
\end{proposition}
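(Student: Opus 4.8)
The plan is to prove the asserted equality of subcomplexes of $\Delta(s\Lambda)$ by establishing the two inclusions separately. Recall that a simplex of $\Delta(s\Lambda)$ is a basic $\tau$-rigid pair $(N,Q)$, that $\Delta(\Lambda)$ is precisely the full subcomplex of $\Delta(s\Lambda)$ spanned by the vertices of the form $(X,0)$ with $X$ an indecomposable $\tau$-rigid module, and that, by the definition of the closed star, $(N,Q)\in\st_{\Delta(s\Lambda)}(P_n,0)$ if and only if $(N\oplus P_n,Q)$ is again a basic $\tau$-rigid pair. Two elementary properties of the unique indecomposable projective-injective $P_n$ of the hereditary algebra $\Lambda=\Lambda_{A_n}$ will drive the argument. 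First, since $\Lambda$ is hereditary and $P_n$ is simultaneously projective and injective, one has $\Ext^1_\Lambda(P_n,-)=0$ and $\Ext^1_\Lambda(-,P_n)=0$. Second, $P_n$ is the longest indecomposable $\Lambda$-module, so it admits every simple $S_i$ as a composition factor; hence $\dim_k\Hom_\Lambda(P_i,P_n)=[P_n:S_i]\neq 0$ for every indecomposable projective $P_i$.

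For the inclusion $\Delta(\Lambda)\subseteq\st_{\Delta(s\Lambda)}(P_n,0)$, I would take a simplex $(N,0)$ of $\Delta(\Lambda)$, so that $N$ is $\tau$-rigid, and verify that $(N\oplus P_n,0)$ is still a $\tau$-rigid pair, i.e.\ that $N\oplus P_n$ is $\tau$-rigid. Because $\Lambda$ is hereditary, $\tau$-rigidity of a module coincides with rigidity, so it suffices to check $\Ext^1_\Lambda(N\oplus P_n,\,N\oplus P_n)=0$. Expanding this into a direct sum, the two cross terms $\Ext^1_\Lambda(N,P_n)$ and $\Ext^1_\Lambda(P_n,N)$ vanish by the injectivity and projectivity of $P_n$ respectively, while $\Ext^1_\Lambda(P_n,P_n)=0$; only $\Ext^1_\Lambda(N,N)=0$ remains, which holds since $N$ is rigid. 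Thus $(N\oplus P_n,0)$ is a basic $\tau$-rigid pair (the case $P_n\in\add N$ being immediate), so $(N,0)$ lies in the closed star.

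For the reverse inclusion, I would begin with $(N,Q)\in\st_{\Delta(s\Lambda)}(P_n,0)$, so that $(N\oplus P_n,Q)$ is a $\tau$-rigid pair; in particular $\Hom_\Lambda(Q,N\oplus P_n)=0$, whence $\Hom_\Lambda(Q,P_n)=0$. Decomposing $Q$ into indecomposable projective summands and using $\Hom_\Lambda(P_i,P_n)\neq 0$ for every $i$, this forces $Q=0$. Consequently $(N,Q)=(N,0)$ is a simplex all of whose vertices are $\tau$-rigid modules, hence a simplex of $\Delta(\Lambda)$. Combining the two inclusions with the observation that $\Delta(\Lambda)$ is exactly the full subcomplex of $\Delta(s\Lambda)$ on the module-vertices yields the claimed equality of simplicial complexes.

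The crux of the argument is the statement that attaching the projective-injective $P_n$ never destroys $\tau$-rigidity; equivalently, that $\Hom_\Lambda(P_n,\tau N)=0$ for every $\tau$-rigid $N$. The clean way to secure it is exactly the $\Ext$ reduction used above, which works because $\Lambda$ is hereditary (so that one may replace $\tau$-rigidity by rigidity) and because $P_n$ is at once projective and injective, so that both relevant $\Ext$ groups vanish. The only other point demanding care is the elementary but essential bookkeeping that $P_n$, being the longest indecomposable of $\Lambda_{A_n}$, maps nontrivially to no—more precisely, receives a nonzero map from every—indecomposable projective, which is what rules out a nonzero projective part $Q$ and pins the star down to the module vertices.
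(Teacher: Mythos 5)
Your proof is correct and follows essentially the same route as the paper: the inclusion $\st_{\Delta(s\Lambda)}(P_n,0)\subseteq\Delta(\Lambda)$ is forced by $\Hom_\Lambda(P_i,P_n)\neq 0$ for all $i$ (killing the projective part $Q$), and the reverse inclusion comes from the projectivity and injectivity of $P_n$. The only cosmetic difference is that where the paper invokes the Auslander--Reiten formula to get $\Hom_\Lambda(P_n,\tau M)=0$ from $\Ext^1_\Lambda(M,P_n)=0$, you use the hereditary property to replace $\tau$-rigidity by rigidity and check the vanishing of the four $\Ext^1$ summands directly; both reduce to the same two vanishing statements.
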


\begin{proof}
First, we prove $\st_{\Delta(s\Lambda)}(P_n,0)\subset\Delta(\Lambda)$. Let $X=(M,P)\in \st_{\Delta(s\Lambda)}(P_n,0)$. It suffices to show that $(P_n\oplus M,P)$ is not a $\tau$-rigid pair if $P\neq 0$. We assume $P\neq 0$. Since there exists an injection from $P_i$ to $P_n$ for any $1\leq i\leq n$, we have $\Hom_\Lambda(P,P_n\oplus M)\neq 0$. Therefore, $(P_n\oplus M,P)$ is not $\tau$-rigid pair. Next, we prove $\Delta(\Lambda)\subset\st_{\Delta(s\Lambda)}(P_n,0)$. For any basic $\tau$-rigid module $M$, it suffices to show $\Hom_\Lambda(P_n\oplus M,\tau (P_n\oplus M))=0$. Since $P_n$ is projective, we have $\tau (P_n\oplus M)=\tau M$. By using the Auslander-Reiten formula, we have $\Hom_\Lambda(P_n,\tau M)=\Ext_\Lambda(M,P_n)=0$ since $P_n$ is injective. Therefore, we have
\begin{align*}
    \Hom_\Lambda(P_n\oplus M,\tau (P_n\oplus M))=\Hom_\Lambda(M,\tau M)=0
\end{align*}
Therefore, we have $\Delta(\Lambda)\subset\st_{\Delta(s\Lambda)}(P_n,0)$.
\end{proof}

We use \emph{$\tau$-tilting reduction} instead of Lemma \ref{lem:bijection-between-B-and-Bx}. We fix $A$ as a finite dimensional algebra, $U$ is a $\tau$-tilting module of $A$, and $T_U$ is the Bongarz completion of $U$. Let $B=\End_A T_U$ and $C=B/\langle e_U \rangle$, where $e_U$ is the idempotent corresponding $\Hom_A (T_U,U)$. For $M\in\mod A$, we denote by 
\begin{align*}
    0\to \mathrm{t}M \to M \to  \mathrm{f}M \to 0
\end{align*}
the canonical sequence associated with a torsion pair $(\textsf{Fac} U, U^{\perp})$. Then, we have the following theorem.

\begin{theorem}[\cite{jasso}*{Theorem 3.15, Corollary 3.16}]\label{jasso}
The map 
\begin{align*}
\Hom_\Lambda(T_U,\mathrm{f}(-))\colon s\tau\text{-}\mathrm{tilt}_UA\to s\tau\text{-}\mathrm{tilt}\ C 
\end{align*}
is a bijection. Moreover, this bijection is compatible with support $\tau$-tilting mutations.
\end{theorem}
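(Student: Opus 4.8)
The plan is to prove the reduction theorem in four stages, treating the functor $F := \Hom_A(T_U,\mathrm{f}(-))$ as the comparison between the $\tau$-perpendicular category of $U$ and $\mod C$. Throughout, $U$ is the fixed basic $\tau$-rigid module (I read the ``$\tau$-tilting'' in the setup as ``$\tau$-rigid'', so that the Bongartz completion is nontrivial), $T_U$ its Bongartz completion, $B=\End_A(T_U)$, $e_U$ the idempotent of $B$ corresponding to the summand $\Hom_A(T_U,U)$ of ${}_B B$, and $C=B/\langle e_U\rangle$. Recall that $(\mathsf{Fac}\,U,U^\perp)$ is genuinely a torsion pair because $U$ is $\tau$-rigid (so $\mathsf{Fac}\,U$ is closed under extensions by the Auslander--Reiten formula), hence $\mathrm{f}(-)$ is a well-defined functor landing in $U^\perp$.

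First I would identify the essential image. Set $\mathcal{J}(U):=U^\perp\cap{}^\perp(\tau U)$, the $\tau$-perpendicular category; it is a wide, in particular abelian, subcategory of $\mod A$. The first step is to show that $F$ restricts to an equivalence $\mathcal{J}(U)\xrightarrow{\sim}\mod C$. Full faithfulness follows from the fact that $T_U$ is $\Ext$-projective in the torsion class ${}^\perp(\tau U)$ and that the truncation $\mathrm{f}(-)$ removes exactly the part of a module generated by $U$, so that $\Hom_A(T_U,-)$ becomes fully faithful after this truncation; essential surjectivity is checked by verifying that $F$ sends $T_U$ to a projective generator of $\mod C$, its non-$U$ indecomposable summands mapping to the indecomposable projective $C$-modules. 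This is where the Bongartz exact sequence $0\to A\to T_U^0\to T_U^1\to 0$ enters, pinning down $C$ as the endomorphism ring of the image of the generator.

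Next I would show that $F$ preserves $\tau$-rigidity together with the counting of summands. Using the Auslander--Reiten formula $\Hom_A(X,\tau Y)\cong D\overline{\Hom}_A(Y,X)$ I would establish the key compatibility $\tau_C F(M)\cong F(\tau_{\mathcal J(U)}M)$, where $\tau_{\mathcal J(U)}$ is the Auslander--Reiten translate computed inside the abelian category $\mathcal J(U)$. Granting this, $M\oplus U$ is a $\tau$-rigid $A$-module precisely when $M\in\mathcal J(U)$ and $F(M)$ is $\tau$-rigid over $C$. Since $|C|=|A|-|U|$, maximality is preserved: $(U\oplus M,P)$ is a $\tau$-tilting pair of $A$ iff $(F(M),F(P))$ is a support $\tau$-tilting $C$-module, so $F$ restricts to the claimed bijection $s\tau\text{-}\mathrm{tilt}_U A\to s\tau\text{-}\mathrm{tilt}\,C$.

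Finally, compatibility with mutation. Because mutation of a $\tau$-tilting pair is the unique replacement of a single indecomposable summand and is governed by an exchange sequence, and because $F$ is an equivalence of the ambient abelian categories $\mathcal J(U)\simeq\mod C$, every exchange sequence in $\mathcal J(U)$ is transported to the corresponding exchange sequence in $\mod C$; hence a mutation away from the $U$-direction upstairs is carried to the mutation of its image downstairs, which is the ``moreover'' clause. The main obstacle is the homological bookkeeping in the third stage: proving $\tau_C F(M)\cong F(\tau_{\mathcal J(U)}M)$ requires tracking carefully how the torsion-free truncation $\mathrm{f}(-)$ and the functor $\Hom_A(T_U,-)$ interact with projective presentations and the Nakayama functor, and it is exactly here that the $\Ext$-projectivity of $T_U$ and the defining torsion pair must be used in full. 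Once this compatibility of the two translations is in place, the remaining stages are essentially formal consequences of having an equivalence of abelian categories.
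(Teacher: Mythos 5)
First, a point of reference: the paper offers no proof of this statement at all --- it is quoted from Jasso's $\tau$-tilting reduction paper and used as a black box --- so the comparison here is with Jasso's argument rather than with anything in the text. You were right to read the hypothesis ``$U$ is a $\tau$-tilting module'' as ``$\tau$-rigid'' (otherwise $T_U=U$ and $C=0$; in the application $U=P_n$ is a single indecomposable summand). Your stage one, the equivalence $\Hom_A(T_U,-)\colon U^\perp\cap{}^\perp(\tau U)\xrightarrow{\sim}\mod C$, is exactly Jasso's Theorem 3.8 and is the right starting point. After that your route diverges from his: Jasso does not compare Auslander--Reiten translates at all. He proves instead that the interval $[\mathsf{Fac}\,U,\ {}^\perp(\tau U)]$ in the lattice of torsion classes of $A$ is isomorphic to $\mathsf{tors}\,C$, and then transports everything through the Adachi--Iyama--Reiten bijection between functorially finite torsion classes and support $\tau$-tilting modules; compatibility with mutation is then automatic, since mutation is encoded by Hasse arrows of the poset. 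That detour through torsion classes is precisely what lets him avoid the homological bookkeeping you flag as your main obstacle.

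Within your own route there is a genuine gap, and it is not quite where you place it. The displayed compatibility $\tau_C F(M)\cong F(\tau_{\mathcal{J}(U)}M)$ is the cheap part: an equivalence of abelian categories preserves almost split sequences, so this follows formally from stage one. What actually carries the weight is the bridge between $\tau_A$ and $\tau_{\mathcal{J}(U)}$: the claim ``$U\oplus M$ is $\tau$-rigid over $A$ iff $F(M)$ is $\tau$-rigid over $C$'' compares $\Hom_A(-,\tau_A-)$ with $\Hom$ into an AR translate computed \emph{inside} the subcategory, and these two translates of the same object genuinely differ; relating them is the content of Jasso's key technical propositions and is nowhere reduced to your displayed isomorphism. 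Separately, your stage two silently assumes that the complement $M$ of $U$ in a $\tau$-rigid pair lies in $\mathcal{J}(U)$. It need not: $\tau$-rigidity of $U\oplus M$ gives $\Hom_A(U,\tau M)=0$ and $\Hom_A(M,\tau U)=0$ but not $\Hom_A(U,M)=0$, which is why the bijection is $M\mapsto\Hom_A(T_U,\mathrm{f}M)$ rather than $M\mapsto\Hom_A(T_U,M)$. You therefore also need to show that $M\mapsto\mathrm{f}M$ is a bijection from such complements onto the $\tau$-rigid objects of $\mathcal{J}(U)$, with inverse given by a Bongartz-type completion inside the ambient torsion class. With those two steps supplied, the rest of your plan (full faithfulness, identification of the projectives of $\mod C$, transport of exchange sequences) does go through.
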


\begin{proof}[Proof of Corollary \ref{cor:explicit-descriptiton-An}]
By Proposition \ref{delta=p} and Lemma \ref{lem.st-lk-relation}, it suffices to show that
\begin{align}\label{lk=slambda}
\lk_{\Delta(s\Lambda)}(P_n,0)\cong\Delta(s\Lambda').\end{align}
We set $A = \Lambda$ and $U = P_n$. Then, we have $T = \Lambda$, $B = \End_\Lambda \Lambda\cong \Lambda$ and $C\cong\Lambda'$. Therefore, by applying the above setting to Theorem \ref{jasso}, we have a bijection 
\begin{align*}
\mathrm{f}(-)\colon s\text{-}\mathrm{tilt}_{P_n}\Lambda\to s\text{-}\mathrm{tilt}\ \Lambda'. 
\end{align*}
Moreover, this bijection is compatible with support $\tau$-tilting mutations. Therefore, we obtain \eqref{lk=slambda} as desired.
\end{proof}

\bibliography{myrefs}
\end{document}